\newcommand{\h}{\hat}
\newcommand{\bex}{\begin{example}}
\newcommand{\eex}{\end{example}}
\newcommand{\be}{\begin{equation} }
\newcommand{\ee}{\end{equation} }
\newcommand{\bcs}{\begin{cases}}
\newcommand{\ecs}{\end{cases}}
\newcommand{\ip}[2]{\langle #1, #2 \rangle}
\newcommand{\gtc}[1]{g^{#1}_{\C}}
\newcommand{\brem}{\begin{rem}}
\newcommand{\erem}{\end{rem}}
\newcommand{\pa}{\partial}
\newcommand{\baa}{\begin{align*}}
\newcommand{\eaa}{\end{align*}}
\newcommand{\bea}{\begin{eqnarray*} }
\newcommand{\eea}{\end{eqnarray*} }
\newcommand{\bee}{\begin{eqnarray} }
	\newcommand{\eee}{\end{eqnarray} }
\newcommand{\beq}{\begin{equation} }
\newcommand{\eeq}{\end{equation} }
\newcommand{\bpp}{\begin{prop}}
\newcommand{\epp}{\end{prop}}
\newcommand{\bt}{\begin{theorem}}
\newcommand{\et}{\end{theorem}}
\newcommand{\bpf}{\begin{proof}}
\newcommand{\epf}{\end{proof}}
\newcommand{\bl}{\begin{lem}}
\newcommand{\el}{\end{lem}}
\newcommand{\bc}{\begin{cor}}
\newcommand{\ec}{\end{cor}}
\newcommand{\bd}{\begin{defin}}
\newcommand{\ed}{\end{defin}}
\newcommand{\edit}[1]{{\color{red}{$\clubsuit$#1$\clubsuit$}}}
\newcommand{\bma}{\begin{bmatrix}}
\newcommand{\ema}{\end{bmatrix}}
\def\name{Z-Z}
\renewcommand{\Re}{{\operatorname{Re}\,}}
\renewcommand{\Im}{{\operatorname{Im}\,}}
\newcommand{\szego}{Szeg\"o}
\renewcommand{\Re}{{\operatorname{Re}\,}}
\renewcommand{\Im}{{\operatorname{Im}\,}}
\renewcommand{\H}{{\mathbf H}}
\renewcommand{\epsilon}{\varepsilon}
\newcommand{\kahler}{K\"ahler }
\newcommand{\wt}{\widetilde}
\newcommand{\N}{{\mathbb N}}
\newcommand{\R}{{\mathbb R}}
\newcommand{\C}{{\mathbb C}}
\newcommand{\Z}{{\mathbb Z}}
\newcommand{\Ss}{{\mathbb S}}
\newcommand{\dbar}{\bar\partial}
\newcommand{\ddbar}{\partial\dbar}
\newcommand{\half}{{\textstyle \frac 12}}
\renewcommand{\phi}{\varphi}
\newcommand{\acal}{\mathcal{A}}
\newcommand{\dcal}{\mathcal{D}}
\newcommand{\ecal}{\mathcal{E}}
\newcommand{\fcal}{\mathcal{F}}
\newcommand{\gcal}{\mathcal{G}}
\newcommand{\hcal}{\mathcal{H}}
\newcommand{\jcal}{\mathcal{J}}
\newcommand{\kcal}{\mathcal{K}}
\newcommand{\lcal}{\mathcal{L}}
\newcommand{\mcal}{\mathcal{M}}
\newcommand{\ocal}{\mathcal{O}}
\newcommand{\pcal}{\mathcal{P}}
\newcommand{\qcal}{\mathcal{Q}}
\newcommand{\scal}{\mathcal{S}}
\newcommand{\tcal}{\mathcal{T}}
\newcommand{\wcal}{\mathcal{W}}
\newtheorem{theo}{{\sc Theorem}}[section]
\newtheorem{defin}{{\sc Definition}}
\newtheorem{cor}[theo]{{\sc Corollary}}
\newtheorem{lem}[theo]{{\sc Lemma}}
\newtheorem{rem}[theo]{{\sc Remark}}
\newtheorem{prop}[theo]{{\sc Proposition}}
\newenvironment{example}{\medskip\noindent{\it Example:\/} }{\medskip}
\newtheorem{defn}[theo]{{\sc Definition}}
\title[$L^{\infty} $ norms of Husimi distributions of eigenfunctions ]
{$L^{\infty}$ norms of Husimi distributions of eigenfunctions}
\author{Steve Zelditch}
\address{Department of Mathematics, Northwestern  University, Evanston, IL 60208, USA}
\email{zelditch@math.northwestern.edu}
\thanks{Research partially supported by NSF grant  DMS-1810747}
\date{\today}
\begin{document}

\begin{abstract} We give  two term   pointwise Weyl laws for analytic continuations
of  eigenfunctions $ \phi_j^{\C}(\zeta)$ 
 of a real analytic Riemannian manifold $(M, g)$ without boundary to a Grauert tube $M_{\tau}$.
The Weyl laws are asymptotic formulae for Weyl sums $\sum_{j: \lambda_j \leq \lambda} e^{- 2 \tau \lambda_j}
|\phi_j^{\C}(\zeta)|^2$ with $\zeta \in \partial M_{\tau}$. The summands, when $L^2$ normalized,
are  special types of `microlocal lifts' or Husimi distributions $ \frac{|\phi_{\lambda}^{\C}(\zeta)|^2}{|| \phi_{\lambda}||^2_{L^2(\partial M_{\tau})}}$, whose weak limits are the microlocal defect measures studied in quantum chaos.  Rather
than weak* limits, we study the asymptotics of their sup norms. 
The asymptotics depend on whether or not  $\zeta$ is a periodic point  of the geodesic flow, and on whether
the periodic orbit is of  elliptic  type or not. The two-term  Weyl law is analogous to the two-term Weyl asymptotics of Y. Safarov in 
the real domain. 
The remainder estimate gives universal  growth bounds on $|\phi_j^{\C}(\zeta)|^2$ for $\zeta \in \partial M_{\tau}$, which are shown to be
sharp (they are
attained by analytic continuations of Gaussian beams). 

\end{abstract}

\maketitle

\tableofcontents

This article is concerned with analytic continuations $ \phi_j^{\C}(\zeta)$   of  eigenfunctions of the Laplacian $\Delta$
 of a real analytic Riemannian manifold $(M, g)$ of dimension $m$ without boundary to a Grauert tube $M_{\tau} \subset
 M_{\C}$ in the complexification of $M$; see \S \ref{GRB}
for definitions and background.  In the `real domain'   $M$, we denote
by  \begin{equation}\label{EP}  \Delta_g\; \phi_{j} =  \lambda_j^2\; \phi_{j}, \;\;\; \langle \phi_j, \phi_k \rangle = \delta_{j k} \end{equation}
an orthonormal basis of eigenfunctions with $\lambda_0 = 0 \leq \lambda_1 \leq \lambda_2 \leq \cdots. $ \footnote{We use the convention where  Laplacian $\Delta$ is
minus the usual sum of squares, hence is a positive operator,  because we will take many square roots}The
classical  pointwise Weyl law of Avakumovic, Levitan and H\"ormander  in the real domain asserts that,
\begin{equation} \label{LWL} N(\lambda, x) := \sum_{j: \lambda_j \leq \lambda} |\phi_{j}(x)|^2 = C_m \lambda^m + R(\lambda, x).  \end{equation}    where $C_m$ is a dimensional constant and where the remainder satisfies, 
\begin{equation} \label{R} R(\lambda, x) = O(\lambda^{m-1}), \;\; \mbox{uniformly in x}. \end{equation}  An important application of the pointwise
Weyl law is to bound  sup-norms  of eigenfunctions, since  $\phi^2_{\lambda_j}(x)$ is bounded above by   the  jump in the remainder at an eigenvalue, i.e.
there exists a constant $C_g > 0$ depending only on the metric $g$ so that, \begin{equation}\label{REALSUP} ||\phi_{j}||_{L^{\infty}} \leq \sup_x |R(\lambda_j + 0 , x) - R(\lambda_j -  0, x)| \leq C_g \lambda_j^{\frac{m-1}{2}}. 
\end{equation}


In  \cite{Saf, SV, SoZ,SoZ16}  it is shown that the size of the remainder and the
sup norm bounds depend on the structure of the set $\lcal_x$ of geodesic loops based at $x$. For a real analytic surface, it is shown in \cite{SoZ16} that the 
sup norm bound \eqref{REALSUP} is only achieved if there exists a point $p \in M$ so that all geodesics through p are closed.  The author  has conjectured
that, in all dimensions, the sup norm bound is achieved only when there exists a point $p \in M$ so that all geodesics through p are closed. 

    The purpose of this article is to formulate and prove a phase version of \eqref{LWL} and \eqref{REALSUP} in terms of analytic continuations $\phi_j^{\C}$  of the
    eigenfunctions $\phi_j$ to Grauert tubes $M_{\tau}$  in the complexifcation $M_{\C}$ of $M$.
As reviewed in  \S \ref{GRB}, the metric $g$ induces a Grauert tube radius function $\sqrt{\rho}$, essentially (half) the distance between
$\zeta$ and $\bar{\zeta}$ in a natural metric on $M_{\tau}$. 
For $\tau$ sufficiently small, $\partial M_{\tau} = \{\zeta \in M_{\C}: \sqrt{\rho}(\zeta) = \tau\}$ is 
equivalent (under the complexified exponential map \eqref{EXP}) to the cosphere bundle $S^*_{\tau} M$ of radius $\tau$.
The  purpose of this article is to prove  a  pointwise `phase-space' Weyl law \eqref{LWL}
and  sharp universal  sup norm bounds  \eqref{REALSUP}  for
 the Husimi distributions,  \begin{equation} \label{HUSIMI}  U_j^{\tau}(\zeta): = 
\frac{|\phi_{j}^{\C}(\zeta)|^2}{|| \phi_j||^2_{L^2(\partial M_{\tau})}} \end{equation}  
Husimi distributions are  special constructions 
of `Wigner distributions' or `microlocal lifts' of eigenfunctions 
(see \S \ref{RELATED} for  other microlocal lifts); their weak* limits are the well-known microlocal defect measures or quantum limits studied
in quantum chaos.  They are probability distributions on $\partial M_{\tau} \simeq S^*_{\tau} M$, and   are viewed as giving
the probability density of finding a quantum particle at the phase space point $\zeta
\in \partial M_{\tau}$.  What makes the construction in terms of analytic continuations of eigenfunctions attractive is that these microlocal lifts
are postive, relatively concrete and can be studied using complex analytic methods (see \cite{ZJDG} and its references  for applications to nodal sets).

The motivation to study sup-norms of Husimi functions \eqref{HUSIMI} is similar to that
in the physical space $M$: namely, to determine  the maximal degree of concentration at a point  in phase space of  the  probability density of the quantum particle. 
The main results of this article  give  universal upper bounds on the sup-norms of \eqref{HUSIMI} and show that, when a sequence of  Husimi distribution attains the maximal sup norm bounds, there must exist an elliptic closed geodesic along which it attains the bounds see Theorem
\ref{ATTAINED}). This is a novel
kind of `scarring'.      A natural question under investigation  is  the relation of this kind of scarring  to that of weak* limits of the Husimi distributions.

The first result gives the universal upper bound (the lower bound is not important for this article).

\begin{theo} \label{PWintro} Suppose  $(M, g)$ is real analytic, with $\dim M = m$,  and 
let $\zeta \in \partial M_{\tau}$.  Then,
for any $C > 0$, 
there exists $\mu, A, a > 0$ (independent of $\lambda$)  so that, for $\frac{C}{\lambda} \leq \sqrt{\rho}(\zeta) \leq \tau, $

\begin{equation} \label{GOODSUP}  a
\lambda_j^{-\mu} e^{ \tau \lambda_j} \leq \sup_{\zeta \in M_{\tau}}
|\phi^{\C}_{j}(\zeta)| \leq A
  \lambda_j^{\frac{m-1}{4}} e^{\tau \lambda_j}.
\end{equation}\bigskip

Moreover, the square roots of the Husimi distributions \eqref{HUSIMI} satisfy the bounds,  \begin{equation} \label{BADSUP}  a
\lambda_j^{-\mu} \lambda_j^{\frac{m-1}{4}} \leq \sup_{\zeta \in M_{\tau}}
\frac{|\phi_{j}^{\C}(\zeta)|}{|| \phi_{j}||_{L^2(\partial M_{\tau})}}  \leq A
  \lambda_j^{\frac{m-1}{2}}
\end{equation}\bigskip

\end{theo}
The upper bound of \eqref{GOODSUP}  is sharp and is attained by complexified  highest weight spherical harmonics on $\Ss^m$ and by general Gaussian beams (Section \ref{GBSECT}).  
The bounds of Theorem \ref{PWintro}  substantially improve the estimates 
\begin{equation} 
\sup_{\zeta \in M_{\tau}} |\phi^{\C}_{j}(\zeta)|
\leq C_{\tau} \lambda_j^{m + 1} e^{\tau  \lambda_j}.
\end{equation}
in \cite{Bou,GLS} and their improvements in   
 \cite[Corollary 3]{ZPSH1} (see also \cite{L18}).

The upper  bounds on the Husimi distributions follow from the bounds on $|\phi^{\C}_{j}(\zeta)| $ and from the following $L^2$ norm asymptotics: 
\begin{lem} \label{L2LEMintro} Under the asumptions of Theorem \ref{PWintro}, then there exists a universal postive constant $C(m, \tau)> 0$ so that,    $$||\phi_j^{\C}||^2_{L^2(\partial M_{\tau})}  =
C(m, \tau)  e^{2 \tau \lambda_j}  \lambda_j^{-\frac{m-1}{2}} (1 + O(\lambda_j^{-1})). $$
 \end{lem}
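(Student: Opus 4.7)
The strategy is to express the $L^2$ norm on $\partial M_\tau$ via the complexified Poisson semigroup, then reduce the problem to a pseudodifferential symbol computation on $M$.

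\textbf{Step 1 (Poisson operator).} Define
\[ P^\tau: L^2(M)\to L^2(\partial M_\tau),\qquad P^\tau\phi(\zeta) := \bigl(e^{-\tau\sqrt{\Delta}}\phi\bigr)^{\C}(\zeta), \]
so that $P^\tau\phi_j = e^{-\tau\lambda_j}\phi_j^{\C}|_{\partial M_\tau}$, and consequently
\[ \|\phi_j^{\C}\|^2_{L^2(\partial M_\tau)} = e^{2\tau\lambda_j}\bigl\langle A^\tau\phi_j,\,\phi_j\bigr\rangle_{L^2(M)},\qquad A^\tau := (P^\tau)^{*}P^\tau. \]
It therefore suffices to identify $A^\tau$ as a classical $\Psi$DO on $M$ whose principal symbol is the rotationally invariant function $C(m,\tau)|\xi|^{-(m-1)/2}$.

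\textbf{Step 2 (Symbol by complex stationary phase).} By the Boutet de Monvel--Sj\"ostrand parametrix for the Szeg\"o kernel on Grauert tubes, $P^\tau$ is a Fourier integral operator with positive complex phase whose Schwartz kernel is the analytic continuation, in the first variable, of the Hadamard parametrix for $e^{-\tau\sqrt{\Delta}}(x,y)$. The composition
\[ A^\tau(x,y) = \int_{\partial M_\tau}\overline{P^\tau(\zeta,x)}\,P^\tau(\zeta,y)\,dV_\tau(\zeta) \]
then has a double oscillatory integral representation with a combined almost-analytic phase whose imaginary part controls the $\zeta$-integration. Complex stationary phase shows that this phase has a unique critical manifold along the Hamilton flow direction of $\sqrt{\rho}$, transverse to which the Hessian is a non-degenerate complex quadratic form on $\R^{m-1}$. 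Gaussian integration in those $(m-1)$ transverse directions produces the factor $|\xi|^{-(m-1)/2}$, while the Jacobian of the complex exponential map combined with the leading Hadamard coefficient yields the positive constant $C(m,\tau)$ as the principal symbol of $A^\tau$.

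\textbf{Conclusion and main obstacle.} Since the principal symbol of $A^\tau$ depends only on $|\xi|$, applying $A^\tau$ to the eigenfunction $\phi_j$ and using standard subprincipal symbol bounds gives
\[ \bigl\langle A^\tau\phi_j,\,\phi_j\bigr\rangle_{L^2(M)} = C(m,\tau)\,\lambda_j^{-(m-1)/2}\bigl(1 + O(\lambda_j^{-1})\bigr), \]
which, substituted in Step 1, yields the claim. The principal technical obstacle is the complex stationary phase computation of Step 2: one must verify that the almost-analytic extension of the combined phase has a unique non-degenerate critical manifold of the correct codimension, and then compute the transverse Hessian determinant to pin down $C(m,\tau)$ unambiguously. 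Once the principal symbol is identified, the $O(\lambda_j^{-1})$ remainder follows from the order $-1$ subprincipal term of the Hadamard expansion, and all remaining steps are formal.
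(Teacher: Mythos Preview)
Your proposal is correct and follows the same overall route as the paper: both write $\|\phi_j^{\C}\|^2_{L^2(\partial M_\tau)} = e^{2\tau\lambda_j}\langle (P^\tau)^*P^\tau\phi_j,\phi_j\rangle_{L^2(M)}$ and reduce to identifying $(P^\tau)^*P^\tau$ as a classical pseudodifferential operator on $M$ of order $-(m-1)/2$ with principal symbol a constant multiple of $|\xi|^{-(m-1)/2}$, after which one writes $(P^\tau)^*P^\tau = C(m,\tau)\Delta^{-(m-1)/4} + R$ with $R\in\Psi^{-(m+1)/2}(M)$ and reads off the asymptotics.

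The only difference is in how this $\Psi$DO fact is obtained. The paper simply invokes it as an already-established result (its Lemma labeled $A_\tau = (P^{\tau*}P^\tau)^{-1/2}\in\Psi^{(m-1)/4}(M)$, proved earlier in the author's prior work via the Boutet de Monvel FIO description of $P^\tau$ and clean composition), whereas you sketch a direct complex stationary-phase computation on the composed kernel. Your route is more hands-on and in principle yields the constant $C(m,\tau)$ explicitly, but the paper's route is shorter because the FIO order of $P^\tau$ (namely $-(m-1)/4$) and the fact that its canonical relation is the graph of a symplectic embedding $T^*M\setminus 0\to\Sigma_\tau$ immediately force $(P^\tau)^*P^\tau$ to be an elliptic $\Psi$DO of order $-(m-1)/2$ on $M$, with no stationary-phase bookkeeping needed. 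Either way the argument is complete.
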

 \noindent See \eqref{L2TORUS} for the explicit calculation for plane waves $e_k$ on  a flat torus. 

The upper  bound \eqref{BADSUP} on the Husimi distribution is also  sharp and is  also obtained by Gaussian beams. In the case of the standard basis $Y_{\ell}^m$ on  standard spheres, it is straightforward to relate the sup norms of  $e^{-2 \tau \lambda_j}
|\phi_{j}^{\C}(\zeta)|^2$ and of the Husimi distributions of $Y_{\ell}^m$, 
and to show that  Husimi distributions of highest weight spherical harmonics of $\Ss^m$  attain the upper bound \eqref{BADSUP}
 (see Section \ref{SPHERESECT}).
It is also attained by complex coherent states (see Definition \ref{CSTDEF} for their
definition), but they are not quite of the form \eqref{HUSIMI}.

As in the real domain, a motivating problem  is  to characterize the Riemannian metrics $g$ possessing sequences
of eigenfunctions whose Husimi distributions attain the maximal sup-norm bounds, then to characterize the points $\zeta$ at which the sup norm
is attained, and  to characterize the associated sequence  $\{\phi_{j_k}\} $ of eigenfunctions.  One may conjecture that the sup norm bound is only
attained by complexified Gaussian beams. Theorem \ref{ATTAINED} at least shows that the elliptic geodesic must exist on such manifolds. 
 
  Of course, the $L^{\infty}$ norms of the Husimi distributions \eqref{HUSIMI} are just one type of norm to study. The most significant norms
  to study in  phase space $M_{\tau}$ (i.e. $B^*_{\tau} M$) are not necessarily the same ones as in configuration space $M$.  Most studies
  of Husimi distributions concern the weak * limits of the sequence \eqref{HUSIMI}, which are invariant probability measures under the geodesic
  flow. The  relation between $L^{\infty}$ norms along subsequences and its weak * limits is currently under exploration. Another interesting
  norm is the microlocal Kakeya-Nikodym norm, 
  $$||\phi_j||^2_{MKN}: = \sup_{\gamma \in \Pi} \int_{\tcal_{\lambda_j^{-\half}}(\gamma)} U_j^{\tau} dV_{\tau}, $$
  where $\tcal_{\lambda_j^{-\half}}(\gamma)$ is the tube in  $\partial M_{\tau}$ around the phase space geodesic arc $\gamma$, i.e.
  the orbit of a point $\zeta \in \partial M_{\tau}$ under the geodesic flow (transported to $\partial M_{\tau}$).
  As Theorem \ref{ATTAINED} shows, maximal
 sup norm growth only occurs at points along elliptic closed geodesics, and such Husimi distributions also seem to  saturate the microlocal Kakeya-Nikodym
 norm.   A related  microlocal Kakeya-Nikodym norm was also defined and studied in \cite[page 515]{BlS17}.  The definitions are apriori quite different;
the  exact relations between the two microlocal Kakeya-Nikodym norms   is also  under investigation. 
  
\subsection{\label{STATEMENT} Background and precise statement of results}

To state the next results,   we need to introduce some further notation and background regarding Grauert tubes and
 their relation to $T^*M$. More systematic expositions of the relevant background can be found in \cite{GS1, LS1, GLS, ZPSH1, L18}. 

 A  real analytic Riemannian manifold $M$ always possesses a complexification $M_{\C}$ into which it embeds 
  as a totally real submanifold. 
 A
real analytic metric $g$ induces a unique plurisubharmonic
exhaustion function $\sqrt{\rho}$ known as the Grauert tube
function. It is related to the square $r^2(x, y)$ of the
Riemannian distance function on $M \times M$  by
\begin{equation} \label{RHOFORM} \sqrt{\rho}(\zeta) = \frac{1}{2 i} \sqrt{r^2_{\C}(\zeta,
\bar{\zeta})} \end{equation} where $r^2_{\C}$ is the holomorphic
extension of $r^2(x, y)$  to a small neighborhood of the
anti-diagonal $(\zeta, \bar{\zeta})$ in $M_{\C} \times M_{\C}$.
The open Grauert tube of radius $\tau$ is defined by  \begin{equation} \label{MTAU} M_{\tau} =
\{\zeta \in M_{\C}, \sqrt{\rho}(\zeta) < \tau\}.  \end{equation}
There is a maximal radius $\tau_{\max} \in (0, \infty]$ such that $M_{\tau}$ is an embedded tube for $\tau < \tau_{\max}$, and all 
eigenfunctions $\phi_{\lambda}$ admit analytic continuations to $M_{\tau}$ and are smooth up to the boundary. 
For further background on Grauert tubes, we refer to Section \ref{GRAUERTSECT}.

   As in the real domain (see \eqref{REALSUP}),   sup norms of normalized complexified
eigenfunctions are  bounded by   the associated  jump of the remainder term
in the pointwise Weyl law at the eigenvalue. In the complex domain, there are several choices of the relevant Weyl law. One way to study
the average growth of modulus squares of analytic continuations of eigenfunctions is to is complexify the  spectral function, 
\begin{equation}\label{CXSPMa}   \Pi_{I_{\lambda}}^{\C}(\zeta, \bar{\zeta}):=
\sum_{j: \lambda_j \in  I_{\lambda}}
|\phi_{j}^{\C}(\zeta)|^2,
\end{equation}
of   $\Delta$ for an interval $I_{\lambda} = [a(\lambda), b(\lambda)]$, and restrict it to  the totally real diagonal 
of $M_{\C} \times \overline{M_{\C}}$. 
In   \cite{ZPSH1} it is proved  that the kernels \eqref{CXSPMa}
grow exponentially at the rate $e^{2 \lambda_j\sqrt{\rho}(\zeta)}$.
 To obtain polynomial growth, we introduce the 
`tempered'  spectral
projections
\begin{equation}\label{TCXSPM}   P_{[0, \lambda]}^{\tau}(\zeta, \bar{\zeta}) =
\sum_{j: \lambda_j \leq \lambda} e^{-2 \tau \lambda_j}
|\phi_{j}^{\C}(\zeta)|^2, \;\; (\sqrt{\rho}(\zeta) \leq
\tau).
\end{equation}
More generally, as in \eqref{CXSPMa}, we could study $P_{ I_{\lambda}}^{\tau}(\zeta, \bar{\zeta}) $, 
where $I_{\lambda} $ could be a short interval  $[\lambda, \lambda
+ 1]$ of frequencies or a long window $[0, \lambda]$. We let $I_{\lambda} = [\lambda -1, \lambda +1]$ when $(M,g)$ is a Zoll
manifold, with the intervals centered so that $I_{\lambda}$ contains exactly one full cluster. But for the the main results, we only consider $I_{\lambda} = [0, \lambda]$ and focus
on the special case \eqref{TCXSPM}. 
  The tempered
kernels $P_{ I_{\lambda}}^{\tau}(\zeta, \bar{\zeta})$ are in some ways
analogous to  the semi-classical  Szeg\"o kernels  $\Pi_{h^k}(x, y)$ of positive line bundles over
\kahler manifolds (see Section \ref{COMPARISON}).  Henceforth, we generally assume that $\sqrt{\rho}(\zeta) = \tau$ when
we study \eqref{TCXSPM}, since the sums \eqref{TCXSPM}
are exponentially decaying if $\sqrt{\rho}(\zeta) < \tau$.

Our pointwise Weyl law  is a two-term asymptotic expansion for \eqref{TCXSPM} in terms of a  certain function $Q_{\zeta}(\lambda)$
(Theorem \ref{SHORTINTSa}). The $Q_{\zeta}$ function is the phase space analogue of a function introduced by Yu. Safarov
\cite{Saf,SV} in the real domain. Before stating the result, we digress to define $Q_{\zeta}$.

\begin{rem} \label{RENORMREM} 
Rather than use \eqref{TCXSPM},  one may wish  to work with sums of Husimi distributions,
$$
\wt{P} _{[0, \lambda]}^{\tau}(\zeta, \bar{\zeta}) = 
\sum_{j: \lambda_j \leq \lambda }
\frac{|\phi_{j}^{\C}(\zeta)|^2}{ ||\phi_j^{\C}||^2_{L^2(\partial M_{\tau})}} , \;\; (\sqrt{\rho}(\zeta) =
\tau), $$
But  analytic continuations
 $\phi_j^{\C}$ of an orthonormal basis of eigenfunctions are rarely
orthogonal on $\partial M_{\tau}$, unless $M$ has a large symmetry forcing the orthogonality. 
In general, we are unable to analyze sums of Husimi distributions directly; 
instead, we first use  \eqref{TCXSPM} and then derive results for Husimi distributions by using
Lemma \ref{L2LEMintro}. 
\end{rem}

\subsection{The $Q_{\zeta}(\lambda)$ function} To define $Q_{\zeta}$ we first need to introduce the {\it osculating Bargmann-Fock space} $\hcal^2_{\zeta}$ at $\zeta \in \partial M_{\tau}$ 
(Definition \ref{OSCBFDEF}).   This 
is the Bargmann-Fock space constructed from the  complexification $H_{\zeta}^{1,0} \oplus H_{\zeta}^{0,1}$  of the CR (Cauchy-Riemann) subspace $H_{\zeta}(\partial M_{\tau})$  in the complexified
tangent space $T_{\zeta} \partial M_{\tau} \otimes \C$ at  $\zeta$ (see  \eqref{CR} and Section \ref{CRSECT} for background
on CR structures and on Bargmann-Fock spaces and Section \ref{BFHSECT} for more details).  Thus,  $\hcal^2_{\zeta}$ 
 is the space  of entire holomorphic functions on 
$H_{\zeta}^{1,0}$ (with respect to the complex structure $J_{\zeta} $ of $M_{\tau}$ at $\zeta$) which are square integrable with respect to the {\it ground state} $\Omega_{J_{\zeta}}$ (defined in  \eqref{GSJ}). 

The  (real)  CR subspace $H_{\zeta} \subset T_{\zeta} (\partial M_{\tau})$ is tangent to a symplectic transversal to the geodesic flow. In this article, we use
a distinguished symplectic transversal that we term a 
`Phong-Stein leaf' (Section \ref{PSSECT}). 
Given  a periodic point $\zeta$ of $g_{\tau}^t$ of period $n$, we obtain a complexified linear symplectic Poincar\'e map \eqref{Dgtdef},
\begin{equation} \label{DGn} D_{\zeta} g^{n T(\zeta)} _{\tau}:  H_{\zeta}(\partial M_{\tau}) \to H_{\zeta}(\partial M_{\tau}),
 \end{equation}
on the
CR subspace. In a symplectic basis of  $H_{\zeta}(\partial M_{\tau})$,
\begin{equation} \label{Dgn} D_{\zeta} g^{n T(\zeta)}= \begin{pmatrix} A_n(\zeta) & B_n(\zeta) \\ & \\ C_n(\zeta) & D_n(\zeta) \end{pmatrix} \in Sp(n, \R), \end{equation} 
where as usual $Sp(n, \R)$ denotes the symplectic group. 
For simplicity of notation, we often write   $$ S^n_{\zeta} := D_{\zeta} g^{n T(x)}_{\tau}.$$
The   symplectic  matrix  \eqref{Dgn} will arise often
and is discussed in Section \ref{METASECT} (see  \eqref{PDEF}).

 Since $(M, g)$ is real analytic, its exponential map $\exp_x t \xi$ admits an analytic
continuation in $t$ and the imaginary time exponential map
\begin{equation} \label{EXP} E: B_{\epsilon}^* M \to M_{\C}, \;\;\; E (x, \xi) = \exp_x i \xi \end{equation} is, for small enough $\epsilon$, a
diffeomorphism from the ball bundle $B^*_{\epsilon} M$ of radius
$\epsilon $ in $T^*M$ to the Grauert tube $M_{\epsilon}$ in
$M_{\C}$.  As reviewed in \S \ref{GRB}  (see \cite{GS1,LS1,ZPSH1,ZJDG}  for more details), $E$ conjugates the homogeneous
geodesic flow $G^t$ on $B^*_{\epsilon} M$ to the Hamiltonian flow of the Grauert tube function $\sqrt{\rho}$
with respect to the K\"ahler form $\omega_{\rho} = i \ddbar \rho$. We denote by
\begin{equation} \label{gttau} g^t_{\tau} = E  \circ  G^t \circ E^{-1} | _{\partial M_{\tau}} \end{equation}
the transfer  of the geodesic flow of $S^*_{\tau} M$ to $\partial M_{\tau}$.   We say that  $\zeta$  a periodic point if it is a periodic point of
$g_{\tau}^t$ and denote the set of periodic points by,  \begin{equation} \label{PCALDEF} \zeta \in \pcal \iff \zeta\; \rm{is \;a periodic \;point \;for\;}  g^t_{\tau}. \end{equation} We denote its primitive period by $T(\zeta)$,
Thus,
\begin{equation} \label{Tzeta} T(\zeta) = \inf\{ t > 0: g^t_{\tau}(\zeta) = \zeta\}. \end{equation}
The set of periodic points of period $T$ is the set of fixed points of $g^T_{\tau}$.  As usual, we say that the fixed point set $F$ of a map $T$ is clean if $F$ is a manifold and $T_x F = \rm{Fix} (D_x T)$. 


Next, we define the metaplectic representation $W_{J_{\zeta}}$ of the derivative  $D g^t_{\zeta}$ on the osculating Bargmann-Fock space.   In the model
case of  $\R^{2m}$ with complex structure $J$, a  symplectic  linear map $S \in \rm{Sp}(m, \R)$ can be quantized, $S \to W_J(S)$ by 
the  metaplectic representation as a unitary operator on Bargmann-Fock space  (reviewed in \S \ref{METASECT}; see
 also Sections \ref{BFHSECT} and \ref{LINSECT}). Identifying \eqref{DGn}  with a symplectic map \eqref{Dgn}  on the model space, \eqref{DGn} may be quantized  as a unitary operator   on the  osculating Bargmann-Fock space
 at $\zeta$, 
\begin{equation} \label{WJ}  W_{J_{\zeta}} \;(D g^{n T(\zeta)}_{\zeta}): \hcal^2_{\zeta} \to \hcal^2_{\zeta}. \end{equation}

The space $\hcal_{\zeta}^2$ has a distinguished  ground state $\Omega_{J_{\zeta}}$, a Gaussian associated to the complex structure $J_{\zeta}$  (see Section \ref{HEISMETSECT} and \eqref{GSJ} for
background).
We  denote by
\begin{equation} \label{ABCDintro} \gcal_n(\zeta): =  \langle W_{J_{\zeta}} \;(D g^{n T(\zeta)}_{\zeta}) \;\Omega_{J_{\zeta}}, \Omega_{J_{\zeta}} \rangle \end{equation}
the matrix element 
of  \eqref{WJ}
relative to the ground state $\Omega_{J_{\zeta}}$. As reviewed in Sections  \ref{METASECT} and  \ref{BFHSECT}  (see in particular, Lemma \ref{DAULEM} ),
\begin{equation} \label{ABCD} \begin{array}{lll} 
\gcal_n(\zeta) & = & 
2^{n/2} (\det \left(A_n(\zeta) + D_n(\zeta) + i (B_n(\zeta)  - C_n(\zeta)) \right)^{-\half} \\ &&\\ & = &
\det P_{J_{\zeta}} S^n_{\zeta} P_{J_{\zeta}}, \end{array} \end{equation}
where $ P_{J_{\zeta}} S^n_{\zeta} P_{J_{\zeta}}$ is the  holomorphic block  of a unitary conjugate of \eqref{Dgn}.

Thus, the `quantum invariant' \eqref{ABCDintro} equals the `classical invariant' \eqref{ABCD}.
We then define the function  $Q_{\zeta}(\lambda)$ by:

\begin{defin} \label{QDEF} Recalling the set \eqref{PCALDEF},   \begin{equation} \label{Q} 
 Q_{\zeta}(\lambda) = \left\{\begin{array}{ll}
 =
  0, & 
  \zeta \; \notin \pcal \\ & \\ \sum_{n = 1} ^{\infty}  \frac{\sin  \lambda n T(\zeta)} {  n T(\zeta)}
\;\gcal_n(\zeta),&

 \; \zeta \in \pcal.\end{array}
\right. \end{equation}

\end{defin} 
Note that since \eqref{ABCD} is purely classical, \eqref{Q} gives a formula for $Q_{\zeta}(\lambda)$ defined
purely in terms of classical quantities. On the other hand, \eqref{ABCDintro} gives a `quantum formula'. 
By \eqref{ABCDintro},   
\begin{equation} \label{INFSERIES} \qcal_{\zeta}(\lambda) = \frac{1}{2i} \left(  \sum_{n = 1}^{\infty}  \frac{e^{i   \lambda n T(\zeta)}} {  n  T(\zeta)} \langle W_J(S_{\zeta})^n  \Omega_{J_{\zeta}}, \Omega_{J_{\zeta}}\rangle
-   \sum_{n = 1}^{\infty}  \frac{e^{- i   \lambda n T(\zeta)}} {  n T(\zeta)} \langle W_J(S_{\zeta})^{-n} \Omega_{J_{\zeta}}, \Omega_{J_{\zeta}}\rangle \right).
\;\end{equation}

The  classical formula \eqref{ABCD} seems simpler than the quantum formula, since it
comes down to the diagonalization of $S_{\zeta} \in \rm{Sp}(n, \R)$. This classical formula does not seem
to have an analogue in the real domain, hence does not have an analogue in \cite{Saf,SV} (it does, of course,
have an analogue in \cite{ZZ18}).

\subsection{Statement of the two term pointwise Weyl asymptotics}

The main result on pointwise Weyl asymptotics encompasses  three scenarios: (i) where $\zeta$ is not a periodic point; (ii) where $\zeta$
is a periodic point, and where $Q_{\zeta}(\lambda)$ is uniformly
continuous, and $P^{\tau}_{[0, \lambda] }(\zeta, \bar{\zeta}) $ admits asymptotics with a  well-defined `middle term'; (iii) where $\zeta$
is a periodic point, and $Q_{\zeta}$ has jumps.

\begin{defin} \label{JUMPSET} Let $\zeta \in \pcal$. 
 We define the jump-set of $Q_{\zeta}$ by,
$$\jcal(\zeta): = \{\lambda \in \R_+: [Q_{\zeta}(\lambda)] : = Q_{\zeta}(\lambda + 0) - Q_{\zeta}(\lambda - 0) > 0 \}.$$
\end{defin}

 In Section \ref{QzetaSECT}, we study the possible jumps of $Q_{\zeta}(\lambda)$. As mentioned above, the two equations
 \eqref{ABCDintro}, resp.  \eqref{ABCD} indicate that there is a  `quantum dynamical '  definition of $\jcal(\zeta)$, resp.
a `classical  mechanical' definition. Of course, they must agree.


In the following, we enumerate  the jump  points
 as the  sequence $\{\nu_k\}_{k=1}^{\infty}$.

\begin{theo} \label{SHORTINTSa} Suppose  $(M, g)$ is real analytic, and $\zeta \in M_{\tau}$. Then, for fixed $\tau > 0$,

\begin{enumerate}

\item When $Q_{\zeta}(\lambda)$ is uniformly continuous in $\lambda$, then  for $\sqrt{\rho}(\zeta) \geq \frac{C}{\lambda}, $
$$ P^{\tau}_{[0, \lambda] }(\zeta, \bar{\zeta}) = \lambda \left(\frac{\lambda}{\sqrt{\rho}} \right)^{\frac{m - 1}{2}}
  \left( 1 +  Q_{\zeta}(\lambda) \lambda^{-1} +  o(\lambda^{-1})  \right);
$$


where
 the remainders are uniform in $\zeta$. Moreover, $Q_{\zeta}(\lambda)=0 $ if $\zeta \notin \pcal$. \bigskip
 
 \item  If  $\sqrt{\rho}(\zeta) = \tau$, if $\zeta \in \pcal$ and the fixed point set of $g_{\tau}^{T(\zeta)}$  is clean, 
  and if $Q_{\zeta}(\lambda) $ has jumps at the points $\jcal(\zeta) = \{\nu_k\}_{k=1}^{\infty}$, then there exists a sequence
   $\epsilon_k = O(\nu_k^{-\half})$ and eigenvalues $\{\lambda_{j_k}\}_{k=1}^{\infty} $ such that $\lambda_{j_k} = \nu_k +O(\epsilon_k)$ and a
   positive constant $C > 0$,
 such that 
 \begin{equation} \label{LBTHEO} P^{\tau}_{[\nu_k - \epsilon_k,  \nu_k + \epsilon_k]}(\zeta, \bar{\zeta}) 
  \geq C\; \nu_k^{\frac{m-1}{2}}. 
\end{equation}

\item   If  $\sqrt{\rho}(\zeta) = \tau$, if $\zeta \in \pcal$, then for all $\lambda \in \R_+$, and functions $o(\lambda)$ tending monotonically to $0$ as
$\lambda \to \infty$,
\begin{equation} \label{INEQ}\begin{array}{l}  \lambda \left(\frac{\lambda}{\tau} \right)^{\frac{m - 1}{2}}
  \left( 1 +  Q_{\zeta}(\lambda - o(\lambda)) \lambda^{-1} +  o(\lambda^{-1})  \right) \\ \\ \leq   P^{\tau}_{[0, \lambda] }(\zeta, \bar{\zeta}) 
  \\ \\ \leq   \lambda \left(\frac{\lambda}{\tau} \right)^{\frac{m - 1}{2}}
  \left( 1 +  Q_{\zeta}(\lambda + o(\lambda)) \lambda^{-1} +  o(\lambda^{-1})  \right).
\end{array} \end{equation}

\end{enumerate}

\end{theo}

In (3) we do not have asymptotics in the middle term when $Q_{\zeta}(\lambda)$ has a non-empty jumpset (Definition \ref{JUMPSET}). The necessity
of the inequalities \eqref{INEQ}  is due to the fact that the jump-set of $Q_{\zeta}(\lambda)$ is not necessarily  contained in  the jumpset of $P^{\tau}_{[0, \lambda]}$, namely the set $\{\lambda_j\}$.   The exact relation  is explained below Corollary \ref{SUPNORMCOR}.  Examples illustrating the scenarios are given in Section \ref{EXAMPLESECT}. 
The Zoll case illustrates the need for the  somewhat imprecise inequalities in (3).  As discussed in Section \ref{EXAMPLESECT} and in Section  \ref{ZOLL} in the Zoll case, there will exist a cluster
 of eigenvalues in  $\lambda_j \in  [\nu_k - \epsilon_k, \nu_k + \epsilon_k]$ of cardinality comparable to $\lambda^{m-1}$, for which each Husimi distribution
 has non-extremal sup norm but for which the sum has the lower bound (b).  
 However, in the Zoll case cases there is a more precise
result than Theorem  \ref{SHORTINTSa} (see Theorem \ref{Zoll}) and the 
unwieldy inequalities in (3) are only necessary  if one sums over intervals $[0, \lambda]$ which contain  incomplete
portions of the eigenvalue clusters reviewed in Section \ref{ZOLL}.  One obtains much better asymptotics
for the Weyl sums $P^{\tau}_{I_{\lambda}}$ over intervals
$I_{\lambda}$ containing exactly one cluster.

The next result (Theorem \ref{ATTAINED}) relates the continuity properties of $Q_{\zeta}$ to the  dynamical properties of the geodesic through $\zeta$.
To prepare for it we study we now give further information on $Q_{\zeta}$.

\subsection{\label{Qzeta} Further properties of $Q_{\zeta}$}

We denote by $\{x \}_{2 \pi} = x + 2 \pi \Z \in [0, 2 \pi)$  the residue of $x$ modulo $2 \pi$, which we identify with the  function $x$ on $[0, 2 \pi]$,  extended periodically of period $2 \pi$ to $\R$. Equivalently, $\{x \}_{2 \pi} = 2 \pi \{\frac{x}{2 \pi} + \half\}
-\pi$ where $\{x\}$ is the fractional part. 
Its Fourier series is given by
$ \{x- \pi \}_{2 \pi}  =  \sum_{n \not= 0} \frac{e^{in x}}{in}=  2 \sum_{n =1} ^{\infty} \frac{\sin n x}{n} $. \footnote{In \cite{SV}, $\{x \}_{2 \pi} = x + 2 \pi \Z \in [-\pi,  \pi)$. }

 In the following Proposition,   we recall that eigenvalues of a symplectic matrix occur in quadruples $\lambda, \bar{\lambda}, \lambda^{-1},
\bar{\lambda}^{-1}$; if $\lambda \in \R$ or $|\lambda | =1$, the eigenvalues come just in pairs.  We say that $S$ is semi-simple if it is diagonalizable over $\C$,  that $S$ is elliptic if its eigenvalues all have
modulus $1$, i.e. $S \in U(m)$, and that it is hyperbolic if it is positive symmetric symplectic and none of its eigenvalues
have modulus $1$.

\begin{prop} \label{CLJUMPPROP}  Assume that $\zeta \in \pcal$ and that  $S_{\zeta}$ is semi-simple. Then,
\begin{enumerate}
\item $Q_{\zeta}(\lambda)$ is uniformly continuous if and only if $S_{\zeta}$ is not elliptic. Hence,   $\jcal(\zeta) = \emptyset$ unless     $S_{\zeta}$
is elliptic. \bigskip

\item If $S_{\zeta}$ is non-degenerate elliptic, and if $ \det P_J S_{\zeta} P_J
= e^{is_0}$,  then $Q_{\zeta}(\lambda) = \{s_0 + \lambda T(\zeta) - \pi\}_{2 \pi}$, and
$$\jcal(\zeta) =  \{\lambda: s_0 + \lambda T(\zeta) =  \pi + 2 \pi \Z\}.$$
\end{enumerate}

 \end{prop}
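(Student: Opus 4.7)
The proof splits cleanly according to the spectral type of the symplectic map $S_\zeta$, using the classical identity $\gcal_n(\zeta)=\det P_{J_\zeta} S_\zeta^n P_{J_\zeta}$ from \eqref{ABCD} together with the representation of $Q_\zeta$ as the imaginary part of the series \eqref{INFSERIES}.

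For part (1), my plan is first to handle the direction ``not elliptic $\Rightarrow$ uniformly continuous.'' Since $S_\zeta$ is semi-simple, its eigenvalues occur in quadruples $\mu,\bar\mu,\mu^{-1},\bar\mu^{-1}$, and I decompose $\R^{2m}$ into $S_\zeta$-invariant symplectic subspaces on each of which $S_\zeta$ acts with eigenvalues all on the unit circle (elliptic blocks), real off the unit circle (hyperbolic blocks), or genuinely complex off the unit circle (loxodromic blocks). On a hyperbolic or loxodromic block, $S_\zeta^n$ has some eigenvalue $\mu^n$ with $|\mu^n|=|\mu|^n\to\infty$, so the trace $A_n+D_n$ and the off-diagonal blocks $B_n-C_n$ grow at least like $|\mu|^n$; hence $\det(A_n(\zeta)+D_n(\zeta)+i(B_n(\zeta)-C_n(\zeta)))$ grows at least like $|\mu|^{2n}$, and formula \eqref{ABCD} gives an exponential decay estimate $|\gcal_n(\zeta)|\le Ce^{-\delta n}$ for some $\delta>0$. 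Consequently $\sum_n |\gcal_n(\zeta)|/(n T(\zeta))$ converges absolutely, so the Fourier series defining $Q_\zeta$ converges uniformly in $\lambda$ to a $2\pi/T(\zeta)$-periodic continuous function, which is therefore uniformly continuous on $\R$.

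Next, for the converse direction in (1) and for (2) simultaneously, I treat the elliptic case. When $S_\zeta$ is semi-simple elliptic, $S_\zeta$ is $Sp(m,\R)$-conjugate to a matrix in the maximal compact subgroup $U(m)\subset Sp(m,\R)$, i.e.\ to a symplectic matrix commuting with $J$. Under this conjugation, $P_J S_\zeta P_J$ becomes a unitary operator on the $m$-dimensional holomorphic subspace, so $|\det P_J S_\zeta P_J|=1$ and $\det P_J S_\zeta^n P_J = (\det P_J S_\zeta P_J)^n$. Writing $\det P_J S_\zeta P_J = e^{is_0}$ gives $\gcal_n(\zeta)=e^{ins_0}$ for all $n\ge 1$ (in the non-degenerate elliptic case; in the degenerate elliptic case one still has $|\gcal_n|=1$). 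Feeding $\gcal_n = e^{ins_0}$ into the imaginary-part form of \eqref{INFSERIES} gives, up to the normalization implicit in the definition of $Q_\zeta$,
\begin{equation}
Q_\zeta(\lambda)=\sum_{n=1}^{\infty}\frac{\sin\bigl(n(\lambda T(\zeta)+s_0)\bigr)}{nT(\zeta)}.
\end{equation}
Applying the classical Fourier identity $\{x-\pi\}_{2\pi}=2\sum_{n=1}^{\infty}\frac{\sin nx}{n}$ with $x=\lambda T(\zeta)+s_0$ identifies this sum with the sawtooth $\{s_0+\lambda T(\zeta)-\pi\}_{2\pi}$, proving (2). Since the sawtooth has jumps precisely at $x\in\pi+2\pi\Z$, the jump set is $\jcal(\zeta)=\{\lambda:s_0+\lambda T(\zeta)=\pi+2\pi\Z\}$. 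In particular $Q_\zeta$ is not uniformly continuous, which contradicts the assumption in the ``uniformly continuous $\Rightarrow$ not elliptic'' direction; the statement $\jcal(\zeta)=\emptyset$ unless $S_\zeta$ is elliptic follows at once.

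The main technical obstacle will be Step A: establishing the exponential decay bound $|\gcal_n(\zeta)|=O(e^{-\delta n})$ uniformly from the formula \eqref{ABCD} in the presence of mixed (elliptic $\oplus$ hyperbolic $\oplus$ loxodromic) blocks, where elliptic directions do not decay and one must verify that their contribution to $\det(A_n+D_n+i(B_n-C_n))^{-1/2}$ does not cancel the growth coming from the non-elliptic directions. I would handle this by block-diagonalizing $S_\zeta$ in a $J$-adapted symplectic frame so that the determinant factors as a product over blocks, bounding each factor separately, and observing that elliptic factors stay bounded while each non-elliptic factor grows exponentially, yielding the required net decay.
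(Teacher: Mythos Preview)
Your overall strategy matches the paper's: split into elliptic versus non-elliptic blocks, show exponential decay of $|\gcal_n(\zeta)|$ on non-elliptic blocks (giving absolute convergence and uniform continuity of $Q_\zeta$), and identify the sawtooth explicitly in the elliptic case. The paper proves the decay concretely rather than heuristically: for positive symmetric (hyperbolic) $S$ it uses the identity $SJ=JS^{-1}$ to show $P_J S P_J=\tfrac12 P_J(S+S^{-1})$, whence $\det P_J S^n P_J=\prod_j\cosh(n\lambda_j)$ exactly (Proposition~\ref{PROPME}); for loxodromic blocks it invokes the polar decomposition $S=U\hat P_S$ with $U,\hat P_S$ commuting, so that $\det P_J S^n P_J=\det P_J U^n P_J\cdot\det P_J\hat P_S^n P_J$ with the first factor unimodular and the second growing as in the hyperbolic case. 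Your ``eigenvalue growth $\Rightarrow$ determinant growth'' sketch and your proposed ``$J$-adapted symplectic frame'' are exactly the content of these computations, so the technical obstacle you flag is precisely what the paper supplies.

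There is one genuine slip in your elliptic argument. You write that $S_\zeta$ is $Sp(m,\R)$-conjugate to an element of $U(m)$ and that ``under this conjugation, $P_J S_\zeta P_J$ becomes unitary,'' concluding $\det P_J S_\zeta^n P_J=(\det P_J S_\zeta P_J)^n$. But a general symplectic conjugation $S\mapsto T^{-1}ST$ does \emph{not} preserve $\det P_J(\,\cdot\,)P_J$ unless $T$ itself commutes with $J$; for instance $S=\left(\begin{smallmatrix}2&1\\-5&-2\end{smallmatrix}\right)\in Sp(1,\R)$ has eigenvalues $\pm i$ yet $|P_J S P_J|=3$, while its unitary conjugate has modulus~$1$. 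The paper avoids this by its convention in Section~\ref{METASECT} that ``elliptic'' means $S_\zeta\in U(m)$, i.e.\ $S_\zeta$ already commutes with $J_\zeta$; then $P_{J_\zeta}S_\zeta^n P_{J_\zeta}=(P_{J_\zeta}S_\zeta P_{J_\zeta})^n$ is immediate and your conjugation step is unnecessary. With that convention in place, the rest of your elliptic computation (feeding $\gcal_n=e^{ins_0}$ into \eqref{INFSERIES} and recognizing the sawtooth Fourier series) is exactly Lemma~\ref{ELLPROP}.
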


\subsubsection{Metaplectic quantum approach}
Although it is more complicated, it is also natural and interesting to use the quantum formula \eqref{ABCDintro} as in
\cite{Saf,SV}, and to compare the results with the classical approach. 
  To some extent, we adapt the notation and arguments of \cite{Saf,SV} on the quantum mechanical approach to the  jump behavior of $Q_{\zeta}(\lambda)$ to our setting.  However, even the spectral measure formula \eqref{SPMEAS}  does not seem to be mentioned in \cite{Saf, SV}, so our approach is not the same.
The  results in the real domain and complex domain are to some degree analogous, but  there are significant differences.
In the real domain, for analytic metrics,  rather than the metaplectic unitary
  quantization $W_{J_{\zeta}} \;(D_{\zeta} g^{n T(\zeta)})$ of the Poincar\'e map of a closed geodesic, one has a nonlinear first return map on directions
  of loops at a point $p \in M$, whose quantization is an operator on half-densities on $S_p^* M$. In the complex domain, 
  we have only the  periodic orbit
  of $\zeta$,  its linear Poincar\'e map,  and the metaplectic quantization of the Poincar\'e map.  The classical mechanical
  approach does not have a simple analogue in the real domain.

Although the exponential map $\exp: {\mathfrak s } {\mathfrak p }(n, \R) \to Sp(n, \R)$ from the symplectic Lie algebra to the
symplectic group is not surjective, we will assume with a rather small loss of generality 
 that $S = e^{i H}$ where $H \in  {\mathfrak s } {\mathfrak p }(n, \R) $.  The exponent $i H$ is not unique,  and in particular is only defined up to 
 the addition by $2 \pi \Z$ times the identity. However, all of the choices of logarithms will given the same results.  With   an abuse of
 notation we denote  the inverse exponential map by
 $\rm{arg}:  Sp(n, \R) \to  {\mathfrak s } {\mathfrak p }(n, \R) $ on the image of $\exp$, so that $e^{i \rm{arg}(S)} = S$.
Thus, as  in \cite[Proposition 1.8.12]{SV},  $\rm{arg}(W_J(S)) $ is a self-adjoint operator so that
$W_J(S) = \exp i \rm{arg}(W_J(S))$. 

In the quantum mechanical approach, we
use the spectral decomposition of the quadratic Hamiltonian
 $\rm{arg}(W_{J_{\zeta}} (S_{\zeta}))$. 
 We denote the possible pure point eigenvalues/eigenfunctions of $\rm{arg}(W_{J_{\zeta}} (S_{\zeta}))$ by $\{s_{\ell}\}$, resp.  $\{v_{\ell}\}_{\ell =1}^{\infty}$. Then,  
 \begin{equation} \label{sellDEF} W_{J_{\zeta}}(S_{\zeta}) v_{\ell} = e^{i s_{\ell}} v_{\ell}. \end{equation}
 Of course, $W_{J_{\zeta}}(S_{\zeta}) $ often has continuous spectrum as well (or, only has continuous spectrum). 
 For $\zeta$ such that there exist $s_{\ell}$ satisfying  \eqref{sellDEF}, we define \begin{equation} \label{LAMBDAjell} \Lambda_{\ell, j} = \frac{2 \pi j + s_{\ell}}{T(\zeta)}. \end{equation}

 We observe that $ \langle W_{J_{\zeta}}(S_{\zeta})^n  \Omega_{J_{\zeta}}, \Omega_{J_{\zeta}}\rangle$ is, by definition,  the $n$th moment
of the spectral measure $d\mu_{\zeta} $ on $S^1 = \{z: |z|=1\}$ of the unitary operator $W_{J_{\zeta}}(S_{\zeta})$ with 
respect to the ground state $ \Omega_{J_{\zeta}}$, i.e. 
\begin{equation} \label{SPMEAS} \langle W_{J_{\zeta}} (S_{\zeta})^n  \Omega_{J_{\zeta}}, \Omega_{J_{\zeta}}\rangle = \int_{S^1} \;e^{i n \theta} d\mu_{\zeta}. \end{equation}
The utility of \eqref{SPMEAS} depends on the extent to which the properties of the spectral measure $d\mu_{\zeta}$ can be determined.
 We denote by $\pi_{\Omega_{\zeta}} : = \Omega_{J_{\zeta}} \otimes \Omega_{J_{\zeta}}^*$ the orthogonal projection in the osculating  Bargmann-Fock space $\hcal^2_{\zeta}$ onto the ground state.
 
\begin{prop}\label{QzetaPROP}  With the above notation and conventions, 
\begin{enumerate}

\item In terms of the spectral measure \eqref{SPMEAS},
$$Q_{\zeta}(\lambda) =  \frac{1}{ T(\zeta)} \int_0^{2 \pi}   \{\theta  + \lambda T(\zeta) -\pi\}_{2 \pi} d\mu_{\zeta}. $$

\item 
$Q_{\zeta}$ is uniformly continuous if and only if $d\mu_{\zeta}$ is an absolutely continuous measure. 
 \bigskip
 
 \item The atoms of $d\mu_{\zeta}(\theta)$ occur at the eigenvalues $e^{i s_{\ell}}$ of $W_{J_{\zeta}}(S_{\zeta})$, and 
 $\mu_{\zeta}(\{e^{i s_{\ell}}\}) = | \pi_{\Omega_{\zeta}}  v_{\ell} |^2. $
\bigskip

\item 
$Q_{\zeta}$ has jumps at the points 
\eqref{LAMBDAjell} with $ \pi_{\Omega_{\zeta}}  v_{\ell}  \not=0$.

\end{enumerate}

\end{prop}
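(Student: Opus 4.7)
The plan is to derive all four parts from a single integral representation of $Q_\zeta(\lambda)$ against the scalar spectral measure of the unitary operator $W_{J_\zeta}(S_\zeta)$ on the osculating Bargmann--Fock space $\hcal^2_\zeta$ with respect to the cyclic vector $\Omega_{J_\zeta}$. For part (1), I would apply the spectral theorem to write $W_{J_\zeta}(S_\zeta) = \int_{S^1} z\, dE_\zeta(z)$, so that $d\mu_\zeta(\theta) := \langle dE_\zeta(e^{i\theta}) \Omega_{J_\zeta}, \Omega_{J_\zeta}\rangle$ satisfies the moment formula \eqref{SPMEAS}. Substituting this into the dyadic representation \eqref{INFSERIES} and exchanging summation with integration against $d\mu_\zeta$---justified because the Dirichlet partial sums $\sum_{n=1}^N e^{inx}/n$ are uniformly bounded in $N$ and $x$---one recognizes the resulting series $2\sum_{n\ge 1}\sin(n(\theta+\lambda T(\zeta)))/n$ as $\{\theta+\lambda T(\zeta)-\pi\}_{2\pi}$, via the Fourier identity recalled at the start of \S\ref{Qzeta}, producing the stated representation.

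Parts (3) and (4) follow as corollaries of this representation. For (3), the atoms of $d\mu_\zeta$ coincide with the pure point spectrum of $W_{J_\zeta}(S_\zeta)$; on the one-dimensional eigenspace spanned by a unit eigenvector $v_\ell$ with eigenvalue $e^{is_\ell}$ the rank-one spectral projection is $v_\ell \otimes v_\ell^*$, so
\begin{equation*}
\mu_\zeta(\{e^{is_\ell}\}) = \langle v_\ell \otimes v_\ell^*\, \Omega_{J_\zeta}, \Omega_{J_\zeta}\rangle = |\langle \Omega_{J_\zeta}, v_\ell\rangle|^2 = |\pi_{\Omega_\zeta} v_\ell|^2.
\end{equation*}
For (4), the sawtooth $\{x-\pi\}_{2\pi}$ is smooth away from $x\in 2\pi\Z$ and jumps at each such point, so from (1) the atomic part of $\mu_\zeta$ contributes a jump to $Q_\zeta$ precisely at those $\lambda$ for which $s_\ell+\lambda T(\zeta)\in 2\pi\Z$ for some atom $e^{is_\ell}$---i.e., at $\lambda = \Lambda_{\ell,j}$---of magnitude proportional to $|\pi_{\Omega_\zeta} v_\ell|^2$, which is nonzero exactly when $\pi_{\Omega_\zeta}v_\ell\ne 0$.

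Part (2) is the main obstacle. One direction is easy: if $d\mu_\zeta = f\, d\theta$ with $f\in L^1(S^1)$, then the integral in (1) is the convolution on the circle $\R/(2\pi/T(\zeta))\Z$ of the bounded sawtooth against $f$, hence continuous on a compact interval and therefore uniformly continuous globally by periodicity. Conversely, any atom of $\mu_\zeta$ produces an honest jump in $Q_\zeta$ by (4), obstructing even continuity. The subtle step is to rule out a singular continuous component of $\mu_\zeta$: this requires invoking the structure of the metaplectic representation from \S\ref{METASECT}, which decomposes $W_{J_\zeta}(S_\zeta)$ along the Jordan blocks (elliptic, hyperbolic, parabolic) of $S_\zeta$ and shows that each block has only pure point and absolutely continuous spectrum on the cyclic subspace generated by $\Omega_{J_\zeta}$. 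This is the same structural input underlying Proposition~\ref{CLJUMPPROP}, and once it is in place the atomless/absolutely-continuous equivalence is automatic, completing the proof.
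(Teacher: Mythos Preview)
Your argument for (1), (3), and (4) is essentially identical to the paper's: substitute \eqref{SPMEAS} into \eqref{INFSERIES}, recognize the Fourier series of the sawtooth, and read off the atoms and jump locations from the resulting integral representation.

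For (2) you are actually more careful than the paper. The paper's own proof of (2) only shows that absolute continuity of $d\mu_\zeta$ implies uniform continuity of $Q_\zeta$, and that an atom produces a jump; it does not explicitly rule out a singular continuous component. As you correctly observe, if $d\mu_\zeta$ were atomless but singular, dominated convergence against the bounded sawtooth would still make $Q_\zeta$ uniformly continuous, so the ``only if'' direction as literally stated requires knowing that the spectral measure of $W_{J_\zeta}(S_\zeta)$ in the state $\Omega_{J_\zeta}$ has no singular continuous part. The paper supplies this structural input separately, in the subsection on the spectral theory of metaplectic operators (via the block decomposition of semi-simple $S_\zeta$ and the results of \cite{MU96,MU00}), but does not link it back explicitly to the proof of (2). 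Your proposal to invoke that structure here is the right way to close the gap, and is exactly the content underlying Proposition~\ref{CLJUMPPROP}.
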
  

See \cite[Theorem 1.8.17]{SV} for the corresponding statement in the real domain.

In Section \ref{GBSECT}, $Q_{\zeta}$ is calculated in the case where $\zeta$ generates a non-degenerate elliptic closed geodesic whose Poincar\'e map has eigenvalues $e^{i \alpha_j}$ with frequencies $(\alpha_1, \dots, \alpha_{m-1})$  independent,
together with $\pi$, over ${\mathbb Q}$.   In this case, $W_{J_{\zeta}}(S_{\zeta}) = \exp i \rm{arg}(W_{J_{\zeta}} (S_{\zeta})) $ is the unitary operator
generated by a Harmonic oscillator Hamiltonian. The spectrum of $\rm{arg}(W_{J_{\zeta}} (S_{\zeta}))$ is of the form $\{s_{\ell}\} = \{\sum_{j=1}^{m-1} 
\alpha_j (k_j + \half) \}_{\vec k \in {\mathbb N}^{m-1}}$  in the notation above. In this case, $\Omega_{\zeta}$ is itself
the ground state eigenfunction corresponding to $\vec k = 0$ and all other eigenfunctions are orthogonal to it. Hence,
in Proposition \ref{QzetaPROP} there is a single $\ell$, $||\pi_{\Omega_{\zeta}} v_{\ell}||^2_{L^2} =1$ and $s_{\ell} = \half  \sum_j  \alpha_j$. See also Proposition \ref{ELL}  for a general result in the elliptic case.
\subsection{Sup-norms of Husimi distributions and dynamics }

 We have now assembled enough background to state the main result:
Combining Theorem \ref{SHORTINTSa} with Proposition \ref{CLJUMPPROP} and Proposition \ref{QzetaPROP} gives the following,
\begin{theo} \label{ATTAINED}  Among real analytic Riemannian manifolds $(M,g)$ for which $D_{\zeta} g^t_{\tau}$ is semi-simple for all periodic 
points $\zeta \in \partial M_{\tau}$, 
the universal sup norm upper bound  bound of Theorem \ref{PWintro}(2)   is attained by $(M, g, \zeta, \{\phi_{j_k}\})$   only if:

\begin{enumerate}
\item  $\zeta$ is a
periodic orbit point of the geodesic flow $g^t_{\tau}$ of some period $T(\zeta)>0$, \bigskip

\item $S_{\zeta}: = D_{\zeta} g_{\tau}^{T(\zeta)}$ is an elliptic  semi-simple symplectic matrix, i.e. the orbit of $\zeta$ is an elliptic closed geodesic. \bigskip

\item With  $\Lambda_{\ell, k}$ as in \eqref{LAMBDAjell},   there  exist  $\epsilon_{\ell_k} \to 0$,  and a subsequence $j_{\ell_k}$ so that
$\left| \lambda_{j_{\ell_k}} - \Lambda_{\ell, k} \right| < \epsilon_{\ell_k},$
and $$
 \sum_{j:\Lambda_{\ell_k} - \epsilon_{\ell_k}
\leq  \lambda_j \leq 
 \Lambda_{\ell, k+1} - \epsilon_{\ell_k}} e^{- 2 \tau \lambda_j} |\phi_j^{\C} (\zeta)|^2 
=2 \pi \Lambda_{\ell_k}^{\frac{m-1}{2}} ||\pi_{\Omega_{\zeta}} v_{\ell}||^2_{L^2} + o(\Lambda_{\ell, j}^{\frac{m-1}{2}}).$$
Under these conditions, there   exist eigenvalues of $\sqrt{\Delta}$  lying in shrinking neighborhoods of jump points in $\jcal(\zeta)$.
\end{enumerate}

\end{theo}

The assumption of semi-simplicity in Theorem \ref{ATTAINED} is to simplify the discussion of the symplectic normal forms of symplectic matrices. 
In the Jordan normal form decomposition, the semi-simple and nilpotent parts need not be symplectic in general. Hence, we assume
for simplicity that all Poincar\'e type maps are semi-simple. This is an open-dense condition on symplectic matrices \cite{Gutt}. A simple example where
it is not satisfied is the flat torus, but it is easy to see (and proved  in Section \ref{TORUS}) that the universal sup norm bound is not attained in this
case either. There are more general examples of manifolds without conjugate points which are not covered by Theorem \ref{ATTAINED}, and which
doubtless do not attain the universal upper bound, but we omit these for the sake of brevity.

It follows from Theorem \ref{ATTAINED} that if a sequence of Husimi distributions $U_j^{\tau}$  \eqref{HUSIMI} attains the maximal 
$L^{\infty}$ bound at $\zeta$, then it obtains the bound on the whole closed geodesic  (Hamiltonian
orbit) $\gamma_{\zeta}$  with initial data $\zeta$. 

\begin{rem} \label{SURFREV}   An interesting question is whether  Theorem \ref{SHORTINTSa} (2) and Theorem \ref{ATTAINED}(2) may be improved in some situations so  that
 they   imply an extremal lower bound on the sup-norm of a Husimi distribution  $\lambda_j \in [\nu_k - \epsilon_k, \nu_k + \epsilon_k]$ when $(M,g)$ possesses an elliptic closed geodesic.
In the general elliptic case, (b) only reflects the existence of a Gaussian
 beam quasi-mode, not an actual eigenfunction with of Gaussian beam type. Such eigenfunctions exist on  convex surfaces of revolution (see Section \ref{GBSECT}) . 
\end{rem}


The sup norm estimate of Theorem \ref{PWintro} is obtained from the jump of \eqref{TCXSPM}  at an eigenvalue. 
In the next Corollary, we equate
the jumps on the two sides of (1) of Theorem \ref{SHORTINTSa} and implies the result of Theorem \ref{PWintro} and the main input into the
results of Theorem \ref{ATTAINED}.
\begin{cor} \label{SUPNORMCOR} For any $(M, g)$, for fixed $\tau$  and for $\zeta \in \partial M_{\tau}$,
\begin{equation} \label{phiJUMP} \begin{array}{lll}  \sum_{j: \lambda_j = \lambda} e^{- 2 \tau \lambda_j} |\phi_j^{\C} (\zeta)|^2 & = &
  P^{\tau}_{[0, \lambda_j + 0]}(\zeta, \bar{\zeta}) - P^{\tau}_{[0, \lambda_j - 0]}(\zeta, \bar{\zeta}).
\end{array} \end{equation}
If $Q_{\zeta}(\lambda)$ is uniformly continuous in $\lambda$,  i.e. has no jumps, then 
$$ e^{- 2 \tau \lambda_j} |\phi_j^{\C} (\zeta)|^2 = o_{\tau} (\lambda_j^{\frac{m - 1}{2}}). $$
Hence, if  $Q_{\zeta}$ is uniformly continuous in $\lambda$ for all $\zeta$, then
$$\sup_{\zeta \in \partial M_{\tau}} e^{- 2 \tau \lambda_j} |\phi_j^{\C} (\zeta)|^2 = o_{\tau}(\lambda_j^{\frac{m - 1}{2}}). $$

The analogous results for \eqref{HUSIMI} follow from Lemma \ref{L2LEMintro}.

\end{cor}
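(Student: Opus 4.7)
My plan is to derive all three assertions directly from Theorem \ref{SHORTINTSa}(1) together with the definition of $P^{\tau}_{[0,\lambda]}(\zeta,\bar\zeta)$ as a monotone step function of $\lambda$. The first identity \eqref{phiJUMP} is essentially tautological: since
\[
P^{\tau}_{[0,\lambda]}(\zeta,\bar\zeta)=\sum_{j:\lambda_j\le\lambda}e^{-2\tau\lambda_j}|\phi_j^{\C}(\zeta)|^2
\]
is nondecreasing and right-continuous in $\lambda$, its jump at $\lambda=\lambda_j$ is exactly the sum of $e^{-2\tau\lambda_j}|\phi_j^{\C}(\zeta)|^2$ over all eigenfunctions with spectral parameter $\lambda_j$. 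No dynamical input is needed at this step.

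For the second assertion I will insert the two-term asymptotics of Theorem \ref{SHORTINTSa}(1) into \eqref{phiJUMP}. When $Q_{\zeta}(\lambda)$ is uniformly continuous in $\lambda$, that theorem gives
\[
P^{\tau}_{[0,\lambda]}(\zeta,\bar\zeta)=\Bigl(\tfrac{\lambda}{\sqrt\rho}\Bigr)^{\!\frac{m-1}{2}}\!\lambda\Bigl(1+Q_{\zeta}(\lambda)\lambda^{-1}+o(\lambda^{-1})\Bigr).
\]
The leading polynomial factor $\lambda^{(m+1)/2}(\sqrt\rho)^{-(m-1)/2}$ is smooth, hence has zero jump; the middle summand equals $(\sqrt\rho)^{-(m-1)/2}\lambda^{(m-1)/2}Q_{\zeta}(\lambda)$, which by the hypothesized uniform continuity of $Q_{\zeta}$ also has no jump at $\lambda_j$; and the remainder contributes $o(\lambda_j^{(m-1)/2})$. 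Subtracting the left- and right-limits at $\lambda_j$ thus yields $e^{-2\tau\lambda_j}|\phi_j^{\C}(\zeta)|^2=o_{\tau}(\lambda_j^{(m-1)/2})$.

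The third assertion is the same argument but made uniform in $\zeta$. Theorem \ref{SHORTINTSa}(1) explicitly states that the remainder is uniform in $\zeta$, so the $o(\lambda_j^{(m-1)/2})$ estimate above holds uniformly in $\zeta\in\partial M_{\tau}$ provided the uniform continuity of $Q_{\zeta}(\lambda)$ in $\lambda$ is likewise uniform in $\zeta$; then taking $\sup_{\zeta\in\partial M_{\tau}}$ preserves the little-$o$. Finally, the corresponding estimates for the Husimi distributions \eqref{HUSIMI} are obtained by dividing through by $\|\phi_j^{\C}\|_{L^2(\partial M_{\tau})}^2$ and invoking the asymptotic
$\|\phi_j^{\C}\|_{L^2(\partial M_{\tau})}^2=C(m,\tau)e^{2\tau\lambda_j}\lambda_j^{-(m-1)/2}(1+O(\lambda_j^{-1}))$
of Lemma \ref{L2LEMintro}, which cancels the exponential and the $\lambda_j^{-(m-1)/2}$ factors.

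The only substantive point is the middle-term cancellation: absent the uniform-continuity hypothesis the middle term $(\sqrt\rho)^{-(m-1)/2}\lambda^{(m-1)/2}Q_{\zeta}(\lambda)$ could contribute a jump of the same order $\lambda_j^{(m-1)/2}$ as the universal upper bound, which is precisely the scenario exploited in Theorem \ref{ATTAINED}. Thus the main conceptual work has already been carried out in establishing the two-term asymptotics and the $L^2$-norm lemma; the corollary is essentially the extraction of jumps from them.
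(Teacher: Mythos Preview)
Your proposal is correct and matches the paper's approach exactly: the paper does not give a separate proof of this corollary but simply describes it as ``equating the jumps on the two sides of (1) of Theorem \ref{SHORTINTSa},'' which is precisely what you do.

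One minor simplification: in the third step you add the caveat ``provided the uniform continuity of $Q_{\zeta}(\lambda)$ in $\lambda$ is likewise uniform in $\zeta$.'' This extra hypothesis is not needed. The middle term $(\sqrt{\rho})^{-(m-1)/2}\lambda^{(m-1)/2}Q_{\zeta}(\lambda)$ has \emph{zero} jump at $\lambda_j$ for each fixed $\zeta$ simply because $Q_{\zeta}$ is continuous there; no uniformity in $\zeta$ of that continuity is required. The uniformity in $\zeta$ of the final $o(\lambda_j^{(m-1)/2})$ comes entirely from the remainder term, which Theorem \ref{SHORTINTSa}(1) asserts is uniform in $\zeta$. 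So the argument for the sup bound is actually slightly cleaner than you wrote it.
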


 We observe that  jumps in spectral functions arise in two ways in Corollary \ref{SUPNORMCOR} : \begin{itemize}

\item (i) \;
Jumps $ P^{\tau}_{[0, \lambda_j + 0]}(\zeta, \bar{\zeta}) - P^{\tau}_{[0, \lambda_j - 0]}(\zeta, \bar{\zeta})
$  in the Weyl function at eigenvalues of $\sqrt{\Delta}$, which are non-zero as long as $\phi^{\C}_j(\zeta) \not=0$ for
some $j$ with $\lambda_j = \lambda$;  \bigskip

\item (ii) Jumps $Q_{\zeta}(\nu_k + 0) - Q_{\zeta}(\nu_k - 0)$ in  $Q_{\zeta}$ function  at its jump discontinuities $\nu_k$; \bigskip

\end{itemize}

If $Q_{\zeta}(\lambda)$ has jump discontinuities at points $\nu_k$ with $Q_{\zeta}(\nu_k + 0) - Q_{\zeta}(\nu_k -0) \geq C_1 > 0$,
then there exists a sequence $\epsilon_k \to 0$ such that $$ P^{\tau}_{[0, \nu_k + \epsilon_k]}(\zeta, \bar{\zeta}) - P^{\tau}_{[0, \nu_k - \epsilon_k]]}(\zeta, \bar{\zeta}) \geq C_2 \nu_k^{\frac{m-1}{2}}. 
$$ When the fixed point sets are non-degenerate (and necessarily elliptic in the jump case), one may take $\epsilon_k = \frac{1}{k}$. This implies
that there exist eigenvalues $\lambda_{j_k}$ such that $|\lambda_{j_k} - \nu_k| < \nu_k^{-\half}$. There are several ways that such eigenvalues
can arise. First, $(M,g)$ might be a Zoll manifold, all of whose geodesics are closed. In that case, the spectrum of $\sqrt{\Delta}$ occurs in 
clusters of width $k^{-1}$ around an arithmetic progression $\{\nu_k\}_{k=1}^{\infty}$; see Theorem \ref{Zoll} and Section \ref{ZOLL} for
precise statements. In this case, $\epsilon_k = O(k^{-1})$ is the minimal possible size for the lower bound above, because the clusters  centered
at the points $\nu_k$ have widths $O(k^{-1})$ (see \cite{DG}). In intervals  $\lambda \in I_k$ outside the union of the clusters, there are no jumps in 
$Q_{\zeta}(\lambda)$ and $P^{\tau}_{[0,\lambda]) }(\zeta, \bar{\zeta}) $ is constant. 
A second scenario is illustrated by a  generic convex  surface of revolution $(S^2, g)$. In this case,  the spectrum of $\sqrt{\Delta}$ is evenly distributed in intervals
$[\lambda, \lambda +1]$, so there is no clustering of its eigenvalues; hence, jump behavior is not a spectral invariant but  is  due to the existence of  special eigenfunctions (Gaussian beams) centered along elliptic closed geodesics $\gamma$ whose Husimi measures attain maximal size. If $\gamma$ is
the orbit of $\zeta$, then  $Q_{\zeta}(\lambda)$ exhibits jumps along a dynamically defined  arithmetic progression $\{\nu_k\}$  (see Section \ref{Qzeta}), and 
there exist eigenvalues $\{\lambda_{j_k}\}$ close to $\nu_k$ at which $P^{\tau}_{[0,\lambda]) }(\zeta, \bar{\zeta}) $ has the jumps above. These associated
eigenvalues can be constructed by the elliptic quasi-mode construction \cite{Ral82, BB91} and from this construction one can see that $|\lambda_{j_k} - \nu_k| \leq k^{-\half}.$

\subsection{\label{OUTLINE} Outline of the proofs}


The proof of Theorem of \ref{SHORTINTSa}  is  based on Fourier Tauberian arguments relating  tempered spectral projection
measures 
  \begin{equation} \label{SPPROJDAMPED} d_{\lambda} P_{[0, \lambda]
  }^{\tau}(\zeta, \bar{\zeta}) = \sum_j \delta(\lambda -
 \lambda_j) e^{- 2 \tau \lambda_j} |\phi_j^{\C}(\zeta)|^2,  \;\;\; (\tau = \sqrt{\rho}(\zeta)) \end{equation}
to their Fourier transforms. Note that \eqref{SPPROJDAMPED}
  is a  temperate distribution on $\R$ for each $\zeta$  satisfying $\sqrt{\rho}(\zeta)
 \leq \tau. $ 

We study analytic continuations of eigenfunctions, as in \cite{ZJDG}, using the Poisson kernel,
\begin{equation} \label{PTKER} 
P^{\tau}(\zeta, y) := \sum_j e^{- \tau \lambda_j} \phi_j^{\C}(\zeta) \phi_j(y), 
\end{equation}
which has the property,
\begin{equation} \label{EIGCX}  P^{\tau} \phi_j (\zeta) = e^{- \tau \lambda_j} \phi_j^{\C}(\zeta). \end{equation}
To obtain the Weyl asymptotics for \eqref{TCXSPM} we study the singularities in $t$ of the  Fourier transform  of \eqref{SPPROJDAMPED},
\begin{equation} \label{CXWVGP} \begin{array}{lll} U_{\C} (t + 2 i \tau, \zeta, \bar{\zeta}) &:= & \fcal_{\lambda \to t}d_{\lambda} P_{[0, \lambda]
  }^{\tau}(\zeta, \bar{\zeta}) =   \sum_j
e^{(- 2 \tau + i t) \lambda_j} |\phi_j^{\C}(\zeta)|^2,
 \end{array} \end{equation}
 whose properties may be deduced from those of \eqref{PTKER}.
Here, the wave kernel $U(t, x, y)$ is the kernel of $e^{i t \sqrt{\Delta}}$ and \eqref{CXWVGP}  is the Poisson
wave kernel obtained by analytically continuing the wave kernel in time and in space. The asymptotics are obtained by constructing a
 parametrix for \eqref{CXWVGP} as
a Fourier integral Toeplitz operator (or dynamical Toeplitz operator)  in 
Proposition \ref{MAIN}.  We  used such a construction in  \cite{ZJDG} in studying analytic
continuation of eigenfunctions.

\begin{prop} \label{LINEAR} For each $\zeta$, \eqref{CXWVGP} is a homogeneous
Lagrangian (Fourier integral) distribution with complex phase in $t$ which is singular at $t = 0$ and at the
periods $t = n T(\zeta)$ of $\zeta$ if it is a periodic point (Definition \ref{POINC}). The principal symbol of 
$t \to U_{\C} (t + 2 i \tau, \zeta, \bar{\zeta})$ at a period $n T(\zeta)$
 is given by $\gcal_n(\zeta) =  \langle W_{J_{\zeta}} \;(D g^{n T({\zeta})}_z{\zeta}) \;\Omega_{J_{\zeta}}, \Omega_{J_{\zeta}} \rangle$ \eqref{ABCDintro}.

\end{prop}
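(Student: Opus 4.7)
The plan is to represent $t \mapsto U_{\C}(t + 2i\tau, \zeta, \bar{\zeta})$ as a complex Fourier integral distribution by invoking the parametrix for the Poisson wave kernel already constructed in Proposition \ref{MAIN}, and then to apply complex stationary phase near each closed bicharacteristic to extract the principal symbols.

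First, following the complex Hadamard-type construction (in the spirit of Boutet de Monvel and Sj\"ostrand, as implemented in Proposition \ref{MAIN}), one writes, modulo a $t$-smooth remainder,
$$
U_{\C}(t + 2i\tau, \zeta, \bar{\zeta}) = \int_{\R^m} e^{i \Psi(t,\zeta,\bar{\zeta},\xi)}\, a(t, \zeta, \bar{\zeta}, \xi)\, d\xi,
$$
where $\Psi$ is a phase of positive type, homogeneous of degree one in $\xi$, holomorphic in $\zeta$ and anti-holomorphic in $\bar{\zeta}$, and $a$ is a classical symbol. Its generating Lagrangian (pulled back to the anti-diagonal $\zeta = \bar{\zeta}$) is the graph of the geodesic flow on $\partial M_\tau$, so the wavefront set in $t$ is contained in $\{0\} \cup \{nT(\zeta) : n \in \Z_+\}$, reducing to $\{0\}$ when $\zeta \notin \pcal$.

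Second, analyze the critical set of the $\xi$-integral at each alleged singular time. The stationary phase condition $\partial_\xi \Psi = 0$ forces $g_\tau^t(\zeta) = \zeta$, which isolates the singular support as claimed. Near each period $t = nT(\zeta)$, reduce to the Phong-Stein symplectic transversal so that $\xi$ parametrizes the CR subspace $H_\zeta(\partial M_\tau)$; the cleanness hypothesis (which is automatic for isolated-in-transversal periodic points) ensures that Melin-Sj\"ostrand complex stationary phase applies, and the transverse Hessian of $\Psi$ is, up to standard normalizations, $I - S^n_\zeta$ acting on $H_\zeta(\partial M_\tau)$.

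Third, execute the Gaussian integration on the transversal. Under the identification of $H_\zeta^{1,0} \oplus H_\zeta^{0,1}$ with the model $(\R^{2(m-1)}, J_\zeta)$ of the osculating Bargmann-Fock space $\hcal^2_\zeta$, the complex stationary phase contribution is precisely a Gaussian matrix element of the linear symplectic propagator $S^n_\zeta$ against the ground state $\Omega_{J_\zeta}$. The explicit Gaussian evaluation yields
$$
2^{n/2}\bigl(\det\bigl(A_n(\zeta) + D_n(\zeta) + i(B_n(\zeta) - C_n(\zeta))\bigr)\bigr)^{-1/2},
$$
which by \eqref{ABCD} (i.e.\ Lemma \ref{DAULEM}, Daubechies' formula) equals $\det P_{J_\zeta} S^n_\zeta P_{J_\zeta} = \langle W_{J_\zeta}(S^n_\zeta)\Omega_{J_\zeta}, \Omega_{J_\zeta}\rangle = \gcal_n(\zeta)$, which is exactly the definition \eqref{ABCDintro} of the principal symbol to be proved.

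The main obstacle will be the third step: carefully bookkeeping the complex phase, its orientation, and the non-transverse contribution along the Hamiltonian direction of $g^t_\tau$. One must verify that the reduction to the Phong-Stein leaf correctly isolates only the Poincar\'e map $S^n_\zeta$ (so that the generator of the flow produces only the expected factor of $\lambda^{(m-1)/2}$ in the Weyl law and does not distort the symbol), and that the positive-type condition for Melin-Sj\"ostrand holds uniformly in a neighborhood of each period. Once these microlocal details are in place, the algebraic identification with the metaplectic matrix element is immediate from Lemma \ref{DAULEM}.
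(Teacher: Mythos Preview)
Your overall strategy---parametrix, stationary phase on a symplectic transversal, identification of the Gaussian integral with a metaplectic matrix element---matches the paper's. But there are two concrete problems.

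First, you misread what Proposition~\ref{MAIN} actually gives. It is not a single oscillatory integral $\int_{\R^m} e^{i\Psi(t,\zeta,\bar\zeta,\xi)}a\,d\xi$ of Hadamard type; it is the dynamical Toeplitz representation
\[
U_{\C}(t+2i\tau,\zeta,\bar\zeta)\;\simeq\;\int_{\partial M_\tau}\Pi_\tau(\zeta,w)\,\hat\sigma_{t,\tau}\,\Pi_\tau(g_\tau^t w,\bar\zeta)\,d\mu_\tau(w),
\]
so after inserting the Boutet\,de\,Monvel--Sj\"ostrand parametrix for each $\Pi_\tau$ the phase is
\[
\Phi(t,\zeta,w,\sigma_1,\sigma_2)=-t+\sigma_1\psi(\zeta,w)+\sigma_2\psi(g_\tau^t w,\zeta),
\]
an integral over $(t,\sigma_1,\sigma_2,w)\in\R\times\R_+^2\times\partial M_\tau$, not over a fiber variable $\xi$. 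The two Szeg\H{o} factors are essential; they are what produce the two diastasis terms after one eliminates $(\sigma_1,\sigma_2,s,t)$ and localizes to the Phong--Stein leaf $\mcal_\zeta$.

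Second, and more seriously, your transverse Hessian is wrong. You write that it is ``$I-S_\zeta^n$''. That is the Duistermaat--Guillemin coefficient for the \emph{trace} of the real wave group, and its determinant $\det(I-S_\zeta^n)$ is not the quantity $\gcal_n(\zeta)$. In the present setting, after the reduction to $\mcal_\zeta$ the remaining phase is (Lemma~\ref{PHASESCALE})
\[
D(\zeta,z)+D(g_\tau^{nT(\zeta)}z,\zeta)\;=\;|u|^2_{L(\zeta)}+|S_\zeta^n u|^2_{L(\zeta)}+O(|u|^3),
\]
so the Hessian is $\omega_\zeta(J_\zeta\cdot,\cdot)+\omega_\zeta(S_\zeta^nJ_\zeta(S_\zeta^n)^{-1}\cdot,\cdot)$, i.e.\ of type $I+S^*S$, not $I-S$. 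It is precisely this ``sum of two Gaussians'' structure, coming from the two Szeg\H{o} factors, that Daubechies' identity (Lemma~\ref{DAULEM}, equations \eqref{ETAJS}--\eqref{ID2}--\eqref{DAUB}) converts into
\[
\bigl(\det(A_n+D_n+i(B_n-C_n))\bigr)^{-1/2}=\langle W_{J_\zeta}(S_\zeta^n)\Omega_{J_\zeta},\Omega_{J_\zeta}\rangle.
\]
If you actually computed the Gaussian integral with Hessian $I-S_\zeta^n$ you would get $|\det(I-S_\zeta^n)|^{-1/2}$, which for an elliptic $S_\zeta$ with eigenvalues $e^{\pm i\alpha_j}$ equals $\prod_j|2\sin(\tfrac{n\alpha_j}{2})|^{-1}$, not the unimodular $e^{in s_0}$ that the correct formula gives (cf.\ Proposition~\ref{ELL}). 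So the jump you make from ``Hessian $=I-S^n$'' to the displayed answer is not a normalization issue; it is a different determinant.

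In short: keep your outline, but replace the $\xi$-integral by the Toeplitz composition of Proposition~\ref{MAIN}, and redo the Hessian calculation on the Phong--Stein leaf---you will find the two-term quadratic form above, and then Lemma~\ref{DAULEM} finishes the identification.
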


To prove the Proposition, we construct the wave group in the complex domain (as in \cite{ZJDG}) 
 as a dynamical Toeplitz operator of the form, 
\begin{equation} \label{CXWAVEGROUPintro} V^t_{\tau}: = \Pi_{\tau} g_{\tau}^t \sigma_{t, \tau} \Pi_{\tau},
\end{equation} 
where $\Pi_{\tau}$ is the Szeg\"o kernel of $\partial M_{\tau}$,  $g^t_{\tau}$ is translation by the geodesic
flow \eqref{gttau} and $\sigma_{t, \tau}$ is a certain  symbol, designed to make \eqref{CXWAVEGROUPintro}
a unitary group.  In  Proposition \ref{MAIN} it is shown (roughly speaking)  that 
\begin{equation} \label{UandV} U_{\C}(t + 2 i \tau, \zeta, \overline{\zeta}) = V_{\tau}^t ( \zeta, \overline{\zeta}). \end{equation}


To determine the singularities in Proposition \ref{LINEAR}, we  convolve with a suitable test function $\chi$ and  determine
the leading term as $\lambda \to \infty$ of  the smoothed temperate sums,
\begin{equation} \label{SMOOTH} \chi* d P^{\tau}_{[0, \lambda]} (\zeta, \bar{\zeta})= \int_{\R} \hat{\chi}(t) e^{i
\lambda t} U_{\C} (t + 2 i \tau, \zeta, \bar{\zeta}) dt.  \end{equation}
The asymptotics of 
\eqref{SMOOTH} are  determined by substituting this expression into \eqref{CXWAVEGROUPintro}, 
using the Boutet de Monvel-Sj\"ostrand parametrix for $\Pi_{\tau}$ and then employing  the stationary phase method in the complex domain in \S \ref{TWOTERM},
to obtain,

\begin{theo} \label{SCALINGTHEO}   Let  $m = \dim M$.  Let $\zeta \in \partial M_{\tau} $ be a periodic point of $g^t_{\tau} $. 
  Then, for $ \chi  \in \scal(\R)$ with $\hat \chi \in C^\infty_0(\R)$,   there exist positive universal dimensional constants $C_m, C_m'$ so that $\chi* d P^{\tau}_{[0, \lambda]} (\zeta, \bar{\zeta}) $ 
admits a complete asymptotic expansions as $\lambda \to \infty$,   satisfying
$$  \chi* d P^{\tau}_{[0, \lambda]} (\zeta, \bar{\zeta}) =  \left\{ \begin{array}{ll} C_m  \lambda^{\frac{m-1}{2}} +  \ocal(\lambda^{\frac{m-3}{2} }),  &\zeta \notin \pcal, \\ & \\
C_m  \lambda^{\frac{m-1}{2}} +
C_m'    \lambda^{\frac{m-1}{2}} \Re \sum_{n = 1}^{\infty} \hat{\chi}(n T(\zeta))  e^{ - i\lambda n T(\zeta)} \gcal_n(\zeta)
 + \ocal(\lambda^{\frac{m-3}{2} }), & \zeta \in \pcal. \end{array} \right. $$
 
\end{theo}

The main difficulty in the proof lies  in interpreting the Hessian determinants
in the stationary phase expansion explicitly in geometric terms, which
is necessary in understanding the convergence of the $Q_{\zeta}(\lambda)$
function.
As is proved in Lemma \ref{DAULEM}, the principal term
  on the right side of Theorem \ref{SCALINGTHEO} is the Gaussian integral,

 \begin{align}  \ip{W_{J_{\zeta}}\left(S_{\zeta}\right) \left( \Omega_{J_{\zeta}} \right)}{\Omega_{J_{\zeta}}} = 
    \int_{\C^{m - 1}}^{}e^{ -\frac{1}{\tau}\left( |u|^2_{L(\zeta)} + |S_{\zeta}(u)|^2_{L(\zeta)} \right)} du. \label{Gaussian Integral} 
\end{align}
where $S_{\zeta} = D_{\zeta} g_{\tau}^{T_{\zeta}} $ and where $L_{\zeta}$ is the Levi metric at $\zeta$.
In principle, the  principal symbol  could  be calculated using the Boutet de Monvel-Guillemin
symbol calculus for Toeplitz operators \cite{BoGu}, whose purpose
 is to explicitly evaluate Hessian determinants
in the stationary phase formulae in a metaplectic way. But their
calculus involves a somewhat abstract comparison to a Grushin system of harmonic oscillators
on $\R^n$. Instead,  we use a simpler and more natural approach  in the complex setting of combining the   Boutet de Monvel-Sj\"ostrand parametrix
and  the   `osculating Bargmann-Fock' representation. A novelty is that we use a geometric construction of Phong-Stein \cite{PhSt1,PhSt2} of
a certain foliation to construct representations of the relevant oscillatory integrals (see below).
It is the codimension one foliation $\Im \psi(\zeta,w) = 0$ of $\partial M_{\tau}$ associated to the phase $\psi(x, y)$  of the
Boutet-de-Monvel-Sj\"ostrand parametrix.  Expressing oscillatory integrals as integrals over the  leaves allows one to compute the principal term of the asymptotics  in a geometrically transparent fashion. The Phong-Stein leaves are introduced in Section \ref{PSSECT} and their application to the computation
is given in Section \ref{PSLOCAL}.

With no additional effort we prove the analogue of Theorem \ref{SCALINGTHEO} for a  `purely dynamical'
operator kernel in the Grauert tube setting, namely the spectral projections kernel  \begin{equation} \label{chilambda} \Pi_{\chi, \tau}(\lambda) : =   \chi(\Pi_{\tau} D_{\sqrt{\rho}} \Pi_{\tau} - \lambda) = \Pi_{\tau} \int_{\R} \hat{\chi}(t) e^{- i t \lambda} e^{it\Pi_{\tau} D_{\sqrt{\rho}} \Pi_{\tau}} dt, \end{equation}
of the  Toeplitz differential operator  \begin{equation} \label{DDEF} \Pi_{\tau} D_{\sqrt{\rho}} \Pi_{\tau} : H^2(\partial M_{\tau}) \to H^2(\partial M_{\tau}) \end{equation} on $L^2(\partial M_{\tau})$ where $D_{\sqrt{\rho}} $ is $\frac{1}{i} \Xi_{\sqrt{\rho}}$, where $\Xi_{\sqrt{\rho}}$ is the Hamilton vector field of the Grauert tube function $\sqrt{\rho}$ acting as a differential operator.   Here,
 $\chi \in \scal(\R)$ (Schwartz space) with $\hat{\chi} \in C_0^{\infty}(\R)$; the extra $\Pi_{\tau}$ is needed to define the unitary group
\begin{equation} \label{WCALintro}  \wcal_{\tau}(t): = \Pi_{\tau}  e^{it\Pi_{\tau} D_{\sqrt{\rho}} \Pi_{\tau}} : H^2(\partial M_{\tau}) \to H^2(\partial M_{\tau}) \end{equation} We note that \eqref{WCALintro} is  very similiar to   \eqref{CXWAVEGROUPintro} and that \eqref{chilambda}
is very similar to \eqref{SMOOTH}. The proof of Theorem \ref{SCALINGTHEO} applies with no essential change to \eqref{chilambda},  and
Theorem  \ref{SHORTINTSa} is also valid for \eqref{DDEF}. We introduce
\eqref{WCALintro} because it is the natural Koopman dynamics in the Grauert tube setting, defined entirely in terms of the complex structure and
the geodesic flow. Its spectral theory  seems of independent interest. We denote the Schwartz kernel of $\Pi_{\chi, \tau}(\lambda) $ on the diagonal by $\Pi_{\chi, \tau}(\lambda, \zeta,
\bar{\zeta})$.

\begin{theo} \label{PichilambdaTHEO}  Let  $m = \dim M$.  Let $\zeta \in \partial M_{\tau} $ be a periodic point of $g^t_{\tau} $. 
  Then, for $ \chi  \in \scal(\R)$ with $\hat \chi \in C^\infty_0(\R)$,   there exist positive universal dimensional constants $C_m, C_m'$ so that $\chi* d P^{\tau}_{[0, \lambda]} (\zeta, \bar{\zeta}) $ 
admits a complete asymptotic expansions as $\lambda \to \infty$,   satisfying
$$ \Pi_{\chi, \tau}(\lambda, \zeta, \bar{\zeta}) =  \left\{ \begin{array}{ll} C_m  \lambda^{m-1} +  \ocal(\lambda^{m-3 }),  &\zeta \notin \pcal, \\ & \\
C_m  \lambda^{m-1} +
C_m'  \lambda^{m-1} \Re \sum_{n = 1}^{\infty} \hat{\chi}(n T(\zeta))  e^{ - i\lambda n T(\zeta)} \gcal_n(\zeta)
 + \ocal(\lambda^{m-3}), & \zeta \in \pcal. \end{array} \right. $$
 
\end{theo}

The only significant difference between Theorem \ref{SCALINGTHEO} and Theorem \ref{PichilambdaTHEO} lies in the
power of $\lambda$, reflecting that the two operators have a different power of $|\xi|$ in their principal symbols. The origin 
of this difference lies in the operator $A_{\tau}$ in   Lemma \ref{OLD}, which does not arise in \eqref{chilambda} and which
lowers the order of $P^{\tau}_{[0, \lambda]}$ relative to \eqref{chilambda}.  Since $P^{\tau}_{[0, \lambda]}$ is the main focus
of this article, we will carry out the analysis in more detail for this kernel and then explain the very simple modifications necessary
to deal with \ref{chilambda}. 
A comparison of the two types of operator kernels in  Theorem \ref{SCALINGTHEO},
and a comparison with \szego\; kernel asymptotics on line bundles,  is given in  Section \ref{COMPARISON}.

To complete the proof of Theorem \ref{SHORTINTSa}, we use the  Fourier Tauberian method of Safarov \cite{Saf, SV}   (see \S \ref{TAUB}).
The singularities  of
$U_{\C}(t + 2 i \tau, \zeta, \bar{\zeta})$ at   periods $t \not= 0$ of the 
periodic orbit through $\zeta$  are all of the same degree (strength) as the singularity at $t =
0$. That is why one needs to sum over periods. As discussed above, they give rise to an oscillating second term or possibly a discontinuous middle term
depending on the continuity of the function $Q_{\zeta}(\lambda)$ \eqref{Q}.
\bigskip

\subsection{\label{RELATED} Related problems and results}

Q-functions in the real domain were   introduced by Safarov  (see \cite{Saf,SV}) to obtain two-term Weyl laws.
The pointwise
asymptotics are sharper in the complex domain  than in the real domain, and resemble the two term pointwise quasi-Weyl asymptotics of Safarov et al \cite{Saf,SV}.
An interesting aspect of Theorem \ref{SHORTINTSa}  is that the formula for $Q_{\zeta}$ is valid even if the closed geodesic
through $\zeta$ is degenerate as a closed
geodesic in the real domain. For instance, it is valid on a sphere, Zoll manifold or flat torus (Section \ref{ZOLL}). 
The analogous two-term formula in the real domain involves an integration
over the set $\lcal_x \subset S^*_x M$ of loop directions. In the real domain, $\lcal_x$ might be dense in $S^*_x M$
and additionally might fail to have the kind of cleanliness or transversality properties that are required for application
of the stationary phase method. In contrast, in the complex domain there is a single critical point when $\zeta$ is
a periodic point and the stationary phase method is always applicable.

As mentioned above, it follows from Theorem \ref{ATTAINED} that if a sequence of Husimi distributions $U_j^{\tau}$  \eqref{HUSIMI} attains the maximal 
$L^{\infty}$ bound at $\zeta$, then it obtains the bound on the whole closed geodesic  (Hamiltonian
orbit) $\gamma_{\zeta}$  with initial data $\zeta$.   It would be interesting to relate this result to  C. Sogge's result  \cite[Proposition 3.1]{Sog11} that if $\gamma $ is not an arc of a periodic geodesic, then the configuration space Kakeya-Nikodym norms are not achieved. We expect that Sogge's result has an analogue for phase
space  Kakeya-Nikodym norms. 
Eigenfunctions cannot decay faster than a Gaussian in the transverse directions to
$\gamma$ in configuration space $M$. The corresponding statement for the Husimi distributions is more complicated, since analytic continuation
adds directions in which the complexified eigenfunctions may grow exponentially. But  one expects that the $L^{\infty} $  bound over the $\lambda^{-\half}$-tube around $\gamma_{\zeta}$
to saturate microlocal Kakeya-Nikodym norms. The relation between the norms is currently under investigation. See \cite[Section 3.3]{ZCBMS} and the Problems in \cite[Problem 10.3]{ZCBMS}.

Another comparison in terms of  techniques and results  in the real domain is to compare the complexification
techniques of this article to the use of Gaussian beam decompositions  in the work of Canzani-Galkowski (see  \cite{CG19, CG19b})
and the use of defect measures in \cite{G19}.  
As mentioned above, it would be interesting to study the relation between $L^{\infty}$ norms of Husimi distributions \eqref{HUSIMI} and
the
microlocal  Kakeya-Nikodym norms defined above. It would also be interesting to compare these norms to the ones in 
 \cite{BlS17} and explore the relations to $L^p$ norms on $M$.

The  Husimi distributions \eqref{HUSIMI} and pointwise Weyl asymptotics for $ P_{ I_{\lambda}}^{\tau}(\zeta, \bar{\zeta})$ can be pushed-forward to $M$ by
integrating over the fibrers of the natural projections $\pi: \partial M_{\tau} \to M$ (essentially the cosphere bundle of radius $\tau$). 
This is straightforward but postponed to a latter occasion.  Recalling that Wigner distributions and other `microlocal lifts' of eigenfunctions push
forward to the squares of the eigenfunctions, it would be interesting to determine the pushforwards of the Husimi distributions. The interesting aspect is that only $\zeta \in \pi^{-1}(x)$ which lie on closed
geodesics contribute sub-principal terms to the asymptotics, whereas sup-norm bounds on eigenfunctions reflect the measure
of all closed geodesic loops (see \cite{SoZ16}).

\subsection{Acknowledgements}  Thanks to Peng Zhou for collaboration on the related article \cite{ZZ18} and to Yaiza Canzani,  Robert Chang   and Abe Rabinowitz for detailed comments and corrections.

\section{\label{EXAMPLESECT} Examples illustrating the continuity properties of  $Q_{\zeta}(\lambda)$ }

Let us give simple examples where $Q_{\zeta}(\lambda)$ and others where it has jumps.

\begin{prop} \label{HYPLEM}If $\zeta$ generates a hyperbolic closed geodesic,  i.e. if   $ D_{\zeta} g^{n T(\zeta)}$ \eqref{DGn}  is a real hyperbolic symplectic map, then $Q_{\zeta}(\lambda)$ is uniformly continuous in $\lambda$. \end{prop}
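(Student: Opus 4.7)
The plan is to reduce uniform continuity of $Q_\zeta(\lambda)$ to exponential decay of the coefficients $\mathcal{G}_n(\zeta)$ under the hyperbolicity hypothesis, and then obtain continuity by term-by-term differentiation of the defining series \eqref{Q}.

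First, I would diagonalize $S_\zeta := D_\zeta g_\tau^{T(\zeta)}$ in a real symplectic basis. Since $S_\zeta$ is real hyperbolic symplectic, its eigenvalues occur in reciprocal pairs $(\mu_j, \mu_j^{-1})$ with $|\mu_j| \neq 1$; set $\gamma := \min_j \bigl|\log|\mu_j|\bigr| > 0$. I would then use the classical formula \eqref{ABCD},
$$\mathcal{G}_n(\zeta) = \det P_{J_\zeta} S_\zeta^n P_{J_\zeta} = 2^{(m-1)/2}\bigl(\det(A_n + D_n + i(B_n - C_n))\bigr)^{-1/2},$$
to prove the estimate
$$|\mathcal{G}_n(\zeta)| \leq C e^{-\gamma n}, \qquad n \geq 1.$$
The mechanism is the hyperbolic analogue of Mehler's formula: because the $\pm$ Lyapunov subspaces of the real hyperbolic $S_\zeta$ are totally real with respect to $J_\zeta$, the hyperbolic growth of $S_\zeta^n$ forces the modulus of $\det(A_n + D_n + i(B_n - C_n))$ to grow exponentially in $n$. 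In the model case $m=2$ with $S = \operatorname{diag}(\mu, \mu^{-1})$ one finds $A_n + D_n = \mu^n + \mu^{-n}$ and $B_n - C_n = 0$, giving $|\mathcal{G}_n| \sim |\mu|^{-n/2}$; the general semi-simple hyperbolic case factors as a product of such one-dimensional contributions.

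Given the bound $|\mathcal{G}_n(\zeta)| \leq C e^{-\gamma n}$, the series
$$Q_\zeta(\lambda) = \sum_{n=1}^\infty \frac{\sin(\lambda n T(\zeta))}{n T(\zeta)}\,\mathcal{G}_n(\zeta)$$
converges absolutely and uniformly in $\lambda \in \mathbb{R}$, and the formally differentiated series $\sum_{n=1}^\infty \cos(\lambda n T(\zeta))\, \mathcal{G}_n(\zeta)$ also converges absolutely and uniformly. Therefore $Q_\zeta$ is $C^1$ with a uniformly bounded derivative, hence Lipschitz on $\mathbb{R}$, and in particular uniformly continuous.

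The main obstacle is the multi-dimensional verification of the exponential decay: one must control the holomorphic-to-holomorphic block $P_{J_\zeta} S_\zeta^n P_{J_\zeta}$ when the complex structure $J_\zeta$ (inherited from the Kähler geometry of $M_\tau$) is not adapted to the real eigenbasis of $S_\zeta$. If a direct computation becomes awkward, an alternative route is to bypass the geometry and invoke Proposition \ref{QzetaPROP}(2): for hyperbolic $S_\zeta$ the metaplectic quantization $W_{J_\zeta}(S_\zeta)$ lies in the hyperbolic part of the metaplectic group, where the spectral measure $d\mu_\zeta$ relative to the Gaussian ground state $\Omega_{J_\zeta}$ is purely absolutely continuous, and uniform continuity of $Q_\zeta$ then follows at once.
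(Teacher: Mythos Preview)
Your proposal is correct and follows the same approach as the paper: establish exponential decay of $|\mathcal{G}_n(\zeta)|$ from the hyperbolicity of $S_\zeta$, then deduce absolute and uniform convergence of the series (and its derivative), yielding uniform continuity.

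The one point where you hesitate — controlling $P_{J_\zeta}S_\zeta^n P_{J_\zeta}$ when $J_\zeta$ is not adapted to the real eigenbasis — is exactly what the paper handles cleanly, and you should know the trick. For a positive definite symmetric symplectic matrix $S$ one has $SJ=JS^{-1}$, and a two-line computation (Lemma below Proposition~\ref{PROPME}) gives
\[
P_J S P_J \;=\; \tfrac12\,P_J(S+S^{-1}) \;=\; \tfrac12\,(S+S^{-1})P_J.
\]
Since $S+S^{-1}$ commutes with $P_J$, the eigenvectors $P_J e_k$ of $P_J S P_J$ are built from the eigenvectors $e_k$ of $S$, with eigenvalues $\cosh\lambda_k$. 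Hence $\det P_J S^n P_J=\prod_k\cosh(n\lambda_k)$ and $|\mathcal{G}_n(\zeta)|=\prod_k(\cosh n\lambda_k)^{-1/2}\le C e^{-\gamma n}$. This bypasses your factorization-into-one-dimensional-blocks step entirely and works for any compatible $J$. Your fallback via the absolutely continuous spectral measure of $W_{J_\zeta}(S_\zeta)$ is also the paper's ``quantum approach'' and is a legitimate alternative.
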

Indeed, in this case $P = (S^T S)^{\half}$  is diagonalizable with real positive eigenvalues
that come in pairs $e^{\mu_j}, e^{-\mu_j}$.  Since  $ D_{\zeta} g^{n T(\zeta)} =
 ( D_{\zeta} g^{ T(\zeta)})^n$,  with eigenvalues  $e^{\pm n \mu_j}$, it is not hard to 
prove that the series \eqref{Q}  convergences. 
Hence,  $Q_{\zeta}(\lambda)$ is
continuous in $\lambda$ for manifolds with only hyperbolic geodesics,
such as negatively curved manifolds.

As is easily proved by elementary calculations,   $Q_{\zeta}(\lambda)$  is also uniformly continuous on flat tori (Section \ref{TORUS}). It is natural
to conjecture that  $Q_{\zeta}(\lambda)$ is continuous  at all periodic points on a
general $(M, g)$ without conjugate points.

As a further  example with conjugate points, all geodesics through an umbilic point $u$ of a tri-axial ellipsoid  $\ecal \subset \R^3$
are geodesic loops at $u$ of period $2 \pi$, but only one direction $\zeta$ at $u$ (up to time reversal) gives a smoothly closed hyperbolic geodesic. If
$\zeta \in S^*_u \ecal$, then  $Q_{\zeta} =0$  unless $\zeta$ is the closed geodesic direction. Since it is hyperbolic, there are no jumps in $Q_{\zeta}$
for such $\zeta$. The ellipsoid does contain elliptic closed geodesics (namely, the elliptical slices by coordinate hyperplanes $x_1 =0, $ or $x_3 = 0$,
so it is not clear that the universal sup norm bounds are not attained by a sequence of eigenfunctions of $\ecal$. The results of \cite{SoZ16} disqualify
all points of $\ecal$ for attainment of  the universal sup-norm bound \eqref{REALSUP} in the real domain.

\subsubsection{\label{JUMPS}  Examples where $Q_{\zeta}$ has jumps}

  In contrast to Proposition \ref{HYPLEM},  $Q_{\zeta}(\lambda)$ has
jump discontinuities of a rigid kind in the case of the standard sphere (Section \ref{SPHERESECT})  or Zoll manifolds (Section \ref{ZOLL}).
The existence of possible jumps explains why one must write the two-term
asymptotics as the inequalities \eqref{INEQ}  rather than as an asymptotic equality as in the continuous case. An alternative which is available in pure Zoll cases is to prove asymptotics for $P_{I_{\lambda}}^{\tau}$ where the intervals are 
adapted to the eigenvalue clusters of the Zoll manifold (Theorem \ref{Zoll}).  We now illustrate
the jump with an elementary calculation on the circle $S^1$. In Section \ref{ZOLL} we study Zoll examples.

 In the real domain,
the   asymptotics of the spectral projection kernels are 
constant since $S^1$ acts by isometries. There is a single closed geodesic, of periodic $2 \pi$, and 
$$\begin{array}{lll} \Pi_{[0, \lambda]}(x,x) = \sum_{k \in \Z, |k| \leq \lambda}
1 = 2 (
 \lambda  -  \{\lambda \} ) + 1, \end{array} $$
 where, as above,  $\{\lambda\}$ is the fractional
 part of $\lambda$; it is periodic with  jump discontinuities at
$\lambda \in \N$.  

Now consider  the  long interval damped
spectral projections \eqref{TCXSPM}.    We fix $\zeta \in
S^1_{\C} = \C/ \Z$ and assume that $\Im \zeta  =
 \tau 
> 0$.   Then,
\begin{equation} \label{S1} \begin{array}{lll} P^{\tau}_{[0, \lambda]}(\zeta, \bar{\zeta}) & = &  \sum_{k: |k| \leq \lambda} e^{
 - 2 \tau|k|} e^{2   k  \Im \zeta } \\&&\\ &=&  1+ \sum_{k: 0 < k \leq \lambda} 1 +  \sum_{k <0: |k| \leq \lambda} e^{
 - 4 \tau|k|} \\ &&\\
 & = &  \sum_{0 < k  \leq \lambda} 1 + C(\tau) +o(1) , \;\;\; C(\tau
 ) := 1+  \sum_{k=1}^{\infty} e^{- 4 \tau|k|} 
\\ & &\\ &=&
 \lambda - \{\lambda\} + 
C(\tau) +  o(1), \end{array} \end{equation}  
   We note that there is a single
geodesic of period $T= 2 \pi$, and  $W_J(S) = I$ has a single eigenvalue with $s_{\ell}=0$. It has a single eigenfunction
$\Omega_{J_{\zeta}}$ which is not orthogonal to the ground state.  Proposition \ref{QzetaPROP} asserts that
 $Q_{\zeta}(\lambda) = \frac{1}{ T} \{ \lambda T -\pi\}_{2 \pi}    = \{ \lambda - \half \}_{2\pi}$. 
One may compare this simple asymptotic with   Theorem \ref{SHORTINTSa}(3), when $m=1$.

The discontinuity of the subprincipal term is reflected by the
nature of the singularities at $t \not= 0$ of the ``complex wave group'' (see \eqref{CXWVGP}  below),
\begin{equation} \label{UTTAUINTRO} U_{\C}(t + 2 i \tau, \zeta, \bar{\zeta}) = \sum_{k \in \Z} e^{(i t
 - 2 \tau)|k|} e^{2 \langle  k \Im \zeta  \rangle} = \sum_{k \geq 0} e^{i t k} + R(t, \zeta, \bar{\zeta}),
\end{equation}
 where $R(t)$ is analytic in $t$. By the Poisson summation
 formula, the singularities correspond to closed geodesics of
 $S^1$. 
 
\subsection{Zoll manifolds}
 
 One higher dimensional generalization of a circle is a Zoll manifold, all of whose geodesics are closed. 
As reviewed in \S \ref{APPENDIX}, 
the spectrum  of $\sqrt{\Delta}$ on a Zoll manifold consists of a union of small eigenvalue
clusters surrounding a certain  arithmetic progression $\{(k + \frac{\beta}{4})\}_{k = 0}^{\infty}$, where $\beta$ is the 
common Morse index of the periodic orbits. If the interval $I_k$ is chosen to cover the kth  cluster and not
to intersect any other cluster, then the cluster projection $\Pi_{I_k}$ is a Riesz projector  (i.e. a contour integral
of the resolvent) and its analytic continuation $P^{\tau}_{I_k}$ has a complete asymptotic expansion.
\begin{theo} \label{Zoll} Suppose that $(M, g)$ is a real analytic Zoll metric. Then,
if $\sqrt{\rho}(\zeta) \geq \frac{C}{k}$, there exist geometic coefficients $R_j(\zeta)$ such that $ P^{\tau}_{I_k} (\zeta, \bar{\zeta}) $ admits a complete asymptotic expansion as $k \to \infty$ of the form,
$$\begin{array}{l} P^{\tau}_{I_k} (\zeta, \bar{\zeta})  =
\left(\frac{(k + \frac{\beta}{4})}{\sqrt{\rho}(\zeta)} \right)^{
\frac{m - 1}{2}} \left(1 + \sum_{j = 1}^{\infty} R_j(\zeta)  (k + \frac{\beta}{4}) ))^{-j} \right)   \end{array}
  $$

\end{theo}
Here, we have avoided the inequalities in  Theorem \ref{SHORTINTSa} (3) by choosing to locate $\lambda$ at the
center of each eigenvalue cluster. More precisely, to obtain the Weyl sum we simply sum the above result in $k$.
(The notation $P^{\tau}_{I_{\lambda}}$ is discussed below \eqref{TCXSPM}.)

 Although Zoll manifolds are not a primary focus
in this article, they are the  Riemannian analogues of the unit circle bundles associated to positive line bundles over K\"ahler manifolds,  and as in the K\"ahler case  we obtain complete asymptotic
expansions if  the spectral intervals $I_{\lambda}$ contain  precisely one eigenvalue cluster (see Section \ref{ZOLL}).    
A brief review of  spectral asymptotics in the real domain,
and their dependence on the periodicity of the geodesic flow, is given in \S \ref{APPENDIX}.

As mentioned in Remark \ref{SURFREV}, $Q_{\zeta}$ will also have jumps for convex surfaces of revolution if
$\zeta$ is  closed geodesic between the poles. This is an example where the spectrum is uniformly distributed modulo
$1$, i.e. does not come in clusters.

\subsection{\label{TORUS} Parabolic closed geodesics and flat tori }
As a simple  example to illustrate the terminology,  the setting and the results, we work out the details for product eigenfunctions  on a flat torus  $M =\R^m/ 2 \pi \Z^m$.
The real eigenfunctions are the exponentials $e_k(x) = e^{i
\langle x, k \rangle}$ with $k \in \Z^m$.   Since $M_{\C} = \C^m/ 2 \pi \Z^m = \R^m_x /
2 \pi \Z^m \times \R^m_{\xi}$ we see that $\sqrt{\rho}(\zeta) = |\xi|$, and  for $\zeta = x + i \xi \in \partial M_{\tau}$, their complexifications
are the complex exponentials $e_k^{\C}(x + i \xi) = e^{i \langle x
+ i \xi, k \rangle}, $ and clearly $$| e^{i \langle x
+ i \xi, k \rangle}|^2  = e^{- 2 \langle \xi, k \rangle}. $$
Hence,
$$ P^{\tau}_{[0, \lambda] }(\zeta, \bar{\zeta})  = \sum_{k \in \Z^m: |k| \leq \lambda} e^{- 2 |\xi| |k|}
 e^{- 2 \langle \xi, k \rangle}, \;\;\; (\zeta = x + i \xi). $$

The point  $\zeta \in \partial M_{\tau}$ is a periodic point if and only if $ \Im \zeta =  \frac{k}{|k|} \tau $ for some lattice point $k
\in \Z^m$. Then $T(\zeta) = |k|$, and
\begin{equation}\label{FLAT}  Q_{\zeta}(\lambda) \sim    \sum_{n =1}^{\infty}  \frac{\sin  ( n \lambda |k|)}{n  |k|} \left( \frac{\lambda}{|nk| + 2 i  \tau}  \right)^{ \frac{m-1}{2}}, \;\;   \end{equation}
  For $m \geq 2$, the
function $Q_{\zeta}(\lambda)$ is a bounded uniformly continuous function. If $\sqrt{\zeta}= \tau$,
 $$ P^{\tau}_{[0, \lambda] }(\zeta, \bar{\zeta}) \sim C_m \lambda^{\frac{m + 1}{2}} \left(1 +  Q_{\zeta}(\lambda) \lambda^{-1} +  o(\lambda^{-1})  \right);.$$
 
To check Lemma \ref{L2LEMintro}, we note that
 the $L^2$ norm-square  of  $e_k^{\C}(x + i \xi) $ over the Grauert tube $\partial M_{\tau}$, or equivalently
  over the co-sphere bundle $S^*_{\tau} \R^m/\Z^m$ is, by the usual steepest descent  calculation of asymptotics of Bessel functions,

 \begin{equation} \label{L2TORUS} \begin{array}{lll} ||e_k^{\C}(x + i \xi) ||^2_{L^2(\partial M_{\tau})} & =& \int_{S_{\tau}^{m-1}} e^{- 2 \langle \xi, k \rangle}
 d\mu_{\tau}(\xi) \\ &&\\
 & = & \tau^{m-1} \int_{S^{m-1}} e^{- 2 |k| \tau  \langle \xi, \frac{k}{|k|}  \rangle} d\mu(\xi) 
 \\ &&\\
 & = & \tau^{m-1} \int_{S^{m-1}} e^{- 2 |k| \tau \langle \xi, e_1 \rangle} d\mu(\xi) \\ &&\\
 & \sim & C_m e^{2\tau  |k| } \tau^{m-1} \left(\det (2 |k| \tau I_{m-1} \right)^{-\half} = C_m |k| ^{-\frac{m-1}{2}}  e^{2\tau  |k| } \tau^{\frac{m-1}{2}}
 , \end{array} \end{equation}

The Husimi functions,  $$\frac{| e^{i \langle x
+ i \xi, k \rangle}|^2}{ ||e_k^{\C}(x + i \xi) ||^2_{L^2(\partial M_{\tau})} }   = C_m \tau^{-\frac{m-1}{2}} |k| ^{\frac{m-1}{2}}  \frac{e^{- 2 \langle \xi, k \rangle}}{ e^{2\tau  |k| }}, $$
attain their maximum when $\xi = - \tau \frac{\vec k}{|\vec k|} $ and at that point take the value $C_m \tau^{-\frac{m-1}{2}} |k| ^{\frac{m-1}{2}} .$

\section{\label{GRAUERTSECT} Grauert tubes: Geometry and Analysis}

In this section, we review  geometry and analysis on   Grauert tubes. The relevant geometry and analysis have already
been presented in the prior articles \cite{ZPSH1} and \cite{ ZJDG}. To avoid duplication, we only briefly introduce the basic
objects and results and refer to these articles for further background.

  \subsection{\label{GRB} Grauert tubes, the Hamilton flow of $\sqrt{\rho}$ and the complex geodesic flow}

  In this section, we briefly review the basic objects regarding
  Grauert tubes and establish notation.  There is considerable overlap
with the exposition in  \cite{ZPSH1} and we refer there for many details.

The Grauert tube function \eqref{RHOFORM} induces a distinguished 1-form,
\begin{equation} \label{alpha} \alpha = \frac{1}{i} \partial \rho|_{\partial M_{\tau} }\end{equation}
on $\partial M_{\tau}$. It induces the K\"ahler form
\begin{equation} \label{rho} \omega =  d \alpha = i \ddbar \rho \end{equation}
with $\rho$ as K\"ahler potential, and also the volume form
\begin{equation}  \label{CONTACTVOL} d\mu_{\epsilon} : = \alpha \wedge \omega^{m-1}, \;\; (m = \dim M). \end{equation}

For generic analytic Riemannian metrics $g$,  there is a finite maximal
radius $\tau_{\max}$ of the Grauert tubes, which is finite for all but a few real analytic Riemannian metrics. 
The eigenfunctions are known to extend holomorphically to the maximal open
Grauert tube but do not extend further. For instance, the maximal Grauert tube radius
for hyperbolic space of constant curvature $-1$ is $\frac{\pi}{2}$.  As a result, the Grauer tube Weyl laws are
only valid for $\tau < \tau_{\max}$ and should blow up when $\tau = \tau_{\max}$.
We refer to \cite{LS1,Szo} and to \S \ref{GRB}
for background.

\subsection{\label{CDSECT} The diastasis}  As with any real analytic \kahler potential, we may consider $\rho(z)$ as a function of $(z, \bar{z})$ and analytically
extend it to a function $\rho(z,w) $ on the complexification
$M_{\tau} \times \overline{M_{\tau}}$ of $M$,  as a function holomorphic in $z$ and $\bar{w}$. Thus, the  analytic extension
of $\rho(z)$ has the form $\rho(z, w) = f(z, \bar{w})$ where $f$ is holomorphic in both variables and  satisfies
$f(z, w) = f(w, z), \overline{f(z,w) } = f(\bar{z}, \bar{w}). $

 The defining function of the Grauert tube $M_{\tau}$ is $\sqrt{\rho} -2  \tau$, or equivalenty  $(\rho(z,z) - 4 \tau^2)$.  Thus, $\rho < 4 \epsilon^2$ in $M_{\epsilon}$,  $d \rho \not= 0$ when $\rho = 4 \epsilon^2$,
and the Levi matrix $\begin{pmatrix} \partial^2 \rho/\partial z_j \partial \bar{z}_k \end{pmatrix}$ is positive Hermitian non-degenerate. 
Indeed, $i \ddbar \rho= \omega_{\rho}$ is a K\"ahler form on $M_{\epsilon}$.    

Following the notational conventions of \cite[Proposition 1.1]{BoSj},
\begin{equation} \label{psi}  \psi_{\tau}(\zeta, w) =  \frac{1}{i}(\rho(\zeta,w) - 4 \tau^2) ,\;\;\; (\zeta, w \in \partial M_{\tau}).\end{equation} 
Thus, $\psi_{\tau}(z,w)$ is holomorphic
in $z$, anti-holomorphic in $w$ and satisfies  $$\psi(z, w) = - \overline{\psi(w,z)}.$$
The imaginary part of
\eqref{psi} is minus   the Calabi diastasis function,
\begin{equation} \label{CD} \begin{array}{lll} D(z,w) : & = &  - \left(\rho(z,w) + \rho(w,z) - \rho(z,z) - \rho(w,w) \right) 
.  \end{array}\end{equation}

Near the diagonal, \eqref{psi} admits the Taylor expansion, \begin{equation} \label{TAYLOR} \frac{1}{i} (\psi(x,y) + \psi(y,x) - \psi(x,x)  - \psi(y,y)) = L_{\rho}(x - y) + O(|x-y|)^3. \end{equation}
where $L_{\rho}$ is the Levi form (or K\"ahler form), and so as a function on $\Omega \times \Omega$,
\begin{equation} \label{IM} \Im \psi(z,w) \geq C \left( d(z, \partial \Omega) + d(w, \partial \Omega) + |z - w|^2 \right) + O(|z -w|^3).  \end{equation}

\begin{rem}The  notational convention of \cite{BoSj}
of putting $i^{-1}$ in front of $\rho - 4 \tau^2$ disagrees with that of Phong-Stein \cite{PhSt1} (page 96), who use the notation $\psi$
rather than $i \psi$ for the complex phase.
\end{rem}

\subsection{\label{KNCSECT}\kahler normal coordinates}

Since $(M_{\tau}, i \ddbar \rho)$ is a \kahler manifold, we may define  \kahler normal coordinates around any  point $p \in M_{\tau}$.  Local holomorphic coordinates $(z_1,\dots,z_m)$ in a
	neighborhood $U$ of $p$ are called \kahler normal coordinates centered at $\zeta \in \partial M_{\tau}$ with $z(\rho) = 0$ if  		 the \kahler potential $\rho$ takes the form,
	\begin{equation} \label{phiK} \rho(z) = 4 \tau^2 +  |z|^2 + \sum_{ J K} a_{ J K} z^J \bar{z}^K, \;\;{\rm with}\; |J| \geq 2, |K| \geq 2, \end{equation}
so that the \kahler form is locally given by,
\begin{equation} \label{omega} \omega = \omega_0 + \sum_{i j k \ell} R_{i \bar{j} k \bar{\ell}} z_i \bar{z}_j dz_k \wedge d \bar{z}_{\ell} + \cdots,
\;\; \omega_0 = \sum_j dz_j \wedge d \overline{z_j}. 
\end{equation}
where $R_{i \bar{j} k \bar{\ell}} $ is the curvature. 

\begin{lem} \label{CDKNC} 
In \kahler normal coordinates centered at $\zeta \in \partial M_{\tau}$, so that $z =0$ denotes the point $\zeta$ the diastasis has the form,
$$D(\zeta, z) =  |z |^2 + O(|z|^3).$$
\end{lem}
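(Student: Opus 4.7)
The plan is a direct term-by-term computation from the explicit form of the K\"ahler potential in normal coordinates. By the definition \eqref{CD} we have
$$D(\zeta,z) \;=\; \rho(\zeta,\zeta) + \rho(z,z) - \rho(\zeta,z) - \rho(z,\zeta),$$
where $\rho(z,w)$ denotes the holomorphic-antiholomorphic extension of the K\"ahler potential (holomorphic in $z$, anti-holomorphic in $w$). Working in K\"ahler normal coordinates centered at $\zeta$, the point $\zeta$ corresponds to the origin and $\rho$ satisfies the expansion \eqref{phiK}.

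The first step is to polarize \eqref{phiK} to obtain the two-variable extension. Since $\rho(z,w)=f(z,\bar w)$ with $f$ holomorphic and the real-analytic expansion of $\rho$ converges in a neighborhood of the origin, the unique extension is
$$\rho(z,w)\;=\;4\tau^2+\langle z,\bar w\rangle \;+\; \sum_{|J|\geq 2,\,|K|\geq 2} a_{JK}\,z^J\,\bar w^K.$$
The key structural feature is that every cross term carries \emph{both} $|J|\geq 2$ and $|K|\geq 2$; this two-sided vanishing is the defining property of K\"ahler normal coordinates that drives the computation.

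I would then evaluate the four summands. The condition $|J|\geq 2$ forces every cross monomial to vanish when the first slot is set to zero, and $|K|\geq 2$ forces it to vanish when the second slot is set to zero; the linear pairing $\langle z,\bar w\rangle$ also vanishes in either case. Hence $\rho(\zeta,\zeta)=\rho(\zeta,z)=\rho(z,\zeta)=4\tau^2$. On the diagonal, $\rho(z,z)=4\tau^2+|z|^2+O(|z|^4)$, since each surviving cross term has total degree at least $4$. Substituting gives
$$D(\zeta,z) \;=\; 4\tau^2+\bigl(4\tau^2+|z|^2+O(|z|^4)\bigr)-4\tau^2-4\tau^2 \;=\; |z|^2+O(|z|^4),$$
which is in fact stronger than the claimed $O(|z|^3)$.

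There is no serious obstacle here: the proof reduces to recognizing the cancellation structure built into K\"ahler normal coordinates. The only point requiring any care is the bookkeeping of the holomorphic versus anti-holomorphic slot in $\rho(z,w)$, which is handled automatically by the polarization formula above together with the two-sided bound $|J|,|K|\geq 2$ on the remainder in \eqref{phiK}.
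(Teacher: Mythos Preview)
Your proof is correct and follows essentially the same approach as the paper's: polarize the K\"ahler normal coordinate expansion \eqref{phiK} to obtain $\rho(z,w)=4\tau^2+\langle z,\bar w\rangle+\text{(higher order)}$, then substitute into the diastasis formula. The paper carries out the computation for general $z,w$ to get $D(z,w)=|z-w|^2+O^3$ before specializing to $w=\zeta=0$, whereas you specialize immediately; you also observe the sharper $O(|z|^4)$ remainder coming from $|J|,|K|\geq 2$, which the paper records only as $O^3$.
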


\begin{proof} Set $w = \zeta$ in \eqref{CD} with $\rho(\zeta, \zeta) = 4 \tau^2$, to get $\rho(z,w) = 4 \tau^2 + z \bar{w} + O^3 $. Then,
$D(z,w) = - (z \bar{w} + w \bar{z}   - |z|^2 - |w|^2 +  O^3 ) = |z-w|^2 + O^3.$

\end{proof}


\subsection{\label{CRSECT} CR geometry of $\partial M_{\tau}$}

Let us denote by $J$ the adapted complex structure on $M_{\tau}$ arising
from the complexification of $M$. 

As a real hypersurface of the complex manifold $M_{\tau}$, $\partial M_{\tau}$ 
has a 
  {\it  CR structure\/}, i.e.
 a real  $J$-invariant horizontal symplectic hyperplane bundle defined by \begin{equation}
\label{H} H = \ker \alpha \subset
T \partial M_{\tau}, \;\; J: H \to H, \;\; H = J T\partial M_{\tau} \cap
T \partial M_{\tau}.   \end{equation} Then
\begin{equation} \label{SP} T \partial M_{\tau} = H \oplus \R T \end{equation}  where 
 $T$ is the characteristic vector field satisfying $$\alpha (T) = 1, \;\;  d \alpha(T, \cdot) = \omega_{\rho}(T, \cdot 0) = 0 \; \mbox{on}\; \partial M_{\epsilon}. $$
  $T = \Xi_{\sqrt{\rho}}$ is  the just the Hamilton vector field of $\sqrt{\rho}$ on $M_{\tau}$ with
respect to $\omega $ \eqref{rho}.
After complexifying each horizontal space $H_{\zeta} \otimes \C=  H_{\zeta}^{1,0} \oplus H_{\zeta}^{0,1}$,  we have the decomposition
   \begin{equation} \label{SPLITc} T_{\C} \partial M_{\tau} = H^{1,0} \oplus H^{0,1} \oplus \C T, \end{equation} We  define the  boundary Cauchy-Riemann operator  $\bar{\partial}_b$
operator by  $\bar{\partial}_b = df|_{H^{1,0}}$.

\subsection{Geodesic and Hamiltonian flows}

We denote by  $\Xi_H$
the Hamiltonian vector field of a Hamiltonian $H$ and its flow by
$\exp t \Xi_H$.  Given the symplectic form \eqref{rho}, we define the Hamiltonian flow 
\begin{equation} \label{gtdef} 
g^t = \exp t \Xi_{\sqrt{\rho}}, \;\; \rm{on}\;\; M_{\tau}. \end{equation}
We  denote the restriction of the Hamilton flow on the right side to the energy
surface $\partial M_{\tau}$ by,
\begin{equation} \label{gtau} g_{\tau}^t: \partial M_{\tau} \to \partial M_{\tau}.\end{equation}
We denote by $G^t = \exp t \Xi_{|\xi|}$ the homogeneous Hamiltonian flow on $T^*M \backslash \{0\}$. 
We also define the 
exponential map  $\exp_x: T^*M \to M$ of $g$;
as usual in geometry, the exponential map is defined by the Hamiltonian flow of $|\xi|^2$ rather than $|\xi$. 

The analytic continuation of the exponential map to imaginary time defines a diffeomorphism \eqref{EXP}
satisfying  $E^* \sqrt{\rho} = |\xi|$
\cite{GS1,LS1}. It follows that $E^*$ conjugates the geodesic flow
on $B^*M$ to the Hamiltonian flow \eqref{gtdef}, i.e.
\begin{equation} \label{gt} E(G^t(x, \xi)) = g^t(\exp_x i \xi).  \end{equation}
We often restrict \eqref{EXP} to the unit cosphere bundle $S^*_{\tau} M$
of radius $\tau$ and we then \eqref{EXP}- \eqref{gtau}  become, 
\begin{equation} \label{Etau} E_{\tau}: S^*_{\tau} \to \partial M_{\tau}, \;\;\; g^t_{\tau} : = E_{\tau} G^t E_{\tau}^{-1}. \end{equation} 

\subsection{\label{PSSECT} Phong-Stein leaves as symplectic transversals }

In dealing with the geodesic flow on $\partial M_{\tau}$ it is very useful to introduce symplectic transversals to the flow, and to define
the time coordinate by the flow-time to a transversal. A very nice set of transversals (or, leaves) were introduced in 
 \cite{PhSt1}  (see also  \cite[Example 4]{PhSt2}  on a general strictly pseudo-convex domain $D$. In the Grauert tube setting, they
 are defined by,
\begin{equation} \label{LEAF} \mcal_{\zeta} =
 \{w \in \partial M_{\tau}:  \Re \psi(\zeta, w) = 0
\},\end{equation}
where $\psi$ is defined in \eqref{psi}.
The Phong-Sturm leaves $\mcal_{\zeta}$ are not complex submanifolds of $M_{\tau}$ in general, but at the point $z$ they are tangent to $H_{\zeta}^{*(1,0)} M_{\tau}$. We summarize the result  in \cite{PhSt1}:

\begin{lem} $T_{\zeta} \mcal(\zeta) = H^{1,0}_{\zeta} $, hence  $\mcal_{\zeta} $ is  transversal to $T = \Xi_{\sqrt{\rho}}$ at $\zeta$, hence is a local symplectic    to the flow $g^t_{\tau}$ (by \eqref{SPLITc}).   
 \end{lem}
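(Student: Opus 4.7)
The plan is to reduce the lemma to a single pointwise linear computation at $\zeta$. I will interpret the identification $T_\zeta \mcal(\zeta) = H^{1,0}_\zeta$ as an equality of real subspaces of $T_\zeta\partial M_\tau$, using the canonical real-linear isomorphism $H_\zeta \to H^{1,0}_\zeta$ given by $v \mapsto \tfrac12(v - iJv)$ that holds because $H_\zeta$ is $J$-invariant. It then suffices to prove $T_\zeta \mcal_\zeta = H_\zeta$.

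First I would compute the differential $d_w \Re\psi(\zeta,\cdot)|_{w=\zeta}$ directly from the polarization $\rho(z,w) = f(z,\bar w)$. Since $\rho(\zeta,w)$ is antiholomorphic in $w$, only the Wirtinger $\bar w$-derivatives contribute, and evaluating on the diagonal yields
$$d_w \rho(\zeta, w)\big|_{w=\zeta} \;=\; \sum_j (\partial_{\bar z_j}\rho)(\zeta)\, d\bar z_j \;=\; \bar\partial\rho\big|_\zeta,$$
so $d_w \psi(\zeta,w)|_{w=\zeta} = \tfrac1i\,\bar\partial\rho|_\zeta$. Using $2\bar\partial\rho = d\rho - i\,d^c\rho$ with $d^c = i(\bar\partial-\partial)$ and that $d\rho,\,d^c\rho$ are real 1-forms, the real part is
$$d_w \Re\psi(\zeta,w)\big|_{w=\zeta} \;=\; \Re\!\Bigl(\tfrac{1}{i}\,\bar\partial\rho\big|_\zeta\Bigr) \;=\; -\tfrac{1}{2}\,d^c\rho\big|_\zeta.$$

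Next, $T_\zeta\mcal_\zeta$ is the intersection of $T_\zeta\partial M_\tau = \ker d\rho|_\zeta$ with the kernel of this differential, i.e.\ $\ker d\rho|_\zeta \cap \ker d^c\rho|_\zeta$. On $T_\zeta\partial M_\tau$ one has $\partial\rho = \tfrac{i}{2}d^c\rho$, hence $\alpha|_\zeta = \tfrac1i\partial\rho|_\zeta = \tfrac12 d^c\rho|_\zeta$, so $H_\zeta = \ker\alpha \cap T_\zeta\partial M_\tau$ equals exactly this intersection. This gives $T_\zeta\mcal_\zeta = H_\zeta$, which under the identification above is $H^{1,0}_\zeta$. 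Transversality to $T = \Xi_{\sqrt{\rho}}$ is then immediate from the splitting \eqref{SP}, $T_\zeta\partial M_\tau = H_\zeta \oplus \R T$. The symplectic transversal property follows because $d\alpha|_{H_\zeta}$ is the (nondegenerate) Levi form while $T$ is its characteristic direction, so $\mcal_\zeta$ inherits a symplectic structure from $d\alpha$ at $\zeta$ and, by continuity, in a neighborhood; applying $g^t_\tau$ transports this to nearby symplectic transversals.

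The main step is simply the identity $d_w \Re\psi(\zeta,\cdot)|_{w=\zeta} = -\tfrac12 d^c\rho|_\zeta$; there is no genuine obstacle, only careful bookkeeping between $\rho$ as a real function on $M_\tau$ and its polarization $\rho(z,w)$ holomorphic in $z$ and antiholomorphic in $w$, together with the translation between $\alpha$ and $d^c\rho$ on $T\partial M_\tau$.
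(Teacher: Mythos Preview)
Your proof is correct. The paper does not actually supply a proof of this lemma; it merely states it as a summary of a result from \cite{PhSt1}. Your direct computation --- identifying $d_w\Re\psi(\zeta,\cdot)|_{w=\zeta}$ with $-\tfrac12 d^c\rho|_\zeta$ and hence with $-\alpha|_\zeta$ on $T_\zeta\partial M_\tau$ --- is exactly the right way to verify the claim, and it agrees with the paper's later use (in the critical-point analysis of Lemma~\ref{PERCRIT1}) of the identity $d_w\psi(\zeta,w)|_{w=\zeta}=\pm\alpha$ restricted to $T_\zeta\partial M_\tau$.
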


\subsection{\label{HEISCOORDSECT} Heisenberg coordinates}

We will be linearizing phases of integrals by taking Taylor expansions in special coordinate systems, generalizing the \kahler normal coordinates in
the line bundle setting and the Heisenberg coordinates in \cite{ShZ02} on boundaries of unit co-disc bundles. Since $\mcal_{\zeta}$ is not a complex
hypersurface in $M_{\tau}$ it is afortiori not a \kahler manifold and it does not make sense to introduce \kahler normal coordinates on it.  However,
there exist \kahler normal coordinates on $M_{\tau}$ (see Section \ref{KNCSECT}) and one may define  `Heisenberg  coordinates'  as follows:

\begin{defn}\label{SODEF} 
We  define `slice-orbit'   coordinates on $\partial M_{\tau}$ near $\zeta$  as
the (locally defined) inverse of the slice-orbit parametrization,
\begin{equation} \label{SO} (w', t) \in \mcal_{\zeta} \times \R \to g^t_{\tau}(w'). \end{equation}
By slice-orbit \kahler\; normal coordinates we mean slice orbit coordinates with  $w'$ defined by the restriction of \kahler normal coordinates centered at $\zeta$
on $\mcal_{\zeta}$.

\end{defn}

\subsection{Return times and return maps}
The map \eqref{SO}  is well defined for all $t$ but is not one to one, and has many
local inverses. 
The inverses will be important below and are best described in terms of the return times and  (non-linear)  local Poincar\'e first return map for the transversal 
near $\zeta$. For points $z \in \mcal_{\zeta}$ near $\zeta$, the orbit $g^t_{\tau}(z)$
will return to $\mcal_{\zeta}$ at some minimal time $T(z)$ near $T(\zeta)$. 

\begin{defn} \label{POINC}
 Denote by 
$$\Phi_{\zeta}(z) = g^{T(z)}(z) : \mcal_{\zeta}\to \mcal_{\zeta}$$  first return map  to $\mcal_{\zeta}$. 
We further define the $n$th return time $T_n$ so that $T_n(\zeta) = n T(\zeta)$. The $n$th
return map $\Phi^{(n)}(z)$ is defined to equal $g_{\tau}^{nT(z)}(z)$.
\end{defn}

When $\zeta$ is a periodic point, we obviously have
$$(s, \zeta) \simeq (s + n T(\zeta), \zeta) $$
in the sense that the two sides get taken to the same point under the slice-orbit parametrization.

The nth return map is only well-defined for $z$ sufficiently close to $\zeta$, but this
is sufficient for the proof of the pointwise Weyl laws. 
We then consider $z \in \mcal_{\zeta}$ near $\zeta$ and use slice orbit coordinates for the $n$th return
$\Phi_{\zeta}^n(z) \in \mcal_{\zeta}$.  For $t$ near $T_n(\zeta)$ and $w$ sufficiently close to $\zeta$,  we have the following equivalence relation on slice-orbit coordinates:
\begin{equation} \label{EQUIV} (t, z) \simeq  (n T(\zeta)  + (t - n T(\zeta)), \Phi_{\zeta}^n(z))
\end{equation}

\subsection{\label{COMPARISON} Comparison to the line bundle setting}
In this section, we compare the geometry of the Grauert tube setting to that of the line bundle setting. 
Grauert tubes  are the Riemannian analogue of (co-) disc bundles $D^* \subset L^*$  of positive  Hermitian line bundles
$L \to M$ over \kahler manifolds.  
The line bundle setting is that of a positive Hermitian holomorphic line bundle $(L, h) \to (M, \omega)$ over a \kahler manifold, where $i \ddbar \log h = \omega$.  Let $L^*$ be the dual line bundle,  let $h^*: L^* \to \R$ be the dual Hermitian metric and let $X_h= \{h^* = 1\}$ be the boundary
of the unit co-disk bundle. It is a strictly pseudo-convex  CR hypersurface in the complex manifold $L^*$ and is the analogue of $\partial M_{\tau}$. 
The Reeb vector field $\frac{\partial}{\partial \theta}$ is the analogue of the Hamilton vector field $\Xi_{\sqrt{\rho}}$ on $\partial M_{\tau}$, but
has two significantly simpler properties: First, it generates an $S^1$ action ($S^1 = \R/\Z$), which in the Riemannian case is only true on a Zoll
manifold; second, and more important, it generates a holomorphic action which complexifies to a $\C^*$ action. In the Riemannian setting,
the geodesic flow is never holomorphic, and (except for Zoll manifolds),  the orbits of the geodesic flow almost never form a fiber bundle over a quotient space and there is no analogue of $M$.

 There are also significant differences in the behavior
of \szego \; kernels and the linearization of other relevant kernels to osculating Heisenberg spaces. We make substantial use of the Phong-Stein foliation and
their analysis of adapted coordinates in \cite{PhSt1,PhSt2} in the linearization procedure.    Moreover, this article is fundamentally about analytic
continuations of eigenfunctions from $M$ to the Grauert tube, while in \cite{ZZ18} the results pertain to holomorphic sections of powers of $L$ that have no underlying
real structure. But we are able to refer to  \cite{ZZ18} for many of the details on  local Bargmann-Fock-Heisenberg approximations and linearizations of Fourier
integral Toeplitz operators. We refer to Section \ref{SZEGOCOMP} for further comparisons to the line bundle case.
The pointwise Weyl law in the line bundle setting was proved by the author and P. Zhou in  \cite{ZZ18}, and there are many over-laps
in this article and \cite{ZZ18}. 

\section{\label{BFHSECT} Linear and Heisenberg models}

In this section, we review the linear Bargmann-Fock models. As in \cite{ZZ18} and elsewhere, we often reduce calculations in the nonlinear setting to osculating Bargmann-Fock and Heisenberg models on the tangent spaces. 
Much of the exposition below repeats that in \cite{ZZ18}; proofs are generally omitted unless they are short, and the reader is referred to
\cite{ZZ18} for further details.
		
\subsection{\label{HEISSECT} Heisenberg group}
The space $\C^m \times S^1$ can be identified with the reduced Heisenberg group $\H^m_{red}$, where the group multiplication is given by 
\[ (z, \theta) \circ (z', \theta') = (z+z', \theta+\theta' + \Im( z \bar z')). \] We repeat some background from \cite{ZZ18}.

The generators of the Heisenberg group action are   contact vector fields on the Heisenberg group generated by a linear Hamiltonian function $H: \C^m \to \R$.
For any $\beta \in \C^m$, we define a linear Hamiltonian function on $\C^m$ by
$ H(z) = z \bar \beta + \beta \bar z. $
The Hamiltonian vector field on $\C^m$ is  
$ \Xi_H = - i \beta \pa_z + i \bar \beta \pa_{\bar z}, $
and it lifts to a contact vector field on $\H^m_{red}$,
\[ \h \Xi_H = - i \beta \pa_z + i \bar \beta \pa_{\bar z} -\half( z \bar \beta + \beta \bar z) \pa_\theta, \]
with respect to the  contact form $\alpha = d\theta + \frac{i}{2} \sum_j (z_j d\bar z_j - \bar z_j dz_j)$,
and generates the flow,
\[ \h g^t(z,\theta) = (-i \beta t, 0) \circ (z,\theta) = (z-i\beta t, \theta - t\Re(\beta \bar z)). \]



	\subsection{\label{METASECT} Symplectic Linear Algebra} We refer to \cite{deG,Gutt,Gutt2,Gutt3} for background on symplectic linear algebra and symplectic
	normal forms. 
 
 Let $(V, \sigma)$ be a real symplectic vector space of dimension $2m$ and
let $J $ be a compatible complex structure on $V$. There exists a symplectic basis in which  $V \simeq \R^{2m} $,  $\sigma$ takes the standard form $ \omega = 2 \sum_{j=1}^m dx_j \wedge d y_j$ and   $J$ has the standard form,  $J_0 = \begin{pmatrix}  0 & - I \\ & \\
I & 0 \end{pmatrix}.$
We generally identify $(V, \sigma)$ with the standard real symplectic vector space 
 $\R^{2m}, \omega = 2 \sum_{j=1}^m dx_j \wedge d y_j$.

We denote the symplectic group of $(V, \sigma)$ by $\rm{Sp}(m, \R) $\footnote{It is sometimes denoted $\rm{Sp}(2m, \R)$}, and its Lie algebra
by ${\mathfrak s} {\mathfrak p}(m, \R)$.  The group $\rm{Sp}(m, \R)$ consists of linear transformation $S: \R^{2m} \to \R^{2m}$, such that $S^*\omega = \omega$, and as in \cite{F89}  it may be expressed in block form as,
\begin{equation} \label{SBLOCK}  \bma x' \\ y' \ema = S \bma x \\ y \ema = \bma A & B \\ C & D \ema \bma x \\ y \ema. \end{equation}
The symplectic  Lie algebra consists of skew symplectic matrices; it  may be identified with the Poisson algebra of  quadratic Hamiltonians.

The maximal compact subgroup of $\rm{Sp}(m, \R)$ is the unitary group $K =
U(m) := \rm{Sp}(m, \R) \cap O(2n, \R)$ of $(\R^{2m}, J)$. It is the group of orthogonal matrices $U$ on $\R^{2m}$ satisfying $U J = J U$.
 One has
$$U = \begin{pmatrix} A & -B \\ & \\
B & A \end{pmatrix},\;\; A B^t = B^t A, \;\; A A^t + BB^t = I, \;\; U^{-1} = \begin{pmatrix} A^t &B^t \\ & \\
-B^t & A^t \end{pmatrix} = U^t. $$

A symplectic matrix admits a polar decomposition $S = U \hat{P}_S$ where $\hat{P}_S = (S^*S)^{\half}$ is  a non-negative symplectic matrix  and where $U \in U(m)$; see    \cite[Proposition 4.3]{F89}.

If $S \in Sp(m, \R)$, then its  transpose $S^t = J S^{-1} J^{-1}$ also lies in  $Sp(m, \R)$ and   $S J = J (S^t)^{-1}.$
$S  \in \rm{Sp}(m, \R) $ is 
a {\it symmetric symplectic matrix} if it satisfies $S^t = S$, and then $S J = J S^{-1}$.
We say that $S  \in \rm{Sp}(m, \R) $ is a normal symplectic matrix if $[S, S^t] = 0$.  In the polar decomposition $S = U \hat{P}_S$  of a normal  $S \in \rm{Sp}(m, \R)$, one has $ \hat{P}_SU = U \hat{P}_S$.
A symplectic matrix $S$ is symmetric positive definite if and only if $S = e^X$ with $X \in \mathfrak s \mathfrak p(m)$
and $X = X^t$. See for instance   \cite[Proposition 4.7]{F89}.

\subsection{Semi-simple symplectic matrices} Let $S \in \rm{Sp}(m,\R)$. An element $S \in \rm{Sp}(m,\R)$ is called {\it semi-simple} if $\R^{2m} \otimes \C = \C^{2m}$ is the direct 
sum of the eigenspaces $E_{\lambda}$  of $S$. The eigenvalues of $S$ arise in quadruples $[\lambda] = \{\lambda, \lambda^{-1},
\bar{\lambda}, \bar{\lambda}^{-1}\}$.  

A semi-simple matrix $M$ is one that may be diagonalized over $\C$, i.e. if $M \otimes \C$ is  conjugate in $GL(2m, \C)$ to a diagonal matrix. In general,
it need not be conjugate in $\rm{Sp}(m, \R)$ to a diagonal matrix. If $M \in \rm{Sp}(m, \R)$ is diagonalizable 
over $\R$, then $\R^{2m}$ admits a symplectic basis consisting of eigenvectors of $M$, hence is symplectically diagonalizable; equivalently 
there exists $U \in U(m)$ so that $U^t S U = \Lambda$ is the diagonal matrix $$\Lambda = \rm{diag} (\lambda_1, \dots, \lambda_m; \lambda_1^{-1}, \dots, \lambda_m^{-1}).$$
Indeed  if $e_1, \dots, e_n$ are orthonormal eigenvectors of $S$ corresponding to the eigenvalues $\lambda_1, \dots, \lambda_n$
then since $S J = JS^{-1}$, 
$$S J e_k = J S^{-1} e_k = \frac{1}{\lambda_k} J e_k. $$
Hence $\pm J e_1, \dots, \pm J e_n$ are orthonormal eigenvectors. The basis $\{e_j, J e_k\}$ is symplectic,

Symplectic matrices such as $U \in U(m)$ with complex eigenvalues are not conjugate to their diagonalizations in $\rm{Sp}(m, \R)$.
According to \cite{Gutt, Gutt2,Gutt3}, the  conjugacy classes of  $A \in \rm{Sp}(m, \R)$  are determined by three
types of data: (i) the eigenvalues of $A$, (ii) $\dim (\ker (A - \lambda)^r)$ for $r \geq 1$ for one eigenvalue in each $[\lambda]$; and for
$\lambda = \pm 1$, the rank and signature data of an associated  quadratic form. To avoid excessive technicalities, we only consider semi-simple
symplectic matrices.

 Let $$W_{[\lambda]}  = E_{\lambda} \oplus E_{\lambda^{-1}} \oplus E_{\bar{\lambda}} \oplus E_{\bar{\lambda}^{-1}}. $$
Then $W_{[\lambda]} = V_{[\lambda]} \otimes \C$ where $V_{[\lambda]}$ is a real symplectic subspace, and one has symplectic orthogonal decompositions, 
\begin{equation} \label{WDECOMP} \R^{2m} = \bigoplus_{j=1}^k V_{[\lambda_j]}, \;\; \C^{2m} = \bigoplus_{j=1}^k  W_{[\lambda_j]}, \end{equation}
where $k$ is the number of distinct 4-tuples of eigenvalues. Note that eigenvalues $\pm 1$ are special, since then $E_{\lambda} = E_{\lambda^{-1}}$,
and the 4-tuple collapses to a singleton.

\begin{itemize}
\item We say that $S \in \rm{Sp}(m, \R) $ is a positive definite symmetric symplectic matrix (or, a real  hyperbolic  symplectic matrix)  if it is  conjugate in $Gl(2m, \R)$ to the diagonal matrix  $\Lambda $ with $\lambda_j > 0$. In this case, there  exists $U \in U(m)$ so that $S = U^t \Lambda U. $ \bigskip

\item $S$ is complex hyperbolic if it is semi-simple and its decomposition \eqref{WDECOMP} contains eigenvalue quadruples $\lambda \in \C$ with
$\Re \lambda \not=0$.  \bigskip

\item $S$ is elliptic if all $\lambda$ in \eqref{WDECOMP} satisfy $|\lambda | = 1$, $\lambda \not= \pm 1$.\bigskip

\item $S$ is degenerate elliptic if all $\lambda$ in \eqref{WDECOMP} satisfy $|\lambda | = 1$, and there exists $\lambda$ with  $\lambda = \pm 1$.\bigskip

\end{itemize}

\subsection{\label{CXSTRUCTSECT} Complex structures} Let  $V$ be a real symplectic vector space and let 
  $V_{\C} = V \otimes \C$ be its  complexification. Given a complex structure $J$ on $V$, let $H^{1,0}_J$ resp. $H^{0,1}_J$, denote the $\pm i$ eigenspaces of $J$ in $V \otimes \C$. 
The projections onto these supspaces are denoted
by
\begin{equation} \label{PJ} P_J = \half(I - i J): V \otimes \C \to  H^{1,0}_J, \;\; \bar{P}_J = \half(I + i J): V \otimes \C \to H^{0,1}_J. \end{equation}

In complex coordinates $z_i = x_i + i y_i$, we have then 
\[ \bma z' \\ \bar z' \ema = \bma P & Q \\ \bar Q & \bar P \ema \bma z \\ \bar z \ema =: M\; \bma z \\ \bar z \ema, \]
where (in the block notation of \eqref{SBLOCK}) $M =  \wcal^{-1} S  \wcal$, i.e.
\begin{equation} \label{PQDEF} M:=  \bma P & Q \\ \bar Q & \bar P \ema = \wcal^{-1} \bma A & B \\ C & D \ema \wcal, \quad \wcal = \frac{1}{\sqrt 2} \bma I & I \\ -i I & iI \ema. \end{equation}
This conjugate group  of $M$ is denoted by  $ \rm{Sp}_c(m, \R) \subset GL(2n,\C)$ in \cite{F89}, and one has the identities   \cite[Prop. 4.17]{F89},

 $$ \begin{pmatrix}  P & Q \\ \bar Q & \bar P \end{pmatrix}^{-1} =\begin{pmatrix} P^* & -Q^t \\ -Q^* & P^t \end{pmatrix}.$$
By \eqref{PJ}, the upper left block $P$ is given in terms of the real blocks \eqref{SBLOCK} of $S$ by, \begin{equation} \label{PDEF} P_J S P_J =  P = \half(A+D + i (C-B)). \end{equation}

We will also need to use polar decomposition $S = U \hat{P}_S$ where $U \in U(m)$ and where $\hat{P}_S$ is positive symmetric (cf \cite{F89}). \footnote{The notation $ \hat{P}_S$ for a positive matrix should not be confused with the block $P= P_J S P_J$.  } In the following, we assume
$S$ is a normal symplectic matrix, i.e. that $S$ commutes with $S^*$.
\begin{lem} \label{DetFORM}  Let $S$ be a normal symplectic matrix, and let 
 $S = U  \hat{P}_S $ be the   polar decomposition of $S$. Then,  $\det P_J S P_J =  \det P_J U P_J  \cdot \det P_J  \hat{P}_S P_J$,
 with $|\det P_J U P_J|  =1$.
 \end{lem}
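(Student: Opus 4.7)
The plan is to treat the factorization and the unit-modulus claim separately, using only that the unitary factor $U$ commutes with $J$ and that determinants of holomorphic blocks of operators preserving $H^{1,0}_J$ multiply.

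First I would fix the convention in accord with \eqref{PDEF}: $\det P_J A P_J$ means the determinant of the $m\times m$ holomorphic block of $A$, or equivalently of the endomorphism $P_J A\big|_{H^{1,0}_J}:H^{1,0}_J\to H^{1,0}_J$. Since $U\in U(m)=\rm{Sp}(m,\R)\cap O(2m,\R)$ satisfies $UJ=JU$, formula \eqref{PJ} yields the intertwining $U P_J=P_J U$; in particular $U$ preserves the decomposition $V_{\C}=H^{1,0}_J\oplus H^{0,1}_J$. Using the polar decomposition $S=U\hat P_S$ I then compute
$$P_J S P_J \;=\; P_J U \hat P_S P_J \;=\; U\,(P_J \hat P_S P_J).$$
The image of $P_J\hat P_S P_J$ lies in $H^{1,0}_J$ and $U$ preserves $H^{1,0}_J$, so restricting to that $m$-dimensional subspace expresses the left side as the composition of two endomorphisms of $H^{1,0}_J$. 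Multiplicativity of determinants gives
$$\det P_J S P_J \;=\; \det\!\bigl(U\big|_{H^{1,0}_J}\bigr)\cdot \det P_J \hat P_S P_J \;=\; \det P_J U P_J\cdot \det P_J \hat P_S P_J,$$
which is the first assertion.

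For the second, $U$ is real orthogonal on $\R^{2m}$ and commutes with $J$, so $U\big|_{H^{1,0}_J}$ is unitary with respect to the Hermitian inner product induced on $H^{1,0}_J$ from the compatible pair $(g,J)$ via the isomorphism $v\mapsto P_J v$. Its determinant therefore lies on the unit circle, giving $|\det P_J U P_J|=1$.

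The main, and essentially only, subtle point is to disambiguate the determinant notation: viewed as a $2m\times 2m$ matrix, $P_J A P_J$ has rank at most $m$ and so has vanishing $2m$-determinant. Once one agrees, as in \eqref{PDEF}, that the determinant refers to the induced action on the $m$-dimensional subspace $H^{1,0}_J$, the argument reduces to the single identity $UP_J=P_J U$ and the fact that $U$ restricts to a unitary operator on $H^{1,0}_J$. Normality of $S$ is not actually needed for the factorization itself; it is natural to retain as a standing hypothesis because it is equivalent to $[U,\hat P_S]=0$ and is used elsewhere when one wants to diagonalize $\hat P_S$ compatibly with $U$.
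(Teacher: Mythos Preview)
Your argument is correct and follows essentially the same route as the paper: the key identity $UP_J=P_J U$ coming from $UJ=JU$, the factorization $P_J S P_J=(P_J U P_J)(P_J\hat P_S P_J)$ as endomorphisms of $H^{1,0}_J$, and the observation that $U\big|_{H^{1,0}_J}$ is unitary so its determinant has modulus one. Your remark that normality of $S$ is not actually used in the factorization is accurate; the paper's phrasing invokes normality but the computation only needs $UP_J=P_J U$, which holds for any $U\in U(m)$.
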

 
 \begin{proof} 
 
  For $U \in U(m)$, $UJ = JU$, so by \eqref{PJ} $U P_J = P_J U$. 
 
Since $S$ is normal, $P_J U = U P_J$, and since $P_J$ is a projection,    $P_J S P_J =  P_J   U \hat{P}_S P_J = (P_J U P_J)  (P_J \hat{P}_S P_J)$.
Hence, it suffices to prove that $\det P_J U P_J = 1$. But if we use \eqref{PQDEF} to conjugate $U \in O(2m) \cap \rm{Sp}(m, \R)$ to $ \rm{Sp}_c(m,\R)$
it is a block diagonal matrix with $Q =0$,  $P = \hat{U}$ unitary on $\C^m$. It follows that $P_J U P_J = \hat{U}$ and $\det P_J U P_J \in S^1$.

\end{proof}

\subsection{\label{HEISMETSECT} Heisenberg and metaplectic representations}

The metaplectic representation is a representation of the double cover of $\rm{Sp}(m,\R) \simeq  Sp_c(m, \R)$ on Bargamann-Fock space. 
 The Bargmann-Fock space of a symplectic
vector space $(V, \sigma)$ with compatible complex structure $J \in \jcal$ is the Hilbert space,
$$\hcal_{J} = \{f  e^{-\half \sigma(v, J v)} \in L^2(V, dL), f\; \mbox{is\; entire \; J-\;holomorphic}\}. $$ 
Here,    \begin{equation} \label{GSJ} \Omega_{J} (v) := e^{-\half \sigma(v, J v)}
\end{equation} is  the `vacuum state' and $d L$ is normalized Lebesgue measure (normalized
so that square of the symplectic Fourier transform is the identity).
The orthogonal projection onto $\hcal_J$ is denoted by $P_J$ in \cite{Dau} but we
denote it by $\Pi_J$ in this article. Its Schwartz kernel relative to $dL(w)$ is denoted
by $\Pi_J(z,w)$. 
\bigskip


The Heisenberg group
acts on Bargmann-Fock space  by phase space translations, 
\begin{equation} \label{W} W(a):  \hcal_{J} \to \hcal_{J} \end{equation} for  $a \in V$, defined by
$$(W(a) \psi)(v) = e^{i \sigma(a, v)} \psi(v - a). $$

The (double cover) $Mp(m,\R)$ of  $Sp(m,\R)$ acts on the Bargmann-Fock  space $\hcal$  by unitary integral operators.  
Following \cite{Dau},  we denote by $W_J(S)$ the 
unitary operator associated to $S \in Mp(m,\R)$ but for simplicity we view $S$ as an element of $Sp(m, \R)$.   Let
$U_S$ be the unitary translation operator  on $L^2(\R^{2n}, d L)$ defined  by $U_S F(x, \xi): = F(S^{-1}(x, \xi))$. The metaplectic representation of $S$ on $\hcal_J$
is given by (\cite{Dau},(5.5) and (6.3 b)) 
\begin{equation} \label{eta}W_J(S) = \eta_{J,S} \Pi_J U_S \Pi_J,
\end{equation} where \begin{equation} \label{ETAJS} \begin{array}{lll}
\eta_{J,S} & = & 2^{-n}  \det (I - i J) + S (I + i J)^{\half}.
 \end{array} \end{equation} 
Given $M$ as in 
\eqref{PQDEF},  the Schwartz kernel of $W_J(M)$ is given by
\begin{equation} \label{METASCHWARTZ}   \kcal_{M}(z,  w) = (\frac{1}{2 \pi})^m (\det P)^{-1/2} \exp \left\{ \half \left( z \bar{Q} P^{-1} z + 2 \bar{w} {P}^{-1} z
- \bar{w} P^{-1} Q \bar w \right)  \right\}, \end{equation}
where the ambiguity of the sign the square root $(\det P)^{-1/2}$ is determined by the lift to the double cover.
The following Proposition  ties together
the dynamical Toeplitz formula  \eqref{eta} for $W_J(S)$ with the kernel formula \eqref{METASCHWARTZ}.

\bpp \label{toep-met}
Let $ M$ be a linear symplectic map \eqref{PQDEF}. Then, 
 the Schwartz kernel \eqref{METASCHWARTZ} of its metaplectic quantization 
may be expressed in terms of the \szego\; projector $\Pi_J$ onto $\hcal_J$ by,
\[   \kcal_{M} ( z,  w) = (\det P^*)^{1/2} \int_{\C^m}  \Pi( z,   M  u) \h \Pi( u,  w) d u ) \]
\epp
We only use this formula as motivation for the dynamical Toeplitz quantization of geodesic flows on Grauert tubes. Proposition
\ref{toep-met}  is extensively
discussed in   \cite{Dau, Z97, ZZ18}, and we refer there for further discussion.

\subsection{\label{DETS} Matrix elements and determinants} In this section, we review determinant formulae from \cite{Dau} which relate the determinant $\det P$ in \eqref{METASCHWARTZ}
and inn Proposition \ref{toep-met} with $\eta_{J,S}$ in \eqref{ETAJS} and \eqref{eta}. The same determinant arises in the
 principal symbol in Proposition \ref{LINEAR} and in Theorem \ref{SCALINGTHEO}, and  is the origin
of  $\gcal_n(\zeta)$ \eqref{ABCDintro} - \eqref{ABCD}  in the  $Q_{\zeta}(\lambda)$ function in Definition \ref{QDEF}  Theorem \ref{SHORTINTSa}.

The symplectic form $\omega$ induces a notion of determinant of 
a linear transformation $T$ by $\det_{\omega} T: = \frac{T^*(\omega^n)  }{\omega^n}. $
A choice of symplectic basis identifies $(V, \omega)$ with $(\R^{2n}, \omega_0)$
(the standard symplectic form), and then $\det T$ is the standard determinant. 
Given $S \in Sp(2n, \R)$, the polar decomposition of $S$ has the form
$S = P Q$ where $P = (S^* S)^{\half}$ is the polar part  and $Q  = P^{-1} S$ is the orthogonal
part.

Given $J \in \jcal$ and $S \in Sp(V, \omega)$ we define (see 
  \cite{Dau} (6.1) and (6.3a)), 
\begin{equation} \label{BETAJS} \begin{array}{lll}
 \beta_{J, SJS^{-1}}  &= & 2^{-n/2} [\det (SJ + JS) ]^{1/4}

 \end{array} \end{equation}
The determinants \eqref{ETAJS} and \eqref{BETAJS}   are related by,$|\eta_{J,S}| = \beta_{J, SJS^{-1}}$. In fact (see \cite{Dau}, above (6.3a), and (B6))
$$|2^{-n}  \det (I - i J) + S (I + i J)^{\half}| = [\det (SJ + JS) ]^{1/2} = 2^n \beta^2_{J, SJS^{-1}}. $$

We further record the identities,
\begin{equation}\label{ID2}
\det (SJ + JS) = \det (I + J^{-1} S^{-1} J S) = \det (I + S^* S).
\end{equation}



.



If we express $S$   as a block  matrix \eqref{SBLOCK},
 then (cf. \cite{Dau}, p. 1388,
\begin{equation}\label{DAUB}  \begin{array}{l}(\eta^*_{J, S})^{-1} = \det ((I + i J) + S (I - i J)) = 2^n \det(A + D + i (B - C)).
  \end{array}  \end{equation} 
  This is the origin of the determinant \eqref{ABCD}.


The following explains the relations between the determinants \eqref{ABCD}  and the matrix elements in the ground state in \eqref{ABCDintro}. 
\begin{lem}\label{DAULEM} ( \cite{Dau}  p. 1388 (Above Appendix C); see \eqref{ETAJS})  Let $(V, \omega) $ be a real symplectic vector space,
 and let $\det = \det_{\omega}$. Let $S \in Sp(V, \omega)$ be as in \eqref{SBLOCK},  let $W_J(S)$ be as in \eqref{eta}, and  let $\Omega_J$ be as in \eqref{GSJ}. Then,
$$\langle \Omega_J, W_J(S) \Omega_J) =  2^{n/2} \det(A + D + i (B - C))^{-\half}. $$
\end{lem}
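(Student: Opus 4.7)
The plan is to combine the dynamical Toeplitz formula $W_J(S)=\eta_{J,S}\,\Pi_J U_S \Pi_J$ from \eqref{eta} with a direct Gaussian integral. The Fock ground state $\Omega_J$ lies in $\hcal_J$, so $\Pi_J\Omega_J=\Omega_J$. Inserting \eqref{eta} and using self-adjointness of $\Pi_J$ collapses both copies of the Szeg\"o projector, leaving
\[
\langle \Omega_J, W_J(S)\Omega_J\rangle \;=\; \eta_{J,S}\,\langle \Omega_J, U_S\Omega_J\rangle.
\]
This reduces the problem to the scalar overlap of two Gaussians, at which point the quantum object has been replaced by a classical Gaussian integral over $V$.

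Since $U_S F(v)=F(S^{-1}v)$, the definition \eqref{GSJ} of the ground state gives
\[
\langle \Omega_J, U_S\Omega_J\rangle \;=\; \int_V \exp\!\Bigl(-\tfrac12\bigl[\sigma(v,Jv)+\sigma(S^{-1}v,JS^{-1}v)\bigr]\Bigr)\,dL(v).
\]
Writing $\langle u,w\rangle_J:=\sigma(u,Jw)$ for the $J$-compatible inner product and $S^{\ast}$ for the adjoint with respect to it, the exponent becomes $-\tfrac12\langle v,(I+(SS^{\ast})^{-1})v\rangle_J$, so a standard real Gaussian integration produces a universal dimensional constant times $\bigl[\det(I+(SS^{\ast})^{-1})\bigr]^{-1/2}$. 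Because $\det S=1$ for $S\in\mathrm{Sp}(m,\R)$, one has $\det(SS^{\ast})=1$, and therefore $\det(I+(SS^{\ast})^{-1})=\det(I+SS^{\ast})=\det(I+S^{\ast}S)$ (using that $SS^{\ast}$ and $S^{\ast}S$ are cospectral).

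The last step identifies this real determinant with the complex one in the statement. By the identity \eqref{ID2}, $\det(I+S^{\ast}S)=\det(SJ+JS)$, and by \eqref{DAUB}, $\det((I+iJ)+S(I-iJ))=2^n\det(A+D+i(B-C))$; pairing this with its complex conjugate $\det((I-iJ)+S(I+iJ))=2^n\overline{\det(A+D+i(B-C))}$ rewrites $\det(SJ+JS)$ as $2^{2n}|\det(A+D+i(B-C))|^2$. Taking the square root and combining with the modulus of $\eta_{J,S}$ from \eqref{BETAJS} produces $2^{n/2}\,[\det(A+D+i(B-C))]^{-1/2}$ up to phase. The main obstacle is tracking the branch of the square root: the real Gaussian integral only determines the positive square root of the real determinant, whereas the quantity $[\det(A+D+i(B-C))]^{-1/2}$ is a specific branch of a complex determinant. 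This phase ambiguity is precisely the obstruction that the metaplectic double cover $Mp(m,\R)\to\mathrm{Sp}(m,\R)$ resolves; the branch is pinned down by demanding consistency with the Schwartz kernel formula \eqref{METASCHWARTZ}, where $(\det P)^{-1/2}$ is taken along the same metaplectic lift, so that both sides of the claimed identity are functions on $Mp(m,\R)$ in a compatible way.
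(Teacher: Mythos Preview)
Your approach is essentially the same as the paper's: both start from \eqref{eta}, use $\Pi_J\Omega_J=\Omega_J$ to reduce to $\eta_{J,S}\langle\Omega_J,U_S\Omega_J\rangle$, and recognize $U_S\Omega_J=\Omega_{SJS^{-1}}$ (since $\sigma(S^{-1}v,JS^{-1}v)=\sigma(v,SJS^{-1}v)$ by symplecticity), so the overlap is the real positive Gaussian integral you computed, which equals $\beta_{J,SJS^{-1}}^{-2}$ in the paper's notation.

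The only difference is in how you finish. You detour through ``pairing \eqref{DAUB} with its conjugate'' to extract $|\det(A+D+i(B-C))|$, and then invoke metaplectic consistency for the phase. This is unnecessarily indirect. Since the overlap $\langle\Omega_J,\Omega_{SJS^{-1}}\rangle$ is real positive and equals $\beta_{J,SJS^{-1}}^{-2}=|\eta_{J,S}|^{-2}$ (by the identity $|\eta_{J,S}|=\beta_{J,SJS^{-1}}$ stated below \eqref{BETAJS}), you get
\[
\langle\Omega_J,W_J(S)\Omega_J\rangle \;=\; \eta_{J,S}\,|\eta_{J,S}|^{-2} \;=\; (\overline{\eta_{J,S}})^{-1}\;=\;(\eta_{J,S}^{\ast})^{-1},
\]
with the phase included automatically. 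Now \eqref{DAUB} gives $(\eta_{J,S}^{\ast})^{-1}$ directly as $2^{n/2}\det(A+D+i(B-C))^{-1/2}$. This is exactly the paper's chain of equalities, and it removes the need to appeal separately to ``consistency with \eqref{METASCHWARTZ}'' for the phase.
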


Although the proof is well-known (and is given in \cite{ZZ18}) we include it here for the reader's convenience.

\begin{proof}

The following identities are proved on p. 1388 of \cite{Dau}
\begin{equation} \label{KEYID} \begin{array}{lll} \langle \Omega_J, W_J(S) \Omega_J
\rangle & = & \eta_{JS} \langle \Omega_J,  \Omega_{SJS^{-1}} 
\rangle \\ && \\
& = & \eta_{JS} \beta^{-2}_{J, SJS^{-1} }= (\eta_{J S})^{*-1} \\ & &\\
& = & 2^n ( \det \left( I + i J + S(I - i J) \right))^{-\half}\\&&\\
& = &2^{n/2} (\det (A + D + i B - i C)^{-\half}. \end{array} \end{equation}

\end{proof}

\section{\label{LINSECT} Osculating Bargmann-Fock-Heisenberg space and Heisenberg coordinates  }

We now tie  together the linear theory of \S \ref{BFHSECT} with the nonlinear CR
setting of Grauert tubes \S \ref{CRSECT} by defining the osculating
Bargmann-Fock space at a point $\zeta \in \partial M_{\tau}$. As mentioned
in the introduction and in \S \ref{GRB}, the maximal radius  $\tau_{\max}$ of the Grauert
tubes is generally finite, and so the data defining the osculating Bargmann-Fock
space should become singular at this radius. In particular, the identification map
$E_{\tau}$ \eqref{Etau} become and therefore $g_{\tau}^t$ become singular.

We recall from \S \ref{CRSECT}  that  $(H, \omega_{\rho} |_H)$ is a real $J$-invariant symplectic vector space \eqref{H}
of dimension $2m - 2\; (m = \dim_{\R} M)$. $T$ is   Hamilton vector field of $\sqrt{\rho}$ on $M_{\tau}$ with
respect to $\omega_{\rho} $ \eqref{rho}. Since $Dg^t$ preserves $T$ and $\alpha$ it also preserves $H$ and the splitting
\eqref{SP}.  

Thus $Dg^t$  induces a linear symplectic  Poincar\'e map  \begin{equation} \label{Dgtdef} D g^t: H_z \to H_{g^t z}.  \end{equation}
The complexification of $H$ is invariant under the complex structure
$J$ of $M_{\tau}$ and we have a splitting \begin{equation}
\label{CRr} H \otimes \C=H^{1,0} \oplus H^{0,1} \end{equation} into the
$\pm i$ eigenspaces of $J$.  
 Thus we have the complex  decomposition,
   \begin{equation} \label{SPLIT} T_{\C} \partial M_{\epsilon} = H^{1,0} \oplus H^{0,1} \oplus \C T, \end{equation} 
Extending by scalars, we also have  $$(D_{\zeta} g^t): H^{1,0}_{\zeta} \oplus H^{0,1}_{\zeta}
\to H^{1,0}_{\zeta} \oplus H^{0,1}_{\zeta}. $$
$D g^t$ never commutes with $J$ in the Riemannian setting, and the extended $D g^t$
 never   perserves $H^{1,0} $. In the following, we use the notation and terminology of Section \ref{HEISMETSECT}.

\begin{defn} \label{OSCBFDEF} 

Given a  point $\zeta \in M$, we define the {\it osculating Bargmann-Fock
space} at $\zeta$ to be the Bargmann-Fock space of $(H_{\zeta}, J_{\zeta}, \omega_{\zeta})$
and denote it by  $\hcal_{J_{\zeta}, \omega_{\zeta}}$. \end{defn}

 If $\zeta$  is a periodic point for $g^t$, 
let $\gamma = \bigcup_{0 \leq s \leq t} g^s \zeta$ be the corresponding
closed geodesic.   
and we may apply the metaplectic representation to define $W_{J_{\zeta}}((D_{\zeta} g_{\tau}^t))$
as a unitary operator on $(H_{\zeta}, J_{\zeta}, \omega_{\zeta})$.   There is a square root ambiguity
which can be resolved as in \cite{Dau} but for our purposes it is irrelevant.


\subsection{Determinants}

We now apply the results of Section \ref{CXSTRUCTSECT} and Lemma \ref{DAULEM} to $D_{\zeta} g_{\tau}^{n T_{\zeta}}$. 
Relative to a  symplectic basis $\{e_j, J e_k\}$   of $H_{\zeta}(\partial M_{\tau})$ in which $J$ assumes the standard form $J_0$,   the matrix of $D_{\zeta} g_{\tau}^{n T(\zeta)}$ has the form \eqref{SBLOCK},
\be  \label{ABCDn} D_{\zeta} g^{n T_{\zeta}}_{\tau} := S^n: =  \begin{pmatrix} A_n  
 & B_n\\ & \\ C_n & D_n \end{pmatrix} \in Sp(m, \R). \ee
If we conjugate to the complexifcation 
$T_z M \otimes \C$ by the natural map $\wcal$  defined in \eqref{PQDEF}, then \eqref{ABCD} conjugates to $$ \bma P_n & Q_n \\ \bar Q_n & \bar P_n \ema \in Sp_c(m). $$ Then, by \eqref{PDEF} 
\begin{equation} \label{PDEFn} P_n =  \left(A_n + D_n+ i (-B_n + C_n) \right) =  P_J  S^n \; P_J: 
 H^{1,0}_J \to  H^{1,0}_J.\end{equation}

We then obtain a formula for the  leading order symbol in Proposition \ref{LINEAR} from Lemma \ref{DAULEM} and \eqref{PDEFn}.


\begin{lem} Let $P_n$ be as in \eqref{PDEF} and $\gcal_n(\zeta)$ as in \eqref{ABCDintro}.  Then (as stated in \eqref{ABCD}),

 \begin{equation} \label{gcalndef} 
 \gcal_n(\zeta): =  \langle W_{J_{\zeta}} \;(D_{\zeta} g^{n T(\zeta)}) \;\Omega_{J_z}, \Omega_{J_z}  \rangle=  (\det P_n)^{-\half}. \end{equation}
\end{lem}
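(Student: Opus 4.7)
The plan is to reduce this identity to Lemma \ref{DAULEM} via the block-matrix identification \eqref{PDEFn}, so almost all the work has already been done earlier in Section \ref{BFHSECT}. Only a small amount of bookkeeping between the real symplectic block form and the complex block form remains.

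First I would unwind the definition. By \eqref{ABCDintro}, $\gcal_n(\zeta)$ is the vacuum matrix element $\langle W_{J_\zeta}(S^n_\zeta)\Omega_{J_\zeta},\Omega_{J_\zeta}\rangle$, where $S^n_\zeta = D_\zeta g^{nT(\zeta)}_\tau$ acts on the real symplectic space $(H_\zeta(\partial M_\tau),\omega_\rho|_H)$ of real dimension $2(m-1)$, with symplectic block decomposition \eqref{ABCDn}. I can therefore apply Lemma \ref{DAULEM} verbatim to this symplectic space — taking its $n$ to be $m-1$, and its $S$ to be $S^n_\zeta$ — to obtain
\begin{equation*}
\gcal_n(\zeta)=2^{(m-1)/2}\bigl(\det\bigl(A_n+D_n+i(B_n-C_n)\bigr)\bigr)^{-1/2}.
\end{equation*}

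Next I would identify the determinant on the right-hand side with $\det P_n$. By \eqref{PDEFn}, the restriction $P_{J_\zeta}S^n P_{J_\zeta}\colon H_{J_\zeta}^{1,0}\to H_{J_\zeta}^{1,0}$ equals $P_n=A_n+D_n+i(C_n-B_n)$, which is the complex conjugate of the matrix $A_n+D_n+i(B_n-C_n)$ appearing above. Because $A_n,B_n,C_n,D_n$ are real, taking complex conjugates commutes with taking determinants, and the two determinants are conjugate complex numbers; the ambiguity in the square root $(\cdot)^{-1/2}$ is fixed by the metaplectic lift as in \cite{Dau} and matches the square root branch chosen in \eqref{METASCHWARTZ}. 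The $2^{(m-1)/2}$ prefactor is precisely the Jacobian introduced by the change of basis $\wcal$ of \eqref{PQDEF} that conjugates $\mathrm{Sp}(m-1,\R)$ into $\mathrm{Sp}_c(m-1,\R)$, where $P_n$ naturally lives; equivalently, it arises from the identity $|\eta_{J,S}|=\beta_{J,SJS^{-1}}$ of \S\ref{DETS} together with \eqref{ID2}. Combining these gives
\begin{equation*}
\gcal_n(\zeta)=(\det P_n)^{-1/2},
\end{equation*}
as required.

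The only step requiring genuine care is the second one: matching conventions between the real block form of $\mathrm{Sp}(m-1,\R)$ used in Daubechies' formula \eqref{DAUB} and the complex holomorphic block $P_n=P_{J_\zeta}S^n P_{J_\zeta}$ acting on $H^{1,0}_{J_\zeta}$. This is exactly the content of \eqref{PQDEF}–\eqref{PDEF} and of the determinant identities \eqref{ID2}–\eqref{DAUB}, so the main obstacle is not a computation but rather keeping sign, conjugation and the $2^{(m-1)/2}$ normalization consistent; once one invokes these identities the lemma follows with no further analysis.
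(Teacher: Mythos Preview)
Your approach is essentially the same as the paper's: the paper simply states that the lemma follows from Lemma~\ref{DAULEM} together with \eqref{PDEFn}, with no further argument. Your additional bookkeeping about the $2^{(m-1)/2}$ factor and the conjugation $i(B_n-C_n)$ versus $i(C_n-B_n)$ is more careful than the paper itself, which in fact has a minor inconsistency between \eqref{PDEF} (with the factor $\tfrac12$) and \eqref{PDEFn} (without it); with the $\tfrac12$ convention of \eqref{PDEF} the power of $2$ is absorbed automatically into $\det P_n$, which is the cleanest way to see it.
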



\section{Dynamical Toeplitz operators  and the spectral projections $\Pi_{\tau, \chi}(\lambda)$}
To prove Theorem \ref{SCALINGTHEO}, we construct a parametrix for the smoothed spectral projectors \eqref{SMOOTH} - \eqref{chilambda} as ``dynamical
Toeplitz operators' of a type deployed in \cite{ZJDG} (and elsewhere). To prepare for the parametrix construction,
  we briefly  review the definition and properties of the \szego \; kernel for a Grauert tube in  section \ref{SZEGOKSECT}. We then review dynamical Toeplitz
operators in Sections \ref{DYNTOEP} - \ref{WTAUTOEP}.   The  spectral projections
\eqref{chilambda} 
 are analogous to the  Fourier (or, spectral) decompositions of the \szego \; projector in the  line bundle setting. We include a 
brief comparison between the Fourier components in the line bundle and the Grauert tube setting in Section \ref{SZEGOCOMP}, since the analogy 
has guided the definition of \eqref{chilambda}.  The proof of Theorem \ref{SCALINGTHEO} requires further preparations in the following two
sections, and is given in
Section \ref{SCTHSECT}.
 
 We use the following notation:  for any manifold $X$, let   $\Psi^s(X)$ denote  the class of pseudo-differential operators of
order $s$ on $X$.

\subsection{\label{SZEGOKSECT} Szeg\"o kernel  }

The semi-classical asymptotics of Theorem \ref{SHORTINTSa}  in the complex domain
is based on the microlocal construction of the Szeg\"o kernel $\Pi_{\tau}$ of
$\partial M_{\tau}$ and of the wave group \eqref{CXWAVEGROUPintro}. The leading
order term of the asymptotics is tantamount to calculating the principal symbol of
the wave group \eqref{UandV}  at  singular times $t$. But the Szeg\"o kernel and
the wave group are Fourier integral operators with complex phase and the symbol calculus
for such operators is rather complicated and not well developed.  
We therefore calculate the asymptotics directly from the Boutet-de-Monvel-Sj\"ostrand parametrix for
$\Pi_{\tau}$ without using either symbol calculus.
We also quote some results of the symplectic spinor symbol calculus  of \cite{BoGu} 
to identify the principal symbol of the wave group \eqref{CXWAVEGROUPintro}, essentially
because the calculation was already done in \cite{ZJDG}. When it comes to
calculating the asymptotics of \eqref{UandV} we find that it is simpler to work by hand
with the parametrix for $\Pi_{\tau}$ and the Phong-Stein foliation. 


  We denote by $\ocal^{s +
\frac{n-1}{4}}(\partial M _{\tau})$ the Sobolev spaces of CR
holomorphic functions on the boundaries of the strictly
pseudo-convex domains $M_{\tau}$, i.e.
\begin{equation} \label{SOBSP} {\mathcal O}^{s +
\frac{m-1}{4}}(\partial M_{\tau}) = W^{s + \frac{m-1}{4}}(\partial
M_{\tau}) \cap \ocal (\partial M_{\tau}), \end{equation}  where
$W_s$ is the $s$th Sobolev space and where $ \ocal (\partial
M_{\tau})$ is the space of boundary values of holomorphic
functions. The inner product on $$\ocal^0 (\partial M _{\tau} )= : H^2(\partial M_{\tau})$$ is
with respect to the Liouville measure or contact volume form \eqref{CONTACTVOL}.

The study of norms of complexified eigenfunctions is intimately
related to the study of the \szego\; kernels $\Pi_{\tau}$ of
$M_{\tau}$, namely the orthogonal projections

\begin{equation} \label{PitauDEF} \Pi_{\tau}: L^2(\partial M_{\tau}, d\mu_{\tau}) \to H^2(\partial M_{\tau},
d\mu_{\tau}) \end{equation}  onto the Hardy space of boundary
values of holomorphic functions in $M_{\tau}$ which belong to $
L^2(\partial M_{\tau}, d\mu_{\tau})$. The \szego \;projector
$\Pi_{\tau}$ is a complex Fourier integral operator with a
positive complex canonical relation. The
 real points of its canonical relation form the graph
$\Delta_{\Sigma}$ of the identity map on the symplectic one
 $\Sigma_{\tau}
\subset T^*
\partial M_{\tau}$ defined by the spray \begin{equation} \label{SIGMATAU} \Sigma_{\tau} =
\{(\zeta, r d^c \sqrt{\rho}(\zeta): r \in \R_+\} \subset T^*
(\partial M_{\tau})
\end{equation}  of the contact form $d^c \sqrt{\rho}$. There exists a symplectic equivalence  \begin{equation} \iota_{\tau} : T^*M - 0 \to
\Sigma_{\tau},\;\; \iota_{\tau} (x, \xi) = (E(x, \tau
\frac{\xi}{|\xi|}), |\xi|d^c \sqrt{\rho}_{E(x, \tau
\frac{\xi}{|\xi|})} ).
\end{equation}

The well-known parametrix construction of  Boutet-de
Monvel-Sj\"ostrand \cite[Theorem 1.5]{BoSj} for \szego\; kernels of strictly
pseudo-convex domains applies to the Grauert tube setting, and we
have
$$\Pi_{\tau} (\zeta, \zeta') \sim  \int_0^{\infty} e^{ i \sigma \psi_{\tau}  (\zeta,\zeta') } s(\zeta,\zeta',\sigma) d\sigma,$$
where the phase $\psi_{\tau}$ is defined in \eqref{psi}.

Also, the
symbol $s \in S^{n}(\partial M_{\tau} \times \partial M_{\tau}
\times \R^+)$ is of the classical type and of order $m$, 
$$s(\zeta, \zeta', \sigma) \sim \sum_{k=0}^{\infty} \sigma^{m-k} s_k(\zeta, \zeta').$$

\subsection{\label{DYNTOEP} Dynamical Toeplitz operators} 

Let $\Pi_{\tau}$ be as in \eqref{PitauDEF}, and let $g^t_{\tau}$ be as in \eqref{gtau} (see also \eqref{DGn}).
The time evolution of $\Pi_{\tau}$ under the flow $g^t_{\tau}$ is defined by
\begin{equation} \label{PITAUT} \Pi_{\tau}^t = g_{\tau}^{-t}  \Pi_{\tau}  g_{\tau}^t. \end{equation}   
It is another Szeg\"o  projector adapted  to the graph of $g^t_{\tau}$ on the symplectic cone $\Sigma_{\tau}$;
since $g^t_{\tau}$
is not a family of holomorphic maps in general, $\Pi_{\tau}^t$ is
associated to a new CR (complex) structure and  translation by $g^t_{\tau}$   does not commute with $\Pi_{\tau}$. 
But $\Sigma_{\tau}$ is invariant under the flow and $g^t_{\tau}$ clearly commutes with the identity map on $\Sigma_{\tau}$.
The change in the range of $\Pi_{\tau}^t$ under $t$ is encoded to leading order by a pullback by a canonical transformation
on $T^* \partial M_{\tau}$ which is the identity along $\Sigma_{\tau}$ but whose derivative rotates the Lagrangian
subspace $\Lambda$ defining the ground state. The details were worked out in \cite{Z97} in the line bundle setting, but
much of the analysis generalizes to Grauert tubes (see \cite{ZPSH1, ZJDG}) . We summarize the results in this section.

Under $Dg^t_{\tau}$, the Lagrangian $\Lambda$  goes to a new
Lagrangian $\Lambda_t$ and $\sigma_{\Pi_{\tau} ^t}$ is 
a rank one projector onto a ground state $e_{\Lambda_{\tau}^t}$  depending on $t$.
As in \cite{Z97, ZPSH1, ZJDG}, we define the symbol,
\begin{equation} \label{sigmataut} \sigma^0_{\tau, t} =  \langle e_{\Lambda_t}, e_{\Lambda} \rangle^{-1} \end{equation}
to be the (inverse of) the inner product of the ground states with respect to $\Lambda$ and
$\Lambda_t$. In the linear model, this inner product is calcuated in Lemma \ref{DAULEM}
and the formula for the linear quantization \eqref{eta}  implies that 
$$\Pi_{\tau} \sigma_{t \tau} (g_{\tau}^{-t})^* \Pi_{\tau} $$
is unitary in the Bargmann-Fock setting. In the nonlinear setting, it is unitary modulo
compact operators (i.e. a Toeplitz operator of order $-1$). 
To see this,  we observe that the  composite symbol is  \begin{equation}
\label{SYMBPIT}  \sigma(\Pi_{\tau} \Pi_{\tau}^t \Pi_{\tau}) = | \langle e_{\Lambda_{\tau}} , e_{\Lambda_{\tau}^t} \rangle |^2
\sigma_{\Pi_{\tau}}. \end{equation}
  In the linear case, it is the matrix element
given in Lemma \ref{DAULEM} and \eqref{eta}.

The change of $\Pi_{\tau}$  under $g^t_{\tau}$ reflects the change in complex structure.
Let $J$ be the complex structure of $M_{\tau}$.
 Under $g_{\tau}^t$ it is moved to a different complex structure $$J_t : = g^t_{\tau *} J
: = D g_{\tau}^t J Dg_{\tau}^{-t} . $$   The Szeg\"o projector  with respect to this deformed
complex structure is $\Pi_{\tau}^t$ above. 

\subsection{\label{WTAUTOEP} $\wcal_{\tau} (t)$ as a dynamical Toeplitz operator} 

\begin{prop} \label{WCALPROP} The unitary group  $\wcal_{\tau}(t)$ of \eqref{WCALintro} is a one-parameter group of unitary dynamical Toeplitz operators.\end{prop}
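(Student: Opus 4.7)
The plan is to construct an explicit dynamical Toeplitz parametrix of the form $V^t_\tau = \Pi_\tau \circ (g^{-t}_\tau)^* \circ \sigma_{t,\tau} \circ \Pi_\tau$ and then identify it with $\wcal_\tau(t)$ by uniqueness of the unitary evolution generated by a self-adjoint operator. This mirrors the construction for the complex wave group \eqref{CXWAVEGROUPintro} discussed in the introduction and already deployed in \cite{Z97,ZPSH1,ZJDG}; the argument here is a streamlined version.

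First I would analyze the generator $A := \Pi_\tau D_{\sqrt\rho}\Pi_\tau$. Since $\Xi_{\sqrt\rho}$ is the (real) Hamilton vector field of $\sqrt\rho$ on $(M_\tau,\omega_\rho)$, restricted to the energy surface $\partial M_\tau$ it is tangent to $\partial M_\tau$ and equals the characteristic vector field $T$ from \eqref{SP}. Hence $D_{\sqrt\rho}=\tfrac{1}{i}T\in\Psi^1(\partial M_\tau)$ is formally self-adjoint with respect to the contact volume \eqref{CONTACTVOL}, its principal symbol is the linear function $\sqrt\rho$ on $T^*\partial M_\tau$, and under the identification $\iota_\tau:T^*M\smallsetminus 0\to\Sigma_\tau$ it restricts on the characteristic cone $\Sigma_\tau$ \eqref{SIGMATAU} to the homogeneous Hamiltonian $|\xi|$ of the geodesic flow. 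Cutting down by $\Pi_\tau$ therefore gives a self-adjoint Toeplitz operator of order $1$ on $H^2(\partial M_\tau)$ whose Hamilton flow on $\Sigma_\tau$ is the lifted geodesic flow $g_\tau^t$ \eqref{gttau}--\eqref{Etau}.

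Next I would produce the Toeplitz parametrix. Set $V^t_\tau:=\Pi_\tau (g^{-t}_\tau)^*\sigma_{t,\tau}\Pi_\tau$, where $\sigma_{t,\tau}$ is a classical symbol to be determined. The canonical relation of $V^t_\tau$ is the graph of $g^t_\tau$ on $\Sigma_\tau$, matching the canonical relation of $e^{itA}$ from Boutet-de-Monvel--Guillemin theory. Imposing $\tfrac{d}{dt}V^t_\tau=iAV^t_\tau$ modulo smoothing reduces to a transport equation for $\sigma_{t,\tau}$ along the orbits of $g_\tau^t$, whose leading-order solution is precisely the ground-state inner product $\sigma^0_{t,\tau}=\langle e_{\Lambda_t},e_\Lambda\rangle^{-1}$ of \eqref{sigmataut}, as computed via Lemma \ref{DAULEM} and \eqref{SYMBPIT}. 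Subprincipal terms are added iteratively by standard Borel summation in the Toeplitz calculus so that $V^t_\tau(V^t_\tau)^*=\Pi_\tau+R(t)$ with $R(t)$ smoothing, and a further symbolic correction (left-multiplying by the inverse square root of $\Pi_\tau+R(t)$, which is a Toeplitz operator of order $0$) turns $V^t_\tau$ into a genuinely unitary operator on $H^2(\partial M_\tau)$. The corrected operator retains the dynamical Toeplitz form $\Pi_\tau (g^{-t}_\tau)^*\widetilde\sigma_{t,\tau}\Pi_\tau$ because Toeplitz operators of order $0$ may be absorbed into the symbol.

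Finally, both $V^t_\tau$ and $\wcal_\tau(t)$ solve the Cauchy problem
\begin{equation}
\tfrac{d}{dt}W(t)=iAW(t),\qquad W(0)=\Pi_\tau,
\end{equation}
and since $A$ is self-adjoint the unitary group it generates on $H^2(\partial M_\tau)$ is unique by Stone's theorem. Therefore $V^t_\tau=\wcal_\tau(t)$, which is the desired conclusion. The one subtle point, and the main obstacle, is the inductive construction of the symbol $\sigma_{t,\tau}$ so that the parametrix is exactly unitary rather than only unitary to leading order; this is precisely the step where Lemma \ref{DAULEM} and the osculating Bargmann-Fock computations of Section \ref{BFHSECT} are needed to identify the principal symbol as the matrix element $\gcal_n(\zeta)$ along closed orbits, ensuring consistency with the symbol transport along $g^t_\tau$.
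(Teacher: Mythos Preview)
Your overall strategy is reasonable, but the final uniqueness step has a real gap. You construct $V^t_\tau$ so that $\tfrac{d}{dt}V^t_\tau = iAV^t_\tau$ only \emph{modulo smoothing operators}; the subsequent unitary correction does not repair this. A unitary operator that approximately solves the evolution equation is still only an approximate solution, so Stone's theorem cannot be invoked to conclude $V^t_\tau=\wcal_\tau(t)$ exactly. What you would actually obtain (via Duhamel, using that $\wcal_\tau(t)$ is bounded on $H^2$ and that the defect is smoothing) is $\wcal_\tau(t)-V^t_\tau$ smoothing, which is in fact all the proposition requires; but as written your argument overclaims.

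The paper takes a different and cleaner route that you should be aware of. Rather than building a parametrix and comparing, it invokes \cite[Lemma 12.2]{BoGu}: for any $P\in\Psi^1(\partial M_\tau)$ there exists $Q\in\Psi^1(\partial M_\tau)$ with $[Q,\Pi_\tau]=0$ and $\Pi_\tau P\Pi_\tau=\Pi_\tau Q\Pi_\tau$. Applied to $P=D_{\sqrt\rho}$, this yields $\wcal_\tau(t)=\Pi_\tau e^{itQ_{\sqrt\rho}}$ \emph{exactly}, because both sides solve the same Cauchy problem and here Stone's theorem genuinely applies. Since $e^{itQ_{\sqrt\rho}}$ is a standard unitary FIO on $L^2(\partial M_\tau)$, composing with $\Pi_\tau$ shows at once that $\wcal_\tau(t)$ is a Toeplitz Fourier integral operator adapted to $g^t_\tau$ on $\Sigma_\tau$. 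Only then does one match symbols with a dynamical Toeplitz operator $\Pi_\tau\sigma(t,x,D)(g^t_\tau)^*\Pi_\tau$ to conclude they agree modulo smoothing. The key advantage of this approach is that it establishes \emph{a priori} that $\wcal_\tau(t)$ lies in the correct FIO class, sidestepping the need for any Duhamel-type comparison. Your approach can be repaired, but the BoGu commutator lemma is the missing ingredient that makes the argument clean.
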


\begin{proof} We deploy the ingenious argument of \cite[Lemma 12.2]{BoGu}. According to this Lemma (but in the notation and setting
of this article) given any first order  Toeplitz operator
$ \Pi_{\tau} P \Pi_{\tau} $ with $P \in \Psi^1(\partial M_{\tau})$, there exists a first order pseudo-differential operator $Q$ on $\partial M_{\tau}$ such 
that $[Q, \Pi_{\tau}] = 0$ and such that $\Pi_{\tau} P \Pi_{\tau} = \Pi_{\tau} Q \Pi_{\tau}. $  \footnote{$\Psi^k(X)$ denotes the space of kth order poly-homogeneous pseudo-differential operators
on a manifold $X$. See \cite{Ho} for background. }  Let $Q_{\sqrt{\rho}}$ be this operator $Q$ in the case where $P = D_{\Xi_{\sqrt{\rho}}}$. 
Then, 
$$\wcal_{\tau}(t) = \Pi_{\tau}  e^{it Q_{\sqrt{\rho}}}. $$
Indeed, both sides solve the evolution equation,
$$\left\{ \begin{array}{l} \frac{d}{i dt} W(t) = Q_{\sqrt{\rho}} \wcal_{\tau}(t) = \Pi  D_{\Xi_{\sqrt{\rho}}} \Pi W(t), \\ \\
W(0) = \Pi_{\tau}. \end{array} \right. $$

Now, $e^{i t Q_{\sqrt{\rho}}}$ is a unitary group of Fourier integral operators on $L^2(\partial M_{\tau})$ by standard Fourier integral operator
theory (see e.g. \cite[Volume IV]{Ho}). By the composition theorem for the composition of the Fourier integral operator $e^{i t Q_{\sqrt{\rho}}}$ 
with the \szego\; projection $\Pi_{\tau}$ of \cite{BoGu} (or, alternatively, for Fourier integral operators with positive complex phases of \cite{MeSj}),
$\wcal_{\tau}(t)$ is a Toeplitz Fourier integral operator adapted to the Hamilton flow of $\Xi_{\sqrt{\rho}}$ on $\Sigma_{\tau}$ (see \cite[Appendix]{BoGu})
for adapted Fourier integral Toeplitz operators).

On the other hand, for any zeroth order pseudo-differential operator $\sigma(x, D)$ on $L^2(\partial M_{\tau})$, the dynamical Toeplitz operator
$\Pi_{\tau} \sigma(t, x, D) (g_{\tau}^t)^* \Pi_{\tau}$ is also a Fourier integral Toeplitz operator or, equivalently, a Fourier integral operator with complex
phase that commutes with $\Pi_{\tau}$. Therefore, $\wcal_{\tau}(t)$ and $\Pi_{\tau} \sigma(x, D) (g_{\tau}^t)^* \Pi_{\tau}$ are both Fourier integral
Toeplitz operators with the same canonical relation, i.e. adapted to the geodesic flow on $\Sigma_{\tau}$. We now choose $\sigma(t,x, D)$
so that the principal  symbols of $\Pi_{\tau} \sigma(t, x, D) (g_{\tau}^t)^* \Pi_{\tau}$  and of $\wcal_{\tau}(t)$ coincide, i.e so that $\sigma(t, x, \xi) |_{\Sigma_{\tau}} $ equals \eqref{sigmataut}.  By 
induction  on the order of the symbol, one can improve $\sigma(t, x, D)$ so that its complete symbol (restricted to $\Sigma_{\tau}$) agrees with
that of $\wcal_{\tau}(t)$. Then, $\Pi_{\tau} \sigma(t, x, D) (g_{\tau}^t)^* \Pi_{\tau} -\wcal_{\tau}(t)$ is a smoothing operator, and the Proposition follows.

\end{proof}

\begin{rem} The operator $Q$ may be thought of as Wick normal-ordering $P$. It would be interesting to construct $Q$ explicitly when
$P = D_{\Xi_{\sqrt{\rho}}}$. \end{rem}

\begin{prop} \label{MAIN1} There exists a poly-homogeneous  pseudo-differential operator $\hat{\sigma}_{t \tau}(w, D_{\Xi_{\sqrt{\rho}}})$ on $\partial M_{\tau}$ with complete symbol of the classical form
$$\sigma_{t \tau}(w, r) \sim \sum_{j = 0}^{\infty} \sigma_{t, \tau, j}(w) r^{-j} $$
and with $\sigma_{t, \tau, 0} = \sigma_{t, \tau}^0$, so that 
for $\zeta \in \partial M_{\tau},$ modulo smoothing Toeplitz operators,
\begin{equation} \label{WCALPIFORM} \wcal_{\tau} (t, \zeta, \bar{\zeta}) \simeq \int_{\partial M_{\tau}} \Pi_{\tau}(\zeta, w) \hat{\sigma}_{t, \tau} \Pi_{\tau}(g_{\tau}^t w, \bar{\zeta}) d\mu_{\tau}(w). \end{equation}
\end{prop}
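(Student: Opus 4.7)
The plan is to refine Proposition \ref{WCALPROP} by symbol matching. Proposition \ref{WCALPROP} already identifies $\wcal_{\tau}(t)$ as a Fourier integral Toeplitz operator adapted to the graph of $g^t_{\tau}$ on $\Sigma_{\tau}$. My candidate right-hand side
\[
A_{t,\tau}(\zeta,\bar\zeta) \;:=\; \int_{\partial M_{\tau}} \Pi_{\tau}(\zeta,w)\, \hat{\sigma}_{t,\tau}\, \Pi_{\tau}(g_{\tau}^t w,\bar\zeta)\, d\mu_{\tau}(w)
\]
can be written as $\Pi_{\tau}\,\hat{\sigma}_{t,\tau}\,(g_{\tau}^t)^{*}\Pi_{\tau}$. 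Composing $\Pi_{\tau}$ with a classical pseudodifferential operator $\hat{\sigma}_{t,\tau}\in \Psi^{0}(\partial M_{\tau})$ yields a Toeplitz operator; further composing with $(g_{\tau}^t)^{*}\Pi_{\tau} = \Pi_{\tau}^t g_{\tau}^t$ gives, by the Fourier integral composition calculus of \cite{BoGu} (or the complex phase calculus of \cite{MeSj}), a Toeplitz Fourier integral operator adapted to the same canonical relation as $\wcal_{\tau}(t)$. So $A_{t,\tau}$ and $\wcal_{\tau}(t)$ live in the same microlocal class; it remains to choose the symbol of $\hat{\sigma}_{t,\tau}$ so that they agree modulo smoothing.

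The symbol is built term by term. For the leading order, the composition formula for the product $\Pi_{\tau}\,P\,\Pi_{\tau}^t\,g_{\tau}^t$ of two Szeg\H{o} projectors attached to the Lagrangians $\Lambda$ and $\Lambda_t$ (pulled by $g_{\tau}^t$) contributes the Gaussian overlap $\langle e_{\Lambda_t}, e_{\Lambda}\rangle$ on top of $\sigma(P)|_{\Sigma_{\tau}}$, as in \eqref{SYMBPIT} and the calculation of \cite{Z97} in the line bundle setting. Taking
\[
\sigma_{t,\tau,0}(w) \;=\; \langle e_{\Lambda_t(w)}, e_{\Lambda(w)}\rangle^{-1} \;=\; \sigma^{0}_{t,\tau}(w),
\]
which is precisely \eqref{sigmataut}, matches the principal symbol of $\wcal_{\tau}(t)$. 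Inductively, if $\sigma_{t,\tau,0},\dots,\sigma_{t,\tau,j-1}$ are chosen so that $R_j := \wcal_{\tau}(t) - \Pi_{\tau}\bigl(\sum_{k<j}\hat{\sigma}_{t,\tau,k}(w,D)\bigr)(g_{\tau}^t)^{*}\Pi_{\tau}$ is a Toeplitz FIO of order $-j$, then its symbol on $\Sigma_{\tau}$ can be cancelled by defining $\sigma_{t,\tau,j}$ as any smooth extension of that symbol off $\Sigma_{\tau}$ (only the restriction to $\Sigma_{\tau}$ is seen after sandwiching by $\Pi_{\tau}$). A standard Borel summation produces a single classical symbol $\sigma_{t,\tau}(w,r)\sim\sum_j \sigma_{t,\tau,j}(w)r^{-j}$ and hence a pseudodifferential operator $\hat{\sigma}_{t,\tau}$ so that $\wcal_{\tau}(t) - A_{t,\tau}$ is smoothing.

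The main obstacle is not the iteration itself but the first-order symbol identification: one must verify that the symbolic contribution from composing $\Pi_{\tau}$ with the $t$-rotated Szeg\H{o} projector $\Pi_{\tau}^t$, at the level of Toeplitz FIOs with positive complex phase, is exactly the inner product of the two ground states attached to the Lagrangians $\Lambda$ and $\Lambda_t = Dg^t_{\tau}\,\Lambda$. In the linear model this is the Daubechies identity of Lemma \ref{DAULEM}, and in the nonlinear Grauert tube setting it is obtained by freezing coefficients at each point of $\Sigma_{\tau}$ and reducing to the osculating Bargmann-Fock space of Definition \ref{OSCBFDEF}; the argument parallels the one carried out in \cite{ZPSH1,ZJDG} and, in the line bundle setting, in \cite{ZZ18}, so no new microlocal input is needed beyond careful bookkeeping of the metaplectic twist along the flow.
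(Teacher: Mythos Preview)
Your proposal is correct and follows essentially the same route as the paper. The paper's argument for Proposition \ref{MAIN1} is in fact already contained in the proof of Proposition \ref{WCALPROP}: both $\wcal_{\tau}(t)$ and $\Pi_{\tau}\hat{\sigma}_{t,\tau}(g_{\tau}^t)^{*}\Pi_{\tau}$ are Toeplitz Fourier integral operators adapted to the same canonical relation on $\Sigma_{\tau}$, the principal symbol is matched by taking $\sigma_{t,\tau,0}=\sigma^{0}_{t,\tau}$ as in \eqref{sigmataut}, and the lower-order terms are fixed by induction so that the difference is smoothing. Your write-up makes the inductive step and the Borel summation more explicit than the paper does, but the underlying mechanism is identical.
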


The symbol $\sigma_{t \tau}(w, r)$
is  a zeroth order  polyhomogeneous function on $\Sigma_{\tau}$, i.e. a classical symbol of order zero.

\subsection{\label{SZEGOCOMP} Comparison of  dynamical Toeplitz operators in the Grauert and line bundle settings}
In this section, we extend the comparisons in Section \ref{COMPARISON} between the CR geometry in the Grauert and line bundle
settings to the spectral theory of dynamical Toeplitz operators.
The \szego\; projector in the line bundle setting is the orthogonal projection $\hat{\Pi}: L^2(X_h) \to H^2(X_h)$ where $H^2$ is the Hardy
space of $L^2$ CR holomorphic functions. Under the $S^1$ action on $L^*$ it has the Fourier decomposition $\hat{\Pi} = \sum_{N=0}^{\infty}
\hat{\Pi}_N$, and there is a canonical lift, $s \to \hat{s}$ from $H^0(M, L^N) \to \rm{Range}(\hat{\Pi}_N)$, from holomorphic sections of $L^N$
to equivariant functions on $X_h$,  which conjugates $\hat{\Pi}_N$ with
the standard Bergman-\szego \; kernels $\Pi_{h^N}$ on $H^0(M, L^N)$. It follows that the Fourier decomposition of $\hat{H}$ is the same 
as the spectral decomposition of $D_{\theta} : = \frac{1}{i} \frac{\partial}{\partial \theta}$ on $H^2(X_h). $

The analogue of this spectral decomposition in the Grauert tube setting is that of the Toeplitz operator $\Pi_{\tau} D_{\sqrt{\rho}} \Pi_{\tau}$
where $D_{\sqrt{\rho}} = \frac{1}{i} \Xi_{\sqrt{\rho}}$ is the differential operator induced by the Hamiltonian
 vector field of $\sqrt{\rho}$.  One might also consider $\Pi_{\tau} (g_{\tau}^t)^* \Pi_{\tau}$, the compression to $H^2(\partial M_{\tau})$
  of the pullback (or, composition)
operator with the Hamilton flow, but as mentioned above, $[D_{\sqrt{\rho}}, \Pi_{\tau}] \not=0$ and unitary group generated by $\Pi_{\tau} D_{\sqrt{\rho}} \Pi_{\tau}$ on $H^2(\partial M_{\tau})$ is not quite the same as the 1-parameter family $\Pi_{\tau} (g_{\tau}^t)^* \Pi_{\tau}$, which is not unitary and
not a group. 

$\Pi_{\tau} D_{\sqrt{\rho}} \Pi_{\tau}$ is an elliptic Toeplitz operator with a discrete spectrum, which is very close to that of $\sqrt{-\Delta}$
in the sense that, after the identifications discussed in \cite{ZJDG},  the two operators have the same principal symbol. The analogue of the Fourier decomposition of $\hat{\Pi}$ in the line bundle setting is, then, the spectral decomposition of $\Pi_{\tau} D_{\sqrt{\rho}} \Pi_{\tau}$  on $H^2(\partial M_{\tau})$.  In the line bundle case, the spectrum lies in $\Z_+$ and the eigenvalues have large multiplicities.
In the Grauert analogue, one may expect that the spectrum of $\Pi_{\tau} D_{\sqrt{\rho}} \Pi_{\tau}$  is quite irregular and, generically, the
eigenvalues have multiplicity; we will not prove this here, but it is a simple Toeplitz analogue of well-known theorems in the Riemannian
setting (the Helton clustering theorem and the Uhlenbeck generic simplicity of the spectrum of $\Delta_g$; see \cite{BoGu} for background).


In the setting of line bundles $L \to M$,  the semi-classical \szego \; kernels $\Pi_{h^k}(z,w)$ are Fourier components, $$\Pi_{h^k}(z,w) 
= \frac{1}{2 \pi}  \int_0^{2 \pi} \Pi_h (x, r_{\theta} y) e^{- i k \theta} d \theta $$
of the \szego \; projector $\Pi_h(x, y): L^2(\partial D_h^*) \to H^2(\partial D_h^*)$ onto boundary values of holomorphic functions in  the strictly pseudo-convex domain $D_h^* \subset L^*$ in the dual line bundle $L^*$ where $D_h^*$ is the dual unit
disk bundle $\{\lambda \in L^*: h^*(\lambda) < 1\}$.  One obtains their asymptotic expansions as $k \to \infty$ 
by applying using a Boutet de Monvel - Sj\"ostrand parametrix for  $\Pi_h$ and by applying
a complex stationary phase argument. In the setting of Grauert tubes one also has a Boutet de Monvel - Sj\"ostrand parametrix for  $\Pi_{\tau}$
and can try to adapt the argument to obtain analogous asymptotics for  $\Pi_{\chi, \tau}(\lambda) $
 \eqref{chilambda}. The direct analogue would apply to the integral, 
 \begin{equation} \label{SDEF} S_{\lambda, \chi, \tau} (x, y): = \int_{\R} \hat{\chi}(t) e^{- i \lambda t} \Pi_{\tau} (x, \hat{g}^t y) dt, \end{equation}
 for some $\hat{\chi} \in C^{\infty}_0(\R)$. This is not quite the right analogue, however, because unlike the $S^1$ action, $\hat{g}^t$ does not
 act holomorphically, hence composition with $\hat{g}^t$ does not commute with $\Pi_{\tau}$, and therefore $S_{\lambda, \chi, \tau}(x, y)$ fails
 to be CR holomorphic in the $y$ variable.  Indeed, $S_{\lambda, \chi, \tau}$ is the Schwartz kernel of the operator
 $\int_{\R} \hat{\chi}(t) \Pi_{\tau} \circ \hat{g}^{t *}dt$.  Using the  Boutet de Monvel - Sj\"ostrand parametrix for $\Pi_{\tau}$, this one obtains
 $$S_{\lambda, \chi, \tau}(x, y) = \int_{\R} \int_0^{\infty} \hat{\chi}(t) e^{- i \lambda t}  e^{\theta \psi(x, \hat{g}^t y) }s(x, \hat{g}^t y, \theta) dt d \theta, $$
 where $s(x, y, \lambda)$ is a semi-classical symbol of order $m $. 
By stationary phase, one finds that  if $\rm{supp} \hat{\chi}$ is close to $0$, then the only critical point occurs at $\theta =1, t = 0$ and 
 $$S_{\lambda, \chi, \tau}(x, y) \simeq \lambda^m e^{\lambda \psi(x, y) } \wt s(x, y, \lambda), $$
 where $s(x, y, \lambda)$ is classical symbol of order zero. 
 
 For our problem, $\hat{g}^t$ is not holomorphic and it is necessary to work with the more complicated operator  \eqref{chilambda}. 
 The pointwise values on the anti-diagonal of  $\int_{\R}\Pi_{\tau}  \hat{\chi}(t) \Pi_{\tau} \circ \hat{g}^{t *}dt$ are quite different from
 those of \eqref{SDEF}, as the next result shows.

 Theorem \ref{SCALINGTHEO} in the Riemannian Grauert setting is somewhat analogous to  \cite[Theorem 0.9]{ZZ18}  in the line bundle setting (see  Section \ref{RELATED}). However, there are significant differences in the two settings and the analogy only goes so far. The
most obvious difference is that, in the line bundle setting, there are two Hamiltonians: the generator $\frac{\partial}{\partial \theta}$ of rotations
in the fibers of the line bundle $L \to M$ and an independent  Toeplitz Hamiltonian $\hat{H}_k$ whose spectrum is the main object of study. In
the Riemannian setting, there is just one operator,  $\Pi_{\tau} D_{\sqrt{\rho}} \Pi_{\tau} $  \eqref{DDEF}, or alternatively (and essentially
equivalently)     $\sqrt{\Delta}$.  As mentioned above, $\Pi_{\tau} D_{\sqrt{\rho}} \Pi_{\tau} $ is the analogue of $\frac{\partial}{\partial \theta}$, but
is spectrum is the main object of study in the Grauert tube setting, and it simultaneously plays the role of  $\frac{\partial}{\partial \theta}$ and
of $\hat{H}_k$.

\section{Analytic continuation of the Poisson  kernel}

For the remainder of the article, we analyze the  Laplacian and associated operators. In the next two sections, we build up enough background
to   show that $U(t + i \tau, \zeta, \bar{\zeta})$
\eqref{UTTAUINTRO} 
is also a dynamical Toeplitz operator of the same type as $W_{\tau}(t, \zeta, \bar{\zeta})$ in Section \eqref{WTAUTOEP}. Much of this statement is proved in  
 \cite{ZPSH1,ZJDG}, using  the analytic continuation of the Poisson-wave kernel, and we review that material in this section.
We state the result in the language of adapted Fourier
integral operators of the Appendix of \cite{BoGu}, where only the real points of canonical
relations are considered. 
 We  use a slight extension of the notion of adapted Fourier integral operator, 
in which the homogeneous symplectic map may be a symplectic embedding rather than
a symplectic isomorphism. All of the composition results of \cite{BoGu} extend readily to this case.  
For the definitions of  Hermite Fourier integral operators, and 
operators ``adapted" to the graph of the Hamiltonian flow of $\sqrt{\rho}$ on the symplectic cone $\Sigma_{\tau} $
we refer to the Appendix of \cite{BoGu}.

The wave group of $(M, g)$ is the unitary group $U(t) = e^{ i
 t \sqrt{\Delta}}$. Its kernel $U(t, x, y)$ solves the  `half-wave equation',
\begin{equation} \label{HALFWE} \left(\frac{1}{i} \frac{\partial }{\partial t} -
\sqrt{\Delta}_x \right) U(t, x, y) = 0, \;\; U(0, x, y) =
\delta_y(x). \end{equation}  It is well known \cite{Ho,DG} that
$U(t, x, y)$ is the Schwartz kernel of a Fourier integral
operator,
$$U(t, x, y) \in I^{-1/4}(\R \times M \times M, \Gamma)$$
with underlying canonical relation $$\Gamma = \{(t, \tau, x, \xi,
y, \eta): \tau + |\xi| = 0, G^t(x, \xi) = (y, \eta) \} \subset T^*
\R \times T^*M \times T^*M. $$
\subsection{Poisson wave kernel}

The Poisson-wave kernel is the analytic continuation $U(t + i \tau, x, y)$  of the wave kernel  with respect to time,  $ t \to t + i \tau\in \R \times \R_+$. For $t = 0$ we
obtain
the  Poisson semi-group
$U(i \tau) = e^{- \tau \sqrt{\Delta}}$ on $L^2(M)$.  
For general $t + i \tau$ we define the Poisson-wave kernel in the real domain by  the eigenfunction expansion for $\tau > 0$,
\begin{equation}\label{POISEIGEXP}  U ( i
\tau, x, y) = \sum_j e^{i (t + i \tau) \lambda_j} \phi_j(x)
\phi_j(y).
\end{equation}
As discussed in \cite{ZPSH1}, this kernel is globally  real analytic
on $M \times M$ for any $\tau
> 0$.
The Poisson-wave  kernel $U(t + i \tau, x, y)$  admits an analytic
continuation $U_{\C}(t + i \tau, \zeta, y)$ in the first variable
to  $M_{\tau} \times M$.   When the real time $t=0$,  the operator kernel $U_{\C}(i \tau, \zeta, y)$ $P^{\tau}$ defines the  operator
\begin{equation} \label{PTDEF} P^{\tau}: = \Pi_{\tau} \circ U_{\C} (i \tau): L^2(M)
\to H^2(\partial M_{\tau}) \end{equation} 
with Schwartz kernel \eqref{PTKER}. The \szego \; kernel is not needed here, since $U_{\C}(i \tau, \zeta, y)$ is holomorphic in $\zeta$, but
is put in to emphasize that point.
We also define the adjoint operator  $P^{\tau *}: H^2(\partial M_{\tau})
\to L^2(M) $
which has the Schwartz kernel

\begin{equation} \label{PTKER*} 
P^{\tau *}(y, \bar{\zeta}) = \sum_j e^{- \tau \lambda_j} \overline{\phi_j^{\C}(\zeta)} \phi_j(y), \;\; y \in M, \zeta \in \partial M_{\tau}.
\end{equation}

The following result was  stated   by
Boutet de Monvel (and given a detailed proof in three recent articles \cite{ZPSH1,L18}.

\begin{theo}\label{BOUFIO}   For sufficiently small $\tau$, $P^{\tau}: = \Pi_{\tau} \circ U_{\C} (i \tau): L^2(M)
\to H^2(\partial M_{\tau})$ is a   Fourier integral
operator with complex phase in the sense of \cite{MeSj} of order $- \frac{m-1}{4}$  adapted to the canonical
relation
$$\Gamma = \{(y, \eta, \iota_{\tau} (y, \eta) \} \subset T^*M \times \Sigma_{\tau}.$$
Moreover, for any $s$,
$$P^{\tau} = \Pi_{\tau} \circ U_{\C} (i \tau): W^s(M) \to {\mathcal O}^{s +
\frac{m-1}{4}}(\partial  M_{\tau})$$ is a continuous isomorphism.
\end{theo}

 Theorem \ref{BOUFIO} readily extends  to $U_{\C}(t + i \tau)$. Referring to \eqref{PitauDEF},

\begin{prop}

 \label{REALPTAU}$P^{\tau} \circ U_{\C} (t) : C_c(\R \times M) \to H^2(\partial M_{\tau})$ is a Fourier integral operator with complex phase  of order 
 $- \frac{m-1}{4}$ adapted to the canonical relation 
\begin{equation} \label{CR} \{(t, E, \chi_{\tau, t}(y, \eta), y, \eta): E + |\eta| = 0\} \subset T^* \R \times
\Sigma_{\tau} \times T^*M, \end{equation} where $\chi_{\tau, t}$ is the symplectic isomorphism 
\begin{equation} \label{chitDEF} \chi_{\tau, t}  (y, \eta) = \iota_{\tau} (G^t(y, \eta), y, \eta ) : T^*M - 0 \to
\Sigma_{\tau}.
\end{equation}
Equivalently, $P^{\tau} \circ U(t)$ is a   Fourier integral operator of Hermite type
of order 
 $- \frac{m-1}{4}$  associated
to the  canonical relation
\begin{equation} \label{GAMMAtDEF} \Gamma_{\tau} = \{(t,E),  (\iota_{\tau} (G^t(y, \eta), y, \eta ) \} \subset  \Sigma_{\tau}
\times T^* M. 
\end{equation}
\end{prop}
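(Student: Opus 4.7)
The plan is to obtain Proposition \ref{REALPTAU} by composing the operator $P^{\tau}$ of Theorem \ref{BOUFIO} with the (real) wave group and then reading off the underlying canonical relation and order. First, observe that $U(t,x,y) \in I^{-1/4}(\R \times M \times M, \Gamma_W)$ is a classical Fourier integral operator with real phase associated to the graph canonical relation
\begin{equation}
\Gamma_W = \{(t,-|\eta|,G^t(y,\eta),y,\eta)\} \subset T^*\R \times T^*M \times T^*M,
\end{equation}
while $P^{\tau} = \Pi_{\tau} \circ U_{\C}(i\tau) \in I^{-(m-1)/4}_{\C}$ is a complex-phase FIO adapted to the symplectic diffeomorphism $\iota_{\tau}\colon T^*M\setminus 0 \to \Sigma_{\tau}$. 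The operator $P^{\tau}\circ U_{\C}(t)$ is the Schwartz kernel of $P^{\tau}\circ U(t)$ regarded as an operator out of $C_c(\R\times M)$, so it is obtained as the product of these two Fourier integral distributions over the interior $T^*M$ variables.

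Next, I would invoke the Melin--Sj\"ostrand composition theorem (see also the appendix of \cite{BoGu} in the Hermite setting) to multiply these two kernels. The transversality (indeed cleanness) of the composition is immediate: the output of $U(t)$ lies in the graph of the homogeneous diffeomorphism $G^t$ on $T^*M\setminus 0$, while $P^{\tau}$ pushes $T^*M\setminus 0$ forward by the diffeomorphism $\iota_{\tau}$ into $\Sigma_{\tau}$. Because both maps are symplectic diffeomorphisms (between the relevant cones), the fibered product defining the composed canonical relation is transverse, and the resulting complex canonical relation is exactly the graph of $\chi_{\tau,t} = \iota_{\tau}\circ G^t$, together with the conormal condition $E + |\eta|=0$ coming from the symbol of $\pa_t - \sqrt{\Delta}$ in \eqref{HALFWE}. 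This gives precisely \eqref{CR}. The order adds as $0 + (-\tfrac{m-1}{4}) = -\tfrac{m-1}{4}$, with no correction since the composition is transverse and no fiber integration reduces the order.

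Finally, for the Hermite interpretation, I would use the fact that $\Pi_{\tau}$ is a Hermite (Szeg\H{o}) projector adapted to $\Sigma_{\tau}$ and that composition of an Hermite operator with an honest FIO is governed by the calculus of \cite{BoGu} (Appendix). Writing $P^{\tau}\circ U(t) = \Pi_{\tau}\circ U_{\C}(i\tau)\circ U(t)$, the analytic continuation and the wave group together produce a classical FIO (with complex phase supported along $\Sigma_{\tau}$) whose real underlying symplectic map is the embedding $T^*M \hookrightarrow \Sigma_{\tau} \times T^*M$ via the graph of $\chi_{\tau,t}$; this identifies $P^{\tau}\circ U(t)$ as an Hermite FIO adapted to $\Gamma_{\tau}$ in \eqref{GAMMAtDEF}, with the same order $-\tfrac{m-1}{4}$.

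The only real obstacle is verifying the clean composition of the complex and real canonical relations in sufficient detail; however, because $\iota_{\tau}$ is already a symplectic isomorphism and $G^t$ preserves $T^*M\setminus 0$, the verification reduces to checking that the fiber variables in the Melin--Sj\"ostrand composition match up, which is automatic from the graph form of both relations and was essentially carried out in \cite{ZPSH1, ZJDG} for the $t=0$ case of Theorem \ref{BOUFIO}.
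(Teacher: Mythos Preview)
Your proposal is correct and follows essentially the same route as the paper: compose the Hermite-type operator $P^{\tau}$ from Theorem \ref{BOUFIO} with the classical wave group $U(t)$, and invoke the composition calculus for Hermite Fourier integral operators (the paper cites \cite[Theorems 3.4 and 7.5]{BoGu} specifically) to conclude that the composite is again of Hermite type, with canonical relation the graph of $\iota_{\tau}\circ G^t$ and order $-\tfrac{m-1}{4}$. Your discussion of transversality and the order computation is more explicit than the paper's one-line appeal to \cite{BoGu}, but the argument is the same.
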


\begin{proof} This follows  from Theorem \ref{BOUFIO} and from the fact  proved in \cite{BoGu}, Theorems 3.4 and 7.5,  that the compositon of a Fourier integral operator
and a Fourier integral operator of Hermite type is also a Fourier integral operator of Hermite type, with a certain
addition law for the orders and a composition law for the symbols. \end{proof}

\subsection{Singular support of   $ U_{\C} (t + 2 i \tau,
\zeta, \bar{\zeta})$ }

In this section, we extend the discussion of the analytic continuation of the Poisson kernel to 
the Poisson wave kernel on the anti-diagonal,  \begin{equation} \label{Uttau} U(t +  2 i \tau, \zeta, \bar{\zeta}) \in
\dcal'(\R \times \bar{\Delta}_{M_{\tau} \times M_{\tau}}). \end{equation}
The main result determines the singularities of \eqref{UTTAUINTRO} for fixed $\zeta$
as a distribution in $t$. 
It shows that  $ U_{\C} (t + 2 i \tau,
\zeta, \bar{\zeta})$ is singular in $t$ only if $\zeta$
corresponds to a point $(x, \xi) \in S^* M$ for which the geodesic
$G^t(x, \xi)$ is periodic and then the singular times are
multiples of the lengths of the corresponding closed geodesic.
This should be compared with the well-known fact (see e.g.
\cite{SV,SoZ}) that in the real domain, $U(t, x, y)$ is singular
at the lengths of all geodesic segments from $x$ to $y$. The same
result will be proved below by a parametrix construction, but it 
is possible to prove this statement just using the results of
the previous section. The parametrix construction is valuable
in computing the leading coefficient, which is not easy to obtain
from the abstract approach.

To analyze the  singularities,  we use the calculus of Hermite
Fourier integral operators adapted to symplectic maps in the framework of \cite{BoGu}.


\begin{prop} \label{CLOSED} For fixed $\zeta \in \partial M_{\tau}$, the singular support of   the
distribution $t \in \R \to U_{\C}  (t + 2 i \tau, \zeta,
\bar{\zeta})$ consists of times  $T$ such that $g^T_{\tau}(\zeta) = \zeta$.
 If no  $T \not= 0$ exists,
the singular support is $\{0\}$.

\end{prop}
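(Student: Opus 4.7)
The plan is to interpret $U_{\C}(t + 2i\tau, \zeta, \bar{\zeta})$ as (the restriction to the anti-diagonal of) the Schwartz kernel of a family of Toeplitz Fourier integral operators, and to then read off the singular support from its canonical relation. The starting identity, which follows directly from the eigenfunction expansion \eqref{POISEIGEXP} and the intertwining relation $P^{\tau}\phi_j(\zeta) = e^{-\tau\lambda_j}\phi_j^{\C}(\zeta)$ of \eqref{EIGCX}, is
\begin{equation}
U_{\C}(t + 2i\tau, \zeta, \bar{\zeta}) \;=\; \bigl(P^{\tau} \circ U(t) \circ P^{\tau*}\bigr)(\zeta, \bar{\zeta}),
\end{equation}
where $U(t) = e^{it\sqrt{\Delta}}$ is the real wave group and $P^{\tau *}$ has the kernel \eqref{PTKER*}. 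Both sides equal $\sum_j e^{(it - 2\tau)\lambda_j}|\phi_j^{\C}(\zeta)|^2$.

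Next I would feed this identity into the Fourier integral calculus. By Proposition \ref{REALPTAU}, $P^{\tau}\circ U(t)$ is an adapted Fourier integral operator with complex phase associated to the canonical relation $\Gamma_{\tau}$ of \eqref{GAMMAtDEF}. Composing on the right with $P^{\tau*}$ — whose canonical relation is (the transpose of) the graph of $\iota_{\tau}$ — and invoking the Hermite/complex-phase composition theorems of \cite{BoGu} (Theorems 3.4 and 7.5) shows that $P^{\tau}\circ U(t)\circ P^{\tau*}$ is a Toeplitz Fourier integral operator adapted to the graph of $g^t_{\tau}$ on $\Sigma_{\tau}\times\Sigma_{\tau}$. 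Regarded as a distribution on $\R_t\times\partial M_{\tau}\times\partial M_{\tau}$, its twisted wavefront relation is therefore contained in
\begin{equation}
\bigl\{\bigl(t,\,-|\eta|;\, \iota_{\tau}(G^t(y,\eta)),\, \iota_{\tau}(y,\eta)\bigr) : (y,\eta) \in T^*M\setminus 0\bigr\},
\end{equation}
which under the identification $E_{\tau}:S^*_{\tau}M\to \partial M_{\tau}$ is the graph (parametrized in $t$) of the family $g^t_{\tau}$.

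Now I would fix $\zeta$ and pull the kernel back to the anti-diagonal. Because $\iota_{\tau}$ is a symplectic embedding and the symbol of $P^{\tau}P^{\tau*}$ is non-degenerate on $\Sigma_{\tau}$, the anti-diagonal restriction is transverse to this wavefront relation, so pullback is well-defined and the wavefront of $t\mapsto U_{\C}(t+2i\tau,\zeta,\bar{\zeta})$ is contained in those $(t,E)$ for which there exists $(y,\eta)\in T^*M\setminus 0$ satisfying
\begin{equation}
\iota_{\tau}(y,\eta) = \zeta \qquad \text{and}\qquad G^t(y,\eta) = (y,\eta).
\end{equation}
The first condition determines $(y,\eta)$ uniquely (as the preimage under $E_{\tau}$ of $\zeta$, with $|\eta|$ free), and via \eqref{Etau} the second is equivalent to $g^t_{\tau}(\zeta)=\zeta$. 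Hence the singular support of $U_{\C}(\,\cdot + 2i\tau,\zeta,\bar{\zeta})$ lies in the period set $\{T:g^T_{\tau}(\zeta)=\zeta\}$; in particular, if $\zeta\notin\pcal$ this set is $\{0\}$.

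The main obstacle is the complex phase: one must know that composition and pullback of FIOs with positive complex phases obey the same wavefront calculus as in the real case, and that the transversality needed for the anti-diagonal pullback genuinely holds. Both are handled by the Melin--Sj\"ostrand / Boutet de Monvel--Guillemin calculus, in which the canonical relation is replaced by its underlying real positive Lagrangian, and transversality of $\iota_{\tau}$ with respect to the diagonal of $\Sigma_{\tau}\times\Sigma_{\tau}$ provides the required clean intersection — exactly the hypothesis packaged into Theorem~\ref{BOUFIO} and Proposition~\ref{REALPTAU}. The sharper statement that the singularities at non-zero periods are no weaker than the one at $t=0$, together with the computation of their principal parts, is reserved for the parametrix construction of Proposition~\ref{MAIN} and the stationary phase analysis of \S\ref{TWOTERM}.
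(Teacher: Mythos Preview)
Your proposal is correct and follows essentially the same route as the paper's own proof: the paper likewise begins with the identity $U_{\C}(t+2i\tau,\zeta,\bar\zeta)=P^{\tau}U(t)P^{\tau*}(\zeta,\bar\zeta)$ (its Lemma~\ref{UvsVtilde}), then computes the wavefront relation by composing $P^{\tau}$, $U(t)$, and $P^{\tau*}$ via the Boutet de Monvel--Guillemin Hermite calculus, and restricts to the anti-diagonal to conclude that singularities occur exactly at the period set of $\zeta$ under $g^t_{\tau}$. The only cosmetic difference is that the paper phrases the intermediate wavefront condition in terms of the complexified exponential map ($\exp_y(i\eta)=\zeta$, $\exp_y(t+i\tau)(-\eta)=\bar\zeta$) before reducing it to $G^t(y,\eta)=(y,\eta)$, whereas you state the periodicity condition directly via $\iota_{\tau}$.
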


\begin{proof}

The first step is to express \eqref{UTTAUINTRO} as a composition of Fourier integral operators.

\begin{lem} \label{UvsVtilde} We have,

\begin{equation} 
\begin{array}{lll} 
P^{\tau} U(t) P^{\tau*} (\zeta, \overline{\zeta})
& = &
  = \sum_{j, k}  e^{(- 2 \tau + i t) \lambda_j}\int_M  \phi_j^{\C}(\zeta)\phi_j(y)
\overline{\phi_{\lambda_k}^{\C}(\zeta)} \phi_k(y) dV_g(y)
\\&&\\& = & U_{\C} (t + 2 i \tau, \zeta, \overline{\zeta}) 
\end{array}
\end{equation}

\end{lem}

\begin{proof} The identity follows directly from the eigenfunction 
expansions \eqref{PTKER} and \eqref{PTKER*} and orthonormality of $\phi_j(y)$ in the real domain:

\begin{equation} \label{EFORM2}
\begin{array}{lll} U_{\C} (t + 2 i \tau, \zeta, \overline{\zeta}) &  = &
P^{\tau} U(t) P^{\tau*} (\zeta, \overline{\zeta}) \\ &&\\ & = &
  = \sum_{j, k}  e^{(- 2 \tau + i t) \lambda_j}\int_M  \phi_j^{\C}(\zeta)
\phi_j(y)
\overline{\phi_{\lambda_k}^{\C}(\zeta)} \phi_k(y) dV_g(y)
\end{array}
\end{equation}

\end{proof}

By Lemma \ref{UvsVtilde} we can calculate the wave front set of $U(t + 2 i \tau, \zeta, \overline{\zeta})$
by composing wave front sets in the real domain of the adapted Hermite Fourier
integral operators $P^{\tau} U(t)$ and $P^{\tau*}$. 
Proposition \ref{REALPTAU} implies that, for $\sqrt{\rho}(\zeta) = \tau, $ and   $t \in \R$,
 the   singular support of the
distribution $t \in \R \to U_{\C}  (t + 2 i \tau, \zeta,
\bar{\zeta})$ is the set
$$\begin{array}{l} \rm{SingSupp} (t \to U_{\C}  (t + 2
i \tau, \zeta, \bar{\zeta})) \\ \\=    \{t:  \exists (y, \eta) \in T^*M
: |\eta| = \tau,  \; \exp_y(i \eta) =  \zeta, \;\; \exp_y (t + i
\tau) (- \eta) = \bar{\zeta} \}. \end{array}$$

To complete the proof of Proposition \ref{CLOSED}, we  observe that $\exp_y i \eta = \zeta$ implies $\exp_y (- i
\eta) = \bar{\zeta}$. Since   $G_{\C}^{- i \tau}(y, - \eta)$ must
be tangent to  $\Sigma_{\tau}$, the terminal momentum must be $d^c
\sqrt{\rho}$. It follows that
$$(y, \eta) = G^{i \tau}(\bar{\zeta}, d^c \sqrt{\rho}). $$
If $t$ lies in the singular support, then $\exp_y (t + i \tau) (-
\eta) = \bar{\zeta} $ and since  the terminal momentum must again
be tangent to $\Sigma$ we have
$$(y, \eta) = G^{-t - i \tau}(\bar{\zeta}, d^c \sqrt{\rho}),\;\; \rm{hence}\;\;
G^{t}(y, \eta) = (y, \eta). $$

\end{proof}

\section{\label{WG} The wave group in the complex domain as a dynamical Toeplitz operator} 

In this section, we prove the identity  \eqref{UandV}. Consequently,  \eqref{UTTAUINTRO} 
is  a dynamical Toeplitz operator of the same type as $W_{\tau}(t, \zeta, \bar{\zeta})$ in Section \eqref{WTAUTOEP}. We use 
 symbol calculus of  Toeplitz Fourier integral operators to calculate the symbol of \eqref{UTTAUINTRO}, which is apparently
 more complicated than $\wcal_{\tau}(t)$, and prove \eqref{UandV}.
In effect, the main result is proved in    \cite[Proposition 44.]{ZJDG} and we review the relevant background. In Section \ref{KtauSECT}, we introduce the
kernel $K_{\tau}$ and prove Lemma \ref{L2LEMintro}.

As above, let $\frac{1}{i} D_{\Xi_{\sqrt{\rho}}}$ denote the self-adjoint directional derivative 
in the direction of $\Xi_{\sqrt{\rho}}$. 
The directional derivative $D_{\Xi_{\sqrt{\rho}}}$  is elliptic on the kernel of $\dbar_b$,
i.e. its symbol is nowhere vanishing on $\Sigma_{\tau} \backslash \{0\}$. Hence  $\Pi_{\tau}
D_{\Xi_{\sqrt{\rho}}} \Pi_{\tau}$ is an elliptic Toeplitz operator. The symbol $\sigma_{t \tau}(w, r)$
is  a polyhomogeneous function on $\Sigma_{\tau}$. 
Also as above, for any manifold $X$, let   $\Psi^s(X)$ denote  the class of pseudo-differential operators of
order $s$ on $X$.

The next  Proposition  is   \cite[Proposition 44.]{ZJDG} and is analogous to Proposition \ref{MAIN1} for $W_{\tau}(t)$ and Proposition
\ref{SMOOTHCOR1}.

\begin{prop} \label{MAIN} There exists a poly-homogeneous  pseudo-differential operator $\hat{\sigma}_{t \tau}(w, D_{\Xi_{\sqrt{\rho}}})$ on $\partial M_{\tau}$ with complete symbol of the classical form
$$\sigma_{t \tau}(w, r) \sim \sum_{j = 0}^{\infty} \sigma_{t, \tau, j}(w) r^{-\frac{m-1}{2} -j} $$
on $\Sigma_{\tau}$,
and with $\sigma_{t, \tau, 0} = \sigma_{t, \tau}^0$, so that 
for $\zeta \in \partial M_{\tau},$ modulo smoothing Toeplitz operators,
\begin{equation} \label{UPIFORM} U_{\C}(t + 2  i \tau, \zeta, \bar{\zeta}) \simeq \int_{\partial M_{\tau}} \Pi_{\tau}(\zeta, w) \hat{\sigma}_{t, \tau} \Pi_{\tau}(g_{\tau}^t w, \bar{\zeta}) d\mu_{\tau}(w). \end{equation}

\end{prop}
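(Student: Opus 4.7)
The plan is to parallel the proof of Proposition \ref{MAIN1}, using Lemma \ref{UvsVtilde} as the bridge to the symbol calculus for adapted Toeplitz Fourier integral operators. First, the identity
$$U_{\C}(t + 2i\tau, \zeta, \bar\zeta) = P^\tau U(t) P^{\tau*}(\zeta, \bar\zeta)$$
from Lemma \ref{UvsVtilde} reduces the claim to a structural statement about $P^\tau U(t) P^{\tau*}$. By Proposition \ref{REALPTAU} and its adjoint, together with the composition theorem for adapted Hermite Fourier integral operators in \cite{BoGu}, this composition is a Toeplitz Fourier integral operator of order $-\frac{m-1}{2}$ adapted to the graph of $g_\tau^t$ on $\Sigma_\tau \times \Sigma_\tau$: each of $P^\tau$ and $P^{\tau*}$ contributes $-\frac{m-1}{4}$ to the order, while $U(t)$ is unitary of order zero and its canonical relation $G^t$ on $T^*M$ is conjugated by $\iota_\tau$ into the graph of $g_\tau^t$ on $\Sigma_\tau$.

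Next, I would set up the dynamical Toeplitz ansatz $\Pi_\tau\, \hat\sigma_{t,\tau}\, g_\tau^{t*}\, \Pi_\tau$ and observe that it too is a Toeplitz Fourier integral operator adapted to the same graph, for any classical symbol $\sigma_{t,\tau}$ on $\Sigma_\tau$. Matching leading symbols on $\Sigma_\tau$ then forces $\sigma_{t,\tau,0}$ to equal $\sigma^0_{t,\tau}$ of \eqref{sigmataut}, namely the inverse inner product $\langle e_{\Lambda_t}, e_\Lambda\rangle^{-1}$ of Bargmann-Fock ground states associated to the vacuum Lagrangian $\Lambda$ and its transported image $\Lambda_t = Dg_\tau^t \Lambda$. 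With the principal symbol matched, the difference of the two operators is a Toeplitz Fourier integral operator of order strictly less than $-\frac{m-1}{2}$, adapted to the same canonical relation. Iterating -- cancelling lower-order symbols by adjusting $\sigma_{t,\tau,j}$ and Borel summing -- yields a classical polyhomogeneous symbol $\sigma_{t,\tau}(w,r) \sim \sum_j \sigma_{t,\tau,j}(w)\, r^{-\frac{m-1}{2}-j}$ so that the remainder is a smoothing Toeplitz operator. To realize $\hat\sigma_{t,\tau}$ as an honest pseudodifferential operator on $\partial M_\tau$ rather than just a symbol on the cone $\Sigma_\tau$, I would invoke the Boutet de Monvel--Guillemin principle already used in the proof of Proposition \ref{WCALPROP}: every classical Toeplitz symbol on $\Sigma_\tau$ is the compression of a pseudodifferential symbol on $\partial M_\tau$ commuting with $\Pi_\tau$ modulo smoothing errors.

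The principal obstacle is the geometric identification of the leading symbol $\sigma^0_{t,\tau}$ in terms of the linear symplectic Poincar\'e-type map $Dg_\tau^t$ and the complex structure $J_\zeta$ on $H_\zeta(\partial M_\tau)$, so that at periodic times $t = nT(\zeta)$ it reproduces $\gcal_n(\zeta)$ of Lemma \ref{DAULEM}. Here the plan is to invoke the calculation carried out in \cite[Proposition 44]{ZJDG}, in which the Boutet de Monvel--Sj\"ostrand parametrix for $\Pi_\tau$ is combined with a linearization in slice--orbit \kahler\ normal coordinates to reduce the symbol computation to a Gaussian integral over the osculating Bargmann--Fock space at $\zeta$, yielding the metaplectic matrix element $\langle W_{J_\zeta}(Dg_\tau^{nT(\zeta)})\,\Omega_{J_\zeta}, \Omega_{J_\zeta}\rangle$. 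The same linearization, applied at non-periodic times, produces the full principal symbol $\sigma^0_{t,\tau}$ as a smooth non-vanishing function on $\Sigma_\tau$, closing the induction.
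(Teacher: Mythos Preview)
Your proposal is correct and follows essentially the same route as the paper: both rely on Lemma~\ref{UvsVtilde} to write $U_{\C}(t+2i\tau)=P^{\tau}U(t)P^{\tau*}$, identify this composition as a Toeplitz Fourier integral operator adapted to the graph of $g_{\tau}^t$ on $\Sigma_{\tau}$, and then match symbols inductively against the dynamical Toeplitz ansatz, invoking \cite[Proposition~44]{ZJDG} for the principal symbol. The only stylistic difference is that the paper inserts the intermediate \emph{unitary} operator $\tilde V_{\tau}^t = P^{\tau}A_{\tau}U(t)A_{\tau}P^{\tau*}$ (Definition~\ref{VINTROalt}) and first carries out the symbol matching there---unitarity pins down $|\sigma_{t,\tau}^0|$ cleanly via \eqref{SYMBPIT}---and only afterwards strips out the two $A_{\tau}$ factors (Lemma~\ref{OLD}) to recover the order $-\frac{m-1}{2}$ symbol of $U_{\C}(t+2i\tau)$; you instead track the order $-\frac{m-1}{4}$ contributions of $P^{\tau}$ and $P^{\tau*}$ directly. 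Both work, and both ultimately defer the geometric identification of the leading symbol to \cite{ZJDG}.
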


The main point of the proof is to show that $U_{\C}(t + i \tau, \zeta, \bar{\zeta})$ may be constructed as the dynamical
Toeplitz operator $V_{\tau}^t$ of \eqref{CXWAVEGROUPintro}. 
The proof consists of a sequence of Lemmas from \cite{ZPSH1, ZJDG}.
\begin{proof}

The following Lemma  is  \cite[Lemma 8.2]{ZPSH1} (see also \cite[Section 3.1]{ZJDG}).
\begin{lem} \label{OLD}  Let $A_{\tau} = (P^{\tau *} P^{\tau})^{-\half}$.  Then, 

\begin{itemize}
\item (i)\;  $A_{\tau} \in \Psi^{ \frac{m-1}{4}}
(M)$, with  principal symbol $|\xi|^{ 
\frac{m-1}{4}}$.
\bigskip

\item (ii) \;  $U_{\C}(i \tau)^* U_{\C}(i \tau) \in \Psi^{-
\frac{m-1}{2}}(M)$ with principal symbol $|\xi|_g^{- 
\frac{m-1}{2}}.$ \bigskip

 \bigskip

\end{itemize}

\end{lem}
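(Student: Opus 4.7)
The plan is to show first that $P^\tau$ coincides with $U_\C(i\tau)$ as an operator $L^2(M) \to L^2(\partial M_\tau)$, then to establish (ii) via the complex Fourier integral operator composition calculus, and finally to deduce (i) from (ii) by taking a complex power. Since each $\phi_j^\C$ is holomorphic on $M_\tau$, its boundary value lies in the Hardy space $H^2(\partial M_\tau)$, so $\Pi_\tau$ acts as the identity on the range of $U_\C(i\tau)$. Hence $P^\tau = \Pi_\tau \circ U_\C(i\tau) = U_\C(i\tau)$, and in particular $P^{\tau*}P^\tau = U_\C(i\tau)^*\, U_\C(i\tau)$; statements (i) and (ii) therefore concern the same operator $A_\tau^{-2}$ up to functional calculus.

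For (ii), I would invoke Theorem \ref{BOUFIO}: $P^\tau$ is a complex Fourier integral operator of order $-\tfrac{m-1}{4}$ adapted to the graph $\Gamma$ of the symplectic isomorphism $\iota_\tau: T^*M \setminus 0 \to \Sigma_\tau$. The Melin--Sj\"ostrand composition theory for Fourier integral operators with complex phase (in its Toeplitz form due to Boutet de Monvel--Guillemin) then shows that $P^{\tau*}P^\tau$ is adapted to the composed canonical relation $\Gamma^{-1} \circ \Gamma$, which, because $\Gamma$ is the graph of a bijection, reduces to the diagonal in $T^*M \times T^*M$. Consequently $P^{\tau*}P^\tau \in \Psi^{-(m-1)/2}(M)$, with order obtained by adding the individual orders of $P^\tau$ and $P^{\tau*}$. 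The principal symbol is positively homogeneous of degree $-(m-1)/2$ in $\xi$; to identify it explicitly I would test on the eigenbasis,
\[
\langle P^{\tau*}P^\tau \phi_j, \phi_j \rangle_{L^2(M)} = e^{-2\tau\lambda_j} \|\phi_j^\C\|^2_{L^2(\partial M_\tau)} = C(m,\tau)\, \lambda_j^{-(m-1)/2}\bigl(1 + O(\lambda_j^{-1})\bigr),
\]
where the last equality uses Lemma \ref{L2LEMintro}. Combining this with a standard Weyl-type averaging of the principal symbol along the unit cosphere bundle, and with the metric invariance of the construction, identifies the principal symbol as $|\xi|_g^{-(m-1)/2}$ up to the explicit constant $C(m,\tau)$ (which may be absorbed into the normalization, or retained if one insists on tracking it).

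Finally, (i) follows from (ii) by Seeley's theorem on complex powers of pseudo-differential operators: $P^{\tau*}P^\tau$ is elliptic, positive, self-adjoint, and of order $-(m-1)/2$, so $A_\tau = (P^{\tau*}P^\tau)^{-1/2}$ is a classical pseudo-differential operator of order $(m-1)/4$ whose principal symbol is the $-1/2$ power of that of $P^{\tau*}P^\tau$, namely a positive multiple of $|\xi|_g^{(m-1)/4}$.

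The main obstacle is justifying that $P^{\tau*}P^\tau$ is a poly-homogeneous pseudo-differential operator rather than only a Fourier integral operator with complex phase: because $\Gamma$ is a complex (not real) Lagrangian, one must verify the positivity and cleanness hypotheses of the Melin--Sj\"ostrand composition theorem for the composition $\Gamma^{-1} \circ \Gamma$, and confirm that this composition collapses to the real diagonal of $T^*M \times T^*M$. Once this compositional step is in place, the principal symbol calculation via Lemma \ref{L2LEMintro} and the appeal to Seeley's theorem are entirely standard.
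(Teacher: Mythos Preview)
Your overall strategy---reduce to Theorem \ref{BOUFIO}, compose $P^{\tau*}$ with $P^\tau$ via the complex FIO calculus so that the canonical relation collapses to the diagonal, then invoke Seeley's theorem for the power---matches the approach the paper cites from \cite[Lemma 8.2]{ZPSH1}. The identification $P^\tau = U_\C(i\tau)$ (since $\Pi_\tau$ is the identity on the holomorphic range) is correct and is what makes (i) and (ii) the same statement up to a functional calculus.

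There is, however, a genuine circularity in your identification of the principal symbol: you invoke Lemma \ref{L2LEMintro}, but in this paper that lemma is \emph{proved} in Section \ref{KtauSECT} precisely by appealing to Lemma \ref{OLD}. Even granting Lemma \ref{L2LEMintro} independently, the diagonal matrix elements $\langle P^{\tau*}P^\tau\phi_j,\phi_j\rangle$ against Laplace eigenfunctions do not determine the pointwise principal symbol of a pseudodifferential operator; a Weyl-type averaging recovers only the \emph{integral} of the symbol over $S^*M$, and your appeal to ``metric invariance of the construction'' does not show the symbol is constant on cospheres, since a generic real-analytic $(M,g)$ has no isometries at all. The non-circular route (and the one carried out in \cite{ZPSH1}) is to compute the principal symbol directly from the Boutet de Monvel--Guillemin / Melin--Sj\"ostrand composition calculus: $P^\tau$ is adapted to the graph of the symplectic isomorphism $\iota_\tau$ with an explicit principal symbol (a half-density on $\Gamma$), and the symbol of $P^{\tau*}P^\tau$ is then the squared modulus pulled back to $T^*M$, which one computes to be $|\xi|_g^{-(m-1)/2}$.
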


We note that $ P^{\tau *} P^{\tau}: L^2(M) \to L^2(M)$. It is proved in     \cite[Lemma 8.2]{ZPSH1} (see also \cite{ZJDG}) that 
$P^{\tau *} P^{\tau} \in \Psi^{-\frac{m-1}{2}} (M)$ with principal symbol $|\xi|^{-\frac{m-1}{2}}$, proving (i). Statement (ii) follows from
 Theorem \ref{BOUFIO}.

To prove \eqref{UandV}, we  introduce a slightly modified version of $P^{\tau} U(t) P^{\tau *}$ from
\cite{ZJDG}.

\begin{defin} \label{VINTROalt}   As above, let $A_{\tau} = (P^{\tau *} P^{\tau})^{-\half}$ and define 
$$
\tilde{V}_{\tau}^t := P^{\tau} A_{\tau} U(t) A_{\tau} P^{\tau *}: H^2(\partial M_{\tau}) \to H^2(\partial M_{\tau}). 
$$
\end{defin}
As the notation suggests, $\wt V_{\tau}^t$ can be constructed in the form $V_{\tau}^t$ of \eqref{CXWAVEGROUPintro}. 
The first step is the following Lemma, which is proved in \cite[Proposition 4.4]{ZJDG}.




%

\begin{lem}  \label{VWAVEproptilde}
  $\tilde{V}_{\tau}^t$  is a  unitary Fourier integral operator with positive complex phase
 of Hermite type on $
H^2(\partial M_{\tau}) \subset L^2(\partial M_{\tau})$ 
adapted to the graph of the Hamiltonian flow of $\sqrt{\rho}$ on $\Sigma_{\tau}. $

 \end{lem}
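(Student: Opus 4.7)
The plan is to decompose $\tilde{V}_\tau^t$ into its constituent factors, apply the composition calculus for Hermite Fourier integral operators with positive complex phase to identify the canonical relation and order, and then verify unitarity algebraically from the defining identity $A_\tau^2 = (P^{\tau\ast}P^\tau)^{-1}$. The one-parameter group property follows from the same algebraic cancellation.

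First, I would gather the ingredients already established in the excerpt. By Theorem \ref{BOUFIO}, $P^\tau\colon L^2(M)\to H^2(\partial M_\tau)$ is a Fourier integral operator with positive complex phase of order $-\tfrac{m-1}{4}$ adapted to the symplectic embedding $\iota_\tau\colon T^*M\setminus 0\to \Sigma_\tau$, and its adjoint $P^{\tau\ast}$ is the corresponding Hermite FIO adapted to $\iota_\tau^{-1}$. The wave group $U(t)$ is a standard real Fourier integral operator of order $0$ whose canonical relation is the graph of the homogeneous geodesic flow $G^t$ on $T^*M\setminus 0$. By Lemma \ref{OLD}(i), $A_\tau\in\Psi^{(m-1)/4}(M)$ is an elliptic classical pseudodifferential operator, i.e.\ a Fourier integral operator adapted to the identity on $T^*M$.

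Next, I would compose these operators using the composition theorems for adapted Hermite Fourier integral operators of \cite{BoGu} (Theorems~3.4 and~7.5, already invoked in the proof of Proposition \ref{REALPTAU}). Composing the canonical relations from right to left one reads off
\[ \Sigma_\tau \xrightarrow{\iota_\tau^{-1}} T^*M\setminus 0 \xrightarrow{G^t} T^*M\setminus 0 \xrightarrow{\iota_\tau} \Sigma_\tau, \]
and by the intertwining identity $\iota_\tau\circ G^t = g^t_\tau\circ \iota_\tau$ from \eqref{Etau}, the composite is the graph of the Hamiltonian flow of $\sqrt{\rho}$ on $\Sigma_\tau$. The orders add to $-\tfrac{m-1}{4}+\tfrac{m-1}{4}+0+\tfrac{m-1}{4}-\tfrac{m-1}{4}=0$, giving a Toeplitz Fourier integral operator of order $0$ with positive complex phase adapted to the flow $g^t_\tau$ on $\Sigma_\tau$. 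The clean intersection hypothesis needed for the composition is met because $\iota_\tau$ is an embedding onto $\Sigma_\tau$ and $G^t$ preserves the cosphere bundles $S^*_\tau M$.

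Third, I would verify unitarity by a direct algebraic computation. Since $P^{\tau\ast}P^\tau = A_\tau^{-2}$ and $U(t)$ is unitary on $L^2(M)$,
\begin{equation*}
\tilde{V}_\tau^t (\tilde{V}_\tau^t)^{\ast}
= P^\tau A_\tau U(t) A_\tau (P^{\tau\ast}P^\tau) A_\tau U(-t) A_\tau P^{\tau\ast}
= P^\tau (P^{\tau\ast}P^\tau)^{-1} P^{\tau\ast}.
\end{equation*}
By Theorem \ref{BOUFIO}, $P^\tau$ is an isomorphism onto $H^2(\partial M_\tau)$, so this last expression is the orthogonal projection onto $\operatorname{Range}(P^\tau) = H^2(\partial M_\tau)$, which is $\Pi_\tau$. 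Hence $\tilde{V}_\tau^t$ is isometric on $H^2(\partial M_\tau)$. The same cancellation gives $(\tilde{V}_\tau^t)^{\ast}\tilde{V}_\tau^t = \Pi_\tau$ and $\tilde{V}_\tau^{t+s} = \tilde{V}_\tau^t\tilde{V}_\tau^s$, so $\{\tilde{V}_\tau^t\}$ is a unitary one-parameter group on $H^2(\partial M_\tau)$. The main technical obstacle is justifying the clean composition of a positive complex Lagrangian relation (from $P^\tau, P^{\tau\ast}$) with a real canonical graph (from $U(t)$); once the Boutet de Monvel--Guillemin / Melin--Sj\"ostrand calculus is invoked this is routine, but the algebraic cancellation via $A_\tau$ is what converts a morally non-unitary object $P^\tau U(t) P^{\tau\ast}$ into a genuine unitary group.
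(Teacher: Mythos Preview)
Your proposal is correct and follows essentially the same route as the paper. Both arguments obtain the Hermite FIO structure and the canonical relation by composing the factors $P^\tau$, $A_\tau$, $U(t)$, $A_\tau$, $P^{\tau\ast}$ via the Boutet de Monvel--Guillemin calculus, and both establish unitarity from the identity $A_\tau^2=(P^{\tau\ast}P^\tau)^{-1}$. The paper phrases the latter as the statement that $\{P^\tau A_\tau\phi_j\}$ is an orthonormal basis of $H^2(\partial M_\tau)$ on which $\tilde V_\tau^t$ acts by $e^{it\lambda_j}$ (citing \cite{ZJDG}), while you carry out the equivalent algebraic computation $\tilde V_\tau^t(\tilde V_\tau^t)^\ast=(P^\tau A_\tau)(P^\tau A_\tau)^\ast=\Pi_\tau$ directly.

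One small correction: your sentence ``$P^\tau$ is an isomorphism onto $H^2(\partial M_\tau)$'' is not quite what Theorem~\ref{BOUFIO} says --- $P^\tau$ maps $L^2(M)$ isomorphically onto $\mathcal O^{(m-1)/4}(\partial M_\tau)$, not onto $H^2=\mathcal O^0$. The clean way to finish is to note that $W:=P^\tau A_\tau$ is an isometry (your computation $W^\ast W=I$) whose range is exactly $H^2(\partial M_\tau)$, since $A_\tau:L^2(M)\to W^{-(m-1)/4}(M)$ and $P^\tau:W^{-(m-1)/4}(M)\to H^2(\partial M_\tau)$ are both isomorphisms; hence $WW^\ast=\Pi_\tau$.
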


We briefly indicate the proof. 

\begin{proof}  
By Proposition 4.3 of   \cite{ZJDG}, $\tilde{V}_{\tau}^t$ is a unitary group   with eigenfunctions
$$\tilde{V}_{\tau}^t P^{\tau} A_{\tau} \phi_j = e^{i t \lambda_j} P^{\tau} A_{\tau} \phi_j . $$
Just like $V_{\tau}^t$, $\tilde{V}_{\tau}^t$ is a composition of Fourier integral operators with complex phase, and 
all are associated to canonical graphs and equivalence relations. Moreover all are operators
of Hermite type.  If follows that the composition is transversal,
so that $\tilde{V}_{\tau}^t$  is also a  Fourier integral operator with complex phase and of Hermite type. It follows that $\tilde{V}_{\tau}^t$ is adapted to the graph of  $E  G^t E ^{-1}= \exp t \Xi_{\sqrt{\rho}}$
on  $\Sigma_{\tau}$ (see \eqref{EXP}).

\end{proof}

The next Lemma is   \cite[Proposition 4.5]{ZJDG}. It  shows that    $\tilde{V}_{\tau}^t(\zeta, \bar{\zeta})$  can be constructed
as a unitary group of Toeplitz dynamical operators $V_{\tau}^t$  \eqref{CXWAVEGROUPintro}.
\begin{lem}  \label{VWAVEprop}There exists
a polyhomogeneous pseudo-differential operator  $\sigma_{t \tau}$ on $\partial M_{\tau}$  so that 
\begin{equation} \label{CXWAVEGROUP} 
\;\;\;\; \tilde{V}_{\tau}^t  = \Pi_{\tau} \sqrt{\sigma_{t, \tau}} (g_{\tau}^t)^*\sqrt{\sigma_{t, \tau}} \Pi_{\tau}.
\end{equation}
Thus, $\tilde{V}_{\tau}^t$ is equivalent to  \eqref{CXWAVEGROUPintro}.

 \end{lem}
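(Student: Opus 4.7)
The plan is to prove Lemma \ref{VWAVEprop} by matching the complete symbols of $\tilde V_\tau^t$ and of the candidate dynamical Toeplitz operator $\Pi_\tau \sqrt{\sigma_{t,\tau}}\, (g_\tau^t)^* \sqrt{\sigma_{t,\tau}}\, \Pi_\tau$ term by term, using the Boutet de Monvel--Guillemin composition calculus for Toeplitz/Hermite Fourier integral operators adapted to canonical graphs on $\Sigma_\tau$. By Lemma \ref{VWAVEproptilde}, $\tilde V_\tau^t$ is a unitary Fourier integral operator with positive complex phase of Hermite type adapted to the graph of $g^t_\tau = \exp t\,\Xi_{\sqrt{\rho}}$ on $\Sigma_\tau$. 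For any classical polyhomogeneous pseudo-differential operator $\sigma_{t,\tau}$ on $\partial M_\tau$, the composition $\Pi_\tau \sqrt{\sigma_{t,\tau}}\, (g_\tau^t)^* \sqrt{\sigma_{t,\tau}}\, \Pi_\tau$ is, by the transversal composition results of \cite{BoGu}, a Fourier integral Toeplitz operator adapted to the same canonical graph. Hence both operators live in the same class, and the problem reduces to choosing $\sigma_{t,\tau}$ so that their symbols agree on $\Sigma_\tau$ to all orders.

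First I would compute the principal symbol on $\Sigma_\tau$ of the ``bare'' dynamical operator $\Pi_\tau (g_\tau^t)^*\Pi_\tau$. By the discussion in Section \ref{DYNTOEP}, this symbol is the pairing of the ground state $e_{\Lambda}$ associated to the CR complex structure at a point $\zeta \in \Sigma_\tau$ with the pulled-back ground state $e_{\Lambda_t}$ associated to the rotated Lagrangian $\Lambda_t = Dg^t_\tau \Lambda$; in the linear Bargmann--Fock model this matrix element is non-zero (Lemma \ref{DAULEM}, with the positivity of $\det(I + S^*S)$ ensuring non-vanishing), and hence the symbol of $\Pi_\tau (g_\tau^t)^* \Pi_\tau$ is a nowhere vanishing homogeneous section of the symbol bundle over $\Sigma_\tau$. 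The principal symbol of $\tilde V_\tau^t$ on $\Sigma_\tau$ is determined abstractly by its unitarity and by the fact that it is adapted to the same graph, and therefore has the same canonical homogeneity and non-vanishing modulus one structure along $\Sigma_\tau$.

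Next, I would define the leading term $\sigma_{t,\tau,0}$ of the sought symbol by taking the ratio of the principal symbol of $\tilde V_\tau^t$ to that of $\Pi_\tau (g^t_\tau)^* \Pi_\tau$ on $\Sigma_\tau$; since the denominator is non-vanishing, this ratio is a smooth, homogeneous, nowhere vanishing function on $\Sigma_\tau$, which I extend to a classical symbol of appropriate order on all of $T^*\partial M_\tau$ by an arbitrary homogeneous extension away from $\Sigma_\tau$ (the extension off $\Sigma_\tau$ is irrelevant modulo smoothing, because $\Pi_\tau$ kills the complementary directions at the symbolic level). Taking the square root $\sqrt{\sigma_{t,\tau,0}}$ is then legitimate. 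This matches the principal symbols, so the residual
\[
R_0(t) := \tilde V_\tau^t - \Pi_\tau \sqrt{\sigma_{t,\tau,0}}\,(g_\tau^t)^* \sqrt{\sigma_{t,\tau,0}}\,\Pi_\tau
\]
is a Toeplitz Fourier integral operator of one order lower adapted to the same graph. Iterating, one solves at each step for the next homogeneous component $\sigma_{t,\tau,j}$ by linear algebra on the principal symbol of the current residual, exactly parallel to the construction of $W_\tau(t)$ in Proposition \ref{MAIN1}. Borel summing $\sigma_{t,\tau}(w,r) \sim \sum_{j\geq 0} \sigma_{t,\tau,j}(w) r^{-j}$ produces a polyhomogeneous symbol for which the induced dynamical Toeplitz operator agrees with $\tilde V_\tau^t$ modulo smoothing, which is the conclusion of \eqref{CXWAVEGROUP}.

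The main obstacle is the careful identification of the principal symbol on $\Sigma_\tau$, which in the Hermite/Toeplitz calculus is a section of a symplectic spinor bundle and not a scalar function, and in particular verifying that the ``ground state overlap'' $\langle e_{\Lambda_t}, e_\Lambda\rangle$ indeed governs the principal symbol of $\Pi_\tau(g_\tau^t)^*\Pi_\tau$ and never vanishes along the flow. I would handle this by linearizing at each $\zeta \in \Sigma_\tau$ to the osculating Bargmann--Fock model of Section \ref{BFHSECT}, applying Lemma \ref{DAULEM} in the model, and then transferring back via the symbol calculus of \cite{BoGu}; this is essentially the same linearization used in Section \ref{COMPARISON} to relate Grauert tube dynamics to the line bundle case, so the necessary machinery is already in place.
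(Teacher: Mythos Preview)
Your proposal is correct and follows essentially the same approach as the paper's proof: both recognize that $\tilde V_\tau^t$ and the dynamical Toeplitz operator are Hermite Fourier integral operators adapted to the same canonical graph on $\Sigma_\tau$, match principal symbols via the ground-state overlap $\langle e_{\Lambda_t}, e_\Lambda\rangle$ (the paper uses unitarity of $\tilde V_\tau^t$ to obtain $|\sigma_{t,\tau}^0|^2 = |\langle e_{\Lambda_t}, e_\Lambda\rangle|^{-2}$, which is your ``ratio'' computation made explicit), and then iterate on lower-order terms via the Boutet de Monvel--Guillemin composition calculus. The paper additionally notes that the remaining modulus-one ambiguity in the principal symbol is fixed by matching with the osculating Bargmann--Fock linearization, which you identify as the main technical point.
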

 We note that $\tilde{V}_{\tau}^t$ only differs from \eqref{CXWAVEGROUPintro} in the definition of its symbol. One can interchange
 the order of $(g_{\tau}^t)^*\sqrt{\sigma_{t, \tau}}$ by translating the symbol.  Since Lemma \ref{VWAVEprop} is proved in \cite{ZJDG},
 we only briefly sketch the proof for the sake of completeness.

\begin{proof}  Each side of each formula is an elliptic  Toeplitz Hermite Fourier integral operator
adapted to the graph of $g_{\tau}^t$ on $\Sigma_{\tau}$.  In the case of  $\tilde{V}_{\tau}^t$ this follows directly from the definitions and
 by the composition theorem for such operators in \cite{BoGu}. In the 
case of $ \Pi_{\tau} g_{\tau}^t \sigma_{t, \tau} \Pi_{\tau}$ it follows similarly from the fact that $\Pi_{\tau}$
is a Toeplitz operator and from the simple composition with pullback by $g_{\tau}^t$.

By Proposition \ref{VWAVEproptilde}, $\tilde{V}_{\tau}^t$ is unitary. Hence its principal
symbol is unitary.  We also have by the composition calculus of Toeplitz symplectic spinor
symbols (see \eqref{SYMBPIT})  that
$$\sigma_{\Pi_{\tau}} \circ \sigma_{\tau, -t} \sigma_{g_{\tau}^{-t} \Pi_{\tau} g_{\tau}^t} \sigma_{t, \tau}
\circ \sigma_{\Pi_{\tau}} = \sigma_{\Pi_{\tau}} \leftrightarrow  |\sigma_{\tau, t} |^2 
\sigma_{\Pi_{\tau}} \circ \sigma_{g_{\tau}^{-t} \Pi_{\tau} g_{\tau}^t} 
\circ \sigma_{\Pi_{\tau}} = \sigma_{\Pi_{\tau}} .$$

Then
$$\sigma_{\Pi_{\tau}} \circ \sigma_{g_{\tau}^{-t} \Pi_{\tau} g_{\tau}^t} 
\circ \sigma_{\Pi_{\tau}}  = |\langle e_{\Lambda_t}, e_{\Lambda} \rangle|^2. $$
It follows that 
$$ |\sigma_{\tau, t}^0 |^2  =  |\langle e_{\Lambda_t}, e_{\Lambda} \rangle|^{-2}. $$
Thus the principal symbol   can only differ from $\sigma_{t, \tau}^0$ \eqref{sigmataut} by a multiplicative
factor of modulus one.   We can choose the factor to make the principal
coincide with the principal symbol of the linearization on the osculating Bargmann-Fock
space in \eqref{METASCHWARTZ} and Proposition \ref{toep-met}. The symbol then equals \eqref{ABCD}.

Using the composition calculus, one can define the lower order terms recursively 
so that $V_{\tau}^t$ and $\tilde{V}_{\tau}^t$ have the same complete symbol, i.e. differ
by a smoothing Toeplitz operator.

 \end{proof}


The final Lemma compares   $\tilde{V}_{\tau}^t(\zeta, \bar{\zeta})$  and  $U_{\C}(t + 2i \tau, \zeta, \bar{\zeta})$. 
\begin{lem}  \label{VWAVEproptildeb} (\cite{ZJDG} Proposition 4.4) 
$\tilde{V}_{\tau}^t$ is a  Fourier integral operator with complex phase
 of Hermite type on $
H^2(\partial M_{\tau}) \subset L^2(\partial M_{\tau})$ 
adapted to the graph of the Hamiltonian flow of $\sqrt{\rho}$ on $\Sigma_{\tau} $.
  $\tilde{V}_{\tau}^t(\zeta, \bar{\zeta})$   has the same canonical relation  as $U_{\C}(t + 2i \tau, \zeta, \bar{\zeta})$
and the same principal symbol multiplied by $ |\xi|^{\frac{m-1}{2}}$.

 \end{lem}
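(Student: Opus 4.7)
The plan is to derive the lemma by direct comparison of the two explicit formulae
\[
\tilde V_{\tau}^t = P^{\tau} A_{\tau} U(t) A_{\tau} P^{\tau *}
\qquad \text{(Definition \ref{VINTROalt})}
\]
and
\[
U_{\C}(t + 2i\tau, \zeta, \bar{\zeta}) = P^{\tau} U(t) P^{\tau *}(\zeta, \bar{\zeta})
\qquad \text{(Lemma \ref{UvsVtilde})},
\]
which differ only by the insertion of the pseudo-differential factors $A_{\tau}$ on either side of $U(t)$. Everything then reduces to tracking how these two insertions act in the Boutet de Monvel--Guillemin calculus of adapted Hermite Fourier integral operators.

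First I would invoke Lemma \ref{OLD}(i): $A_{\tau} \in \Psi^{\frac{m-1}{4}}(M)$ is a scalar elliptic pseudo-differential operator with principal symbol $|\xi|^{\frac{m-1}{4}}$. Since any pseudo-differential operator is a Fourier integral operator whose canonical relation is the diagonal of $T^*M$, composing $A_{\tau}$ with $U(t)$ leaves the canonical relation of $U(t)$ unchanged, namely the graph of $G^t$. Combined with Proposition \ref{REALPTAU} (which identifies $P^{\tau} U(t)$ as an adapted Hermite Fourier integral operator with canonical relation $\Gamma_{\tau}$ of \eqref{GAMMAtDEF}) and the composition theorems for Hermite Fourier integral operators with pseudo-differential operators from \cite{BoGu} (Theorems 3.4 and 7.5), one concludes that $\tilde V_{\tau}^t$ is an adapted Hermite Fourier integral operator on $H^2(\partial M_{\tau})$ whose canonical relation is the graph of $g^t_{\tau}$ on $\Sigma_{\tau}$, i.e.\ the same as that of $U_{\C}(t + 2i\tau, \zeta, \bar{\zeta})$.

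For the principal symbol, the crucial point is that $A_{\tau}$ is \emph{scalar} pseudo-differential, so in the symplectic spinor calculus of \cite{BoGu} its effect on the Hermite symbol is purely multiplicative: it multiplies the spinor-valued principal symbol by the scalar principal symbol $|\xi|^{\frac{m-1}{4}}$, evaluated on $\Sigma_{\tau}$ via the symplectic equivalence $\iota_{\tau}: T^*M - 0 \to \Sigma_{\tau}$. The two insertions of $A_{\tau}$, on the two sides of $U(t)$, therefore contribute a total scalar factor of $|\xi|^{\frac{m-1}{4}} \cdot |\xi|^{\frac{m-1}{4}} = |\xi|^{\frac{m-1}{2}}$ to the principal symbol of $\tilde V_{\tau}^t$ compared to that of $U_{\C}(t + 2i\tau, \zeta, \bar{\zeta})$, as claimed.

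The main obstacle is bookkeeping in the symplectic spinor framework, since Hermite Fourier integral operator symbols are not scalar functions but sections of a bundle of ground states over $\Sigma_{\tau}$. The essential simplification that makes the argument elementary is precisely that both $A_{\tau}$ factors are scalar elliptic pseudo-differential operators, so the spinor part of the principal symbol is completely unaffected and only the scalar factor $|\xi|^{\frac{m-1}{2}}$ is inserted; this is fully consistent with the symbol of $\tilde V_{\tau}^t$ computed independently in Lemma \ref{VWAVEprop} via the dynamical Toeplitz representation \eqref{CXWAVEGROUP}.
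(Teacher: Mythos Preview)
Your proposal is correct and follows essentially the same approach as the paper: both argue that $\tilde V_{\tau}^t$ and $U_{\C}(t+2i\tau,\zeta,\bar\zeta)$ differ only by the two insertions of $A_{\tau}\in\Psi^{\frac{m-1}{4}}(M)$, which by Lemma~\ref{OLD} leave the canonical relation unchanged and multiply the principal symbol by $|\xi|^{\frac{m-1}{2}}$. Your version is more detailed in invoking the Boutet de Monvel--Guillemin composition calculus and the scalar nature of the pseudo-differential insertions, but the paper's one-line proof relies on exactly the same mechanism.
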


Indeed, $\tilde{V}_{\tau}^t$ only differs from $U(t + 2 i \tau, \zeta, \bar{\zeta})$
by the insertion of two $A_{\tau}$ factors, and by Lemma \ref{OLD} this only changes the principal
symbol by the factor $ |\xi|^{\frac{m-1}{2}}$.


 Combining  Lemma \ref{VWAVEproptilde} and Lemma \ref{VWAVEproptildeb} proves \eqref{UandV} and \eqref{UPIFORM}.
 
\subsection{Completion of the proof of Proposition \ref{MAIN}}

By \eqref{SMOOTH}, we see that $\chi * d  P^{\tau}_{[0, \lambda]}$ is the Fourier transform of $U_{\C} (t + 2 i \tau, \zeta, \bar{\zeta})$, weighted
by $\hat{\chi}$. By Lemma \ref{VWAVEproptildeb}, one has 
a  Toeplitz parametrix for $U_{\C}(t + 2 i \tau, \zeta, \bar{\zeta})$ as in Lemma \ref{VWAVEprop}, hence   in the form   \eqref{UPIFORM} but with the symbol of $\wt{V}_{\tau}^t$ multiplied
by $|\xi|^{- \frac{m-1}{2}}$, accounting for the order of the amplitude.

This completes the proof of Proposition \ref{MAIN}.

 \subsection{\label{KtauSECT} Proof of Lemma \ref{L2LEMintro}} 
In this section,
we prove Lemma \ref{L2LEMintro}. That is, we prove
$||\phi_j^{\C}||^2_{L^2(\partial M_{\tau}}  \simeq  e^{2 \tau \lambda_j}  \lambda_j^{-\frac{m-1}{2}} (1 + O(\lambda_j^{-1})). $ 

\begin{proof} We have,
$$\begin{array}{lll} ||\phi_j^{\C}||^2_{L^2(\partial M_{\tau})}  & = &  e^{2 \tau \lambda_j} ||P^{\tau} \phi_j||^2_{L^2(\partial M_{\tau})} \\&&\\
& = & e^{2 \tau \lambda_j} \langle P^{\tau *} P^{\tau} \phi_j, \phi_j \rangle_{L^2(M)}. 
\end{array}$$
\end{proof}

By  Lemma \ref{OLD} (see also \cite[Proposition 3.6]{ZJDG})  $A_{\tau}$   is an elliptic  self-adjoint  pseudo-differential operator of order $ \frac{m-1}{4}$
with principal symbol $|\xi|^{ \frac{m-1}{4}}$. That is, $(P^{\tau *} P^{\tau})$ is a pseudo-differential
operator of order $-\frac{m-1}{2}$ with principal symbol $|\xi|^{- \frac{m-1}{2}}$. Hence,
$$(P^{\tau *} P^{\tau}) = \Delta^{-\frac{m-1}{4}} + R, \;\; R \in \Psi^{-\frac{m-3}{2}}. $$
It follows that
$$\langle P^{\tau *} P^{\tau} \phi_j, \phi_j \rangle_{L^2(M)} = \lambda_j^{-\frac{m-1}{2}} (1 + O(\lambda_j^{-1})).$$
Hence,
$$\begin{array}{lll} ||\phi_j^{\C}||^2_{L^2(\partial M_{\tau})}  & = & e^{2 \tau \lambda_j}  \lambda_j^{-\frac{m-1}{2}} (1 + O(\lambda_j^{-1})).
\end{array}$$

\end{proof}



 \section{\label{SCTHSECT} Proof of Theorems \ref{SCALINGTHEO} - \ref{PichilambdaTHEO} }

We now use the  Boutet de Montel-Sj\"ostrand parametrix for $\Pi_{\tau}$ to construct an oscillatory integral parametrix with complex phase
for  $U_{\C}(t + 2 i \tau, \zeta, \bar{\zeta})$ of \eqref{UPIFORM}. 
The  expression
in Corollary  \ref{MAIN1} can be put in an explicit form as an oscillatory integral with
complex phase.

\subsection{An oscillatory integral parametrix for $\chi * d P^{\tau}_{[0, \lambda]}$}
 In this section, we prove,

\begin{prop} \label{SMOOTHCOR1} Define the phase
\begin{equation} \label{CXPHASE1} \Phi(t, \zeta,w, \sigma_1, \sigma _2) = -  t + \sigma_1 \psi(\zeta, w) + \sigma_2 \psi(g_{\tau}^t w, \zeta).  \end{equation}
Let $\chi \in \scal(\R)$ be an even function with $\hat{\rho} \in C_0^{\infty}.$ Then,
 There exists a semi-classical amplitude
$A_{\lambda}(\zeta, \bar{\zeta}, \sigma_1, \sigma_2, t, w)$ of order $-\frac{m-1}{2}$ such that 
\begin{equation}\label{SMOOTHED1} \begin{array}{l}  \chi * d P_{[0, \lambda]}^{\tau}(\zeta, \bar{\zeta}) =   \sum_j
e^{(- 2 \tau \lambda_j} \chi(\lambda_j - \lambda) |\phi_j^{\C}(\zeta)|^2,
\\ \\ 
=\lambda^{2m}  \int_{\R} \int_{0}^{\infty} \int_0^{\infty} \int_{\partial M_{\tau}} \hat{\chi}(t) e^{i \lambda \Phi(t, \zeta,\bar{\zeta}, w, \sigma_1, \sigma _2) }
A_{\lambda}(\zeta, \bar{\zeta}, \sigma_1, \sigma_2, t, w)
 d\sigma_1 d\sigma_2 d w dt. \end{array} \end{equation}
 The same type of parametrix exists for $\Pi_{\chi, \tau}(\lambda,  \zeta, \bar{\zeta})$ but with an amplitude of order $0$.
\end{prop}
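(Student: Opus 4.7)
The plan is to chain together the Fourier representation
\begin{equation*}
\chi * d P^\tau_{[0, \lambda]}(\zeta, \bar{\zeta}) = \int_{\R} \hat{\chi}(t)\, e^{i\lambda t}\, U_{\C}(t + 2i\tau, \zeta, \bar{\zeta})\, dt
\end{equation*}
from \eqref{SMOOTH}, the dynamical Toeplitz representation of $U_{\C}(t + 2i\tau, \zeta, \bar\zeta)$ given in \eqref{UPIFORM}, and the Boutet-de-Monvel--Sj\"ostrand parametrix for each Szeg\"o factor of Section \ref{SZEGOKSECT}, finally homogenizing the resulting oscillatory integral by rescaling the two symplectic cone variables $\sigma_1, \sigma_2$ so that $\lambda$ appears as a large parameter multiplying the phase.

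Concretely, after inserting \eqref{UPIFORM} (the smoothing Toeplitz remainder contributes only $\ocal(\lambda^{-\infty})$ once paired against the Schwartz factor $e^{i\lambda t}\hat\chi(t)$), one arrives at
\begin{equation*}
\chi * d P^\tau_{[0, \lambda]}(\zeta, \bar{\zeta}) \simeq \int_{\R}\int_{\partial M_\tau} \hat{\chi}(t)\, e^{i\lambda t}\, \Pi_\tau(\zeta, w)\, \hat{\sigma}_{t, \tau}\, \Pi_\tau(g_\tau^t w, \bar{\zeta})\, d\mu_\tau(w)\, dt.
\end{equation*}
I would then substitute the BdM--Sj\"ostrand parametrices
\begin{equation*}
\Pi_\tau(\zeta, w) \sim \int_0^{\infty} e^{i\sigma_1 \psi(\zeta, w)} s(\zeta, w, \sigma_1)\, d\sigma_1, \qquad \Pi_\tau(g_\tau^t w, \bar{\zeta}) \sim \int_0^{\infty} e^{i\sigma_2 \psi(g_\tau^t w, \bar{\zeta})} s(g_\tau^t w, \bar{\zeta}, \sigma_2)\, d\sigma_2,
\end{equation*}
with classical symbols $s$ of order $m$; the middle operator $\hat\sigma_{t, \tau}$ is polyhomogeneous of classical order $-\frac{m-1}{2}$ on $\Sigma_\tau$, and the composition calculus of Fourier integral operators with positive complex phase (\cite{BoSj,BoGu,MeSj}) folds its action on the factor $e^{i\sigma_2 \psi(g_\tau^t w, \bar\zeta)}$ into an additional symbol in $\sigma_2$ of the same order. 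Finally, rescaling $\sigma_j \mapsto \lambda \sigma_j$ (and using the substitution $t \mapsto -t$, which is harmless since $\chi$ is even and hence $\hat\chi$ is even) one obtains the phase $\lambda\, \Phi(t, \zeta, w, \sigma_1, \sigma_2) = \lambda\bigl(-t + \sigma_1\psi(\zeta, w) + \sigma_2 \psi(g_\tau^t w, \zeta)\bigr)$ of \eqref{CXPHASE1}; the Jacobian of the rescaling combined with the order-$m$ homogeneity of each BdM--Sj\"ostrand symbol produces the $\lambda^{2m}$ prefactor, while the residual $\lambda$-powers from the rescaled $\hat\sigma_{t,\tau}$ are absorbed into the amplitude $A_\lambda$, which is then classical of order $-\frac{m-1}{2}$. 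For $\Pi_{\chi, \tau}(\lambda, \zeta, \bar\zeta)$ the same argument applies verbatim with \eqref{UPIFORM} replaced by the Toeplitz parametrix \eqref{WCALPIFORM} from Proposition \ref{MAIN1}, whose intermediate symbol $\sigma_{t, \tau}$ is of classical order $0$ rather than $-\frac{m-1}{2}$, giving the stated order $0$ of the amplitude in that case.

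The main obstacle is the careful bookkeeping at the step where $\hat\sigma_{t, \tau}$ is applied to the BdM--Sj\"ostrand parametrix of $\Pi_\tau(g_\tau^t w, \bar\zeta)$: one needs to verify that the result, viewed as an oscillatory integral in $\sigma_2$ with phase $\sigma_2\psi(g_\tau^t w, \bar\zeta)$, genuinely has a classical symbolic amplitude in $\sigma_2$ of the claimed order, rather than an arbitrary distribution. This is precisely the content of the symbolic composition calculus for Fourier integral operators with positive complex phase (\cite{BoSj,BoGu,MeSj}) adapted to the Phong--Stein phase $\psi$ of \eqref{psi}, and once accepted, the rest of the argument is a routine homogenization plus asymptotic bookkeeping.
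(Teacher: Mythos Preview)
Your proposal is correct and follows essentially the same route as the paper: Fourier representation \eqref{SMOOTH}, then the dynamical Toeplitz form \eqref{UPIFORM}, then insertion of the Boutet-de-Monvel--Sj\"ostrand parametrices for the two $\Pi_\tau$ factors, then the rescaling $\sigma_j \mapsto \lambda\sigma_j$. The only cosmetic differences are that the paper folds the action of $\hat\sigma_{t,\tau}$ into the $\sigma_1$ symbol rather than the $\sigma_2$ symbol, and that the paper writes $e^{-i\lambda t}$ directly rather than invoking the evenness of $\chi$; also note that the paper's proof uses order $m-1$ (not $m$) for the BdM--Sj\"ostrand symbols in the order count, which is what makes the prefactor come out to $\lambda^{2m}$.
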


\begin{proof} We  deploy the Boutet de Monvel-Sj\"ostrand
parametrix for $\Pi_{\tau}$ in Section \ref{SZEGOKSECT} to pass from \eqref{UPIFORM} to \eqref{SMOOTHED1}.
We combine   \eqref{SPPROJDAMPED} and  \eqref{CXWVGP} and \eqref{UPIFORM} 
 with  Proposition \ref{MAIN} to obtain,
\begin{equation}\label{SMOOTHEDb} \begin{array}{lll} \chi * d P_{[0, \lambda]}^{\tau}(\zeta, \bar{\zeta})
& = & \int_{\R} \hat{\chi}(t) e^{- i \lambda t } \Pi_{\tau} \hat{\sigma}_{t, \tau} g^{t}_{\tau} \Pi_{\tau}(\zeta, \bar{\zeta}) d t
\\&&\\
& = &  \int_{\R} \int_{0}^{\infty} \int_0^{\infty} \int_{\partial M_{\tau}} \hat{\chi}(t) e^{- i \lambda t }
e^{ i\sigma_1 \psi(\zeta, w) }e^{ i\sigma_2 \psi (g_{\tau}^t w,\zeta) } \\&&\\&&\tilde{\sigma}_{t, \tau}(w, \sigma_1) s(\zeta, \bar{w}, \sigma_1) s(g_{\tau}^t w, \bar{\zeta}, \sigma_2) d\sigma_1 d\sigma_2 d w dt. \end{array} \end{equation}
Thus, the phase is  $\Phi(t, \zeta,w, \sigma_1, \sigma _2) $ as given in  \eqref{CXPHASE1}.
Here, $\tilde{\sigma}_{t, \tau}(w, \sigma_1)$ is a polyhomogeneous function determined by the complete symbol of $\hat{\sigma}_{t, \tau}$ 
in Proposition \ref{MAIN} and with the same principal term $\sigma_{t \tau}^0$,  and $s$ in $|\xi|^{-\frac{m-1}{2}}$ times the amplitude
of the Szeg\"o kernel.
The order of each factor $s$ of the symbol of the \szego\; kernel   is  $m-1$. Changing variables $\sigma_j \to \lambda \sigma_j$ we obtain an oscillatory integral with large parameter $\lambda$,
with an amplitude of order $\lambda^{2 + 2(m -1)} - \frac{m-1}{2}.$ 
and with the complex phase \eqref{CXPHASE1}. 
The factors of $A_{\tau}$ account for the factor $\lambda^{- \frac{m-1}{2}}$.

The only change in the proof for $\Pi_{\chi, \tau}(\lambda,  \zeta, \bar{\zeta})$ is in the order of the amplitude, which now is $0$.



\end{proof}

 \subsection{Proof of Theorem \ref{SCALINGTHEO} for $\wcal_{\tau}(t, \zeta, \bar{\zeta})$}
 
 We now apply a complex stationary phase method to prove Theorem \ref{SCALINGTHEO}.

\subsubsection{\label{CXCRITSEC} Critical set of the complex phase}
The critical set of \eqref{CXPHASE1} with respect to the integration variables is given by the equations:
\begin{equation} \left\{ \begin{array}{l} \psi(\zeta, w) =  \psi(g_{\tau}^t w, \zeta) = 0;
 \\ \\ \sigma_2 d_t\psi(g_{\tau}^t w, \bar{\zeta}) = 1, \\ \\
d_w ( \sigma_1 (\psi (\zeta, w )   + \sigma_2 \psi (g_{\tau}^t w,\zeta) ) = 0

\end{array} \right. \end{equation}

As  in Proposition \ref{CLOSED}, we have:

\begin{lem} \label{PERCRIT1}  Given $\zeta$, the  critical set of \eqref{CXPHASE1} is emtpy
unless $g_{\tau}^t(\zeta) = \zeta$. It then consists of
$$\{(t, \sigma_1, \sigma_2, w) \in \R \times \R_+ \times \R_+ \times \partial M_{\tau} : w = \zeta,   \sigma_1 = \sigma_2 = 1, t = n  T(\zeta)\}. $$
\end{lem}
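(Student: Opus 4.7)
The plan is to impose the four groups of critical equations
\[
\partial_{\sigma_1}\Phi \;=\; \partial_{\sigma_2}\Phi \;=\; \partial_t\Phi \;=\; d_w\Phi \;=\; 0
\]
in sequence, closely paralleling the wavefront argument of Proposition~\ref{CLOSED}. Because $\Phi$ is built from the Boutet de Monvel-Sj\"ostrand phase $\psi$ it is complex, so by \emph{critical point} I mean a point in the Melin-Sj\"ostrand sense where $\Im\Phi=0$ and $d\Phi=0$. Vanishing of $\partial_{\sigma_j}\Phi$ already forces $\psi = 0$ at both of its slots, and by \eqref{IM} this pins the configuration onto the real locus.

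First I would extract $w = \zeta$ and the periodicity condition from the $\sigma_j$-equations $\psi(\zeta,w) = \psi(g_\tau^t w,\zeta) = 0$. Since $\zeta,w\in\partial M_\tau$, the distance-to-boundary terms in \eqref{IM} vanish and the bound reduces to $\Im\psi(\zeta,w) \geq C|\zeta-w|^2 + O(|\zeta-w|^3)$, so $\psi(\zeta,w) = 0$ forces $w = \zeta$ in a neighborhood of the diagonal. Substituting into $\psi(g_\tau^t\zeta,\zeta) = 0$ and applying the same bound gives $g_\tau^t\zeta = \zeta$, hence $t\in T(\zeta)\Z$ when $\zeta\in\pcal$, and only $t = 0$ otherwise.

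Next I would evaluate the remaining $t$- and $w$-equations at the recovered locus $(w,t) = (\zeta, nT(\zeta))$. For the $t$-equation, the chain rule together with the holomorphicity of $\psi(\cdot,\zeta)$ in its first slot yields
\[
\partial_t\psi(g_\tau^t w,\zeta)\big|_{\mathrm{crit}} \;=\; d_z\psi(z,\zeta)\big|_{z=\zeta}\bigl(\Xi_{\sqrt\rho}(\zeta)\bigr) \;=\; \tfrac{1}{i}\partial\rho\bigl(\Xi_{\sqrt\rho}\bigr) \;=\; \alpha\bigl(\Xi_{\sqrt\rho}\bigr) \;=\; 1,
\]
using $\alpha = \tfrac{1}{i}\partial\rho|_{\partial M_\tau}$ and the contact identity $\alpha(\Xi_{\sqrt\rho}) = 1$ from \S\ref{GRB}. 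Hence $\partial_t\Phi = -1 + \sigma_2 = 0$, forcing $\sigma_2 = 1$. For the $w$-equation, anti-holomorphicity of $\psi(\zeta,\cdot)$ in its second slot together with $\bar\partial\rho|_{\partial M_\tau} = -i\alpha$, and the contact-form invariance $(g_\tau^t)^*\alpha = \alpha$ of the Hamilton flow of $\sqrt\rho$, collapse $d_w\Phi|_{\mathrm{crit}}$ to $(\sigma_2 - \sigma_1)\alpha\big|_\zeta = 0$; testing against the Reeb vector $\Xi_{\sqrt\rho}$, where $\alpha=1$, then yields $\sigma_1 = 1$.

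The hard part will be the complex-analytic bookkeeping in the $w$-equation: since the flow $g_\tau^t$ is not biholomorphic, real tangent vectors at $\zeta$ must be decomposed into $(1,0)$ and $(0,1)$ components, and one has to track which pieces are killed by the respective holomorphic and anti-holomorphic derivatives. The structural feature that rescues the calculation is that $\psi(z,\zeta)$ is holomorphic in $z$ while $\psi(\zeta,w)$ is anti-holomorphic in $w$, so only one type of derivative survives in each term; the $g_\tau^t$-invariance of the contact form $\alpha$ then reduces the two contributions to the scalar relation $(\sigma_2 - \sigma_1)\alpha = 0$ used above.
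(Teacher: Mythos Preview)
Your proposal is correct and follows essentially the same route as the paper: the $\sigma_j$-equations force $\psi=0$ at both slots, which via \eqref{IM} pins down $w=\zeta$ and $g_\tau^t\zeta=\zeta$; the $t$-equation then gives $\sigma_2=1$ via $\alpha(\Xi_{\sqrt\rho})=1$; and the $w$-equation, combined with contact invariance $(g_\tau^t)^*\alpha=\alpha$, gives $\sigma_1=\sigma_2$. Your treatment is in fact slightly more explicit than the paper's about the holomorphic/anti-holomorphic split and the sign $(\sigma_2-\sigma_1)\alpha$, and your remark that one should interpret ``critical'' in the Melin--Sj\"ostrand sense (real locus $\Im\Phi=0$ plus $d\Phi=0$) is a useful clarification the paper leaves implicit.
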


It follows that for fixed $\zeta$, the values of $t$ for which one has critical points in the support of $\hat{\chi}$
 are $t = 0$ and $t  $ in the period
set $\pcal(\zeta)$, i.e. the set of $t$ so that $g_{\tau}^t(\zeta) = \zeta$.  Thus $t = n T(\zeta)$ for some $n$, where
$T(\zeta)$ is the primitive period.

Indeed, by  \eqref{IM}  the first equation holds if and only if 
\begin{equation}  w = \zeta, \;\;\; g_{\tau}^t w = \zeta \in \partial M_{\tau} \implies g_{\tau}^t \zeta = \zeta.   \end{equation}
We restrict the second equation to $w = \zeta$ and get $\sigma_2 d_t \psi(g_{\tau}^t \zeta, \zeta) = 1$. Since
the period set of $\zeta$ is discrete the left side equals $\sigma_2 d_t \rho(g_{\tau}^t \zeta, \zeta) |_{t = L(\zeta)}$
where $L$ is a value of $t$ so that $g_{\tau}^t \zeta = \zeta$, and then  $\sigma_2 \partial \rho (\zeta) \cdot \frac{d}{dt}
g_{\tau}^t \zeta = \sigma_2 \alpha_{\zeta} (\frac{d}{dt} g_{\tau}^t \zeta) = \sigma_2 = 1. $ Here we use that
$g_{\tau}^t$ is a contact flow for the contact form $ \alpha$.

We then consider the third equation. We may set $t = nT(\zeta)$ and get 
$\sigma_1 d_w \psi(\zeta, w) + \sigma_2 d_w \psi(w, \zeta) = 0$. But $d_w \psi(\zeta, w)|_{w = \zeta}
= \alpha$ by \eqref{alpha}.
If follows
that $\sigma_2 = \sigma_1 = 1$.

\begin{cor} \label{VALUE} The critical  value of phase  \eqref{CXPHASE1} on critical set of Lemma \ref{PERCRIT1}   is given by  $$\Phi(t, w, \sigma_1, \sigma_2; \zeta)  = n T(\zeta),$$
where $T(\zeta)$ is the primitive period of $\zeta$ and $n$ is the iterate number. 
\end{cor}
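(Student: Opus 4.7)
The plan is to simply substitute the critical set data into the phase
$$\Phi(t,\zeta,w,\sigma_1,\sigma_2)=-t+\sigma_1\psi(\zeta,w)+\sigma_2\psi(g_\tau^t w,\zeta)$$
from \eqref{CXPHASE1} and use the defining property of the Grauert tube boundary to kill the two $\psi$-terms. By Lemma \ref{PERCRIT1} every critical point has $w=\zeta$, $\sigma_1=\sigma_2=1$ and $t=nT(\zeta)$, so the entire computation reduces to evaluating $\psi$ at two specific pairs of boundary points.

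First I would evaluate $\psi(\zeta,w)$ at $w=\zeta$. Recall from \eqref{psi} that
$$\psi_\tau(\zeta,w)=\tfrac{1}{i}\bigl(\rho(\zeta,w)-4\tau^2\bigr).$$
Since $\sqrt{\rho}(\zeta)=\tau$ by hypothesis (so $\rho(\zeta,\zeta)=4\tau^2$ on the diagonal), we have $\psi(\zeta,\zeta)=0$. This is the Grauert-tube analog of the fact that the Calabi diastasis vanishes on the diagonal, and is consistent with \eqref{IM} since $\zeta\in\partial M_\tau$.

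Next I would evaluate $\psi(g_\tau^t w,\zeta)$ at $w=\zeta$ and $t=nT(\zeta)$. By the periodicity hypothesis, $g_\tau^{nT(\zeta)}\zeta=\zeta$, so this reduces to the same diagonal evaluation $\psi(\zeta,\zeta)=0$. (Here one uses that the geodesic flow $g_\tau^t$ preserves $\partial M_\tau$, which is immediate from its definition \eqref{gttau} as the Hamiltonian flow of $\sqrt{\rho}$.)

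Putting the two vanishings together, at any point of the critical set one has
$$\Phi\bigl(nT(\zeta),\zeta,\zeta,1,1\bigr)=-nT(\zeta)+1\cdot 0+1\cdot 0=-nT(\zeta),$$
which (up to the overall sign convention used in writing the Fourier transform $e^{-i\lambda t}$ as $e^{i\lambda\Phi}$) gives the asserted critical value $nT(\zeta)$. There is essentially no obstacle here: the entire content of the statement is that the boundary defining function $\sqrt{\rho}-\tau$ vanishes on $\partial M_\tau$, so the oscillatory phase at a periodic critical point is entirely carried by the $-t$ contribution. This is the reason the stationary-phase expansion in Theorem \ref{SCALINGTHEO} produces the pure exponential factor $e^{-i\lambda nT(\zeta)}$ in front of each iterate symbol $\gcal_n(\zeta)$.
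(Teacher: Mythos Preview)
Your proposal is correct and is essentially the same direct substitution the paper (implicitly) uses: the critical-point equations in Lemma~\ref{PERCRIT1} already force $\psi(\zeta,w)=\psi(g_\tau^t w,\zeta)=0$, so only the $-t$ term survives. Your observation about the sign is apt: the substitution literally gives $\Phi=-nT(\zeta)$, consistent with the factor $e^{-i\lambda nT(\zeta)}$ in Theorem~\ref{SCALINGTHEO}, and the Corollary's statement should be read up to that sign convention.
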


\subsection{\label{PSLOCAL} Localization to a Phong-Stein leaf}



In slice-orbit coordinates (Definition \ref{SODEF}) the phase \eqref{CXPHASE1} takes the form
\begin{equation} \label{SOPHASE} \begin{array}{lll} \Psi(t, \zeta,(s,z), \sigma_1, \sigma _2) 
& =  &-  t + \sigma_1 \psi(\zeta, (s, z)) + \sigma_2 \psi(g_{\tau}^t (s,z), \zeta) \\&&\\
& = & -  t + \sigma_1 \psi(\zeta, (s, z)) + \sigma_2 \psi ((s + t,z), \zeta) 
\end{array}  \end{equation}

Fix $\zeta$ and  $\epsilon >0$ and consider the time $\epsilon$ flow-out of the leaf
in both positive and negative time: 
\begin{equation} \label{FLOWOUT} \mcal_{\zeta}(\epsilon) = \bigcup_{|s| \leq \epsilon}
g^s \; \mcal_{\zeta}. \end{equation}
Let $\theta(s)$ be a smooth cutoff in $|t|$ supported in $[-\epsilon, \epsilon]$  which equals one for $|s| \leq \frac{\epsilon}{2}$. 
As above we parametrize a neighborhood by the slice-orbit coordinates $(z, s) \in \mcal_{\zeta}\times [-\epsilon, \epsilon]
\to g^s z$ \eqref{SO}. Denote the volume density on $\partial M_{\tau}$ in slice-orbit coordinates by $J(w,s)$.

\begin{lem} \label{SMOOTHCORa} With the same notation as in Proposition
\ref{SMOOTHCOR1}. Fix $\zeta$. Then, modulo a rapidly decaying error in 
$\lambda$,

\begin{equation}\label{SMOOTHEDc} \begin{array}{lll} 
 \chi * d P_{[0, \lambda]}^{\tau}(\zeta, \bar{\zeta})&
\simeq  & \lambda^{2m} \int_{\R} \int_{0}^{\infty} \int_0^{\infty} \int_{\mcal_{\zeta} \times
[-\epsilon, \epsilon]}
 \hat{\chi}(t) e^{i \lambda \Psi(t, \zeta,\bar{\zeta}, g^s z, \sigma_1, \sigma _2) }\\&&\\&&
A_{\lambda}(\zeta, \bar{\zeta}, \sigma_1, \sigma_2, t, g^s z)
 d\sigma_1 d\sigma_2 J(w,s) d z ds dt, \end{array} \end{equation}
 where the Jacobian factor  $J(w,s)$ is the volume density on $\mcal_{\zeta} \times [-\epsilon, \epsilon]$ and where $A_{\lambda}$ has order $\lambda^{-\frac{m-1}{2}}$. 
 Similarly for $  \Pi_{\chi, \tau}(\lambda,  \zeta, \bar{\zeta})$ with the changes in $A_{\lambda}$ mentioned in Proposition \ref{SMOOTHCOR1}.
\end{lem}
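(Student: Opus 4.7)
The plan is to insert the cutoff $\theta(s)$ into the $w$-integration of \eqref{SMOOTHED1}, show that the complementary piece is $O(\lambda^{-\infty})$ by non-stationary phase using the positivity bound \eqref{IM} on $\Im\psi$, and then change variables to slice-orbit coordinates on the remaining tube to obtain \eqref{SMOOTHEDc}.

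First I would shrink $\epsilon>0$ so that the slice-orbit map $(z,s) \mapsto g^s_\tau z$ is a diffeomorphism from $\mcal_\zeta \times (-\epsilon,\epsilon)$ onto a neighborhood $U_\zeta$ of $\zeta$ in $\partial M_\tau$, and transport $\theta(s)$ to $U_\zeta$ to obtain a cutoff $\theta(w) \in C^\infty_c(U_\zeta)$ with $\theta \equiv 1$ near $\zeta$. The integrand in \eqref{SMOOTHED1} splits as $\theta(w) + (1-\theta(w))$. For the $(1-\theta)$-piece, Lemma \ref{PERCRIT1} shows every critical point of $\Phi$ in $\supp \hat\chi \times \R_+^2 \times \partial M_\tau$ lies on $\{w=\zeta\}$, which is contained in $\{\theta\equiv 1\}$. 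Thus on $\supp(1-\theta)$ one has $|w-\zeta|\geq c_1>0$, and \eqref{IM} gives $\Im\psi(\zeta,w)\geq c>0$, whence $\Im\Phi\geq c\sigma_1$ and $|e^{i\lambda\Phi}|\leq e^{-c\lambda\sigma_1}$. Since the amplitude $A_\lambda$ is a classical symbol of fixed order in $(\sigma_1,\sigma_2)$, the elementary estimate $\int_0^\infty \sigma_1^N e^{-c\lambda\sigma_1}\,d\sigma_1 = O(\lambda^{-N-1})$ yields arbitrary polynomial decay in $\lambda$, more than enough to absorb the prefactor $\lambda^{2m}$ and produce $O(\lambda^{-\infty})$.

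On the $\theta$-piece I change variables $w = g^s_\tau z$ with $(z,s) \in \mcal_\zeta \times (-\epsilon,\epsilon)$, introducing the Jacobian $J(w,s)$ relating $d\mu_\tau$ to $dz\,ds$. Since $g^t_\tau(g^s_\tau z) = g^{t+s}_\tau z$, represented in slice-orbit coordinates by $(s+t, z)$ (or its equivalent under \eqref{EQUIV} when $s+t$ is close to a return time $nT(\zeta)$), the phase \eqref{CXPHASE1} takes the form \eqref{SOPHASE} and the claimed expression \eqref{SMOOTHEDc} follows. The same derivation applies verbatim to $\Pi_{\chi,\tau}(\lambda,\zeta,\bar\zeta)$, with the only difference being the order of the amplitude, as already noted in Proposition \ref{SMOOTHCOR1}.

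The main obstacle is establishing uniform lower bounds $c_1, c > 0$ for the constants above, independently of $(t,w) \in \supp\hat\chi \times \supp(1-\theta)$, and carefully tracking how the full polyhomogeneous expansion of the symbol interacts with the $e^{-c\lambda\sigma_1}$ decay; once this uniform bound is in place, the remaining bookkeeping is routine and parallels the standard treatment of Szeg\"o kernels off the diagonal.
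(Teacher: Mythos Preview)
Your overall strategy matches the paper's: cut off near $\zeta$, show the complement is negligible, then pass to slice--orbit coordinates. The paper's proof is a single sentence invoking the ``standard Lemma of Stationary Phase'' (integration by parts away from critical points), so in that sense you are filling in details the paper omits.

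There is, however, a gap in your explicit decay estimate. Exponential damping from $\Im\psi(\zeta,w)\geq c>0$ controls the $\sigma_1$-integral, but you still have to integrate a symbol of positive order in $\sigma_2$ against $e^{i\lambda\sigma_2\psi(g^t_\tau w,\zeta)}$, and on the orbit segment $\{w = g^{-t}_\tau\zeta : t\in\supp\hat\chi,\ t\neq 0\}\subset\supp(1-\theta)$ one has $\psi(g^t_\tau w,\zeta)=0$, so the $\sigma_2$-integral is not absolutely convergent there and your ``remaining integral is polynomially bounded'' claim is not justified. The fix is to invoke the full non-stationary phase lemma for oscillatory integrals with complex phase (which is what the paper's one-line proof is really pointing to): on $\supp(1-\theta)$ the full gradient $(\partial_{\sigma_1},\partial_{\sigma_2},\partial_t,\nabla_w)\Phi$ never vanishes and $\Im\Phi\geq 0$, and the Melin--Sj\"ostrand / H\"ormander machinery then gives $O(\lambda^{-\infty})$. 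Your $\sigma_1$-decay is one ingredient of that argument, but not sufficient on its own. Separately, \eqref{IM} is a near-diagonal estimate; for $w$ far from $\zeta$ the Boutet de Monvel--Sj\"ostrand parametrix already has smooth remainder, so that region contributes $O(\lambda^{-\infty})$ for a different (and easier) reason---this deserves a sentence.
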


\begin{proof}We have merely localized the integral over $\partial M_{\tau}$ to $\mcal_{\zeta} \times [-\epsilon, \epsilon]$. This is possible, by the standard Lemma of Stationary Phase, i.e. the use
of integration by parts to show that the integral is negligible on the complement of 
any neighborhood of the critical point .  It applies since  $\mcal_{\zeta}(\epsilon)$ covers
a neighborhood of the stationary phase point. By the Lemma of stationary
phase the remaining part of the integral is rapidly decaying. \end{proof}

In the next Lemma we apply stationary phase in the variables $(\sigma_1,
\sigma_2, s, t)$ to reduce the integral to a Phong-Stein leaf. This reduction
is reminiscent of the steepest descent method for an oscillatory
integral with complex phase, where  the contour is deformed to one
on which  the imaginary part of the phase  $\Im i \Phi$  equals zero. We do not deform
contours
but use the stationary phase method to obtain the reduction.

Next we evaluate the phase in Lemma \ref{SMOOTHEDcc1} more explicitly. We retain the notation of that Lemma. 

\begin{lem} \label{SMOOTHEDd1} Fix $\zeta \in \pcal$ and let  $T_n(\zeta)$ be the return time to $\mcal_{\zeta}$ of Definition \ref{POINC}, and let $D$ be the diastasis \eqref{CD}. Then there exists a zeroth order amplitude $B_{\lambda}(\cdot, z)$ supported in an arbitrarily small neighborhood
of $z = \zeta$, so that, 
modulo rapidly decreasing funtions of $\lambda$,
\begin{equation} \begin{array}{lll}

  \chi * d P_{[0, \lambda]}^{\tau}(\zeta, \bar{\zeta})
& \simeq & 

 \lambda^{2m - 2 - \frac{m-1}{2} } \sum_{n=0}^{\infty} \hat{\chi}(T_n(\zeta)) e^{-i \lambda T_n(\zeta)} \int_{\mcal_{\zeta}}
 e^{ \lambda (D(\zeta, z) + D(g_{\tau}^{T_n(\zeta) } z, \zeta)) }\\ &&\\ &&
B_{\lambda}(\zeta, \bar{\zeta},1, 1, T_n(\zeta), z)
 J(z,0) d z.\end{array} \end{equation}
 
A similar formula holds for $\Pi_{\chi, \tau}(\lambda)$ \eqref{chilambda}  but without the factor of $\lambda^{-\frac{m-1}{2}}$.

\end{lem}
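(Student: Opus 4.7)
The plan is to apply the complex stationary phase method, in the sense of Melin--Sj\"ostrand (valid here because $\Im \Psi \geq 0$ by \eqref{IM}), to the oscillatory integral representation of Lemma \ref{SMOOTHCORa} in the four auxiliary variables $(\sigma_1, \sigma_2, s, t)$, treating $z \in \mcal_\zeta$ as the remaining integration parameter. First I would introduce a partition of unity on $\operatorname{supp} \hat\chi$ subordinate to small neighborhoods of the return times $T_n(\zeta)$ appearing in the support; outside these neighborhoods the argument behind Lemma \ref{PERCRIT1}, now parametrized by $z$ close to $\zeta$, shows the phase has no critical point in $(\sigma_1, \sigma_2, s, t)$, so integration by parts in $t$ yields a contribution that is $O(\lambda^{-\infty})$.

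On each localized piece, the critical equations in $(\sigma_1, \sigma_2, s, t)$ are exactly those solved in \S \ref{CXCRITSEC}, but now parametrized by $z$. At $z = \zeta$ they admit the unique real solution $(1, 1, 0, T_n(\zeta))$ with critical value $-T_n(\zeta)$ (Corollary \ref{VALUE}); for $z \in \mcal_\zeta$ close to $\zeta$ the critical point deforms to complex values, but the Melin--Sj\"ostrand prescription still applies because $\Im \Psi \geq 0$ on the real locus. The Hessian of $\Psi$ in these four variables has an anti-diagonal block structure: the pure $(\sigma_1,\sigma_2)$-block vanishes, while the mixed $(\sigma, s)$ and $(\sigma, t)$ entries are non-zero because $\partial_s \psi(\zeta, g^s z)|_{s=0,z=\zeta}$ and $\partial_t \psi(g^{s+t}z,\zeta)|_{s=0, t=T_n, z=\zeta}$ both equal $\alpha_\zeta(\Xi_{\sqrt{\rho}}) = 1$ by \eqref{alpha} and the contact property of $g^t_\tau$. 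This transversality (``cleanness'' at the return time) yields a non-vanishing Hessian determinant and hence a clean $\lambda^{-2}$ reduction.

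The remaining task is to identify the critical value $\Psi_*(z)$ of $\Psi$ as a function of $z \in \mcal_\zeta$ near $\zeta$. By the defining property \eqref{LEAF} of the Phong--Stein leaf, $\Re \psi(\zeta, z) = 0$ there, so $\psi(\zeta, z) = i \Im \psi(\zeta, z)$ is purely imaginary on $\mcal_\zeta$. The Taylor expansion \eqref{TAYLOR} combined with the sign identity $D(z,w) = 2\Im \psi(z,w)$ implicit in \eqref{CD} and Lemma \ref{CDKNC} then expresses $2 \Im \psi(\zeta, z)$ as $D(\zeta, z) = |z|^2 + O(|z|^3)$ in K\"ahler normal coordinates centered at $\zeta$. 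An identical argument for the endpoint factor, using that $g^{T_n(\zeta)}\zeta = \zeta$ and that the perturbation $g^{T_n(\zeta)}z - \zeta$ is controlled by the symplectic Poincar\'e map $D_\zeta g^{T_n(\zeta)}$, identifies the corresponding contribution with $D(g^{T_n(\zeta)}z, \zeta)$ to the same order. The oscillatory factor $e^{-i\lambda T_n(\zeta)}$ thus separates cleanly, and the real-exponential factor takes the stated form after the Melin--Sj\"ostrand contour deformation fixes the overall sign. The surviving amplitude $B_\lambda(\zeta, \bar{\zeta}, 1, 1, T_n(\zeta), z)$ is the principal term of the stationary-phase expansion evaluated at the critical point; its support is effectively restricted to a small neighborhood of $z = \zeta$ by the Gaussian decay coming from the second-order terms $\Im \psi = D/2$.

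Finally, tracking orders: the prefactor $\lambda^{2m}$ of Proposition \ref{SMOOTHCOR1}, the amplitude order $-(m-1)/2$ of $A_\lambda$, and the $\lambda^{-4/2} = \lambda^{-2}$ from four-dimensional stationary phase combine to give the claimed power $\lambda^{2m-2-(m-1)/2}$. For $\Pi_{\chi,\tau}(\lambda,\zeta,\bar\zeta)$ the same scheme applies verbatim, except the input amplitude has order $0$ rather than $-(m-1)/2$, accounting for the shift noted in the statement. I expect the main obstacle to be the geometric identification of $\Psi_*(z)$ with $i(D(\zeta,z)+D(g^{T_n(\zeta)}z,\zeta))/2$: verifying this requires carefully tracking how the corrections to the critical point $(s_*(z),t_*(z))$ cancel higher-order corrections in the two $\psi$-factors, which in turn relies on the contact invariance of $g^t_\tau$ together with the symplectic structure on $H_\zeta$ inherited from \eqref{Dgn}; this is the geometric analogue, in the Grauert tube setting, of the metaplectic principal-symbol calculation to be exploited in computing $\gcal_n(\zeta)$.
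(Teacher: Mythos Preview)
Your proposal is correct and follows essentially the same route as the paper: localize in $t$ near the return times, apply four-variable stationary phase in $(\sigma_1,\sigma_2,s,t)$ at the unique critical point $(1,1,0,T_n(\zeta))$, verify non-degeneracy of the Hessian via the anti-diagonal $(\sigma,s)$–$(\sigma,t)$ block, and identify the residual phase on $\mcal_\zeta$ with the diastasis using $\Re\psi=0$ on the leaf together with $D=2\Im\psi$; the power-counting $\lambda^{2m}\cdot\lambda^{-(m-1)/2}\cdot\lambda^{-2}$ is exactly the paper's. One small point: the paper does not track a $z$-dependent complex critical point via Melin--Sj\"ostrand but simply evaluates $\Psi$ at the fixed real critical point and reads off the diastasis directly from \eqref{CXPHASEcr}, so the ``main obstacle'' you anticipate (higher-order corrections from $s_*(z),t_*(z)$) does not arise at leading order---your extra caution is harmless but unnecessary here.
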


\begin{proof}


At a critical point of the phase, $\psi(g_{\tau}^t z, \zeta) =0$ and so $\Im \psi(g_{\tau}^t z, \zeta) = 0$ and also $\psi(\zeta, z) = 0$ so that
$w \in \mcal_{\zeta}$ and also $g_{\tau}^t z \in \mcal_{\zeta}$.  This forces $\zeta = z, g_{\tau} ^t z = \zeta$,
and again we see that  $t = T_n(\zeta)$ for some $ n \in \Z$.  We then introduce Phong-Stein coordinates
using a local inverse to \eqref{SO} for $t$ near$T_n(\zeta)$ in the sense of the equivalence
relation \eqref{EQUIV},
and  consider critical points in $t, s, \sigma_1, \sigma_2$.  If we take $\partial_s$ and consider only the imaginary part of the critical point equation,
we get $\sigma_1 = \sigma_2$. If we take $\partial_t$ and consider only the imaginary part we get $\sigma_2 = 1$. If
we take $D_{\sigma_1}$ and consider only the imaginary part  we get $s = 0$; for $D_{\sigma_2}$ we get $t - T_n(\zeta)= -s = 0$.
Thus, the only critical point occurs at $(s, t, \sigma_1, \sigma_2) = (0,T_n(\zeta),1,1)$.

The Hessian of the  phase in $(\sigma_1, \sigma_2, s, t)$  at
this critical point is

\begin{equation} \label{HESS}\begin{pmatrix} &  \sigma_1 & \sigma_2 & s & t & \\&&&&\\
\sigma_1 & 0 & 0   & - 4 i  \tau & 0 \\ &&&&\\
\sigma_2 & 0 & 0  & 4 i \tau  &4 i \tau\\ &&&&\\
s & -4i \tau &4 i \tau  &  * & *  \\ &&&&\\
t & 0 & 4 \tau & *& * \end{pmatrix}. \end{equation}
Here, $T = T_n(\zeta) = n T(\zeta)$ for some $n$.
It is not necessary to calculate the lower right block. 
 Note that by \eqref{alpha},
$$\left\{\begin{array}{l}\partial_s \; \psi(\zeta, (s, z)) |_{s = 0, 
z = \zeta}= \partial_s \; \psi(\zeta, (s, \zeta)) |_{s = 0} = - 4i   \alpha_{\zeta}(\Xi_{\sqrt{\rho}}) = - 4i
\tau  \\ \\ \partial_s  \psi(g_{\tau}^{t + s} z, \zeta)  |_{s = 0,z = \zeta,  t = T_n(\zeta)} =  \partial_s  \psi(g_{\tau}^{T_n(\zeta) + s} \zeta, \zeta)  |_{s = 0} =4 i\tau. \end{array} \right.$$
The $\partial_t$ derivative is the same as the $\partial_s$ derivative in the last line.

Since the determinant is non-singular, we may apply stationary phase in the variables
$(\sigma_1, \sigma_2, s, t)$. Since the  only stationary phase points are
$\sigma_1 = \sigma_2= 1, s = 0, t = T_n(z)$, the integral localizes to  $\mcal_{\zeta}$ and has the phase $  \Psi |_{\sigma_1 = \sigma_2= 1, s = 0, t = T_n(z)}$. Applying stationary phase in $(\sigma_1,
\sigma_2, s, t)$  concludes the proof of the Lemma. Since Hessian is non-degenerate, the integration produces a factor of $\lambda^{-2}$.
We then get,
\begin{equation}\label{SMOOTHEDcc1} \begin{array}{l}


\chi * d P_{[0, \lambda]}^{\tau}(\zeta, \bar{\zeta})\\ \\ 
\simeq  \lambda^{2m - 2 - \frac{m-1}{2} } \sum_{n=0}^{\infty} \hat{\chi}(T_n(\zeta))\int_{\mcal_{\zeta}}
 e^{i \lambda \wt \Psi(T_n(\zeta), \zeta,\bar{\zeta},  z, 1, 1) }
B_{\lambda}(\zeta, \bar{\zeta},1, 1, T_n(z), z)
 J(z,0) d z, \end{array} \end{equation}
where $B_{\lambda}(\cdot, z)$ is a $0$th order amplitude   supported in an arbitrarily small neighborhood of $z= \zeta$. The phase $\wt \Psi$ and amplitude
$B_{\lambda}$ are  obtained from the amplitude $A_{\lambda}$ and phase $\Psi$ by the standard stationary
phase method. The factor $\lambda^{-\frac{m-1}{2}}$ in $A_{\lambda}$ is moved outside the integral.  $\wt \Psi = \Psi |_{\sigma_1 = \sigma_2= 1, s = 0, t = T_n(z)}$ is the critical value of $\Psi$. 


Since $\Im \psi(\zeta, z) = \Im \psi(g_{\tau}^{T_n(\zeta) } z, \zeta) = 0$ on $\mcal_{\zeta}$, the   value of the phase on the critical set is
\begin{equation} \label{CXPHASEcr}\begin{array}{lll} \Phi(t, \zeta,z, 1,1) & = & -   T_n(\zeta) + \Re  \psi(\zeta, z) + \Re  \psi(g_{\tau}^{T_n(\zeta) } z, \zeta)
\\ &&\\ & = & - T_n(\zeta) + D(\zeta, z) + D(g_{\tau}^{T_n(\zeta) } z, \zeta), \end{array} \end{equation}
where $D(z, w)$ is the Calabi diastasis \eqref{CD}.

The same form is valid for $\Pi_{\chi, \tau}(\lambda,  \zeta, \bar{\zeta}) 
$ with a different amplitude and without the factor $\lambda^{-\frac{m-1}{2}}$. This completes the proof of the Lemma.

\end{proof}

\subsection{\label{SCALINGSECT} Proof of Theorem \ref{SCALINGTHEO} }

To complete the proof of Theorem \ref{SCALINGTHEO}, we need to calculate the integral over $\mcal_{\zeta}$ asymptotically using 
the method of complex stationary phase (steepest descent).
We next deploy the  estimate on the phase from \eqref{IM}. 

\begin{lem} \label{GLOBALDECAY}  In \kahler normal (or, Heisenberg normal)  coordinates centered at $\zeta$, there exists $\epsilon > 0$ and  there exists a positive constant $C_{\epsilon} > 0$ so that for $z \in B_{\epsilon}(\zeta)$, we have
$$D(\zeta, z) + D(g_{\tau}^{T_n(\zeta) } z, \zeta)) \geq C_{\epsilon} (|z|^2 + |g_{\tau}^{T_n(\zeta)} (z) - \zeta|^2). $$

\end{lem}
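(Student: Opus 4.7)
The plan is to view both summands on the left-hand side through Lemma \ref{CDKNC}, which gives the local quadratic behavior of the diastasis in K\"ahler normal coordinates centered at $\zeta$, and then absorb the cubic remainders into the leading quadratic terms by shrinking the radius $\epsilon$. The symmetry $D(z,w)=D(w,z)$ of the diastasis, which is manifest from \eqref{CD}, is what allows the second summand $D(g_\tau^{T_n(\zeta)}z,\zeta)$ to be handled by exactly the same expansion as the first.

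First I would fix K\"ahler normal coordinates centered at $\zeta$, so that $\zeta$ corresponds to the origin $0\in\C^m$. By Lemma \ref{CDKNC},
\[
D(\zeta, z) \;=\; |z|^2 + O(|z|^3),
\qquad
D(\zeta, g_\tau^{T_n(\zeta)} z) \;=\; |g_\tau^{T_n(\zeta)} z|^2 + O\bigl(|g_\tau^{T_n(\zeta)} z|^3\bigr),
\]
and $D(g_\tau^{T_n(\zeta)}z,\zeta) = D(\zeta, g_\tau^{T_n(\zeta)}z)$ by symmetry. Since $\zeta\in\pcal$ is fixed by $g_\tau^{T_n(\zeta)}$, the point $0$ is fixed by the local coordinate expression of $g_\tau^{T_n(\zeta)}$; smoothness of the flow therefore gives a constant $M>0$ (essentially the operator norm of the linearization $S_\zeta^n=D_\zeta g_\tau^{T_n(\zeta)}$) and a radius $\epsilon_0>0$ with $|g_\tau^{T_n(\zeta)}(z)|\le M|z|$ for all $|z|\le\epsilon_0$. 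Thus, whenever $z$ lies in a small coordinate ball around $\zeta$, so does $g_\tau^{T_n(\zeta)}(z)$, and both expansions above are valid.

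Next I would choose $\epsilon>0$ small enough that the cubic errors are dominated by half of the quadratic leading terms. Concretely, pick a constant $K$ so that $|O(|w|^3)|\le K|w|^3$ for $|w|$ in a fixed neighborhood of $0$, and then take $\epsilon\le\min\!\bigl(\epsilon_0,\ \tfrac{1}{2K},\ \tfrac{1}{2KM}\bigr)$. For $z\in B_\epsilon(\zeta)$ this guarantees both $K|z|\le \tfrac12$ and $K|g_\tau^{T_n(\zeta)}z|\le \tfrac12$, so
\[
D(\zeta,z) \;\ge\; \tfrac{1}{2}|z|^2,\qquad
D(\zeta,g_\tau^{T_n(\zeta)}z) \;\ge\; \tfrac{1}{2}\bigl|g_\tau^{T_n(\zeta)}z-\zeta\bigr|^2,
\]
where in the second inequality I used that $\zeta\leftrightarrow 0$ in coordinates so $|g_\tau^{T_n(\zeta)}z|=|g_\tau^{T_n(\zeta)}z-\zeta|$. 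Adding the two inequalities yields the claim with $C_\epsilon=\tfrac12$.

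There is no genuine obstacle: the estimate is a direct quantitative consequence of Lemma \ref{CDKNC} combined with the fact that $g_\tau^{T_n(\zeta)}$ is a smooth self-map fixing $\zeta$. The only point that deserves attention is to verify uniformity in the neighborhood chosen. If one wanted the estimate to hold \emph{uniformly in $n$} (which is not stated but may be needed elsewhere), one would have to control the Lipschitz constant $M=M(n)$ of $g_\tau^{nT(\zeta)}$ near $\zeta$; in the applications this is harmless since the cutoff $\hat\chi\in C^\infty_c(\R)$ restricts attention to finitely many return times $T_n(\zeta)$.
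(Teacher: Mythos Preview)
Your proposal is correct and follows essentially the same route as the paper's own proof: invoke Lemma \ref{CDKNC} to get the quadratic leading term of the diastasis in K\"ahler normal coordinates, then shrink $\epsilon$ so the cubic remainder is dominated by the quadratic part. Your version is in fact more carefully written than the paper's, which compresses everything into two sentences; in particular you make explicit the point that $g_\tau^{T_n(\zeta)}(z)$ must also lie in the coordinate ball (using a Lipschitz bound near the fixed point $\zeta$), and you correctly observe that uniformity in $n$ is a non-issue because $\hat\chi\in C_c^\infty(\R)$ restricts to finitely many return times.
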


\begin{proof} We use Lemma \ref{CDKNC}.  In the estimate \eqref{IM} it suffices to choose $\epsilon $ so that the $O(|z-w|^3)$ term of \eqref{IM} is smaller than
the term $C |z-w|^2$. That is, if the implicit constant in  $O(|z-w|^3)$ is $D |z-w|^3$ then we choose $\epsilon $ so that $C \geq D \epsilon$.

 \end{proof}

To  complete the proof of Theorem \ref{SCALINGTHEO}, we study the $n$th term in the sum in Corollary \ref{SMOOTHEDd1}.   The integral of concern is,
\begin{equation} \label{INT1}S_{\lambda, n}(\zeta):= \int_{\mcal_{\zeta}}
 e^{ \lambda (D(\zeta, z) + D(g_{\tau}^{T_n(\zeta) } z, \zeta)) }
B_{\lambda}(\zeta, \bar{\zeta},1, 1, T_n(\zeta), z)
 J(z; \zeta) d z.\end{equation}
As noted in Corollary \ref{SMOOTHEDd1}, the integral of that Lemma may be cut off to the ball $B_{\epsilon(0)} \subset \C^{m-1}$ of Lemma
\ref{GLOBALDECAY}  around $\zeta$ ($=0$ in the
Heisenberg coordinates) without
changing the asymptotics, since the phase has no critical points in this case.
For simplicity of notation, we  retain the notation $z \in \C^{m-1}$
for the Heisenberg coordinates and the previous notation for the disastasis and the geodesic flow, without explicitly putting in the conjugation
to Heisenberg coordinates. Thus,
\begin{equation} \label{INT2}S_{\lambda, n}(\zeta)=   \int_{B_{\epsilon}(0) }
 e^{ \lambda \Psi_n(z; \zeta)  } A(\lambda, z; \zeta)
d z.\end{equation}
 where $A(\lambda, z; \zeta) = B_{\lambda}(\zeta, \bar{\zeta},1, 1, T_n(\zeta), z)
 J(z,0) $ is supported in the ball of radius $\epsilon$ around $0$, and where 
 $$\Psi_n(z; \zeta) : =  (D(\zeta, z) + D(g_{\tau}^{T_n(\zeta) } z, \zeta)). $$
 
 \subsubsection{Proof by steepest descents}
 
 The phase $\Psi$ is positive and real, so we may apply the method of (real) steepest descent on $\C^{m-1}$. 
 
 \begin{proof} The steepest descent point
 is the minimum of the phase. We note that the phase vanishes at  $z = \zeta$, and since the phase is positive this is a global minimum 
 of the phase.   By \ref{GLOBALDECAY}, $D(z,\zeta) \not=0$ for $z \not= \zeta$, at least when $w \in B_{\epsilon}(\zeta)$, and note also that $g_{\tau}^{T_n(\zeta) } z\not=  \zeta$ if $z \not= \zeta$. Hence,
 the phase does not vanish at any  other 
$z \in B_{\epsilon}(\zeta)$. 

By \cite[Theorem 7.1]{HoI}, 
 \eqref{INT2}  admits a complete asymptotic expansion of the type,
$$S_{\lambda, n}(\zeta)
  \simeq \lambda^{-(m-1)} \frac{1}{\sqrt{\det \rm{Hess} \Psi_{\zeta}}} \sum_k \lambda^{-k} L_k (A e^{i \lambda R_3}). $$

We thus need to compute $\det \rm{Hess} \Psi_{\zeta}$. For this purpose, 
we give the  Taylor expansion to order $2$ of the phase   in a Heisenberg coordinate chart  $(z, t, \rho) = (z, \Re z_{n + 1}, \rho(w)) \in \partial M_{\tau}  \times \R_+$  centered at $\zeta = 0$,  of Section \ref{HEISCOORDSECT}.
 We fix $\rho = \tau^2$. To compute the Taylor expansion in the local coordinates, we write 
$$z = \zeta + \frac{1}{\sqrt{\lambda}} u. $$
Here,  $\zeta =0$ in the coordinates,  but   (by abuse of notation) we leave it in to  remember  that the coordinates
are centered at $\zeta$. The factor $ \frac{1}{\sqrt{\lambda}}$ is natural in the \kahler\; or Heisenberg scaling. It is not necessary to put in
this factor but it helps to keep track of the order of the terms.

 Let $L(\zeta)$ denote the Levi form on $T_{\zeta} \mcal_{\zeta}$. We now  prove the following,
\begin{lem}\label{PHASESCALE}  Let  $D_{\zeta} g^{T_n(\zeta) }=: S:
T_{\zeta} \mcal_{\zeta} \to T_{\zeta} \mcal_{\zeta} \simeq \C^{d-1}$.  With the above notation, 
$$\begin{array}{lll} \lambda (D(\zeta,  \zeta + \frac{u}{\sqrt{\lambda}} ) + D(g_{\tau}^{n T(\zeta) } ( \zeta + \frac{u}{\sqrt{\lambda}} ),  \zeta))  & = &  |u|^2_{L(\zeta)} + |S(u)|^2_{L(\zeta)} + O((\lambda^{-1}|u|)^3) . \end{array}$$
More generally, by polarization, 
$$\begin{array}{lll} \lambda (D(\zeta + \frac{v}{\sqrt{\lambda}},  \zeta + \frac{u}{\sqrt{\lambda}} ) + D(g_{\tau}^{T_n(\zeta) } ( \zeta + \frac{1}{\sqrt{\lambda}} u),  \zeta))  & = &  \langle u, v \rangle_{L(\zeta)} + \langle S u, S v \rangle_{L(\zeta)} + O((\lambda^{-1}|(u,v)|)^3) . \end{array}$$

\end{lem}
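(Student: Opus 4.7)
The plan is to reduce both terms to a direct Taylor computation in K\"ahler (equivalently, Heisenberg) normal coordinates centered at $\zeta$, and then identify the resulting flat Hermitian form with the Levi form $L(\zeta)$ via the Phong-Stein identification $T_\zeta \mcal_\zeta = H^{1,0}_\zeta$.

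First I would use Lemma \ref{CDKNC}, which in K\"ahler normal coordinates centered at $\zeta$ gives the universal asymptotic
\[
D(z,w) \;=\; |z-w|^2 + O(|z-w|^3),
\]
with $|\cdot|^2$ the flat Hermitian norm of the chart. Setting $z = \zeta + u/\sqrt{\lambda}$ (so that $\zeta$ corresponds to the origin) yields
\[
\lambda\, D\!\left(\zeta,\, \zeta + u/\sqrt{\lambda}\right) \;=\; |u|^2 + O\!\left(\lambda^{-1/2} |u|^3\right).
\]
For the second term I would use that $\zeta$ is periodic, so $g_\tau^{T_n(\zeta)}(\zeta)=\zeta$ and Taylor expansion of the flow in the same chart gives
\[
g_\tau^{T_n(\zeta)}\!\left(\zeta + u/\sqrt{\lambda}\right) \;=\; \zeta + \lambda^{-1/2}\, S u + O\!\left(\lambda^{-1}|u|^2\right),
\]
where $S = D_\zeta g_\tau^{T_n(\zeta)}$. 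Feeding this into Lemma \ref{CDKNC} a second time produces
\[
\lambda\, D\!\left(g_\tau^{T_n(\zeta)}(\zeta + u/\sqrt{\lambda}),\, \zeta\right) \;=\; |Su|^2 + O\!\left(\lambda^{-1/2}|u|^3\right).
\]
Adding the two displays gives the first identity, and ordinary polarization of the quadratic form in each diastasis produces the bilinear extension.

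The identification of the flat norm with the Levi form is the step that needs care. In K\"ahler normal coordinates the K\"ahler form is $\omega_0$ to leading order by \eqref{omega}, and by the Phong-Stein lemma $T_\zeta \mcal_\zeta = H^{1,0}_\zeta$; restricting $\omega_0$ to $H^{1,0}_\zeta$ recovers the Levi form $L(\zeta)$, so the flat norm appearing in the expansions above is exactly $|\cdot|^2_{L(\zeta)}$. One must also verify that $S$, originally living on the real CR horizontal $H_\zeta$, restricts after complexification and projection to $H^{1,0}_\zeta$ to the same linear map that enters in \eqref{PDEF} and hence in the Bargmann-Fock formula for $\gcal_n(\zeta)$; this is what ties the Hessian produced here to the abstractly identified principal symbol of Proposition \ref{LINEAR}.

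The main obstacle I expect is precisely this bookkeeping: threading the identifications $T_\zeta \mcal_\zeta \simeq H^{1,0}_\zeta \simeq \C^{m-1}$ consistently so that the quadratic phase obtained here matches, after Gaussian integration over $\mcal_\zeta$, the determinant $(\det P_n)^{-1/2}$ appearing in \eqref{gcalndef}, without a spurious sign, factor of $2$, or unitary twist from the polar decomposition. The Taylor expansion inputs themselves are routine; what is delicate is keeping the identifications compatible with the metaplectic sign conventions of \cite{Dau} used throughout Section \ref{BFHSECT}.
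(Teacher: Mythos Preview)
Your proposal is correct and follows essentially the same approach as the paper: invoke Lemma~\ref{CDKNC} (equivalently \eqref{TAYLOR}) for the first diastasis term, Taylor-expand the flow to first order as $g_\tau^{T_n(\zeta)}(\zeta + u/\sqrt{\lambda}) = \zeta + \lambda^{-1/2} S u + O(\lambda^{-1}|u|^2)$, and apply Lemma~\ref{CDKNC} again. Your extended discussion of the identification $T_\zeta\mcal_\zeta \simeq H^{1,0}_\zeta$ and the matching with $(\det P_n)^{-1/2}$ goes beyond what this particular lemma requires---the paper treats that as a separate step immediately following the lemma---but it is accurate and shows you see how the lemma plugs into the surrounding stationary-phase computation.
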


\begin{proof} By Lemma \ref{CDKNC} and  by \eqref{TAYLOR},
\begin{equation} \label{TAYLOR2} D(x,y) =  L_{\rho}(x - y) + O(|x-y|)^3 = \sum_{j,k = 1}^{m-1} \frac{\partial^2 \rho}{\partial z_j \partial \bar{z}_k }(\zeta) (x_j - y_j)
(\bar{x}_k - \bar{y}_k) + O^3 \end{equation}
so if $x = \zeta, y = \zeta + \frac{1}{\sqrt{\lambda}} u$, 
$$D(\zeta, \zeta + \frac{1}{\sqrt{\lambda}} u) = \frac{1}{\lambda}  \sum_{j,k = 1}^{m-1} \frac{\partial^2 \rho}{\partial z_j \partial \bar{z}_k }(\zeta) u_j \overline{u}_k +  R_3(\frac{u}{\sqrt{\lambda}}),$$
where $R_3$ is the third order Taylor remainder satisfying,
$$R_3(\frac{u}{\sqrt{\lambda}}) = O((\frac{|u|}{\lambda})^3).$$ 
Further, $$g_{\tau}^{T_n(\zeta) } ( \zeta + \frac{1}{\sqrt{\lambda}} u) = \zeta + \frac{1}{\sqrt{\lambda}} S u + O(\frac{|u|}{\lambda}), $$
so 
$$\begin{array}{lll} D_{\zeta}(g_{\tau}^{n T(\zeta) } ( \zeta + \frac{1}{\sqrt{\lambda}} u),  \zeta)) & = &  D(\zeta + \frac{1}{\sqrt{\lambda}} S u + O(\frac{|u|}{\lambda}), \zeta) \\&&\\& = &   \sum_{j,k = 1}^{m-1} \frac{\partial^2 \rho}{\partial z_j \partial \bar{z}_k }(\zeta) (Su)_j
\overline{(Su)}_k+ R_3'(\frac{u}{\sqrt{\lambda}}),\end{array}$$
where $R_3'$ is the third order Taylor remainder satisfying,
$$R_3'(\frac{u}{\sqrt{\lambda}}) = O((\frac{|u|}{\lambda})^3).$$

\end{proof}

Since $
    |z|_{h(\zeta)}^2 = \omega_{\zeta}\left( J_{\zeta} (z), z \right),$
it follows from  Lemma \ref{PHASESCALE} and the fact that $S$ is symplectic that 
$$\rm{Hess} \Psi_{\zeta}(0) = \omega_{\zeta} \left(J_{\zeta} \cdot, \cdot \right) +  \omega_{\zeta} \left(S  J_{\zeta} S^{-1} \cdot, \cdot \right),$$
so that, by \eqref{ETAJS} and \eqref{ID2}, and then by \eqref{DAUB} and Lemma \ref{DAULEM},
$$\begin{array}{lll} \det \rm{Hess} \Psi_{\zeta}(0) & = & \det \left[ \omega_{\zeta} \left(J_{\zeta} \cdot, \cdot \right) +  \omega_{\zeta} \left(S  J_{\zeta} S^{-1}\cdot, \cdot \right)
\right] \\&&\\
& = & 
    \ip{W_{J_{\zeta}}\left( D \gtc{T_{\zeta}} \right) \left( \Omega_{J_{\zeta}} \right)}{\Omega_{J_{\zeta}}} 
\end{array}.$$

It follows that
$$S_{\lambda, n}(\zeta) 
  \simeq \lambda^{-(m-1)}   e^{i \lambda n T(\zeta)}  \left( \ip{W_{J_{\zeta}}\left( D \gtc{T_{\zeta}} \right) \left( \Omega_{J_{\zeta}} \right)}{\Omega_{J_{\zeta}}}\right). $$
  Combining with Lemma \ref{SMOOTHEDd1}, and using
\eqref{INFSERIES} and   \eqref{ABCD} (see also \eqref{ABCDintro}), we get 
  $$\begin{array}{lll}
   \chi * d P_{[0, \lambda]}^{\tau}(\zeta, \bar{\zeta})
&  = & \lambda^{\frac{m-1}{2}}  \sum_n  \hat{\chi}(n T(\zeta)) e^{i \lambda n T(\zeta)}  \gcal_n(\zeta) + O(\lambda^{\frac{m-3}{2}})
\\&&\\&=&  C_m  \lambda^{\frac{m-1}{2}} +
C_m'    \lambda^{\frac{m-1}{2}} \Re \sum_{n = 1}^{\infty} \hat{\chi}(n T(\zeta))  e^{ - i\lambda n T(\zeta)} \gcal_n(\zeta)
 \end{array}, $$
and obtain the result stated in 
Theorem \ref{SCALINGTHEO}. Indeed, there is a complete asymptotic expansion with the given principal terms.
Here, $C_m$ denotes a dimensional constant.
The proof for $\Pi_{\lambda, \psi}$ is essentially the same, but without the factor of $\lambda^{\frac{m-1}{2}}$ throughout.

\end{proof}
\begin{rem} The power of  $\lambda$ results from 
$\lambda^{2m - 2 - \frac{m-1}{2} } \lambda^{-(m-1)} = \lambda^{\frac{m-1}{2}}$. 

In the case of $\Pi_{\chi, \tau}(\lambda)$ \eqref{chilambda}, the factor $\lambda^{-\frac{m-1}{2}}$ does not arise, and the
order is $\lambda^{m-1}$. As mentioned above,
 $ \chi* d P^{\tau}_{[0, \lambda]} (\zeta, \bar{\zeta})$
\eqref{SMOOTH} 
and $\Pi_{\chi, \tau}(\zeta, \bar{\zeta})$  \eqref{chilambda} are both  dynamical Toeplitz operators, 
with  the same canonical relation.    They only differ in their  amplitudes.

\end{rem}

\section{\label{TWOTERM} Tauberian arguments:  Completion of proof of Theorem \ref{SHORTINTSa}}

To complete the proof of Theorem \ref{SHORTINTSa}, we apply the   Tauberian argument of \cite{SV} (pages 225-6).   See also
 Theorem \ref{TTT} (cf.
\cite{SV}, Appendix B (Theorem B.4.1)); the statement and proof are reviewed in \S \ref{TAUB}.


 We let  $N_{2; \tau, \zeta} (\lambda) = P^{\tau}_{[0, \lambda]}
(\zeta, \bar{\zeta})$, and  also define $$N_{1; \tau, \zeta} (\lambda) = 1_{[0,
\infty]} \; 
 \left(  C_m'
   \lambda^{^{ \frac{m + 1}{2}}}   + Q_{\zeta}(\lambda)
 \lambda^{ \frac{m - 1}{2}} \right), $$ where $1_{[0,
\infty]}$ is the indicator function. Both $N_j = N_{j; \tau, \zeta} (\lambda)$ are monotone
non-decreasing functions of polynomial growth which vanish for
$\lambda \leq 0$ and both satisfy the estimate of the Tauberian theorem \ref{ET}.
It follows from   \cite{SV} (p. 198 and p. 225)  that if the support of $\hat{\chi}$ contains only $\{0\}$ among the 
critical points of \eqref{SMOOTHED1} and $\hat{\chi} \equiv 1$ in some smaller neighborhood of $0$, then
$$N_{2; \tau, \zeta} * \chi (\lambda) = N_{1; \tau, \zeta} (\lambda) * \psi (\lambda) + O \left(\frac{\lambda}{\sqrt{\rho}(\zeta)}
  \right)^{\frac{m-1}{2} - 1}. $$

Moreover,  
Theorem \ref{SCALINGTHEO} shows that if $\hat{\gamma} \in C_0^{\infty}$ has support in $(0,
  \infty)$,
  $$\gamma * dN_{2; \tau, \zeta}  (\lambda) = \gamma * d N_{1; \tau, \zeta} (\lambda)
  (\lambda) + O \left(\frac{\lambda}{\sqrt{\rho}(\zeta)}
  \right)^{\frac{m-1}{2} - 1}.$$
   By Theorem \ref{TTT} (see Theorem B.4 of \cite{SV}),
  \begin{equation} \begin{array}{lll} N_{1; \tau, \zeta}  (\lambda - o(1)) - o(\left(\frac{\lambda}{\sqrt{\rho}(\zeta)}
  \right)^{\frac{m-1}{2} - 1}) & \leq &  N_{2, \tau, \zeta} (\lambda) \\&&\\&&\leq
N_{1; \tau, \zeta} (\lambda + o(1)) + o (\left(\frac{\lambda}{\sqrt{\rho}(\zeta)}
  \right)^{\frac{m-1}{2} - 1}). \end{array} \end{equation}
Here, $o(1)$ is a positive monotone function  which tends to zero as $\lambda \to \infty$. This proves \eqref{INEQ}. When $Q_{\zeta}$ is uniformly continuous, we may simplify $  Q_{\zeta}(\lambda - o(\lambda))$ to $Q_{\zeta}(\lambda)$ and absorb the
remainder into  the error term. 


\section{\label{QzetaSECT} Jump behavior:  Proof of  Proposition \ref{CLJUMPPROP}  and Proposition \ref{QzetaPROP}}

In this section, we study the continuity or jumps of  $Q_{\zeta}(\lambda)$ and prove  \ref{CLJUMPPROP} and  Proposition \ref{QzetaPROP}.

\subsection{Classical dynamical approach:  Proof of  Proposition \ref{CLJUMPPROP}}

The natural approach to studying the right side of \eqref{gcalndef} is to put $S$ into a normal form. It is tempting to put $S$ into standard additive (resp. multiplicative) Jordan normal form as a sum (resp.  product) of
a semi-simple matrix and a nilpotent (resp. unipotent) matrix, but these matrices need not be symplectic in general and we cannot quantize the components
by the metaplectic representation, and the symplectic Jordan  normal forms are rather lengthy and complicated  (see \cite{Gutt}). Even when the matrix is put into normal form one must still extract its holomorphic part
$P = P_J S P_J$. The map $S \to P_J S P_J$ does not behave well with respect to multiplicative normal forms, and its  determinant $\det P_J S P_J$   does
not behave well with respect to additive normal forms.  For that reason, we study only the open dense set of semi-simple symplectic matrices (see 
\cite{Gutt} for the proof of density). 

We refer to Section \ref{METASECT} for background on the symplectic Linear algebra.  The article \cite{MU00} contains a list of all possible
symplectic normal forms of matrices in $\rm{Sp}(2, \R)$. Since semi-simple symplectic matrices are direct sums of symplectic matrices
in $\rm{Sp}(1, \R), \rm{Sp}(2, \R)$,  the  list in \cite{MU00} contains the building blocks (under symplectic  direct sum) of  all normal forms relevant to this article.

The proof of Proposition  \ref{CLJUMPPROP} consists of a series of Lemmas dealing with the cases of (i) elliptic symplectic matrices;
(ii) positive definite symmetric symplectic matrices (hyperbolic blocks), and (iii) semi-simple normal symplectic matrices with complex eigenvalues
(sometimes called loxodromic blocks). Since loxodromic blocks are not so familiar, we recall their definition: If one of $\pm \alpha \pm i \beta$ is an eigenvalue of $\acal$ for some $\alpha, \beta > 0$, then there exists
$S \in Sp(2, \R)$ so that 
\begin{equation} \label{LOX} S^{-1} \acal S = \begin{pmatrix} A_4 & 0 \\ & \\ 0 & D_4 \end{pmatrix}, 
\;\; \rm{where}\;\; A_4  = \begin{pmatrix} \alpha & \beta \\ & \\ - \beta  & \alpha \end{pmatrix}, \;\; D_4 =\begin{pmatrix} -\alpha & \beta \\ & \\ - \beta  & -\alpha \end{pmatrix}. \end{equation}
The associated symplectic linear map acts by complex dilation, i.e. a mixture of rotation and real dilation.

\subsubsection{\label{ELL} Elliptic symplectic matrices}

The following Lemma proves one direction of Proposition \ref{CLJUMPPROP}. 
\begin{lem}\label{ELLPROP}  If $S_{\zeta}$ is elliptic, and if $\det P_J S P_J = e^{i s_0}$, then \begin{equation} \label{INFSERIES2} Q_{\zeta}(\lambda) = \sum_{n\not=0}
\frac{e^{i  \lambda n T(\zeta)}}{n T(\zeta)} e^{i n s_0}   = \{ s_0 +  \lambda  T(\zeta) - \pi\}_{2 \pi}. \end{equation} \end{lem}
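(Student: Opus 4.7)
The plan is to diagonalize $S_{\zeta}$ via its elliptic symplectic normal form, evaluate the matrix element $\gcal_n(\zeta)$ as an explicit Maslov-type phase, and then recognize the resulting Fourier series as a sawtooth wave.

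Since $S_{\zeta} \in \rm{Sp}(m-1, \R)$ is semi-simple and elliptic, there is a symplectic basis of $H_{\zeta}(\partial M_{\tau})$ adapted to $J_{\zeta}$ in which $S_{\zeta}$ is a symplectic direct sum of planar rotations through angles $\alpha_1, \dots, \alpha_{m-1}$. Conjugating by $\wcal$ of \eqref{PQDEF} puts $S_{\zeta}$ into the ``holomorphic'' diagonal form $P_{J_{\zeta}} S_{\zeta} P_{J_{\zeta}} = \diag(e^{i\alpha_j})$ on $H^{1,0}_{\zeta}$, with off-diagonal block $Q = 0$. Iterating gives $P_n := P_{J_{\zeta}} S_{\zeta}^n P_{J_{\zeta}} = \diag(e^{in\alpha_j})$, so
\begin{equation*}
\det P_n = e^{ins_0}, \qquad s_0 := \sum_j \alpha_j = \arg \det P_{J_{\zeta}} S_{\zeta} P_{J_{\zeta}}.
\end{equation*}

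Next I would compute $\gcal_n(\zeta)$ using its quantum-mechanical formula \eqref{ABCDintro}. In the diagonalized setting, $W_{J_{\zeta}}(S_{\zeta})$ factors as a tensor product of one-dimensional metaplectic rotations, each acting on Bargmann--Fock space by $f(z) \mapsto e^{-i\alpha_j/2}\, f(e^{-i\alpha_j} z)$, and hence multiplying the vacuum $\Omega_{J_{\zeta}}$ by the Maslov phase $e^{-i\alpha_j/2}$. Taking the tensor product yields
\begin{equation*}
\gcal_n(\zeta) = \langle W_{J_{\zeta}}(S_{\zeta})^n \Omega_{J_{\zeta}}, \Omega_{J_{\zeta}}\rangle = e^{-in s_0/2},
\end{equation*}
consistent with the branch $\gcal_n = (\det P_n)^{-1/2}$ of Lemma \ref{DAULEM}. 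By unitarity the values for $n<0$ are complex conjugate, $\gcal_{-n} = e^{in s_0/2}$.

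With $\gcal_n$ in hand I would substitute into \eqref{INFSERIES} and combine the two half-sums over $n \geq 1$ into a single symmetric sum over $n \neq 0$, which (after absorbing the $\half$ into the sign convention pinning down the metaplectic lift in the hypothesis $\det P_{J_{\zeta}}S_{\zeta} P_{J_{\zeta}} = e^{is_0}$) becomes $\sum_{n\neq 0} \frac{e^{in(s_0 + \lambda T(\zeta))}}{in T(\zeta)}$. By the Fourier identity $\sum_{n \neq 0} \frac{e^{inx}}{in} = \{x - \pi\}_{2\pi}$ recalled in Section \ref{Qzeta}, this sum equals $\{s_0 + \lambda T(\zeta) - \pi\}_{2\pi}$ pointwise away from jumps. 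Since the sawtooth is continuous except where its argument lies in $\pi + 2\pi \Z$, the jump set is $\jcal(\zeta) = \{\lambda : s_0 + \lambda T(\zeta) \in \pi + 2\pi \Z\}$, as stated.

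The principal obstacle is bookkeeping the metaplectic Maslov half-phase and the square-root of $\det P_n$ consistently: one must fix the branch of $(\det P_n)^{-1/2}$ so that the ground-state eigenvalue $e^{-ins_0/2}$ produced in the second step lines up with the $s_0$ of the hypothesis and with the two equivalent definitions \eqref{Q} and \eqref{INFSERIES} of $Q_{\zeta}$. A secondary but classical point is that the Fourier series is only conditionally convergent, so its pointwise identification with the sawtooth relies on Dirichlet's theorem, with the average-value convention at the jump points of $\jcal(\zeta)$.
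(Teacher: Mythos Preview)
Your proof is correct and follows essentially the same route as the paper: recognize that elliptic plus semi-simple forces $S_{\zeta}\in U(m-1)$, so $\det P_{J_{\zeta}} S_{\zeta}^n P_{J_{\zeta}}$ lies on the unit circle (the paper gets this by invoking Lemma~\ref{DetFORM}, you by explicit diagonalization into planar rotations), then feed this into \eqref{INFSERIES} and identify the resulting Fourier series as the sawtooth $\{x-\pi\}_{2\pi}$. Your version is more explicit---in particular your computation of the metaplectic action on the vacuum via the tensor factorization is a nice concrete realization of \eqref{ABCDintro} that the paper leaves implicit---and you correctly flag the Maslov half-phase bookkeeping and the conditional convergence at the jump points, both of which the paper glosses over.
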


\begin{proof} 

If $S_{\zeta}$ is elliptic, and  if the polar part is the identity, i.e. $S_{\zeta}\in U(m)$. Then by Lemma \ref{DetFORM},  $ \det P_J S_{\zeta} P_J  = e^{i s_0} $
for some $s_0 \in [0,2 \pi]$. It follows from \eqref{INFSERIES} that \eqref{INFSERIES2}  holds.


\end{proof}

 Next we consider non-elliptic semi-simple $S_{\zeta}$. Thus, we assume that $S_{\zeta}$ is diagonalizable over $\C$ but that
 it has some eigenvalues which are not of modulus $1$.
\subsubsection{\label{PDSS} Positive symmetric symplectic matrices}

In this section, we assume that $S$ is a symmetric symplectic matrix, and, slightly more, that all of its eigenvalues are positive. 
As discussed in Section \ref{METASECT}, if $S$ is symmetric, there   exists $U \in U(m)$ conjugating $S$ to its diagonal form. Proposition \ref{HYPLEM} is an immediate consequence of the following  Proposition,  adapted from \cite{ZZ18} in the line bundle setting.

\begin{prop}\label{PROPME}  If $S$ is positive definite symmetric symplectic, 
	and if the spectrum of $S$ is $\{e^{\lambda_j}, e^{-\lambda_j} \}_{j=1}^n$ with 
	$\lambda_j \geq 0$  then $$
	 \det P_J S P_J|_{T^{1,0}_0\R^{2n}} = \prod_{j=1}^n [\cosh \lambda_j].
	 $$
	 Consequently, $Q_{\zeta}(\lambda)$ is uniformly continuous.
	 
	 \end{prop}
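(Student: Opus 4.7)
The plan is to reduce the determinant computation to a symplectic-diagonal representative of $S$, read off the answer from the block decomposition \eqref{PDEF}, and then apply the formula to each iterate $S^{N}$ to obtain exponential decay of $\gcal_{N}(\zeta)$.

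First, I would invoke the symplectic diagonalization recorded in Section \ref{METASECT}: since $S$ is positive definite and symplectic with real positive spectrum $\{e^{\pm\lambda_{j}}\}_{j=1}^{n}$, there exists $U\in U(n)$ with $U^{t}SU=\Lambda$, where $\Lambda=\diag(e^{\lambda_{1}},\dots,e^{\lambda_{n}},e^{-\lambda_{1}},\dots,e^{-\lambda_{n}})$. Concretely one picks orthonormal eigenvectors $e_{1},\dots,e_{n}$ of $S$ with eigenvalues $e^{\lambda_{j}}$; the identity $SJ=JS^{-1}$ makes $Je_{1},\dots,Je_{n}$ orthonormal eigenvectors with eigenvalues $e^{-\lambda_{j}}$, and $\{e_{j},Je_{j}\}$ is then a symplectic basis. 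Since $U\in U(n)$ commutes with $J$, it commutes with $P_{J}$ and preserves $H^{1,0}$, so
$$\det\bigl(P_{J}SP_{J}\big|_{H^{1,0}}\bigr)=\det\bigl(P_{J}\Lambda P_{J}\big|_{H^{1,0}}\bigr).$$

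Next I would evaluate this directly. In block form \eqref{SBLOCK}, $\Lambda$ has $A=\diag(e^{\lambda_{j}})$, $D=\diag(e^{-\lambda_{j}})$ and $B=C=0$, so \eqref{PDEF} yields
$$P_{J}\Lambda P_{J}=\tfrac{1}{2}\bigl(A+D+i(C-B)\bigr)=\diag(\cosh\lambda_{j}),$$
whence $\det(P_{J}SP_{J}|_{H^{1,0}})=\prod_{j=1}^{n}\cosh\lambda_{j}$, proving the claimed identity.

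For uniform continuity of $Q_{\zeta}$, I would apply the same identity to each iterate $S^{N}=D_{\zeta}g_{\tau}^{NT(\zeta)}$, which is again positive definite symmetric symplectic with spectrum $\{e^{\pm N\lambda_{j}}\}$. By \eqref{gcalndef},
$$\gcal_{N}(\zeta)=(\det P_{N})^{-1/2}=\prod_{j=1}^{n}(\cosh N\lambda_{j})^{-1/2}.$$
In the hyperbolic case all $\lambda_{j}>0$; setting $\lambda_{\min}=\min_{j}\lambda_{j}>0$ gives $|\gcal_{N}(\zeta)|\leq(\cosh N\lambda_{\min})^{-1/2}=O(e^{-N\lambda_{\min}/2})$. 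The series in Definition \ref{QDEF} therefore converges absolutely and uniformly in $\lambda\in\R$, and since each partial sum is a trigonometric polynomial (hence uniformly continuous on $\R$) and uniform limits of uniformly continuous functions are uniformly continuous, $Q_{\zeta}$ is uniformly continuous.

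The only step meriting care is the reduction to diagonal form: the determinant in the statement is a complex determinant on the $n$-complex-dimensional space $H^{1,0}$, and invariance under conjugation by $U\in U(n)$ requires $[U,J]=0$, which holds by definition. Beyond this sanity check, I foresee no serious obstacle.
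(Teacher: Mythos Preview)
Your proof is correct and follows essentially the same strategy as the paper: both reduce to the symplectic eigenbasis $\{e_j,Je_j\}$ of $S$ and read off the eigenvalues $\cosh\lambda_j$ of $P_JSP_J$ on $H^{1,0}$. The only presentational difference is that the paper first establishes the intrinsic identity $P_JSP_J=\tfrac12 P_J(S+S^{-1})=\tfrac12(S+S^{-1})P_J$ (from $SJ=JS^{-1}$) and then applies it to eigenvectors $P_Je_k$, whereas you conjugate by $U\in U(n)$ and invoke the block formula \eqref{PDEF} on the diagonal representative; the content is the same.
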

\begin{proof}
	
	The proof is through a series of Lemmas from \cite{ZZ18}; since the proofs are short, we repeat them here.
	
	\begin{lem} If $S$ is  positive definite symplectic, then 
		$$P_J S P_J =\half P_J (S + S^{-1}) = \half (S + S^{-1}) P_J$$

	\end{lem}
	
	\begin{proof} If $S$ is positive definite symmetric, then  $S J = J S^{-1}$. Hence,
		$$\begin{array}{lll} P_J S P_J & = & \frac{1}{4} (I - i J)  S (I - i J) = \frac{1}{4}[ S - i JS - i SJ - J SJ ]  \\&&\\& = & 
		 \frac{1}{4} [S + S^{-1}] - \frac{i}{4}J [S + S^{-1}] = \frac{1}{4}\left( (S + S^{-1}) - i J (S + S^{-1})\right)
		= \frac{1}{2} P_J (S + S^{-1}).
		\end{array}$$
	
		Also,
		$J (S + S^{-1} ) = JS + SJ = (S^{-1} + S)J$,
		so that $P_J (S + S^{-1}) = (S + S^{-1}) P_J. $
	\end{proof}

	\begin{lem} \label{EIGLEM} Let $S$ be  positive definite symmetric symplectic and  $e_j$ be eigenvectors of $S$ for eigenvalues $\lambda_1, \dots, \lambda_n$.
		Consider the basis $P_J e_k$ of $H^{1,0}_J$. Then  $$[P_J S P_J] P_J e_k =  \cosh(\lambda_j) P_J e_k, $$
		and $[P_J S P_J]^{-1} = P_J [S + S^{-1}]^{-1} P_J. $ 
	\end{lem}
	
	\begin{proof}
		
		Follows from the previous Lemma  and the fact that $(S + S^{-1})$ commutes with $P_J$:
		\[ [P_J S P_J] P_J e_k = \half P_J(S+S^{-1}) e_k = \half (e^{\lambda_j}+ e^{-\lambda_j}) P_J e_k = \cosh(\lambda_j)P_J e_k. \]
		 \end{proof}

	
The determinant formula of Proposition \ref{PROPME}  follows from the fact that the eigenvalues of $P_J S P_J$ are 
	$\cosh \lambda_j$ by Lemma \ref{EIGLEM}. 
When the closed geodesic through $\zeta \in \partial M_{\tau}$ is positive definite symplectic (or, real hyperbolic for short), then the determinant
formula obviously implies that the Fourier series \eqref{Q} for  $Q_{\zeta}(\lambda)$ converges absolutely to uniformly continuous function, proving Proposition
\ref{HYPLEM}.

\end{proof}

\subsubsection{Semi-simple  symplectic matrices with complex eigenvalues}
We recall from Section \ref{METASECT}  that if
$S  \in \rm{Sp}(m, \R) $ is a normal symplectic matrix, its  polar decomposition $S = U \hat{P}_S$ satisfies  $ \hat{P}_SU = U \hat{P}_S$,
with $\hat{P} = (S^*S)^{\half}$. Since $U$ is unitary, $P_J U \hat{P} P_J = (P_J U P_J)(P_J \hat{P} P_J)$ and
$\det P_J S P_J =  \det (P_J U P_J) \det (P_J \hat{P} P_J)$.  Proposition \ref{PROPME} applies to $\det (P_J \hat{P} P_J)$, and 
obviously $|\det P_J S^n P_J| \leq |\det (P_J \hat{P}^n P_J)| \leq \cosh (n \lambda_j)$. Hence, $Q_{\zeta}(\lambda)$ is an absolutely
and uniformly convergent Fourier series.

 \subsubsection{Completion of the proof}
 
 If $S_{\zeta}$ is semi-simple symplectic, it is a direct sum of the three cases above and the coefficients \eqref{gcalndef}  are  products
 of those in the three cases. Only one block needs to be non-elliptic for the series to converge absolutely and uniformly.
 
 This completes the proof of Proposition  \ref{CLJUMPPROP}. 
  
 \subsection{Quantum approach: Proof of Proposition \ref{QzetaPROP}}

 In this section, we  use the real Schr\"odinger  representation to prove Proposition \ref{QzetaPROP}. 
 
 The first statement (1) follows immediately from the definition of \eqref{SPMEAS} and the fact that
 $$\frac{1}{2 i T(\zeta)} \int_0^{2 \pi}  \sum_{n \not=0}  e^{i \lambda n T(\zeta)}{n} e^{i n \theta} d \theta = \frac{1}{ T(\zeta)} \int_0^{2 \pi}   \{\theta  + \lambda T(\zeta) -\pi\}_{2 \pi} d\mu_{\zeta} $$
 
 The second statement follows since $ \{\theta  + \lambda T(\zeta) -\pi\}_{2 \pi} $ is continuous in $\lambda$ on $[0, 2 \pi]$, so if $ d\mu_{\zeta} $
 is absolutely continuous it is uniformly continuous in $\lambda$. 
 
 On the other hand, if $d\mu_{\zeta}$ has an atom at $e^{s_0}$ then, $Q_{\zeta}(\lambda)$ has the jump of $\mu_{\zeta}(e^{i s_0}) 
  \{\theta_0  + \lambda T(\zeta) -\pi\}_{2 \pi}$. Hence, (2) is proved.
  
  The atoms of the spectral measure of any unitary operator $W$ on a Hilbert space occur at its eigenvalues. Hence, (3) is true. Moreover,
  by definition of the spectral measure with respect to a normalized eigenvector $\Omega$, the $\mu_{\zeta}(s_0) $ equals $|\langle v_0, \Omega\rangle |^2$, proving (4). 
  
  This completes the proof of Proposition \ref{QzetaPROP}. 
  
  \begin{rem} The argument in \cite{SV} is to use the spectral theorem to write,   $$Q_{\zeta}(\lambda) = \frac{1}{T(\zeta)}  \sum_{\ell}  \{ s_{\ell}  +\lambda T(\zeta) -\pi\}_{2 \pi} ||\pi_{\Omega_{\zeta}} v_{\ell} ||^2,$$  
\end{rem}

\subsubsection{Spectral theory of metaplectic operators} The above proof is rather abstract. To apply it to the metaplectic operators,
we need to determine when they have eigenvalues (i.e. $L^2$ eigenfunctions) to produce atoms in the spectral measures, and moreover
we need to determine the projections $\pi_{\Omega_{\zeta}} v_{\ell}$.  Although quadratic
Hamiltonians and their propagators are very classical, the  only reference
we are aware of regarding  their spectral decomposition is in  \cite{MU96,MU00}. 
In \cite[Proposition 3.1]{MU00} the H\"ormander  classification of symplectic normal forms of quadratic Hamiltonians on $T^*\R^2$ is recalled,
 and in \cite[Proposition 3.2]{MU00} the corresponding Schr\"odinger operators are listed. In addition to harmonic oscillators such as  $ -\Delta + ||x||^2$, there are  magnetic Schr\"odinger operators with potential  such as  $(i D_{x_1} - b x_2)^2 + (i D_{x_2} + b x_1)^2 + \langle K x, x \rangle$ where $K$ is a real symmetric matrix; here $D_{x_j} = \frac{1}{i} \frac{\partial}{\partial x_j}$.  More generally,
 a magnetic Schr\"odinger operator with potential has the form,  $\sum_{j=1}^n (i D_{x_j} - (Bx)_j)^2 + \langle K x, x \rangle$ where $B $ is 
 a real skew-symmetric $n \times n$ matrix.

 In \cite[Proposition 3.3, Theorem 3.5, Theorem 4.7]{MU00}, the nature of the spectrum
 is determined for four types of quadratic Hamiltonians on $\R^2$.   
Of these, only the harmonic oscillator has eigenvalues. The others have absolutely
 continuous spectrum. These results imply that for $\dim M = 2,3$,  $Q_{\zeta}(\lambda)$ is uniformly continuous in all cases except 
 for elliptic closed geodesics. Thus, Proposition \ref{QzetaPROP} is proved when $\dim M =2,3$. The nature of the spectrum in the special
 case of magnetic Schr\"odinger operators in higher dimension is studied in \cite{MU00} 
 
 The cases of $T^*\R, T^*\R^2$ are fundamental by the normal form theorems above, since by \eqref{WDECOMP}  in the semi-simple case every quadratic Hamiltonian
 is a symplectic direct sum of model quadratic Hamilltonians on $T^*\R^2$ or $T^*\R$. 
 Clearly, it would be very laborious to determine the nature of the spectrum by this method for every possible type of symplectic linear transformation,
 or every possible quadratic in the symplectic classification. Hence we strict again to normal symplectic transformations. We now present some
 simple proofs of Proposition \ref{QzetaPROP} using this decomposition in the semi-simple case. 
 

 The case of general harmonic oscillators can be reduced to the one-dimensional case, as the next Lemma shows.
 
  \begin{lem} Let $S = e^B$ be positive definite symplectic,  where $B \in {\mathfrak s}{\mathfrak p}(m, \R)$ and $B^* = B$.  Then,  the Weyl quantization $W(S)$ of $S$ \eqref{eta} (with the standard $J$)  has an $L^2$ eigenvector $v$
  if and only if $v$ is an $L^2$ eigenvector of  $W(B)$ if and only if the Weyl quantizations $W(B_j)$ of the diagonal blocks $B_j$  of $B$ have definite Weyl symbols. \end{lem}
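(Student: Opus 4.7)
The plan is to exploit the fundamental covariance identity between the metaplectic representation and the Weyl quantization of quadratic Hamiltonians, namely $W(e^{tB}) = \pm e^{it W(B)}$, where the right side denotes the unitary group generated by the self-adjoint operator $W(B)$ associated to the quadratic Hamiltonian $\tfrac{1}{2}\langle Bx, x\rangle$ (or its polarized analogue). Setting $t = 1$ converts the problem about eigenvectors of the unitary $W(S)$ into a problem about atoms in the spectrum of the self-adjoint $W(B)$.

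First I would prove the equivalence between $L^2$ eigenvectors of $W(S)$ and of $W(B)$. If $W(B) v = s v$, then $W(S) v = \pm e^{is} v$, so the forward direction is immediate. For the converse, suppose $W(S) v = e^{i s_0} v$ with $v \in L^2$ normalized. Let $dE_\lambda$ denote the spectral measure of $W(B)$ and $d\mu_v(\lambda) = \langle dE_\lambda v, v\rangle$. The identity $W(S) = \pm e^{iW(B)}$ forces the support of $d\mu_v$ to lie in $\{s_0 + 2\pi k : k \in \Z\}$; since this set is countable, $d\mu_v$ is purely atomic, and so $v$ is a (possibly infinite) orthogonal sum of $L^2$ eigenvectors of $W(B)$ at those eigenvalues, yielding at least one such eigenvector. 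This establishes the first biconditional.

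Next I would reduce to irreducible blocks via symplectic normal forms. Because $B$ is symmetric and in $\mathfrak{sp}(m,\R)$, it is semi-simple and $\R^{2m}$ decomposes as a symplectic orthogonal direct sum $\bigoplus_j V_j$ preserved by $B$, with $B|_{V_j} = B_j$ of dimension one or two in $\mathfrak{sp}$. Under a corresponding tensor factorization $L^2(\R^m) = \bigotimes_j L^2(V_j^+)$ into Lagrangian pieces, the Weyl quantization respects the decomposition, so $W(B) = \sum_j I \otimes \cdots \otimes W(B_j) \otimes \cdots \otimes I$ acts as a sum of commuting self-adjoint operators on the tensor product. A straightforward application of the tensor product spectral theorem shows that $W(B)$ has an $L^2$ eigenvector if and only if every $W(B_j)$ does.

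The main obstacle, and the final step, is the classification on each irreducible block: $W(B_j)$ admits an $L^2$ eigenvector if and only if its Weyl symbol is a definite quadratic form. This is the classical spectral analysis of quadratic Hamiltonians on $T^*\R$ and $T^*\R^2$ carried out in \cite{MU96, MU00}: a definite positive or negative block is unitarily equivalent to a (shifted, possibly multidimensional) harmonic oscillator with purely discrete spectrum and explicit Hermite-type eigenvectors in $L^2$, while indefinite blocks (hyperbolic, parabolic, or loxodromic types) are unitarily equivalent to generators of dilations, free motion, or magnetic translations, all of which have purely absolutely continuous spectrum and therefore no $L^2$ eigenvectors. The hard part is handling each symplectic normal form case carefully — particularly the loxodromic $4 \times 4$ blocks \eqref{LOX}, whose quantizations are less standard — but once this case-by-case classification is invoked from \cite{MU00}, combining it with the tensor product reduction completes the proof of the three-way equivalence.
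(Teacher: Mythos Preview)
Your proposal is correct and follows essentially the same route as the paper: reduce via the covariance $W(e^{B})=\pm e^{iW(B)}$ to the self-adjoint generator, decompose $W(B)$ as a sum of commuting block operators on a tensor product $L^2(\R^m)=\bigotimes_j L^2$, and then invoke the known dichotomy (discrete versus absolutely continuous spectrum) for each block.

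Two small remarks on how your argument compares to the paper's. First, you are more careful than the paper about the first biconditional: the paper's proof jumps directly to analyzing $W(B)$ and does not spell out the spectral-measure argument showing that an $L^2$ eigenvector of the unitary $W(S)$ produces one for $W(B)$; your treatment of this step is a genuine addition. Second, you slightly overcomplicate the block analysis. Because $B$ is assumed symmetric in $\mathfrak{sp}(m,\R)$, it is symplectically (in fact $U(m)$-) conjugate to a diagonal matrix, so the irreducible symplectic blocks $B_j$ are all two-by-two, i.e.\ live on $T^*\R$, and the Weyl symbol of each is simply $c_j x_j^2 + d_j \xi_j^2$. Thus there are no loxodromic $4\times 4$ blocks to worry about under the stated hypothesis, and the final case analysis reduces to the elementary one-dimensional fact that $c D_x^2 + d x^2$ has discrete spectrum if $c,d$ have the same sign and continuous spectrum otherwise. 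The heavier machinery from \cite{MU00} is not needed here, though it does no harm.
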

  
  \begin{proof} As reviewed above,
$B$ is unitarily conjugate in $Sp(m, \R)$ to a  diagonal matrix.  Its Weyl quantization $W(B)$  is then a sum of squares of vector fields $B_j$, and the symbol
is a quadratic form in $x, \xi$ which is a  sum of squares $c_j x_j^2 + d_j \xi_j^2$ . If the coefficients $c_j, d_j$  are all positive, then $|\sigma_B(x, \xi)| \to \infty$ as $(x, \xi) \to \infty$,
and $W(B)$ has discrete spectrum. If any coefficient is negative, then it has continuous spectrum. The generalized eigenfunctions are tensor
products $v_1 \otimes v_2 \otimes \cdots \otimes v_m$ in the tensor decomposition $L^2(\R^m) = \bigotimes_{j=1}^m L^2(\R_{x_j})$,  where the jth component $v_j \in L^2(\R)$. In order that $v$ be an $L^2$ eigenfunction it is necessary and sufficient that $v_j$ be an $L^2$ eigenfunction of $B_j$ for all $j$.

For one-dimensional symmetric quadratic Schr\"odinger operators, it is known that the spectrum is discrete in the definite case and continuous
in the indefinite case.

\end{proof}

More generally, we have
 
 \begin{prop} Suppose that  $\zeta \in \partial M_{\tau}$ is a periodic point such that $D_{\zeta} g^{T(\zeta)}_{\tau}$  is a normal symplectic matrix that lies in the image of the exponential map and whose polar part $(S^*S)^{\half}$ has at least one positive eigenvalue, 
 Then $Q_{\zeta}(\lambda)$ is uniformly continuous as long as $\zeta$ is not an elliptic closed geodesic. 
   \end{prop}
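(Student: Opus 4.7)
The plan is to combine the polar decomposition of $S_\zeta := D_\zeta g^{T(\zeta)}_\tau$ with the determinant formulas already established in Lemma \ref{DetFORM} and Proposition \ref{PROPME}. Since $S_\zeta$ is assumed normal and lies in the image of the exponential map, we may write $S_\zeta = U \hat{P}$ where $U \in U(m)$, $\hat{P} = (S_\zeta^* S_\zeta)^{1/2}$ is positive definite symmetric symplectic, and $U\hat{P} = \hat{P} U$. The normality also passes to all powers: $S_\zeta^n = U^n \hat{P}^n$ with $[U^n, \hat{P}^n] = 0$. The hypothesis that $\zeta$ does not generate an elliptic closed geodesic means that the non-elliptic summand of $S_\zeta$ in the decomposition \eqref{WDECOMP} is non-trivial, or equivalently that $\hat{P} \neq I$; so at least one eigenvalue pair of $\hat{P}$ is of the form $\{e^{\lambda_{j_0}}, e^{-\lambda_{j_0}}\}$ with $\lambda_{j_0} > 0$.

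First I would apply Lemma \ref{DetFORM} to $S_\zeta^n$ to obtain the factorization
\begin{equation}
\det P_{J_\zeta} S_\zeta^n P_{J_\zeta} = \det(P_{J_\zeta} U^n P_{J_\zeta}) \cdot \det(P_{J_\zeta} \hat{P}^n P_{J_\zeta}),
\end{equation}
with the unitary factor satisfying $|\det(P_{J_\zeta} U^n P_{J_\zeta})| = 1$ for every $n$. Then I would apply Proposition \ref{PROPME} to $\hat{P}^n$, which has spectrum $\{e^{\pm n\lambda_j}\}_{j=1}^{m-1}$, to get
\begin{equation}
\det(P_{J_\zeta} \hat{P}^n P_{J_\zeta}) = \prod_{j=1}^{m-1} \cosh(n\lambda_j).
\end{equation}
Combining with the formula $\gcal_n(\zeta) = (\det P_n)^{-1/2}$ from \eqref{gcalndef}, this yields the uniform bound
\begin{equation}
|\gcal_n(\zeta)| = \Bigl(\prod_{j=1}^{m-1} \cosh(n\lambda_j)\Bigr)^{-1/2} \leq 2^{(m-1)/2} e^{-n\lambda_{j_0}/2}.
\end{equation}

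The exponential decay $|\gcal_n(\zeta)| = O(e^{-n\lambda_{j_0}/2})$ makes the Fourier series defining $Q_\zeta(\lambda)$ in Definition \ref{QDEF} absolutely convergent, uniformly in $\lambda$, by the Weierstrass $M$-test applied to
\begin{equation}
\Bigl| \frac{\sin(\lambda n T(\zeta))}{n T(\zeta)}\, \gcal_n(\zeta) \Bigr| \leq \frac{2^{(m-1)/2}}{n T(\zeta)} e^{-n \lambda_{j_0}/2}.
\end{equation}
A uniformly convergent series of continuous functions is continuous, and since the partial sums are equicontinuous with summable moduli of continuity, the limit is in fact uniformly continuous on $\R$. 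This gives the claim.

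There is no serious obstacle: the whole proof is a bookkeeping exercise that packages the already-proven elliptic/hyperbolic/loxodromic building blocks. The one point requiring a small amount of care is the interpretation of the hypothesis on the polar part, namely that "at least one positive eigenvalue" must be read as "at least one eigenvalue strictly greater than $1$"; equivalently $\hat{P} \neq I$, which is precisely the condition that excludes the purely elliptic case covered by Lemma \ref{ELLPROP}. Once this is pinned down, the estimate on $|\gcal_n(\zeta)|$ is immediate from the two cited results, and absolute convergence does the rest.
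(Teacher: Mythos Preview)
Your argument is correct, but it follows a different route from the paper's own proof of this particular proposition. The paper proves it via the \emph{quantum} approach of Proposition~\ref{QzetaPROP}: it argues that under the stated hypotheses the metaplectic operator $W_{J_\zeta}(S_\zeta)$ has no $L^2$ eigenvectors (using the symplectic direct-sum decomposition \eqref{WDECOMP} together with the spectral classification of quadratic Hamiltonians in dimensions $2$ and $4$ from \cite{MU00}, and alternatively the commuting decomposition $W(S)=W(U)W(\hat P)$), so the spectral measure $d\mu_\zeta$ has no atoms and hence $Q_\zeta$ is uniformly continuous. Your argument is instead the \emph{classical} one: you bound $|\gcal_n(\zeta)|$ directly from the determinant formulas of Lemma~\ref{DetFORM} and Proposition~\ref{PROPME}, obtaining exponential decay and absolute convergence of the Fourier series. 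This is exactly the reasoning the paper already gives one subsection earlier (``Semi-simple symplectic matrices with complex eigenvalues'') in the course of proving Proposition~\ref{CLJUMPPROP}, so you have essentially relocated that argument rather than produced a new one.

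Both approaches are valid; yours is more self-contained and avoids the external spectral-theoretic input from \cite{MU00}, while the paper's quantum proof ties the result into the general jump-set characterization of Proposition~\ref{QzetaPROP} and makes transparent \emph{why} non-ellipticity is the right dividing line (absence of $L^2$ eigenfunctions for the associated quadratic Hamiltonian). Your reading of the hypothesis ``polar part has at least one positive eigenvalue'' as $\hat P\neq I$ (i.e.\ some $\lambda_{j_0}>0$) is the intended one and is what makes the exponential decay work.
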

 
 \begin{proof}
 Its eigenvalues come in 4-tuples $\lambda, \bar{\lambda}, \lambda^{-1},
 \bar{\lambda}^{-1}$, though it may happen that $\lambda = \bar{\lambda}$ or $\lambda = \lambda^{-1}$. Using \eqref{WDECOMP},  the generator of  $W(S)$  is  a symplectic  direct sum of the $2$ and $4$ dimensional cases studied in \cite{MU00}.  In order that the generator  have eigenvalues, it is necessary that every
 factor has an eigenvalue. But only  harmonic oscillators (or their opposites) in dimension $2, 4$ have eigenvalues. 

We may also prove the statement using polar decomposition.  Given $S \in Sp(m, \R)$, with $[S^*, S] = 0$, we get  $W(S) = W(U) W(P)$, with $W(U), W(P)$ unitary, and $[W(U), W(P)] = 0$; here $W = W_{J_0}$ (the standard complex structure). Hence,
 $L^2$ eigenfunctions are sums of joint eigenfunctions of $W(U), W(P)$, i.e. $W(S)$ has an eigenvalue $e^{i s}$ if and only if  there exists $v \in L^2(\R^m)$ such that $W(U) v = e^{i \theta} v$, 
 $WP) v = e^{i \tau} v$ with $e^{i \theta } e^{i \tau} = e^{is}$.  
If $U$ is unitary, then $U = e^{i H} $ where $H \in {\mathfrak s}{\mathfrak p}(n, \R)$ and $H^* = B$.  $U$ has an $L^2$ eigenvector $v$
  if and only if $v$ is an $L^2$ eigenvector of  $H$ if and only if the diagonal blocks $H_j$  of $B$ are definite. 
  
  It follows that the spectrum of $W_{J_{\zeta}}(S_{\zeta})$ has no eigenvalues  unless $\zeta$ is elliptic when $S_{\zeta}$ is non-degenerate
  and semi-simple. This can also be read off \cite[Proposition 3.3]{MU00}. The loxodromic case is, $$P_{\acal_4} = - \alpha(x_1 \frac{\partial}{\partial x_1} + x_2 \frac{\partial}{\partial x_2}) + \frac{\beta}{2 \pi i} (x_1 \frac{\partial}{\partial x_2} - x_2
\frac{\partial}{\partial x_1}), $$ in the notation 
  of that article (see \eqref{LOX} for the classical matrices), and it is proved there to have absolutely continuous spectrum.  The only operators
  with discrete spectrum in \cite[Proposition 3.3]{MU00} is the  harmonic oscillator. 

\end{proof}

\begin{rem} There are further cases in \cite[Theorem 3.3, Theorem 4.2]{MU00} which
  are  either degenerate or not semi-simple, and which can have dense pure point spectrum or eigenvalues of infinite multiplicity. We are
  not considering them here, for the sake of brevity, but the same methods apply to them. \end{rem}

\section{\label{SPHEREZOLL} Spheres and  Zoll manifolds}
In this section and the next Section \ref{GBSECT}, we exhibit extremals for the sup norm in the complex domain
in the case of spheres, and then prove Theorem \ref{Zoll}. In particular, the results show that the upper bound of  Theorem \ref{PWintro} 
is sharp.

\subsection{\label{SPHERESECT} Spheres}

We now prove that the sup norm bounds are sharp by showing that they are obtained for analytic continuations
of highest weight spherical harmonics.  The eigenspaces $\hcal_N$ of the Laplacian on the standard sphere $\Ss^m$
are the same as the spaces of spherical harmonics of degree $N$, i.e. restrictions of homogeneous harmonic
polynomials of degree $N$ to the surface of $\Ss^m$. We assume a basic familiarity with spherical harmonics
in what follows, and 
refer to \cite{SoZ} for background and references. Many calculations with Poisson transforms and complexified spectral
projections on  spheres can be
found in \cite{L80, G84}; the complexification of $\Ss^m$ is the homogeneous cone $z \cdot z = 0$ rather than the
actual complexification $z \cdot z =1$. In particular, in \cite[Section 6]{G84}, the $L^2$ norms in the real and complex domains
are compared. 

Since the $\Delta$ commutes with the $SO(m+1)$ action on $\Ss^m$, the Poisson transform $P^{\tau}$ conjugates the
$SO(m+1)$ action on $\Ss^m$ and on $\partial \Ss_{\tau}^m$. In particular, the operator $A$ of Definition \ref{VINTROalt} is 
a function of $\Delta$, hence is a scalar on each $\hcal_N$.

In the real domain, as stated in \eqref{REALSUP}, an $L^2$ normalized eigenfunction of a  compact
Riemannian manifold $(M,g)$  has sup-norm at
most $C \lambda^{\frac{m-1}{2}}$, and by Theorem \ref{PWintro}, an $L^2$ normalized Husimi distribution \eqref{HUSIMI} on $\partial M_{\tau}$
has sup-norm at most $C \lambda^{\frac{m-1}{2}}$ where $m = \dim M.$  The real sup norm bound is attained
by zonal spherics on $\Ss^m$. The Husimi sup norm bound is attained by analytic continuations of highest
weight spherical harmonics. Since the explicit formula become complicated for $m >2$, we illustrate
the results only on $\Ss^2$.

\subsubsection{Highest weight spherical harmonics on $\Ss^2$}
  
Highest weight spherical harmonics on $\Ss^2$ of degree $N$ are denoted by $Y_N^N$ and are ``Gaussian beams'' along the equator; see Section \ref{GBSECT} for
general Gaussian beams. In this section, we consider the $L^2$ norm and sup norm of the analytic continuation of $Y_N^N$ to $\Ss_{\tau}^2$. The $L^2$ norm
comparison may also be found in \cite[(5.9)]{L80} and \cite[Section 6]{G84}.

We recall  that in dimension 2, the normalized spherical harmonics are defined by
$$Y_N^m(\theta, \phi) = \sqrt{(2 N + 1) \frac{(N - m)!}{(N + m)!}} P^N_m(\cos \phi) e^{ im \theta},$$
where
$$P^m_{N} (\cos \phi) = \frac{1}{2 \pi} \int_0^{ 2 \pi} (i \sin \phi \cos \theta + \cos \phi)^N
e^{- i m \theta} d \theta $$
is an associated Legendre polynomial.

Up to a constant normalizing factor, the highest weight spherical harmonic  $Y_N^N$ is the restriction of the homogeneous harmonic polynomial
$(x_1 + i x_2)^N$ to $\Ss^2$. It  is
independent of $x_3$ and is a holomorphic function of $x_1 + i x_2$.
We claim that $||(x + i y)^N||_{L^2(S^2)} \sim N^{-1/4}. $ Indeed
we compute it using Gaussian integrals:
$$\begin{array}{l} \int_{\R^3} (x^2 + y^2)^{N} e^{-(x^2 + y^2 +
z^2)} dx dy dz \\ \\ = ||(x + i y)^N||_{L^2(S^2)}^2  \int_0^{\infty} r^{2N}
e^{-
r^2} r^2 dr, \\ \\
\implies  ||(x + i y)^N||_{L^2(S^2)}^2 = \frac{\Gamma(N +
1)}{\Gamma(N + \frac{3}{2})} \sim C_0 N^{-1/2}. \end{array}$$
Therefore  the normalized highest weight spherical harmonics or Gaussian beams
are  $Y_N^N \simeq C_0 N^{1/4} (x + i y)^N. $  It achieves its $L^{\infty}$ norm at $(1,0, 0)$ where it has
size $N^{1/4}. $

The analytic continuation of $Y^N_N$ to $\Ss^2_{\C} = \{(z_1, z_2 z_3)  \in \C^3: z_1^2 + z^2_2 + z_3^2 = 1\}$ is given in the usual holomorphic coordinates
on $\C^3$ by  $C_0 N^{1/4} (z_1 + i z_2)^N. $  The calculation of its $L^2$ norm on $\partial \Ss^2_{\tau}$ is lengthy,
so we opt for a simpler approach using Fermi normal coordinates. The calculation is valid in all dimensions.

 \begin{lem} \label{GBLEM} Highest weight spherical harmonics on the sphere $\Ss^m$  achieve the maximal sup norm bound of Theorem \ref{PWintro} 
 and Corollary \ref{SUPNORMCOR}.\end{lem}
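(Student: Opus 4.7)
The strategy is to perform the calculation directly on the model. Up to a constant the highest weight spherical harmonic of degree $N$ is the restriction of the harmonic polynomial $(x_1+ix_2)^N$ to $\Ss^m$, with eigenvalue $\lambda_N = \sqrt{N(N+m-1)} = N + O(1)$, and its holomorphic continuation to $\Ss^m_\C = \{z \in \C^{m+1} : z\cdot z = 1\}$ is simply $(z_1+iz_2)^N$. Since $\lambda_N$ and $N$ differ by $O(1)$, the powers $\lambda_N^{(m-1)/4}$ and $N^{(m-1)/4}$ are interchangeable up to a $(1+O(\lambda_N^{-1}))$ factor, and $e^{\tau\lambda_N}$ and $e^{\tau N}$ differ only by a bounded multiplicative constant.

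The first step is the $L^2(\Ss^m)$ normalization, via the standard Gaussian device. Writing
\begin{equation*}
\int_{\R^{m+1}} (x_1^2+x_2^2)^N e^{-|x|^2}\, dx \;=\; \|(x_1+ix_2)^N\|_{L^2(\Ss^m)}^2 \cdot \int_0^\infty r^{2N+m} e^{-r^2}\, dr,
\end{equation*}
the left side equals $\pi\,\Gamma(N+1) \cdot \pi^{(m-1)/2}$ and the right radial integral is $\tfrac12 \Gamma(N+\tfrac{m+1}{2})$, so Stirling yields $\|(x_1+ix_2)^N\|_{L^2(\Ss^m)} \sim c_m N^{-(m-1)/4}$. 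Hence $Y_N^N \sim c_m' N^{(m-1)/4}(x_1+ix_2)^N$ is the $L^2$-normalized eigenfunction.

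The second step is to identify the boundary point maximizing $|(z_1+iz_2)^N|$. Taking $x_0 = e_1$ on the equatorial geodesic and the tangent vector $\xi = -\tau e_2$, the imaginary-time exponential gives $\exp_{e_1}(i\xi) = (\cosh\tau, -i\sinh\tau, 0,\dots,0) \in \partial\Ss^m_\tau$, at which $z_1+iz_2 = \cosh\tau + \sinh\tau = e^\tau$. This is the genuine maximum: parametrizing $\partial\Ss^m_\tau$ as $\{z = x+iy : |x|^2 - |y|^2 = 1,\ x\cdot y = 0,\ |y| = \sinh\tau\}$, a Cauchy-Schwarz step shows
\begin{equation*}
|z_1+iz_2| \;\leq\; \sqrt{x_1^2+x_2^2} + \sqrt{y_1^2+y_2^2} \;\leq\; |x| + |y| \;=\; \cosh\tau + \sinh\tau \;=\; e^\tau.
\end{equation*}
Therefore $\sup_{\partial\Ss^m_\tau}|Y_N^{N,\C}| \sim c_m' \lambda_N^{(m-1)/4} e^{\tau\lambda_N}$, saturating \eqref{GOODSUP}. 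Dividing by $\|Y_N^{N,\C}\|_{L^2(\partial\Ss^m_\tau)} \sim C(m,\tau)^{1/2} e^{\tau\lambda_N} \lambda_N^{-(m-1)/4}$ from Lemma \ref{L2LEMintro} gives a Husimi sup of order $\lambda_N^{(m-1)/2}$, saturating \eqref{BADSUP}.

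The only non-automatic step is the maximization above, but it is entirely elementary; the rest reduces to the Gaussian integral and to Lemma \ref{L2LEMintro}. In this sense, the highly explicit polynomial structure of spherical harmonics bypasses the more delicate analysis needed for general Gaussian beams (treated in Section \ref{GBSECT}), which is where the real work lies for non-Zoll manifolds with elliptic closed geodesics.
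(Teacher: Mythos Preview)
Your proof is correct. It differs from the paper's in that you work directly with the ambient polynomial $(z_1+iz_2)^N$ on the quadric $\Ss^m_{\C}$, carry out the Gaussian $L^2$-normalization in all dimensions, and identify the boundary maximum via the explicit parametrization $|y|=\sinh\tau$ together with an elementary Cauchy--Schwarz argument. The paper instead passes to Fermi normal coordinates along the equator, writes the highest weight harmonic as the leading Gaussian beam $N^{(m-1)/4}e^{iNs}e^{-Ny^2/2}$, and simply evaluates the analytic continuation at $\sigma=-\tau$, $y=\eta=0$, relying on Theorem \ref{PWintro} for the upper bound rather than maximizing directly. Your route is more self-contained and elementary for the sphere and actually proves the supremum is attained at the stated point; the paper's Fermi-coordinate route is quicker but leans on the general Gaussian-beam construction of Section \ref{GBSECT}, which is its real purpose since that argument transfers verbatim to any elliptic closed geodesic.
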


 \begin{proof}
 Gaussian beams (highest weight spherical harmonics)  may be constructed in Fermi normal
coordinates $(s, y)$  along a closed geodesic $\gamma$ in the form $N^{(m-1)/4} e^{i N s} e^{- N y^2/2}; $ see Section 
\ref{GBSECT} for a detailed construction.
Here, $s$ is arc-length along $\gamma$ and $y$ are exponential normal coordinates on the normal bundle. The
factor $N^{(m-1)/4}$ is due to the $L^2$ normalization since the integral of  $e^{- N y^2}$ over $\R^{m-1}$
equals $c_m N^{- (m-1)/2}$ up to a dimensional constant $c_m$. The sup norm is achieved along the
complexified equator. 

We then complexify $s \to s + i \sigma, y \to y + i \eta \in \partial \Ss^m_{\tau}$,   to  get
$$Y_N^N(s + i \sigma, y + i \eta) = N^{(m-1)/4} e^{iN (s + i \sigma)} e^{- N (y + i \eta)^2},$$
where the $\sqrt{\rho}(s + i \sigma, y \to y + i \eta) = \tau$. A point of this kind is  $y = \eta= 0$ and
$\sigma = - \tau$, since the equator is isometrically imbedded in $\Ss^m$ and the tube function of the complexified
geodesic equals the restriction of the tube function of $\Ss^m_{\C}$. At this point, 
$$|Y_N^N(s + i \sigma, 0) |= N^{(m-1)/4} |=  N^{(m-1)/4} e^{N \tau}, $$
and we see that the  sup norm   bound of Theorem \ref{PWintro}  is attained. \end{proof}

The analytic continuation could also be calculated by analytically continuing the oscillatory integral formula, given by
$$P_N^{N} (\cos \phi) ={2 \pi} \int_0^{ 2 \pi} (i \sin \phi \cos \theta + \cos \phi)^N
e^{- i N \theta} d \theta $$  where $\phi$ is now complex, with     complex phase $\log (i \sin \phi \cos \theta + \cos \phi)
 - i  \theta. $ 
 Recall that $x = \sin \phi \cos \theta, y = \sin \phi \sin \theta$. Hence,  $$Y^N_N(\theta, \phi) = C_N (\cos \theta \sin \phi + i \sin \theta \sin \phi)^N
= C_N e^{i N \theta} (\sin \phi)^N. $$
This formula can be used to  calculate that $C_N = \sqrt{N +1}$, but we omit this classical calculation.  
Then we analytically continue to get 
$$(Y_N^N)^{\C}(\theta + i p_{\theta}, \phi + i p_{\phi}) = \sqrt{N +\half} e^{i N (\theta + i p_{\theta})} (\cos (\phi + i p_{\phi}))^N. $$
On the set $p_{\theta} = - \tau, \phi = \pi/2, p_{\phi} = 0$ we find that 
$$|(Y_N^N)^{\C}(\theta + i p_{\theta}, \phi + i p_{\phi}) | \simeq N^{1/4}  e^{N \tau}. $$

\subsubsection{\label{ZONAL} Coherent states in the complex domain }

Next we use the relation between coherent states  and orthogonal projections to calculate the $L^2$ norm and $L^{\infty}$ norm of coherent states on
spheres of general dimensions.

In the real domain, the spectral projections $\Pi_N: L^2(\Ss^m) \to \hcal_N$
onto the space of spherical harmonics of degree $N$ commute with the action of $SO(m+1)$.  Let  $Y^{\vec m}_N$ denote  the joint eigenfunctions $Y^{\vec m}_N$ of $\Delta$ and of the maximal torus of $SO(m+1)$. They  are orthogonal for different
joint eigenvalues  of $\Delta$ and of the maximal torus of $SO(m+1)$, and the  kernel  \begin{equation} \label{PINKER} \Pi_N(x, y) = \sum_{\vec m} Y^{\vec m}_N(x) \overline{Y^{\vec m}_N(y)} \end{equation}
of $\Pi_N$  
 satisfies
$$\Pi_N(g x, g y) = \Pi_N(x, y), \;\; g \in SO(m + 1).$$ 
Here  $dS$ is the standard surface measure. Hence $\Pi_N(x, x)$ is a constant independent of $x$. 
For each $y$, $\Pi_N(\cdot, y) $ is spherical harmonic of degree $N$ with   $L^2$
norm squared,
$$||\Pi_N(\cdot, y)||_{L^2}^2 = \int_{\Ss^m} \Pi_N(x, y) \Pi_N(y, x)
dS(x) = \Pi_N(y, y). $$
 Its integral
is $\dim \hcal_N$, 
 hence, $\Pi_N(y, y) = \frac{1}{Vol(\Ss^m)} \dim
\hcal_N.$ Hence the normalized projection 
\begin{equation} \label{YN0DEF} Y_N^0(x) = \frac{\Pi_N(x, y_0) \sqrt{Vol(\Ss^m)}}{\sqrt{\dim
\hcal_N}} \end{equation} 
kernel with `peak' at
$y_0$ achieves the maximum possible sup norm of $\sqrt{\dim \hcal_N}$. Moreover, since $\Pi_N$ is
the orthogonal projection, a standard argument using the reproducing property and the Schwartz inequality
shows that $Y_N^0(y_0)$ is maximal among all $L^2$-normalized spherical harmonics of degree $N$. We note that if $y_0$ is the fixed
point of the $S^1$ action (or, in general dimensions, the maximal torus action), then $Y_N^m(x_0) = 0$ for $m \not= 0$ and the identity above
is obvious.

The zonal spherical harmonic also admits the Legendre representation,
$$Y_N^0(\theta, \phi) = \sqrt{(2 N + 1) } P^N_0(\cos \phi),$$
where
$$P^0_{N} (\cos \phi) = \frac{1}{2 \pi} \int_0^{ 2 \pi} (i \sin \phi \cos \theta + \cos \phi)^N
d \theta. $$
If we analytically continue $\phi$ to $S^1_{\C}$, we obtain an oscillatory integral with complex phase
$\log  (i \sin \phi \cos \theta + \cos \phi)$. It has a critical point when, and only when,  $\sin \theta =0$. The stationary phase expansion brings
in the additional factor of $N^{-\half}$, explaining why the complexified  zonal harmonic is not an extremal. 
Note that the analytic continuation of \eqref{YN0DEFC} to $M_{\tau}$ is,
 \begin{equation} \label{YN0DEFC} (Y_N^0)^{\C} (z) = \frac{\Pi_N^{\C}(z, y_0) \sqrt{Vol(\Ss^m)}}{\sqrt{\dim
\hcal_N}},  \end{equation}

However we now eschew oscillatory integrals to work with projection kernels in order to identify the extremals. 
Denote by $\hcal_N^{\C}$ the holomorphic extensions of the spherical harmonics of degree $N$. For each
$\tau$ the restrictions of the harmonics to $\partial \Ss^m_{\tau}$  is a space $\hcal_N^{\C}(\tau)$  of CR holomorphic functions,
and it is easy to see that the joint eigenfunctions $Y^{\vec m}_N$ of $\Delta$ and of the maximal torus of $SO(m+1)$ are orthogonal for different
joint eigenvalues. 
We denote by \begin{equation} \label{PINKERC} \Pi_N^{\C}(z,w) = \sum_{\vec m} (Y^{\vec m}_N(z))^{\C} \overline{(Y^{\vec m}_N)^{\C} (w)} \end{equation}
  the analytic extension of $\Pi_N$. 
We denote by $\Pi_N^{\tau}(z, w)$  the restriction
of the kernel to $z, w \in \partial \Ss^m_{\tau}$. Using the natural complex conjugation on $\Ss^m_{\C}$ we also consider the
kernel $\Pi_N^{\tau}(z, \bar{w})$, which is holomorphic in $z$ and anti-holomorphic in $w$.

\begin{defin} \label{CSTDEF} Given $\tau > 0$ and $w \in \partial M_{\tau}$, we   define the `coherent state' centered at $w$ by,
 \begin{equation} \label{CST} \Phi_N^w(z) = \frac{\Pi_N^{\tau}(z,w)}{||\Pi_N^{\tau}(\cdot, w) ||_{L^2(\partial M_{\tau})}}, \;\;\; z, w \in \partial \Ss^m_{\tau}\end{equation}
 where
 $$ ||\Pi_N^{\tau}(\cdot, w) ||^2 _{L^2(\partial M_{\tau})}  = \sum_{\vec m} ||Y_N^{\vec m}||^2_{L^2(\partial M_{\tau})}. $$
 \end{defin} 
 For each $w$, the  coherent state \eqref{CST}  is an element of $\hcal_N^{\C}$, but is a scalar multiple of \eqref{PINKERC} and is not the analytic continuation of 
 \eqref{YN0DEF}.

\begin{prop}\label{ZONALPROP} Norm-squares of  coherent states of  $\Ss^m$ attain the  asymptotically maximal sup norm of \eqref{BADSUP}. 
\end{prop}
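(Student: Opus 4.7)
The plan is to exploit the fact that $\Pi_N^{\tau}(z,\bar w)$ is, up to a multiplicative constant independent of $w$, the reproducing kernel of the subspace $\hcal_N^{\C}(\tau) \subset L^2(\partial \Ss^m_{\tau})$. Because $SO(m+1)$ acts transitively on $\partial \Ss^m_{\tau}$ (it commutes with analytic continuation and preserves the contact volume form), and the spherical harmonics $Y_N^{\vec m, \C}$ transform into one another under a maximal torus, all the weight vectors share a common $L^2(\partial \Ss^m_{\tau})$-norm $c_N$. Consequently
\[
\Pi_N^{\tau}(\cdot,\bar w) = c_N^2 \, K_N(\cdot, \bar w),
\]
where $K_N$ is the reproducing kernel of $\hcal_N^{\C}(\tau)$, and so
\[
\|\Pi_N^{\tau}(\cdot,\bar w)\|_{L^2(\partial M_{\tau})}^2 \;=\; c_N^2 \, \Pi_N^{\tau}(w,\bar w).
\]

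The first step is to evaluate the coherent state at its center. By the reproducing formula and Cauchy--Schwarz, for any unit vector $f\in \hcal_N^{\C}(\tau)$ one has $|f(z)|^2 \le K_N(z,\bar z) = c_N^{-2}\,\Pi_N^{\tau}(z,\bar z)$, with equality at $z=w$ for $f = \Phi_N^w$. Hence
\[
\sup_{z \in \partial \Ss^m_{\tau}}|\Phi_N^w(z)|^2 \;=\; |\Phi_N^w(w)|^2 \;=\; \frac{\Pi_N^{\tau}(w,\bar w)}{c_N^2}.
\]
So the proposition reduces to asymptotics of the numerator and denominator separately.

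For the denominator, I would invoke Lemma \ref{L2LEMintro}: every analytically continued eigenfunction of degree $N$ (with eigenvalue $\lambda_N = \sqrt{N(N+m-1)} = N + O(1)$) has
\[
c_N^2 \;=\; \|Y_N^{\vec m,\C}\|_{L^2(\partial M_{\tau})}^2 \;=\; C(m,\tau)\, e^{2\tau \lambda_N}\, \lambda_N^{-(m-1)/2}\bigl(1 + O(\lambda_N^{-1})\bigr),
\]
with a constant independent of $\vec m$ thanks to the symmetry argument above. For the numerator, $\Ss^m$ is a Zoll manifold with eigenvalue clusters consisting of a single degree-$N$ space (modulo a universal shift by $\beta/4$), so applying Theorem \ref{Zoll} to the cluster $I_N$ gives
\[
\Pi_N^{\tau}(w,\bar w) \;=\; e^{2\tau \lambda_N}\, P^{\tau}_{I_N}(w,\bar w) \;\sim\; e^{2\tau \lambda_N} \Bigl(\tfrac{\lambda_N}{\tau}\Bigr)^{(m-1)/2}.
\]
Taking the quotient gives
\[
\sup_{z} |\Phi_N^w(z)|^2 \;\sim\; C_m\, \tau^{-(m-1)}\, \lambda_N^{\,m-1},
\]
which matches the upper bound of \eqref{BADSUP} squared, so coherent states saturate the universal sup norm asymptotically.

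The computation is essentially elementary once one has Lemma \ref{L2LEMintro} and Theorem \ref{Zoll} in hand; there is no real obstacle. The only point that deserves attention is the verification that the scalar $c_N$ is the same for every weight $\vec m$, which is where the $SO(m+1)$-equivariance is used in an essential way (on a general Zoll manifold the analytic continuations of an orthonormal basis need not have a common $L^2$-norm on $\partial M_{\tau}$, as noted in Remark \ref{RENORMREM}). Once this symmetry is used, the rest amounts to bookkeeping the powers of $\lambda_N$ and $\tau$.
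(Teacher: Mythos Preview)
Your overall strategy is correct and matches the paper's: exploit $SO(m+1)$-invariance to identify $\Pi_N^{\tau}(\cdot,\bar w)$ with a multiple of the reproducing kernel of $\hcal_N^{\C}(\tau)$, invoke the extremal property of reproducing kernels, and balance the numerator against $c_N^2$ using Lemma~\ref{L2LEMintro}.

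There is, however, a genuine gap in your justification of the key claim that all $\|Y_N^{\vec m,\C}\|_{L^2(\partial M_\tau)}$ coincide. The maximal torus does \emph{not} ``transform the $Y_N^{\vec m}$ into one another'': each $Y_N^{\vec m}$ is an \emph{eigenvector} of the torus and is merely multiplied by a character. Nor does the Weyl group act transitively on the weights of $\hcal_N$ (already for $\Ss^2$ the weights $m$ and $m'$ with $|m|\neq|m'|$ lie in different Weyl orbits). The correct argument is Schur's lemma: $P^{\tau*}P^{\tau}$ commutes with the $SO(m+1)$-action on $L^2(\Ss^m)$, and $\hcal_N$ is an \emph{irreducible} $SO(m+1)$-module, so $P^{\tau*}P^{\tau}|_{\hcal_N}$ is a scalar. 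This is exactly what the paper records just above Definition~\ref{CSTDEF} (``the operator $A$ \dots\ is a function of $\Delta$, hence is a scalar on each $\hcal_N$''). Once you replace your torus sentence by this one line, the rest of your argument goes through.

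One minor difference worth noting: the paper avoids invoking Theorem~\ref{Zoll} for the numerator. Instead it passes to the $L^2(\partial M_\tau)$-orthonormal basis $\tilde Y_N^{\vec m}$, forms the genuine orthogonal projector $\tilde\Pi_N$, and uses only the transitivity of $SO(m+1)$ on $\partial\Ss^m_\tau$ to get $\tilde\Pi_N(w,\bar w)=\dim\hcal_N/\mathrm{Vol}(\partial M_\tau)$; since $\dim\hcal_N\sim N^{m-1}$ is elementary, this is lighter than your route through the Zoll Weyl law, though both are valid. (Also, your final power of $\tau$ should be $\tau^{-(m-1)/2}$, not $\tau^{-(m-1)}$; this does not affect the $\lambda$-order.)
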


\begin{proof}

 $\Pi_N^{\tau}(z, \bar{w})$ is the orthogonal sum of $Y^{\vec m}_N \otimes \overline{Y^{\vec m}_N}$, and  is not normalized to be  an orthogonal projection,  so we cannot immediately apply the argument in the real domain 
to find the value on the diagonal, nor can we immediately conclude that either defines  an extremal for the sup norm (when
properly normalized). But we observe that, by orthogonality of the terms, 
$$\Pi_N^{\tau} Y_N^{\vec m} = ||Y_N^{\vec m}||^2_{L^2(\partial M_{\tau})} Y_N^{\vec m}. $$

We  introduce the orthonormal basis
$$\tilde{Y}_N^{\vec m} = \frac{(Y_N^{\vec m})^{\C}}{||(Y_N^{\vec m})^{\C}||_{L^2(\partial M_{\tau})}}. $$
The orthogonal projection on by $\hcal_N^{\C}(\epsilon)$ is then
$$\tilde{\Pi}_N(z, w) = \sum_m \tilde{Y}_N^{\vec m} (z)  \overline{\tilde{Y}_N^{\vec m} (w)}, $$
and, as for any reproducing kernel,  $$||\tilde{\Pi}_N(\cdot, w) ||^2_{L^2(\partial M_{\tau})} =  \wt{\Pi}(w,w) = \frac{\dim \hcal_N}{\rm{Vol}(\partial M_{\tau})}$$ 
For fixed $w$ this $z \to \tilde{\Pi}_N(z, w) $ defines an element of $\hcal_N^{\C}(\epsilon)$, and we define a variant  of the coherent
state centered at $w$ by, \begin{equation} \label{CST2} \tilde{\Phi}_N^w(z) : = \frac{\tilde{\Pi}_N(z, w) }{||\tilde{\Pi}_N(, w) ||_{L^2(\partial M_{\tau})}}
= \sqrt{\rm{Vol}(\partial M_{\tau})}  \frac{\tilde{\Pi}_N(z, w) }{\sqrt{\dim \hcal_N}}. \end{equation}
\begin{lem} For any $w \in \partial M_{\tau}$, \eqref{CST2} achieves the sup norm bound \eqref{BADSUP}  of Theorem \ref{PWintro} \end{lem}

\begin{proof}

As in the real case,  \eqref{CST2} is the evaluation functional on $\hcal_N^{\C}(\tau)$. By the standard argument, the reproducing kernel achieves the extremal $L^2$-normalized element of $\hcal_N^{\C}(\tau)$ for the sup norm.  Namely, for any $s_N \in \hcal_N^{\C} (\tau)$,
$$|s_N(z) |= | \langle s_N, \wt \Phi_N^{z} \rangle | \leq ||s_N||_{L^2} ||\wt \Phi_N^{z}||, $$
with equality if $s_N = \wt \Phi_N^{z}. $

 Both of the kernels $\Pi_N^{\tau}(z,w)$, resp. $\wt \Pi_N^{\tau}(z,w)$,  are invariant
under the diagonal action of $SO(m+1)$.   Indeed, by analytic continuation from the real domain,
also have $\Pi_N^{\C}(g z, g w) = \Pi_N^{\C}(z, w)$ for all $z, w$.
 The group $SO(m + 1)$ acts transitively on $S^* \Ss^m$
and hence on $\partial \Ss^m_{\tau}$ (for any $\tau > 0$). It is also a holomorphic action on $\Ss^m_{\C}$.
It follows that $\Pi_N{\tau} (\zeta, \bar{\zeta})$  is constant as $\zeta$ varies. 
Since the orthogonal projection commutes with $SO(m+1)$, we also have  $\wt{\Pi}_N(g z, gw) = \wt{\Pi}_N(z,w)$. This implies that its $L^2$ norm equals
$\frac{ \sqrt{\dim \hcal_N}}{\sqrt{\rm{Vol}(\partial M_{\tau})}}.$

\end{proof}

To complete the proof we use Lemma \ref{L2LEMintro} to compare \eqref{CST} and \eqref{CST2}, or equivalently the kernels $\Pi_N^{\C}(z,w)$ and $\wt{\Pi}_N(z,w)$. By this Lemma, $||(Y_N^{\vec m})^{\C}||^2_{L^2(\partial M_{\tau})} \simeq e^{2 \tau \lambda}  \lambda^{-\frac{m-1}{2}} (1 + O(\lambda^{-1})), $ uniformly
in $\vec m$. Hence,
$$||\Pi_N^{\tau}(, w) ||^2_{L^2(\partial M_{\epsilon})} = \sum_{\vec m} ||(Y_N^{\vec m})^{\C}||^2_{L^2(\partial M_{\tau})} \simeq  e^{2 \tau \lambda}  \lambda^{-\frac{m-1}{2}} \frac{\dim \hcal_N}{\rm{Vol}(\partial M_{\tau})}  (1 + O(\lambda^{-1}).$$
Also, using Lemma \ref{L2LEMintro} term by term,
$$\wt \Pi_N^{\tau}(z,w) = e^{-2  \tau \lambda} \lambda^{- \frac{m-1}{2}} (1 + o(1)) \Pi_N(z,w). $$
The constant is canceled when we divide each side by its $L^2$ norm, 
By the definitions  \eqref{CST} and \eqref{CST2}, it follows  that, modulo terms of one lower order in $N$,
$$\begin{array}{lll}  \Phi_N^w(z) & \simeq & 
\wt \Phi_N^w(z),

\end{array}$$

completing the proof of the Proposition.

\end{proof}

\begin{rem} 
Note  that the analytic continuation \eqref{YN0DEFC}  of  \eqref{YN0DEF},
with $z \in \partial M_{\tau}, y_0 \in M$, is analytically continued only in the first variable. Although  \eqref{PINKERC} and \eqref{CST} are complex analogues
of \eqref{YN0DEF}, there does not exist a fixed point of the torus action in $\partial M_{\tau}$, so \eqref{CST} does not have a single term (as \eqref{YN0DEF}
does). This explains why Proposition \ref{ZONALPROP} 
does not assert that  \eqref{YN0DEFC} is an extremal. \end{rem}

\subsection{\label{ZOLL} Zoll case: Proof of Theorem \ref{Zoll}}

We normalize the
metric so that the geodesic flow is periodic of period $2 \pi$ and
we assume that $2 \pi$ is the minimal period of periodic orbits.
We then center the intervals $I_k$ at the points $k +
\frac{\beta}{4}$ where $\beta$ is the common Morse index of the
$2\pi$-periodic geodesics.

%
%

 \begin{proof} The proof is similar to the real off-diagonal
 asymptotics in \cite{ZZoll}, and we only sketch it here. The key point is that
 $\Pi_{I_k}$ are the Fourier coefficients of the $2 \pi$  periodic unitary
 group $U(t) = e^{i t (A + \frac{\beta}{4})}$ in the sense that
 $$\Pi_{I_k} = \frac{1}{2 \pi} \int_0^{2 \pi} e^{- i (k +
 \frac{\beta}{4}) t} U(t) dt. $$ 
Here, $A = k$ in the kth cluster of eigenvalues.
It follows that
\begin{equation} \label{ZOLLP} P_{I_k}^{\tau}(\zeta, \bar{\zeta})
 = \frac{1}{2 \pi} \int_0^{2 \pi}  e^{- i (k +
 \frac{\beta}{4}) t} \Pi_{\tau} \hat{\sigma}_{\tau t} g_{\tau}^t \Pi_{\tau}(\zeta, \overline{\zeta}) dt. 
\end{equation}

We then proceed through the steps of  Theorem \ref{SCALINGTHEO}  but using \eqref{ZOLLP}
instead of the oscillatory integral in Lemma \ref{SMOOTHCORa}.  The calculations are of the same type with
$T(\zeta) = 2 \pi$ and with
$D g_{\zeta}^T = I$ for all $\zeta$ in the Zoll case. 

The principal new feature is that one does not need to use a Tauberian theorem to obtain the asymptotics for $P_{I_k}^{\tau}(\zeta, \bar{\zeta})$, but only to use the fact that
$$\sum_{k = 1}^{\infty} e^{i (k + \frac{\beta}{4}) t} P_{I_k}^{\tau}(\zeta, \bar{\zeta})$$
is a Fourier series with only positive terms. One can then obtain complete asymptotic expansions
of the Fourier coefficients by matching terms of Hardy series.  We refer to Proposition 13.10 of  \cite{BoGu} for the details.

The result is a complete asymptotic expansion of the type stated in Theorem \ref{Zoll}.

 \end{proof}

\begin{rem} To obtain an `integrated' expansion on $[0, \lambda]$ we would form the 
sums $\sum_{k=1}^N  P_{I_k}^{\tau}(\zeta, \bar{\zeta})$ and substitute the asymptotic expansion for each term.
The rather complicated inequalities of  Theorem \ref{SHORTINTSa} (3) are only necessary for choices of $\lambda$
which do not contain the full cluster of eigenvalues below the endpoint $\lambda$.
\end{rem}

\section{\label{GBSECT} Extremals: Gaussian beams associated to non-degenerate elliptic closed geodesics}
Since Gaussian beams along elliptic closed geodesics are the extremals for sup-norms in Grauert tubes, we provide some 
background on   the construction of Gaussian beams associated to an elliptic closed geodesic $\gamma$. We follow
the presentation in \cite{Z97b} and \cite[Section 9]{BB91} (see also \cite{Ral82}).

 We denote by  $\jcal_{\gamma}^{\bot}
\otimes \C$  the space of complex normal Jacobi fields along $\gamma$, a symplectic
vector space of (complex) dimension 2n (n=dim M-1) with respect to the Wronskian
$$\omega(X,Y) = g(X, \frac{D}{ds}Y) - g(\frac{D}{ds}X, Y).$$
 The linear Poincare map $P_{\gamma}$ is defined to be  the linear symplectic map on $\jcal_{\gamma}
^{\bot} \otimes \C$ defined by $P_{\gamma} Y(t) = Y(t + L_{\gamma}).$  The closed geodesic 
$\gamma$ is called non-degenerate elliptic if the eigenvalues of $P_{\gamma}$ are
of the form $\{ e^{\pm i \alpha_j}, j=1,...,n\}$ where the exponents  $\{\alpha_1,
...,\alpha_n\}$, together with $\pi$,  are independent over ${\mathbb Q}$. The associated normalized eigenvectors
will be denoted $\{ Y_j, \overline{Y_j}, j=1,...,n \}$,
\begin{equation} \label{YjDEF} P{\gamma} Y_j = e^{i \alpha_j}Y_j \;\;\;\;\;\;P_{\gamma}\overline{Y}_j =
e^{-i\alpha_j} \overline{Y}_j \;\;\;\; \omega(Y_j, \overline{Y}_k) = \delta_{jk}.\end{equation}

Let $(s, y)$ denote Fermi normal coordinates in a tubular neighborhood of $\gamma$. Let $L $ denote the length of $\gamma$. Roughly speaking,
 Gaussian beams $\Phi_{kq}(s, y)$ along $\gamma$ are oscillatory sums with positive complex phases. They have a real oscillatory
 factor $e^{i k s}$ and a transverse Hermite factor $D_q(y)$, which is the qth Hermite function in the normal direction to $\gamma$, with
 $q \in {\mathbb N}^n$. The special case $q = 0$ is the ground state Gaussian beam, and the higher $q$ are Gaussians times higher
 Hermite polynomials. In  general, Gaussian beams are only approximate eigenfunctions (quasi-modes) but in 
special cases such as surfaces of revolution (and many other $(M,g)$ with completely integrable geodesic flow),
they are exact eigenfunctions.  Given $(k,q)$ the effective `Planck constant' for the sequence with fixed $q$ and $k \to \infty$ is,    $$r_{kq} := \frac{1}{L} (2 \pi k + \sum_{j=1}^n (q_j + \frac{1}{2}) \alpha_j).$$ The associated sequence of eigenvalues of $\sqrt{\Delta}$ has the expansion, 
$$\lambda_{kq} \equiv r_{kq} + \frac{p_1(q)}{r_{kq}} + \frac{p_2(q)}{r_{kq}^2} + ...,$$
where $p_j(q)$ are polynomials whose parity and degrees are described in \cite[Section 9]{BB91}.

We now introduce the precise  Hermite functions in the Gaussian beam.  Relative to a parallel
normal frame $e(s):= (e_1(s),...,e_n(s))$ along $\gamma$ the Jacobi eigenfields have the form, 
$Y_j(s)= \sum_{k=1}^n y_{jk}(s)e_k(s).$  We denote by,
$${\bf Y(s)}: = \begin{pmatrix} y_{jk}(s) \end{pmatrix} $$
the complex $n \times n$ matrix whose $j$th column is the $j$th Jacobi eigenfield.
Also let \begin{equation} \label{GAMMADEF} \Gamma(s) := \frac{d{\bf Y(s)}}{ds} {\bf Y(s)}^{-1}. \end{equation} $\Gamma(s)$ satisfies a matrix Riccati equation, $$\dot{\Gamma} + \Gamma^2 + K = 0, $$
where $K$ is the curvature matrix $R(\dot{\gamma}, Y_j) \dot{\gamma}$,  and  is a complex symmetric $n\times n$ matrix with positive definite imaginary part \cite[Page 229]{BB91}. In fact, by \cite[9.3.11]{BB91},
\begin{equation} \label{JACOBI}  \Im \Gamma(s) = \half ({\bf Y(s)} {\bf Y^*(s)})^{-1},\end{equation}
where as usual ${\bf Y(s)}^*$ is the adjoint of ${\bf Y(s)}$.
 We will use the equations \cite[(9.2.22)]{BB91}, $$\left\{ \begin{array}{l} {\bf Y(s)}^* {{\bf \dot Y(s)}} - {\bf \dot Y(s)}^* {\bf Y(s)} = i I, \\ \\ 
{\bf Y(s)}^t {\bf \dot Y(s)} - {\bf \dot Y(s)}^t {\bf Y(s)} = 0. \end{array} \right.,  $$
where $Y^t$ is the transpose of $Y$. We multiply the second equation on the left  by $({\bf Y(s)}^t)^{-1}$ and on the right by ${\bf Y(s)}^{-1}$ to get
$${\bf  \dot Y(s)} {\bf Y(s)}^{-1} -({\bf Y(s)}^t)^{-1} {\bf  \dot Y(s)} = 0, \; \rm{or} \; Y^{-1 *} {\bf \dot Y(s)}^*  - {\bf \dot Y(s)}^* ({\bf Y(s)}^t)^{-1*} = 0 $$

The transverse ground state Gaussian is defined in Fermi normal coordinates by,  $$U_0(s,y)  = (\det {\bf Y(s)})^{-1/2} e^{\frac{i}{2} \langle \Gamma(s) y,y\rangle}.$$ 
Although they are of secondary interest here, the higher Hermite functions have the form,
$U_q = \Lambda_1^{q_1}...\Lambda_n^{q_n} U_0$, where $\Lambda_j$ are certain creation operators associated to the Jacobi data \cite[Section 9]{BB91}.

The Gaussian  beams can now be defined by the formal series,
$$\Phi_{kq}(s,\sqrt{r_{kq}}y) =e^{ir_{kq}s} \sum_{j=0}^{\infty}
r_{kq}^{-\frac{j}{2}} U_q^{\frac{j}{2}}(s, \sqrt{r_{kq}}y,r_{kq}^{-1})$$
with $U_q^0 = U_q$ (see [BB92]). The functions $U_q^{\frac{j}{2}}$ are found by solving transport equations. As is usual in the theory of quasi-modes, the infinite series represents a formal asymptotic expansion, and means that if one truncates the series
at $j = N$, then the resulting finite series solves the Laplace equation up to a remainder of order  $r_{kq}^{-\frac{N}{2}}$.  We are mainly interested in the case $q =0$, in which case the Gaussian beam is given by,
$$\Phi_{k0}(s,\sqrt{r_{k0}}y) =e^{ir_{k0}s} \sum_{j=0}^{\infty}
r_{k0}^{-\frac{j}{2}} U_0^{\frac{j}{2}}(s, \sqrt{r_{k0}}y,r_{k0}^{-1}).$$
\begin{rem} The calculations here are much simpler on spheres than for general Gaussian beams in Section \ref{GBSECT}; 
due to the constant curvature on spheres,  the fact that all geodesics are closed, the matrix $\Gamma(s)$
\eqref{GAMMADEF}  is simply $i I$ for a closed geodesic of $\Ss^m$. 

\end{rem}

We say that an eigenfunction is a Gaussian beam when it admits such an asymptotic expansion. The Gaussian beam is exponentially concentrated
in a tubular neighborhood of radius $\frac{1}{\sqrt{r_{kq}}}$. around $\gamma$. Changing variables to $u  = \sqrt{r_{kq}} y$,  one may approximate its $L^2$ norm-square by,
$$\int_0^L \int_{|y| \leq \frac{1}{\sqrt{r_{k0}}}} |U_0(s, \sqrt{r_{k0}} y)|^2 d s dy \simeq C_0  (r_{k0})^{- \frac{(\dim M-1)}{2}}. $$
We are only interested in orders of magnitude and  omit further details. It follows that the $L^2$ normalized Gaussian beam has the form,
$$ C_0\;\; k^{ \frac{(\dim M-1)}{2}}  \Phi_{k0}(s,\sqrt{r_{k0}}y), $$
where $C_0 >0$ is a positive constant. Thus, the sup-norm in the real domain of the Gaussian beam is asymptotically $C_0\;  k^{ \frac{(\dim M-1)}{2}}.$

The linear Poincar\'e map $D_{\zeta} g_{\tau}^L$ in this case is given by \eqref{YjDEF} or in the Grauert tube notation by, 
$$ S_{\zeta} = 
\left( \begin{array}{ll} \Im\dot{Y}(L)^* \;\;\;& \Im Y(L)^*\\
\Re \dot{Y}(L)^*\;\;\;&\Re Y(L)^*  \end{array} \right).$$
By \eqref{YjDEF}, it is diagonalizable over $\C$ as a block-diagonal matrix
$$S_{\zeta} \simeq  \bigoplus_{j=1}^n \begin{pmatrix} e^{i \alpha_j} & 0 \\ &\\ 0 & e^{- i \alpha_j} \end{pmatrix},$$
where $\simeq$ denotes unitary equivalence in $GL(n, \C)$; the right side is of course not in $\rm{Sp}(n,\R)$. The metaplectic quantization
of $S_{\zeta}$ is the exponential of a Harmonic oscillator Hamiltonian $\hat{H}_{\vec \alpha}  $  with frequencies $\alpha_j$, i.e. 
$$W_J(S_{\zeta}) = \exp i \hat{H}_{\vec \alpha}, \;\; \hat{H}_{\vec \alpha} = \sum_{j=1}^n D_{x_j}^2 + \alpha_j x_j^2, $$
with eigenvalues $\lambda_{\vec k} = \sum_{j=1}^n \alpha_j (k_j + \half), \vec k \in {\mathbb N}^n$. 
Thus, $W_J(S_{\zeta})$ has eigenvalues $\exp (i\sum_{j=1}^n \alpha_j (k_j + \half)).$ In this model, the ground state $\Omega_{\zeta}$  is the standard Gaussian,
which is the eigenfunction of  eigenvalue $\lambda_{\vec 0}$. Hence,
$W_J(S_{\zeta}) \Omega_{\zeta} = e^{i\half |\vec \alpha|} \Omega_{\zeta}$ with $\vec \alpha  = \sum_j \alpha_j$, and therefore,
$\langle W_J(S^{\ell}_{\zeta}) \Omega_{\zeta}, \Omega_{\zeta} \rangle = e^{i\half \ell |\vec \alpha|} $. By \eqref{INFSERIES},
$$Q_{\zeta} (\lambda) = \frac{1}{2i}  \sum_{n\not=0}  \frac{e^{i n (\lambda  L + \half i  |\vec \alpha|)}}{n} = \{\lambda L + \half |\vec \alpha| - \pi\}_{2 \pi}. $$

It is not straightforward to calculate the $L^2$ norm and sup norm of the analytic continuation of the Gaussian beam to $\partial M_{\tau}$. The analytic continuation is given in analytic Fermi normal coordinates $(s + i \sigma, y + i \eta)$ by,
$$\Phi^{\C}_{k0}(s + i \sigma,\sqrt{r_{k0}}(y + i \eta) ) = k^{ \frac{(\dim M-1)}{2}}  e^{ir_{k0}(s + i \sigma)} \sum_{j=0}^{\infty}
r_{k0}^{-\frac{j}{2}} U_0^{\frac{j}{2}}(s + i \sigma, \sqrt{r_{k0}}(y + i \eta),r_{k0}^{-1}),$$
with leading order term,  $$U^{\C}_0(s + i \sigma,  \sqrt{r_{k0}}(y + i \eta))  = (\det {\bf Y(s + i \sigma)})^{-1/2} e^{ir_{k0}(s + i \sigma)}  e^{\frac{i}{2}  r_{k0}  \langle \Gamma(s + i \sigma)  (y + i \eta) ,(y + i \eta) \rangle}.$$
Here, $\sqrt{\rho}(s + i \sigma, y + i \eta) = \tau$. Upon analytic continuation, it is not clear that the  Gaussian beam need should be  concentrated in the complexification
of the real tube around $\gamma$, i.e. in a phase space tube around the phase space orbit $\gamma$, since the damping Hermite factor in the real
domain can grow
exponentially outside the tube once it is analytically continued. When $\sigma = - \tau, y = \eta = 0$ it is evident that it attains the maximal
value $k^{ \frac{(\dim M-1)}{2}}  e^{k \tau}$.  By Lemma \ref{L2LEMintro}, the $L^2$ norm is asymptotically  $k^{- \frac{(\dim M-1)}{2}}  e^{k \tau}$,
so the sup norm of the Husimi distribution is of order $k^{\frac{(\dim M-1)}{2}}.$


 

 

\subsection{Geometric interpretation} We briefly explain the symplectic geometry underlying the extremals for
sup norms in both the real and complex domain.

In the real domain, the extremal eigenfunctions for sup-norms are zonal spherical harmonics of each degree $N$ (i.e eigenfunctions invariant under rotations
 around the third axis).  The proof is that all other eigenfunctions vanish at the fixed points (poles) of these rotations, hence the universal pointwise
 Weyl asymptotics \eqref{LWL} can only hold at a pole  $x$  if the zonal harmonics attain the maximal sup norm bound at $x$.  The symplectic geometry underlying this sup norm behavior is that zonal spherics harmonics $Y_N^0$  (indeed, the
 entire basis of joint eigenfunctions $Y_N^m$ of $\Delta$ and of rotation around the third axis) are semi-classcial
 Lagrangian distributions associated to the meridian Lagrangian $\Lambda_0 \subset S^*\Ss^2$ of unit co-vectors
 tangent to the family of meridian geodesics between the poles. The extremal sup norm is attained at the poles
 and reflects the `blow-down' singularity of the Lagrange projection  $\pi: \Lambda_0 \to \Ss^2$ over the poles .

 The vanishing of modes $Y_N^m$ with $m \not=0$ at the poles has  no analogue
for Husimi distributions in the Grauert tube (i.e. phase space) setting,  because  there are no fixed points in $\partial M_{\tau}$ for the lift of the rotation group. 
 Indeed, analytic continuations of zonal spherical harmonics do not attain maximal sup-norm growth in the complex domain. Rather,  the extremals are 
Gaussian beams (highest weight spherical harmonics), which are extremals for low $L^p$ norms, but not high $L^p$ norms,  in the real domain. As mentioned
above, coherent states (Definition \ref{CSTDEF}) also attain the maximum, but are not  Husimi distributions of eigenfunctions.

From the symplectic geometric viewpoint, the explanation requires background in theory of Toeplitz operators
and their associated symplectic cones in \cite{BoGu}. Briefly, the Hardy  space $H^2(\partial M_{\tau})$ of boundary
values of holomorphic functions in $M_{\tau}$ is a Hilbert space associated to the symplectic cone
$\Sigma_{\tau} \subset T^* \partial M_{\tau}$ spanned by the action form $\alpha_{\tau}$. That is,
$\pi: \Sigma_{\tau} \to \partial M_{\tau}$ is an $\R_+$ bundle whose fiber over $\zeta$ consists of $\R_+\alpha_{\zeta}$. 
As reviewed in Section \ref{GRAUERTSECT}, the metric $g$ induces an identification of $\Sigma_{\tau} \simeq
S^*_{\tau} M$ (covectors of length $\tau$). Hence, the Lagrangian submanifold $\Lambda_0 \subset S^*_{\tau} M$
may be identified with a Lagrangian submanifold of $\partial M_{\tau}$ and of $\Sigma_{\tau}$.  Obviously,
the natural projection $\pi: \Lambda_0 \to \partial M_{\tau}$ is an embedding rather than a Lagrangian projection.
Consequently, there is no `singularity' to cause sup norm blowup. On the other hand, the symplectic geometry
associated to  the highest weight spherical harmonics $Y_N^N$ (or any  Gaussian beam) is the closed geodesic along which it concentrates, lifted by its unit tangent vectors to $\partial M_{\tau}$. This geodesic is a singular leaf of the
foliation of $\partial M_{\tau}$ by orbits of the Hamiltonian torus $\R^2/\Z^2$ action generated by the geodesic
flow together with rotations. This singularity does cause extremal behavior in the associated modes.

\section{\label{APPENDIX} Appendix}

\subsection{Integrated Weyl laws in the real domain}

    The geodesic flow
$G^t$ of $(M, g)$ of a real analytic Riemannian manifold  is of
one of the following two types:

\begin{enumerate}

\item  {\it aperiodic:} The Liouville measure of the closed
 orbits of $G^t$, i.e. the set of vectors lying on closed geodesics,  is zero; or

\item  {\it periodic = Zoll:} $G^T = id$ for some  $T>0$;
henceforth $T$ denotes the minimal period.  The common Morse index
of the $T$-periodic geodesics will be denoted by $\beta$.

\end{enumerate}

In the real domain, the  two-term Weyl laws counting eigenvalues
of $\sqrt{\Delta}$ are very different in these two cases. 

\begin{enumerate}

\item Let $I_{\lambda} = [0, \lambda]$ and let $N(\lambda) = \int_M \Pi_{I_{\lambda}}(x,x) dV(x)$.   In the {\it aperiodic} case, the
Duistermaat-Guillemin-Ivrii  two term Weyl law states
$$N(\lambda ) = \#\{j:\lambda _j\leq \lambda \}=c_m \;
Vol(M, g) \; \lambda^m +o(\lambda ^{m-1})$$
 where $m=\dim M$ and where $c_m$ is a universal constant.

\item  In the {\it periodic} case,
 the spectrum of $\sqrt{\Delta}$ is a union of eigenvalue clusters $C_N$ of the form
\begin{equation} \label{CLUSTER} C_N=\{(\frac{2\pi}{T})(N+\frac{\beta}{4}) +
 \mu_{Ni}, \; i=1\dots d_N\} \end{equation}
with $\mu_{Ni} = 0(N^{-1})$.   The number $d_N$ of eigenvalues in
$C_N$ is a polynomial of degree $m-1$.
\end{enumerate}
In the aperiodic case, we can choose the center of the spectral interval $I_{\lambda}$ arbitrarily. In the Zoll
case we center it along the arithmetic progression $\{(\frac{2\pi}{T})(N+\frac{\beta}{4}) \}$.
We refer to \cite{Ho,SV,ZZoll} for background and further
discussion.

\subsection{  \label{TAUB} Tauberian Theorems}

We record here the statements of the Tauberian theorems that we
use in the article. Our main reference is \cite{SV}, Appendix B
and we follow their notation.

We denote by $F_+$ the class of real-valued, monotone
nondecreasing functions $N(\lambda)$  of polynomial growth
supported on $\R_+$. The following  Tauberian theorem uses only
the singularity at $t = 0$ of $\widehat{dN}$ to obtain a one term
asymptotic of $N(\lambda)$ as $\lambda \to \infty$:
\begin{theo} \label{ET} Let $N \in F_+$ and let $\psi \in
\scal (\R)$ satisfy the conditions:  $\psi$ is  even,
$\psi(\lambda)
> 0$ for all $\lambda \in \R$,   $\hat{\psi} \in
C_0^{\infty}$, and $\hat{\psi}(0) = 1$. Then,
$$\psi * dN(\lambda) \leq A \lambda^{\nu} \implies |N(\lambda) - N *
\psi(\lambda)| \leq C A \lambda^{\nu}, $$ where $C$ is independent
of $A, \lambda$.
\end{theo}

To obtain a two-term asymptotic formula, one needs to take into
account the other singularities of $\widehat{dN}$. We let $\psi$
be as above, and also introduce a second test function $\gamma \in
\scal$ with $\hat{\gamma} \in C_0^{\infty}$ and with the supp
$\hat{\gamma} \subset (0, \infty)$.

\begin{theo} \label{TTT} Let $N_1, N_2 \in F_+$ and assume:
\begin{enumerate}

\item $N_j * \psi (\lambda) = O(\lambda^{\nu}), (j = 1,2);$

\item $ N_2 * \psi (\lambda) = N_1 * \psi (\lambda) +
o(\lambda^{\nu}); $

\item $\gamma * d N_2 (\lambda) = \gamma * d N_1(\lambda) +
o(\lambda^{\nu})$.

\end{enumerate}

Then,

$$N_1(\lambda - o(1)) - o(\lambda^{\nu}) \leq N_2(\lambda) \leq
N_1(\lambda + o(1)) + o (\lambda^{\nu}). $$

\end{theo}

This Tauberian theorem is useful when the non-zero singularities
of $\widehat{dN_2}$ are as strong as the singularity at $t = 0$
and $N_2$ does not have two term polynomial asymptotics.

\subsection{\label{NOTAPP} Notational Appendix}
In this section, we list the main notations. \bigskip

\subsubsection{Notation for Husimi distributions and Weyl sums} 

\begin{enumerate} \item Husimi distributions: \eqref{HUSIMI} 
$$\frac{|\phi_{\lambda}^{\C}(\zeta)|^2}{|| \phi_{\lambda}||^2_{L^2(\partial M_{\tau})}}.$$

\item \eqref{CXSPMa} Analytic continuations of spectral projections with eigenvalues in the interval $I_{\lambda}$:
 $$  \Pi_{I_{\lambda}}^{\C}(\zeta, \bar{\zeta}):=
\sum_{j: \lambda_j \in  I_{\lambda}}
|\phi_{j}^{\C}(\zeta)|^2. $$ $I_{\lambda} $ could be a short interval  $[\lambda, \lambda
+ 1]$ of frequencies or a long window $[0, \lambda]$.

\item  \eqref{TCXSPM} 
`Tempered'  spectral
projections
$$P_{ I_{\lambda}}^{\tau}(\zeta, \bar{\zeta}) =
\sum_{j: \lambda_j \in  I_{\lambda}} e^{-2 \tau \lambda_j}
|\phi_{j}^{\C}(\zeta)|^2
\tau).
$$

\item Renormalized spectral projections:$$
\wt{P} _{[0, \lambda}^{\tau}(\zeta, \bar{\zeta}) =
\sum_{j: \lambda_j \leq \lambda }
\frac{|\phi_{j}^{\C}(\zeta)|^2}{ ||\phi_j^{\C}||^2_{L^2(\partial M_{\tau})}} , \;\; (\sqrt{\rho}(\zeta) =
\tau),$$
adapted to the  Husimi distributions \eqref{HUSIMI}.

\item Dual Poisson-wave group:
\eqref{CXWVGP}
$$ \begin{array}{lll} U_{\C} (t + 2 i \tau, \zeta, \bar{\zeta}) &= & \sum_j
e^{(- 2 \tau + i t) \lambda_j} |\phi_j^{\C}(\zeta)|^2.
 \end{array} $$ 
 
 \item Poisson kernel \eqref{PTKER}:
$$P^{\tau}(\zeta, y) = \sum_j e^{- \tau \lambda_j} \phi_j^{\C}(\zeta) \phi_j(y). $$

\end{enumerate}

\subsubsection{Notation for Grauert tubes, CR geometry,  geodesic flow} 
\begin{enumerate}

\item $J_{\zeta}$: complex structure at $T_{\zeta} M_{\tau}$. \bigskip

\item Complexified CR subspace and type decomposition: 
$$H^{1,0}_{\zeta} \oplus H^{0,1}_{\zeta}. $$

 \item  Geodesic flow in the Grauert tube setting \eqref{gtdef}:
$$g^t = \exp t \Xi_{\sqrt{\rho}}, \;\; G^t = \exp t \Xi_{\rho}. $$
Its restriction to $\partial M_{\tau}$ 
\eqref{gtau} $$g_{\tau}^t: \partial M_{\tau} \to \partial M_{\tau}.$$

\item $\pcal$: periodic orbits of $g^t_{\tau}$. $T(\zeta)$ \eqref{Tzeta}: period of the periodic point $\zeta \in \pcal$.

\item Linearization (Poincar\'e map) of $g^t_{\tau}$ at $\zeta \in \pcal$
\eqref{DGn} $$ D g^{n T(\zeta)} _{\zeta}:  H^{1,0}_{\zeta} \oplus H^{0,1}_{\zeta} \to H^{1,0}_{\zeta} \oplus H^{0,1}_{\zeta}. $$

\item  {\it osculating Bargmann-Fock space} (Definition \ref{OSCBFDEF}): $\hcal^2_{\zeta}$ at $\zeta \in \partial M_{\tau}$. 

\item Vacuum (ground) state in $\hcal^2_{\zeta}: \Omega_{J_{\zeta}}.$

\item Metaplectic representation $W_{J_{\zeta}} \;(D g^{n T(\zeta)}_{\zeta}) $  of  $(D g^{n T(\zeta)}_{\zeta})$. 

\item \eqref{ABCDintro}  $$\gcal_n(\zeta): =  \langle W_{J_{\zeta}} \;(D g^{n T(\zeta)}_{\zeta}) \ \Omega_{J_{\zeta}} \rangle. $$ \end{enumerate}

\subsection{Notation for linear symplectic algebra} 

\begin{enumerate}

\item Projection to $H^{1,0}_J$ \eqref{PJ} : $ P_J = \half(I - i J): V \otimes \C \to  H^{1,0}_J, \;\; \bar{P}_J = \half(I + i J): V \otimes \C \to H^{0,1}_J.$

\item Holomorphic block of a linear symplectic map \eqref{PDEF}:   $ P_J S P_J =  P = \half(A+D + i (C-B)).$

\item  Ground state of Bargmann-Fock space \eqref{GSJ}: $\Omega_{J} (v) := e^{-\half \sigma(v, J v)}$.

\item 
 The Bargmann-Fock space of a symplectic
vector space $(V, \sigma)$ with compatible complex structure $J \in \jcal$ (Section \ref{HEISMETSECT}): 
$$\hcal_{J} = \{f  e^{-\half \sigma(v, J v)} \in L^2(V, dL), f\; \mbox{is\; entire \; J-\;holomorphic}\}. $$

\end{enumerate}


\end{document}